\DeclareMathOperator{\ann}{Ann}
\DeclareMathOperator{\Ass}{Ass}
\DeclareMathOperator{\codim}{codim}
\DeclareMathOperator{\cx}{cx}
\DeclareMathOperator{\depth}{depth}
\DeclareMathOperator{\eend}{end}
\DeclareMathOperator{\Ext}{Ext}
\DeclareMathOperator{\gr}{gr}
\DeclareMathOperator{\Hom}{Hom}
\DeclareMathOperator{\Image}{Image}
\DeclareMathOperator{\injdim}{injdim}
\DeclareMathOperator{\Ker}{Ker}
\DeclareMathOperator{\Min}{Min}
\DeclareMathOperator{\Proj}{Proj}
\DeclareMathOperator{\projdim}{projdim}
\DeclareMathOperator{\rank}{rank}
\DeclareMathOperator{\reg}{reg}
\DeclareMathOperator{\Soc}{Soc}
\DeclareMathOperator{\Spec}{Spec}
\DeclareMathOperator{\Supp}{Supp}
\DeclareMathOperator{\Tor}{Tor}
\DeclareMathOperator{\Var}{Var}
\renewcommand{\ge}{\geqslant}
\renewcommand{\le}{\leqslant}
\theoremstyle{plain}
\newtheorem{theorem}{Theorem}[section]
\newtheorem{lemma}[theorem]{Lemma}
\newtheorem{proposition}[theorem]{Proposition}
\newtheorem{corollary}[theorem]{Corollary}
\theoremstyle{definition}
\newtheorem{definition}[theorem]{Definition}
\newtheorem{discussion}[theorem]{Discussion}
\newtheorem{example}[theorem]{Example}
\newtheorem{hypothesis}[theorem]{Hypothesis}
\newtheorem{question}[theorem]{Question}
\newtheorem{para}[theorem]{}
\newenvironment{customnotations}[1]
  {\innercustomnotations}
  {\endinnercustomnotations}
\newenvironment{customquestion}[1]
  {\innercustomquestion}
  {\endinnercustomquestion}
\theoremstyle{remark}
\newtheorem{remark}[theorem]{Remark}
\numberwithin{equation}{section}
\begin{document}

\pagenumbering{roman}
\thispagestyle{empty} 
\begin{center}
{\Large \bf
Asymptotic Prime Divisors Related to Ext, Regularity of Powers of Ideals, and Syzygy Modules
}\\

\vspace{1cm}
Thesis\\
\vspace{0.4cm}
Submitted in partial fulfillment of the requirements\\
\vspace{0.4cm}
of the degree of\\
\vspace{0.8cm}

{\bf \large Doctor of Philosophy}\\
\vspace{0.4cm}
by\\
\vspace{0.4cm}

{\large \bf Dipankar Ghosh}\\
\vspace{0.4cm}
Roll No. 114090004\\

\vspace{0.8cm}
Under the supervision of\\
\vspace{0.4cm}
{\bf \large Prof. Tony J. Puthenpurakal}\\
\vspace {2cm}

\includegraphics[width=1.8in]{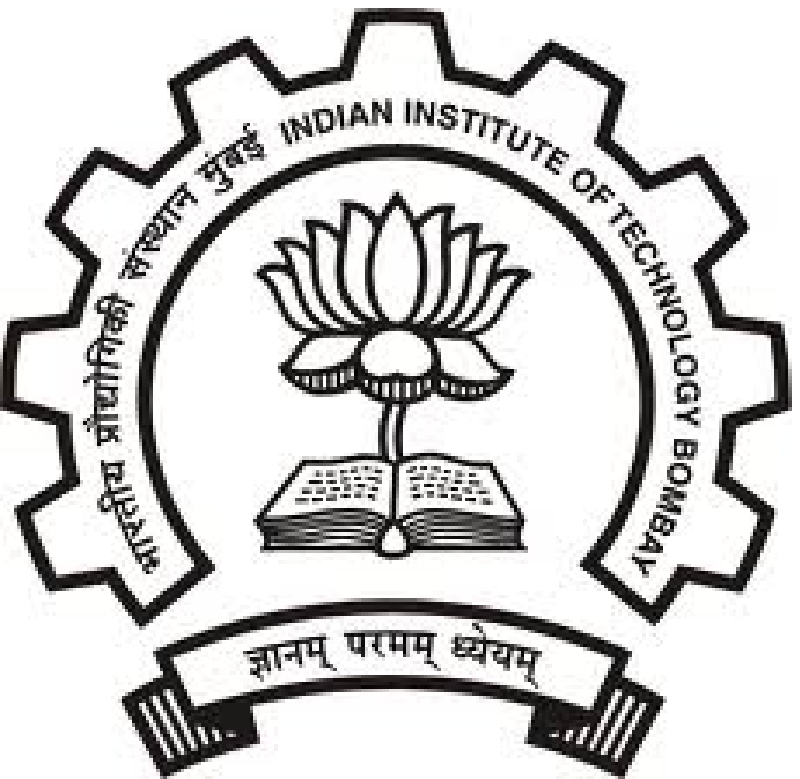}\\

\vspace {1cm}
{\bf \Large
Department of Mathematics \\
\vspace{0.4cm}
Indian Institute of Technology Bombay\\
\vspace{0.4cm}
2016
}
\end{center}

\newpage
\thispagestyle{empty} 	
\cleardoublepage      	

\thispagestyle{empty}

\vspace*{8cm}
\begin{center}
 {\huge \it 
  Dedicated To\\
  \vspace*{0.5cm}
  My Parents and Grandparents }
\end{center}

\pagenumbering{roman}

\newpage
\thispagestyle{empty}
\cleardoublepage

\thispagestyle{plain}

\includepdf{Approval}

%
%

\newpage
\thispagestyle{empty}
\cleardoublepage

\thispagestyle{plain}

\includepdf{Declaration}

%

\newpage
\thispagestyle{empty}
\cleardoublepage

\chapter*{Abstract}
\addcontentsline{toc}{chapter}{Abstract} 	
\markboth{Abstract}{Abstract}
 
 This dissertation focuses on the following topics: (1) asymptotic prime divisors over complete intersection rings,
 (2) asymptotic stability of complexities over complete intersection rings, (3) asymptotic linear bounds of Castelnuovo-Mumford
 regularity in multigraded modules, and (4) characterizations of regular local rings via syzygy modules of the residue field.
 
 Chapter~\ref{Introduction} introduces the concerned research problems and provides an overview of the works presented
 in this dissertation.
 
 Chapter~\ref{Review of Literature} gives a detailed literature review of the subjects studied in this dissertation.
 
 Now we concentrate on our main four topics mentioned in the first paragraph. For each topic, there is a separate chapter as follows.
 
 Chapter~\ref{Chapter: Asymptotic Prime Divisors over Complete Intersection Rings} deals with the study of asymptotic behaviour of
 certain sets of associated prime ideals related to Ext-modules. Let $A$ be a local complete intersection ring. Suppose $M$ and $N$
 are two finitely generated $A$-modules and $I$ is an ideal of $A$. We prove that
 \[
  \bigcup_{n \ge 1} \bigcup_{i \ge 0} \Ass_A\left( \Ext_A^i(M,N/I^n N) \right)
 \]
 is a finite set. Moreover, we analyze the asymptotic behaviour of the sets
 \[
  \Ass_A\left( \Ext_A^i(M, N/I^n N) \right) \quad\mbox{if $n$ and $i$ both tend to $\infty$}.
 \]
 We show that there are non-negative integers $n_0$ and $i_0$ such that for all $n \ge n_0$ and $i \ge i_0$ the set
 $\Ass_A\left( \Ext_A^i(M, N/I^n N) \right)$ depends only on whether $i$ is even or odd.
 We also prove the analogous results for complete intersection rings which arise in algebraic geometry.

 Chapter~\ref{Chapter: Asymptotic Stability of Complexities} shows that if $A$ is a local complete intersection ring, then the
 complexity $\cx_A(M,N/I^nN)$ is independent of $n$ for all sufficiently large $n$.
 
 Chapter~\ref{Chapter: Asymptotic linear bounds of Castelnuovo-Mumford regularity} is devoted to study the Castelnuovo-Mumford
 regularity of powers of several ideals.
 Let $A$ be a Noetherian standard $\mathbb{N}$-graded algebra over an Artinian local ring $A_0$. Let $I_1,\ldots,I_t$ be homogeneous
 ideals of $A$, and let $M$ be a finitely generated $\mathbb{N}$-graded $A$-module.
 We show that there exist two integers $k_1$ and $k_1'$ such that
 \[
   \reg(I_1^{n_1} \cdots I_t^{n_t} M) \le (n_1 + \cdots + n_t) k_1 + k'_1 \quad \mbox{for all }~ n_1, \ldots, n_t \in \mathbb{N}.
 \]  
 Moreover, we prove that if $A_0$ is a field, then there exist two integers $k_2$ and $k'_2$ such that
 \[
   \reg\left(\overline{I_1^{n_1}}\cdots \overline{I_t^{n_t}} M\right) \le (n_1 + \cdots + n_t) k_2 + k'_2
   \quad \mbox{for all }~ n_1, \ldots, n_t \in \mathbb{N},
 \]
 where $\overline{I}$ denotes the integral closure of an ideal $I$ of $A$.
 
 Chapter~\ref{Chapter: Characterizations of Regular Local Rings via Syzygy Modules} is allocated for the syzygy modules of the residue
 field of a Noetherian local ring. Let $A$ be a Noetherian local ring with residue field $k$. We show that if a finite direct sum of
 syzygy modules of $k$ maps onto `a semidualizing module' or `a non-zero maximal Cohen-Macaulay module of finite injective
 dimension', then $A$ is regular. We also prove that $A$ is regular if and only if some syzygy module of $k$ has a non-zero direct
 summand of finite injective dimension.
 
 We conclude this dissertation by presenting a few open questions in Chapter~\ref{Summary and Conclusions}.
 
 \noindent {\large \it Key words and phrases.} Associate primes; Graded rings and modules; Rees rings and modules; Ext; Tor; Complete
 intersection rings; Eisenbud operators; Support variety; Complexity; Hilbert functions; Local cohomology; Castelnuovo-Mumford regularity;
 Regular rings; Syzygy and cosyzygy modules; Semidualizing modules; Injective dimension.
 
\newpage
\thispagestyle{empty}
\cleardoublepage

\tableofcontents 	
\markboth{Contents}{Contents}

\newpage
\thispagestyle{empty}	
\cleardoublepage

\chapter*{List of Notations}
\addcontentsline{toc}{chapter}{List of Notations}
\markboth{List of Notations}{List of Notations}

 Throughout this dissertation, unless otherwise specified, {\it all rings} ({\it graded or not}) {\it are assumed to be commutative
 Noetherian rings with identity}.
 
 Let $A$ be a  ring, and let $I$ be an ideal of $A$. Suppose $M$ and $N$ are finitely generated $A$-modules. We use the following list of notations in this dissertation.
 \par \noindent
 {\bf Sets}
 \par \noindent
  \begin{tabular}{|p{2.5cm}|p{12.5cm}|}
  \hline 
  $\mathbb{N}$ & the set of all non-negative integers\\ \hline
  $\mathbb{Z}$ & the set of all integers\\ \hline
  $\Lambda$ & a finite collection of non-negative integers\\ \hline
  $\phi$ & empty set\\ \hline
  ${\bf f}$ & a finite sequence $f_1,\ldots,f_c$\\ \hline
  $\Spec(A)$ & the set of all prime ideals of $A$, {\it spectrum} of $A$\index{spectrum of a ring} \\ \hline
  $\Min(A)$ & the set of all minimal prime ideals of $A$ \\ \hline
  $\Supp(M)$ & $\{ \mathfrak{p} \in \Spec(A) : M_{\mathfrak{p}} \neq 0 \}$, {\it support} of $M$\index{support of a module}\\ \hline
  $\Proj(A)$ & the set of all homogeneous prime ideals of an $\mathbb{N}$-graded ring $A = \bigoplus_{n \ge 0}A_n$
		which do not contain the irrelevant ideal $A_{+} = \bigoplus_{n \ge 1}A_n$\index{Proj of a graded ring}\\ \hline
  $\Var(\mathfrak{a})$ & $\{ \mathfrak{p} \in \Spec(A) : \mathfrak{a} \subseteq \mathfrak{p} \}$,
                         {\it variety of an ideal} $\mathfrak{a}$ of $A$\index{variety of an ideal}\\ \hline
  $\Ass_A(M)$ & the set of all associated prime ideals of an $A$-module $M$\\ \hline
  $D(x)$ & $\{ \mathfrak{p} \in \Spec(A) : x \notin \mathfrak{p} \}$,
           where $x$ is an element of $A$\index{basic open set of $\Spec(A)$}\\ \hline
  ${}^*D(x)$ & $\{\mathfrak{p} \in \Proj(A) : x \notin \mathfrak{p}\}$, where $A$ is an $\mathbb{N}$-graded ring and
		$x$ is a homogeneous element of $A$\\\hline
  ${}^*\Ass_A(M)$ & $\Ass_A(M) \cap \Proj(A)$, the set of {\it relevant associated prime ideals}\index{relevant associate primes}
		     of an $\mathbb{N}$-graded module $M$ over an $\mathbb{N}$-graded ring $A$\\ \hline
  $\mathscr{V}(M,N)$ & support variety of $M$ and $N$\\ \hline
  \end{tabular}
  \newpage \noindent
  {\bf Graded Objects}\\
  Let $R = \bigoplus_{\underline{n} \in \mathbb{N}^t} R_{\underline{n}}$ be an $\mathbb{N}^t$-graded ring
  (where $t$ is a fixed positive integer), and let $L = \bigoplus_{\underline{n} \in \mathbb{N}^t} L_{\underline{n}}$ be an
   $\mathbb{N}^t$-graded $R$-module. Suppose $A$ is an $\mathbb{N}$-graded ring and $M$ is an $\mathbb{N}$-graded
  $A$-module. Assume $I$ and $J$ are two homogeneous ideals of $A$.
  \par \noindent
  \begin{tabular}{|p{2.5cm}|p{12.5cm}|}
  \hline 
  $\underline{n}$ & $t$-tuple $(n_1,n_2,\ldots,n_t)$ over $\mathbb{N}$, i.e., element of $\mathbb{N}^t$ \\ \hline
  $|\underline{n}|$ & sum of the components of $\underline{n}$, i.e., $(n_1 + n_2 + \cdots + n_t)$\\ \hline
  $\underline{e}^i$ & $t$-tuple with all components $0$ except the $i$th component which is $1$\\ \hline
  $\underline{0}$ & $t$-tuple with all components $0$\\ \hline
  $L_{\underline{n}}$ & the $\underline{n}$th graded component of an $\mathbb{N}^t$-graded module $L$\\ \hline
  $\eend(M)$ & $\sup\{ n : M_n \neq 0 \}$, end of $M$\index{end of an $\mathbb{N}$-graded module}\\ \hline
  $L(\underline{u})$ & same as $L$ but the grading is twisted by $\underline{u}$,
			i.e., $L(\underline{u})$ is an $\mathbb{N}^t$-graded module with $L(\underline{u})_{\underline{n}}
			:= L_{(\underline{n} + \underline{u})}$ for all $\underline{n} \in \mathbb{N}^t$\\ \hline
  $\deg(a)$ & homogeneous degree of an element $a$\\ \hline
  $d(J)$ & $\min\{d : J \mbox{ is generated by homogeneous elements of degree}\le d \}$\\ \hline
  $\rho_M(I)$ & $\min\{ d(J) : J \mbox{ is an $M$-reduction of } I \}$\\ \hline
  $\varepsilon(M)$ & the smallest degree of the (non-zero) homogeneous elements of $M$ \\ \hline
  $A_{+}$ & $\bigoplus_{n \ge 1}A_n$ which is called the {\it irrelevant ideal}\index{irrelevant ideal of a graded ring} of $A$\\ \hline
  $a_i(M)$ & $\eend\left( H_{A_{+}}^i(M) \right)$\\ \hline
  \end{tabular}
  \vspace*{0.2cm}
  \par \noindent
  {\bf Rings}
  \par \noindent
  \begin{tabular}{|p{2.5cm}|p{12.5cm}|}
  \hline
  $(A,\mathfrak{m})$ & local ring $A$ with its unique maximal ideal $\mathfrak{m}$ \\ \hline
  $(A,\mathfrak{m},k)$ & local ring $(A,\mathfrak{m})$ with its residue field $k := A/\mathfrak{m}$ \\ \hline
  $\widehat{A}$ & $\mathfrak{m}$-adic completion of $A$, where $(A,\mathfrak{m})$ is a local ring \\ \hline 
  $S^{-1}A$ & localization of $A$ by a multiplicatively closed subset $S$ of $A$\\ \hline
  $A_{\mathfrak{p}}$ & localization of $A$ at a prime ideal $\mathfrak{p}$ of $A$\\ \hline
  $A_{(\mathfrak{p})}$ & homogeneous localization (or degree zero localization) of a graded ring $A$ at a homogeneous prime ideal
                         $\mathfrak{p}$ of $A$\\ \hline
  $A[X_1,\ldots,X_d]$ & polynomial ring in $d$ variables $X_1,\ldots,X_d$ over $A$\\ \hline
  $\mathscr{R}(I)$ & $\bigoplus_{n\ge 0}I^nX^n$ which is called the {\it Rees ring} associated to $I$
                     \index{Rees ring associated to an ideal}\\ \hline
  $F(I)$ & $\mathscr{R}(I)\otimes_A k$. It is called {\it fiber cone}\index{fiber cone} of $I$
                                        (where $(A,\mathfrak{m},k)$ is a local ring)\\\hline
  $\mathscr{T}$ & polynomial ring $A[t_1,\ldots,t_c]$ over $A$ in the cohomology operators $t_j$ with $\deg(t_j) = 2$ for
                  all $1 \le j \le c$\index{polynomial rings in cohomology (Eisenbud) operators}\\ \hline
  $\mathscr{S}$ & bigraded ring $\mathscr{R}(I)[t_1,\ldots,t_c]$, where we set $\deg(t_j) = (0,2)$ for all $1 \le j \le c$
                  and $\deg(u) = (s,0)$ if $\deg(u)$ in $\mathscr{R}(I)$ is $s$
                  \index{bigraded ring in cohomology operators}\\ \hline
  \end{tabular}
  \newpage \noindent
  {\bf Ideals}
  \par \noindent
  \begin{tabular}{|p{2.5cm}|p{12.5cm}|}
  \hline
  $\ann_A(X)$ & $\{ a \in A : ax = 0\mbox{ for all }x \in X\}$, {\it annihilator} of $X$, where $X \subseteq M$
                \index{annihilator of an element}\index{annihilator of a module}\\ \hline
  $\sqrt{I}$ & $\{ a \in A : a^n \in I\mbox{ for some }n \ge 1\}$, {\it radical} of $I$\index{radical of an ideal}\\ \hline 
  $\Soc(A)$ & $\{ a \in A : a\mathfrak{m} = 0 \}$, {\it socle} of a local ring $(A,\mathfrak{m})$\index{socle of a local ring}\\ \hline
  \end{tabular}
  \par \noindent
  {\bf Modules}
  \par \noindent
  \begin{tabular}{|p{2.5cm}|p{12.5cm}|}
  \hline
  $\widehat{M}$ & $\mathfrak{m}$-adic completion of $M$, where $M$ is an $(A,\mathfrak{m})$-module \\ \hline
  $M_g$ & localization of $M$ by the multiplicatively closed set $\{g^i : i \ge 0\}$\\ \hline
  $M^n$ & direct sum of $n$ copies of $M$, where $n$ is a positive integer\\ \hline
  $\omega$ & canonical module of $A$\\ \hline
  $\Omega_n^A(M)$ & the $n$th syzygy module of $M$, where $n$ is a non-negative integer\\ \hline
  $\Omega_{-n}^A(M)$ & the $n$th cosyzygy module of $M$, where $n$ is a non-negative integer\\ \hline
  $\Image(\Phi)$ & image of a module homomorphism $\Phi$\\ \hline
  $(0 :_M I)$ & $\{ x \in M : Ix = 0 \}$, {\it colon submodule} of $M$\index{colon submodule}\\ \hline
  $\mathscr{R}(I,N)$ & {\it Rees module}\index{Rees module} $\bigoplus_{n\ge 0}I^nN$ of $N$ associated to $I$\\ \hline
  $N[X]$ & $N \otimes_A A[X]$\quad (tensor product of the $A$-modules $N$ and $A[X]$)\\ \hline
  $\gr_I(N)$ & {\it associated graded module}\index{associated graded module}
		$\bigoplus_{n \ge 0} (I^n N/I^{n+1} N)$ of $N$ with respect to $I$\\ \hline
  $\mathcal{N}$ & $\bigoplus_{n \ge 0} N_n$, an $\mathbb{N}$-graded $\mathscr{R}(I)$-module.
                  We set $\mathcal{L} := \bigoplus_{n \ge 0} (N/I^{n+1} N)$ \\ \hline
  \end{tabular}
  \par \noindent
  {\bf Few Invariants}
  \par \noindent
  \begin{tabular}{|p{2.7cm}|p{12.3cm}|}
  \hline
  $\depth(A)$ & depth of a local ring $A$\\ \hline
  $\dim(A)$ & $\sup\{n : \exists~ \mathfrak{p}_0 \subsetneq \mathfrak{p}_1 \subsetneq \cdots \subsetneq
	      \mathfrak{p}_n, ~\mathfrak{p}_i \in \Spec(A) \}$, {\it Krull dimension} of $A$\index{Krull dimension of a ring}\\\hline
  $\dim_A(M)$ & {\it dimension} of an  $A$-module $M$, i.e., $\dim(A/\ann_A(M))$ \index{dimension of a module}\\ \hline
  $\dim(\mathscr{V})$ & dimension of a variety $\mathscr{V}$\\ \hline
  $\rank_k(V)$ & dimension of $V$ as a vector space over a field $k$\\ \hline
  $\mu_A(M)$ & the minimal number of generators of $M$ as an $A$-module\\ \hline
  $\codim(A)$ & $\mu_A(\mathfrak{m}) - \dim(A)$, {\it codimension of a local ring} $(A,\mathfrak{m})$
                \index{codimension of a local ring}\\ \hline
  $\lambda_A(M)$ & length of $M$ as an $A$-module\\ \hline
  $\beta_n^A(M)$ & the $n$th Betti number of $M$\\ \hline
  $\projdim_A(M)$ & projective dimension of an $A$-module $M$\\ \hline
  $\injdim_A(M)$ & injective dimension of an $A$-module $M$\\ \hline
  $\cx_A(M)$ & complexity of $M$\\ \hline
  $\cx_A(M,N)$ & complexity of a pair of $A$-modules $(M,N)$\\ \hline
  $\reg(M)$ & Castelnuovo-Mumford regularity of $M$\\ \hline
  $\limsup_{n \rightarrow \infty} a_n$ & limit supremum value of a sequence $\{a_n\}$\\ \hline
  \end{tabular} 
  \newpage \noindent
  {\bf Homological Tools}
  \par \noindent
  \begin{tabular}{|p{2.5cm}|p{12.5cm}|}
  \hline
  $\Hom_A(M,N)$ & the collection of all $A$-linear maps from $M$ to $N$\\ \hline
  $\Ext^i_A(-,-)$ & the $i$th {\it extension functor}\index{extension functor} (the right derived functors of $\Hom$)\\ \hline
  $\Tor_i^A(-,-)$ & the $i$th {\it torsion functor}\index{torsion functor} (the left derived functors of tensor product)\\ \hline
  $H_{J}^i(M)$ & the $i$th local cohomology module of $M$ with respect to an ideal $J$\\ \hline
  $\Ext_A^{\star}(M,N)$ & $\bigoplus_{i \ge 0}\Ext_A^i(M,N)$, total Ext-module\\ \hline
  $\mathcal{C}(M,N)$ & $\Ext_A^\star(M,N)\otimes_A k$\quad (where $A$ is a local ring with its residue field $k$)\\ \hline 
  $\mathscr{E}(\mathcal{N})$ & $\bigoplus_{n \ge 0} \bigoplus_{i \ge 0}\Ext_A^i(M,N_n)$, where $\mathcal{N}=\bigoplus_{n\ge 0}N_n$\\\hline
  $\mathbb{F}$ & free resolution of $M$\\ \hline
  $\mathbb{F}(m)$ & same complex as $\mathbb{F}$ but the components are twisted by $m$ ($\in \mathbb{Z}$)\\ \hline
  $\Hom_A(\mathbb{F},N)$ & cochain complex induced by applying the functor $\Hom_A(-,N)$ on $\mathbb{F}$\\ \hline
  \end{tabular}
  
\newpage
\thispagestyle{empty}
\cleardoublepage

\pagenumbering{arabic}  

\chapter{Introduction}\label{Introduction}
\thispagestyle{empty}

 In this dissertation, we study the following topics: (1) asymptotic prime divisors over complete intersection rings,
 (2) asymptotic stability of complexities over complete intersection rings, (3) asymptotic linear bounds of Castelnuovo-Mumford
 regularity in multigraded modules, and (4) characterizations of regular local rings via syzygy modules of the residue field.
 
 \footnotetext[1]{The new results appear under the topics (1) and (2) are joint work with T. J. Puthenpurakal; while the results shown
 in topic (4) are joint work with A. Gupta and T. J. Puthenpurakal.}
 
 Throughout this dissertation, unless otherwise specified, {\it all rings} ({\it graded or not}) {\it are assumed to be commutative
 Noetherian rings with identity}.
 
\section{Asymptotic Prime Divisors Related to Ext}
 
 Let $A$ be a ring, and let $M$ be a finitely generated $A$-module. A prime ideal $\mathfrak{p}$ of $A$ is called an {\it associated prime ideal}\index{associated prime ideal} of $M$ if $\mathfrak{p}$ is the annihilator $\ann_A(x)$ of some element $x$ of $M$. The set of all associated prime ideals of $M$ is denoted by $\Ass_A(M)$. It is well-known that $\Ass_A(M)$ is a finite set.
 
 Throughout this section, let $M$ and $N$ be two finitely generated $A$-modules, and let $I$ be an ideal of $A$. In \cite{Bro79}, M. Brodmann proved that the set of associated prime ideals $\Ass_A(M/I^n M)$\index{Brodmann's result} is independent of $n$ for all sufficiently large $n$. Thereafter, L. Melkersson and P. Schenzel generalized Brodmann's result in \cite[Theorem~1]{MS93}\index{Melkersson and Schenzel's result} by showing that
 \[
	  \Ass_A\left( \Tor_i^A(M, A/I^n) \right)
 \]
 is independent of $n$ for all large $n$ and for every fixed $i \ge 0$.
 Later, D. Katz and E. West proved the above result in a more general way: For every fixed $i \ge 0$,
 the sets\index{Katz and West's result}
 \[
  \Ass_A\left( \Tor_i^A(M, N/I^n N) \right) \quad\mbox{and}\quad \Ass_A\left( \Ext_A^i(M, N/I^n N) \right)
 \]
 are independent of $n$ for all sufficiently large $n$ (see \cite[Corollary~3.5]{KW04}). In particular, for every fixed $i \ge 0$,
 one obtains that the sets
 \[
  \bigcup_{n\ge 1} \Ass_A\left( \Tor_i^A(M, N/I^n N) \right) \quad\mbox{and}\quad
  \bigcup_{n\ge 1} \Ass_A\left( \Ext_A^i(M, N/I^n N) \right) \quad\mbox{are finite}.
 \]
 
 The motivation for our main results on associated prime ideals came from the following two questions. They were raised by W. V.
 Vasconcelos \cite[Problem~3.15]{Vas98} and L. Melkersson and P. Schenzel \cite[page 936]{MS93} respectively.
 \index{Vasconcelos's question}\index{Melkersson and Schenzel's question}
 \begin{center}
  (A1)\hfill Is the set $~\displaystyle\bigcup_{i\ge 0}\Ass_A\left(\Ext_A^i(M,A)\right)~$ finite?\hfill \; \\
  (A2)\hfill Is the set $~\displaystyle \bigcup_{n \ge 1} \bigcup_{i \ge 0} \Ass_A\left( \Tor_i^A(M,A/I^n) \right)~$
                            finite? \hfill \;
 \end{center}
 Note that if $A$ is a Gorenstein local ring, then the question (A1) has a positive answer (trivially). If we change the question
 a little, then we may ask the following:
 \begin{center}
  (A3)\hfill Is the set $~\displaystyle \bigcup_{i \ge 0} \Ass_A\left( \Ext_A^i(M,N) \right)~$ finite? \hfill \;
 \end{center}
 This is not known for Gorenstein local rings. However, if $A = Q/({\bf f})$, where $Q$ is a  local ring and
 ${\bf f} = f_1,\ldots,f_c$ is a $Q$-regular sequence, and if $\projdim_Q(M)$ is finite, then the question (A3) has an
 affirmative answer. This can be seen by using the finite generation of $\bigoplus_{i \ge 0} \Ext_A^i(M,N)$ over the polynomial ring
 $A[t_1,\ldots,t_c]$ in the cohomology operators $t_j$ over $A$.
 
 The result in the same spirit proved by T. J. Puthenpurakal is the following:
 
\begin{theorem}{\rm \cite[Theorem~5.1]{Put13}}\label{theorem: Tony's result}\index{Puthenpurakal's result on prime divisors}
 Let $A$ be a local complete intersection ring. Suppose $\mathcal{N} = \bigoplus_{n \ge 0} N_n$ is a finitely generated graded module
 over the Rees ring $\mathscr{R}(I)$. Then
 \[
  \bigcup_{n \ge 0} \bigcup_{i \ge 0} \Ass_A\left( \Ext_A^i(M,N_n) \right) \quad \mbox{is a finite set}.
 \]
 Furthermore, there exist $n_0, i_0$ $(\ge 0)$ such that for all $n \ge n_0$ and $i \ge i_0$, we have
 \begin{align*}
  \Ass_A\left(\Ext_A^{2i}(M,N_n)\right) &= \Ass_A\left(\Ext_A^{2 i_0}(M,N_{n_0})\right), \\
  \Ass_A\left(\Ext_A^{2i+1}(M,N_n)\right) &= \Ass_A\left(\Ext_A^{2 i_0 + 1}(M,N_{n_0})\right).
 \end{align*}
\end{theorem}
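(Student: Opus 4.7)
The strategy is to assemble $\mathscr{E}(\mathcal{N}) = \bigoplus_{n \ge 0}\bigoplus_{i \ge 0}\Ext_A^i(M,N_n)$ into a finitely generated bigraded module over $\mathscr{S}$, and then deduce both assertions from general facts about finitely generated bigraded modules over a standard bigraded Noetherian ring.

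First I would pass to the $\mathfrak{m}$-adic completion (which does not affect the associated-prime statements here) and write $A = Q/({\bf f})$ with $Q$ regular local and ${\bf f} = f_1,\ldots,f_c$ a $Q$-regular sequence, so that $\projdim_Q(M) < \infty$. Next, fix a free resolution $\mathbb{F}$ of $M$ over $A$ and lift it to a complex of free $Q$-modules in the standard Eisenbud fashion, producing cohomology operators $t_1,\ldots,t_c$ from this lift. Applying $\Hom_A(\mathbb{F},-)$ to $\mathcal{N}$ yields a cochain complex $\Hom_A(\mathbb{F},\mathcal{N}) \cong \bigoplus_{n \ge 0}\Hom_A(\mathbb{F},N_n)$ which is simultaneously a complex of graded $\mathscr{R}(I)$-modules (via the Rees action on $\mathcal{N}$) and carries commuting chain maps $t_1,\ldots,t_c$. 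Because the Eisenbud operators depend only on the $Q$-lift of $\mathbb{F}$ and not on the coefficient module, they are natural in the second argument and therefore commute with the $\mathscr{R}(I)$-action. Passing to cohomology, $\mathscr{E}(\mathcal{N})$ acquires the structure of a bigraded $\mathscr{S}$-module: Rees-degree $(s,0)$ from $\mathscr{R}(I)$ and cohomological degree $(0,i)$, with the $t_j$ of bidegree $(0,2)$.

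The central step, and the main obstacle, is to show that $\mathscr{E}(\mathcal{N})$ is finitely generated over $\mathscr{S}$. Gulliksen's classical theorem supplies this one slice at a time: for each fixed $n$, the module $\bigoplus_{i\ge 0}\Ext_A^i(M,N_n)$ is finitely generated over $\mathcal{T} = A[t_1,\ldots,t_c]$. The task is to make this uniform in $n$, and this is where the Rees grading enters decisively. I would pick homogeneous $\mathscr{R}(I)$-generators $x_1,\ldots,x_r$ of $\mathcal{N}$ with $x_j \in N_{d_j}$, apply Gulliksen's theorem to obtain a finite generating set of each $\bigoplus_{i\ge 0}\Ext_A^i(M, N_{d_j})$ over $\mathcal{T}$, and then use the compatibility between the $\mathscr{R}(I)$-action and the $t_j$'s to transport these generators into a finite generating set of $\mathscr{E}(\mathcal{N})$ over $\mathscr{S}$. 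Here the finite generation of $\mathcal{N}$ over the Noetherian ring $\mathscr{R}(I)$ is essential.

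Once finite generation of $\mathscr{E}(\mathcal{N})$ over $\mathscr{S}$ is in hand, the set $\Ass_{\mathscr{S}}\bigl(\mathscr{E}(\mathcal{N})\bigr)$ is finite, hence so is its contraction $\bigcup_{n,i}\Ass_A\bigl(\Ext_A^i(M,N_n)\bigr)$, which proves the first claim. For the parity-stability, split $\mathscr{E}(\mathcal{N})$ into the direct sum of its even-Ext-degree and odd-Ext-degree parts; each summand is a finitely generated bigraded module over $\mathscr{R}(I)[t_1,\ldots,t_c]$ after rescaling so that the $t_j$'s have second-coordinate degree $1$, making the ring standard bigraded. A bigraded analogue of Brodmann's asymptotic stability of associated primes for finitely generated bigraded modules then yields $n_0, i_0 \ge 0$ such that $\Ass_A\bigl(\Ext_A^i(M,N_n)\bigr)$ is constant on each parity class for all $n \ge n_0$ and $i \ge i_0$, which is exactly the second assertion.
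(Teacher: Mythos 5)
Your overall strategy---reduce to the complete case, put the bigraded $\mathscr{S}=\mathscr{R}(I)[t_1,\ldots,t_c]$-module structure on $\mathscr{E}(\mathcal{N})$, establish finite generation, and then read off both conclusions from general facts about finitely generated multigraded modules---is precisely the route taken in \cite{Put13} and echoed in this chapter, so the skeleton is right. Two steps, however, do not hold up as written.

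The sketch of why $\mathscr{E}(\mathcal{N})$ is finitely generated over $\mathscr{S}$ (which is Theorem~\ref{theorem: finite generation result}) is not a valid argument. Picking homogeneous $\mathscr{R}(I)$-generators $x_j\in N_{d_j}$ of $\mathcal{N}$ and Gulliksen generators of each $\bigoplus_{i}\Ext_A^i(M,N_{d_j})$ does not let you ``transport'' a generating set: $\Ext_A^i(M,-)$ is only left-exact in the second variable, so the maps $\Ext_A^i(M,N_{d_j})\to\Ext_A^i(M,N_m)$ induced by the $\mathscr{R}(I)$-action need not be jointly surjective even though the corresponding maps $N_{d_j}\to N_m$ are. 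A counterexample to the needed surjectivity already appears at the level of a single short exact sequence, where a connecting map to $\Ext^{i+1}$ obstructs it. The actual proof in \cite{Put13} is a graded, relative version of Gulliksen's change-of-rings argument: one first notes that $\bigoplus_{n\ge 0}\bigoplus_{i\ge 0}\Ext_Q^i(M,N_n)$ is a finitely generated graded $\mathscr{R}(I)$-module (since $\projdim_Q(M)<\infty$ and $\mathcal{N}$ is finitely generated over the Noetherian ring $\mathscr{R}(I)$), and then uses the long exact sequence of cohomology operators to pass from $Q$ to $A=Q/({\bf f})$ by induction on $c$. That comparison with $Q$ is the idea your sketch is missing; ``Gulliksen slice-by-slice plus transport'' does not substitute for it.

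Second, ``$\Ass_{\mathscr{S}}\bigl(\mathscr{E}(\mathcal{N})\bigr)$ is finite, hence so is its contraction'' elides the real content. What is needed is that $\bigcup_{n,i}\Ass_A\bigl(\Ext_A^i(M,N_n)\bigr)\subseteq\bigl\{P\cap A: P\in\Ass_{\mathscr{S}}\bigl(\mathscr{E}(\mathcal{N})\bigr)\bigr\}$, and this is not formal; it requires a localization argument or, as the paper does, an appeal to West's result \cite[Lemma~3.2]{Wes04} that the union of the $A$-associated primes of the graded pieces of a finitely generated multigraded module is finite. Your even/odd decomposition and the appeal to a bigraded analogue of Brodmann stability for the second assertion are correct; that is exactly \cite[Proposition~5.1]{Wes04}, used as Lemma~\ref{lemma: West Proposition 5.1} in the text.
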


 In particular, $\mathcal{N}$ can be taken as $\bigoplus_{n \ge 0} I^n N$ or $\bigoplus_{n \ge 0} I^n N/I^{n+1} N$ in the above
 theorem. Note that if $N \neq 0$, then $\bigoplus_{n \ge 0} N/I^n N$ is not finitely generated as an $\mathscr{R}(I)$-module.
 So we cannot take $\mathcal{N}$ as $\bigoplus_{n \ge 0} N/I^n N$ in Theorem~\ref{theorem: Tony's result}. In this theme,
 T. J. Puthenpurakal \cite[page 368]{Put13} raised the following question:
 \begin{center}
  (A4)\hfill Is the set $~\displaystyle \bigcup_{n \ge 1} \bigcup_{i \ge 0}\Ass_A\left( \Ext_A^i(M,N/I^n N) \right)~$
                  finite? \hfill \;
 \end{center}
 
 In Chapter~\ref{Chapter: Asymptotic Prime Divisors over Complete Intersection Rings}, we show that the question (A4) has an
 affirmative answer for local complete intersection rings. We also analyze the asymptotic behaviour of the sets
 \begin{center}
  $\Ass_A\left( \Ext_A^i(M, N/I^n N) \right)$\quad if $n$ and $i$ both tend to $\infty$.
 \end{center}
 If $A$ is a local complete intersection ring, then we prove that there exist non-negative integers $n_0, i_0$ such that for all
 $n \ge n_0$ and $i \ge i_0$, we have
 \begin{align*}
  \Ass_A\left(\Ext_A^{2i}(M,N/I^nN)\right) &= \Ass_A\left(\Ext_A^{2 i_0}(M,N/I^{n_0}N)\right), \\
  \Ass_A\left(\Ext_A^{2i+1}(M,N/I^nN)\right) &= \Ass_A\left(\Ext_A^{2 i_0 + 1}(M,N/I^{n_0}N)\right).
 \end{align*}
 We also show the analogous results for complete intersection rings which arise in algebraic geometry.
 
 Recall that a local ring $A$ is said to be a {\it local complete intersection ring}\index{complete intersection ring} if
 its completion $\widehat{A} = Q/({\bf f})$, where $Q$ is a complete regular local ring and ${\bf f} = f_1,\ldots,f_c$ is a $Q$-regular
 sequence. To prove our results, we may assume that $A$ is complete because of the following well-known fact: For a finitely generated $A$-module
 $D$, we have
 \[
	  \Ass_A(D) = \left\{ \mathfrak{q} \cap A : \mathfrak{q} \in \Ass_{\widehat{A}}\left(D \otimes_A \widehat{A}\right) \right\}.
 \]
 So, without loss of generality, we may assume that $A$ is a quotient of a regular local ring modulo a regular sequence. Then the
 desired results follow from more general results appeared below. Let us fix the following hypothesis for the rest of this section.
 
\begin{hypothesis}\label{hypothesis: A = Q/f}
	Let $Q$ be a ring of finite Krull dimension, and let ${\bf f} = f_1,\ldots,f_c$ be a $Q$-regular sequence. Set $A := Q/({\bf f})$. Let $M$ and $N$ be finitely generated $A$-modules, where $\projdim_Q(M)$ is finite. Let $I$ be an ideal of $A$.
\end{hypothesis}

 We prove the announced finiteness and stability results in this set-up. One of the main ingredients we use in the proofs of our
 results is the following theorem due to T. J. Puthenpurakal. This theorem concerns the finite generation of a family of Ext-modules.
 Here we need to understand the theory of cohomology operators which gives the bigraded module structure used in the following theorem.
 For details, we refer to Section~\ref{Module structure}.
 
\begin{theorem}{\rm \cite[Theorem~1.1]{Put13}}\label{theorem: finite generation result}
 With the {\rm Hypothesis~\ref{hypothesis: A = Q/f}}, let $\mathcal{N} = \bigoplus_{n \ge 0} N_n$ be a finitely generated graded module
 over the Rees ring $\mathscr{R}(I)$. Then 
 \[
  \bigoplus_{n \ge 0} \bigoplus_{i \ge 0} \Ext_A^i(M,N_n)
 \]
 is a finitely generated bigraded $\mathscr{S} := \mathscr{R}(I)[t_1,\ldots,t_c]$-module, where
 \[
  t_j : \Ext_A^i(M,N_n) \longrightarrow \Ext_A^{i+2}(M,N_n) \quad (j = 1,\ldots,c)
 \]
 are the cohomology operators over $A$.
\end{theorem}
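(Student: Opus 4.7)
The plan is to derive this statement by combining Gulliksen's classical finite generation theorem, which handles a single finitely generated $A$-module in place of $\mathcal{N}$, with the naturality of the Eisenbud cohomology operators in the second argument. This lets us treat the $\mathscr{R}(I)$-module structure on $\mathcal{N}$ as commuting with the $t_j$-action, assembling everything into a bigraded $\mathscr{S} = \mathscr{R}(I)[t_1, \ldots, t_c]$-module.

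First I would recall the construction of the operators. Fix a finitely generated $A$-free resolution $\mathbb{F}_\bullet$ of $M$ and lift its differentials to $Q$-linear maps $\widetilde{d}$ on free $Q$-modules $\widetilde{\mathbb{F}}_\bullet$. Since $\widetilde{d}^2$ reduces to zero modulo $({\bf f})$, we may write $\widetilde{d}^2 = \sum_{j=1}^c f_j \tilde{t}_j$ for some $\tilde{t}_j \colon \widetilde{\mathbb{F}}_\bullet \to \widetilde{\mathbb{F}}_\bullet[-2]$, whose reductions mod $({\bf f})$ give chain maps $t_j \colon \mathbb{F}_\bullet \to \mathbb{F}_\bullet[-2]$. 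These pairwise commute up to homotopy and are natural in the second argument, so on cohomology they yield the desired operators. Next, since $\Hom_A(\mathbb{F}_i, -)$ preserves the $\mathscr{R}(I)$-module structure of $\mathcal{N}$, each $\Hom_A(\mathbb{F}_i, \mathcal{N})$ is a finite direct sum of copies of $\mathcal{N}$ and hence is a finitely generated graded $\mathscr{R}(I)$-module. The differentials of $\Hom_A(\mathbb{F}_\bullet, \mathcal{N})$ are $\mathscr{R}(I)$-linear, and the $t_j$-actions on cohomology commute with $A$-linear morphisms in the second argument, so $\mathscr{E}(\mathcal{N})$ carries a well-defined bigraded $\mathscr{S}$-module structure.

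The heart of the proof is finite generation of $\mathscr{E}(\mathcal{N})$ over $\mathscr{S}$. Following Gulliksen's strategy, one constructs $\mathbb{F}_\bullet$ so that $\bigoplus_i \mathbb{F}_i$ admits a compatible $\mathscr{T} = A[t_1, \ldots, t_c]$-module structure and is finitely generated as such; this uses the hypothesis $\projdim_Q(M) < \infty$ essentially, since the construction proceeds by starting from a finite $Q$-free resolution of $M$ and building the $A$-resolution via the Koszul complex on ${\bf f}$. Applying $\Hom_A(-, \mathcal{N})$ then produces a cochain complex whose total term $\bigoplus_i \Hom_A(\mathbb{F}_i, \mathcal{N})$ is finitely generated as a bigraded $\mathscr{S} \cong \mathscr{R}(I) \otimes_A \mathscr{T}$-module. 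Since $\mathscr{S}$ is a polynomial ring over the Noetherian ring $\mathscr{R}(I)$, it is itself Noetherian, and passing to cohomology preserves finite generation, yielding the claim.

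The main obstacle is arranging the resolution $\mathbb{F}_\bullet$ so that the Eisenbud operators act as a bona fide $\mathscr{T}$-module structure making $\bigoplus_i \mathbb{F}_i$ finitely generated—this is essentially Gulliksen's theorem and depends crucially on $\projdim_Q(M) < \infty$. By contrast, once this has been set up on the resolution side, accommodating the graded Rees module $\mathcal{N}$ on the Hom target is largely formal, reducing to the stability of finite generation under tensor product with a finitely generated graded module over the Noetherian polynomial ring $\mathscr{S}$.
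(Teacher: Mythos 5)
The dissertation does \emph{not} prove Theorem~\ref{theorem: finite generation result}: it is stated and attributed to \cite[Theorem~1.1]{Put13}, and the paper uses it as a black box (together with the graded module structure of \ref{para:module structure 2} and Gulliksen's theorem recalled in \ref{para:Gulliksen}). So there is no in-paper proof to compare your attempt against; what you have written is a proof where the paper supplies only a citation.

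On its own merits, your sketch follows the standard Shamash--Eisenbud--Gulliksen strategy and the overall architecture is sound: a finite $Q$-free resolution of $M$ gives rise to a Shamash-type $A$-free resolution $\mathbb{F}_\bullet$; applying $\Hom_A(-,\mathcal{N})$ yields a complex that is finitely generated over $\mathscr{S}=\mathscr{R}(I)[t_1,\ldots,t_c]$; and since $\mathscr{S}$ is Noetherian (because $\mathscr{R}(I)$ is), the subquotient $\mathscr{E}(\mathcal{N})$ inherits finite generation. This is precisely the combination of Gulliksen's finite-generation theorem with the Rees-module naturality that \cite{Put13} exploits. One place where the wording needs tightening: you assert that ``$\bigoplus_i \mathbb{F}_i$ admits a compatible $\mathscr{T}$-module structure and is finitely generated as such.'' This conflates two different actions. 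The Eisenbud chain maps $t'_j\colon \mathbb{F}_{i+2}\to\mathbb{F}_i$ \emph{lower} homological degree, so they do not directly endow $\bigoplus_i \mathbb{F}_i$ with a finitely generated module structure over the positively graded polynomial ring $\mathscr{T}=A[t_1,\ldots,t_c]$. The finitely generated $\mathscr{T}$-module that drives Gulliksen's argument is $\bigoplus_i\Hom_A(\mathbb{F}_i,D)$, where the $t_j$ act in cohomological degree $+2$; one then checks (a standard but non-formal step) that this action agrees on cohomology with the Eisenbud operators. After this correction the rest of your argument goes through: $\Hom_A(\mathbb{G},\mathcal{N})$ is a finite direct sum of copies of $\mathcal{N}$, hence finitely generated over $\mathscr{R}(I)$, and the total Hom complex is then finitely generated over $\mathscr{S}\cong\mathscr{R}(I)\otimes_A\mathscr{T}$. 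No fatal gap, but you should be clear that you are reconstructing a proof that this paper merely cites.
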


 With the Hypothesis~\ref{hypothesis: A = Q/f}, we prove that the set
 \[
  \bigcup_{n \ge 1} \bigcup_{i \ge 0} \Ass_A\left( \Ext_A^i(M,N/I^n N) \right) \quad\mbox{is finite};
 \]
 see Theorem~\ref{theorem:Q mod f finiteness}. After having this finiteness result, we focus on the asymptotic stability of the sets
 of associated prime ideals which occurs periodically after a certain stage. We show that there exist $n', i' \ge 0$ such that for
 all $n \ge n'$ and $i \ge i'$, we have
 \begin{align*}
  \Ass_A\left(\Ext_A^{2i}(M,N/I^nN)\right) &= \Ass_A\left(\Ext_A^{2 i'}(M,N/I^{n'}N)\right), \\
  \Ass_A\left(\Ext_A^{2i+1}(M,N/I^nN)\right) &= \Ass_A\left(\Ext_A^{2 i' + 1}(M,N/I^{n'}N)\right);
 \end{align*}
 see Theorem~\ref{theorem:Q mod f stability}. To prove this result, we take advantage of the notion of Hilbert function. We set
 \[
  V_{(n,i)} := \Ext_A^i(M,N/I^n N) \quad\mbox{for all } n, i \ge 0.
 \]
 Since $\bigcup_{n \ge 0} \bigcup_{i \ge 0} \Ass_A\left( V_{(n,i)} \right)$ is finite, it is enough to prove that for each
 \[
  \mathfrak{p} \in \bigcup_{n \ge 0} \bigcup_{i \ge 0} \Ass_A\left( V_{(n,i)} \right),
 \]
 there exist some $n_l, i_l \ge 0$ such that exactly one of the following alternatives must hold:
 \begin{align*}
  \mbox{either}\quad \mathfrak{p} &\in \Ass_A\left( V_{(n,2i+l)} \right)    \quad\mbox{for all }n \ge n_l \mbox{ and } i \ge i_l; \\
  \mbox{or}\quad     \mathfrak{p} &\notin \Ass_A\left( V_{(n,2i+l)} \right) \quad\mbox{for all }n \ge n_l \mbox{ and } i \ge i_l,
 \end{align*}
 where $l = 0, 1$. Localizing at $\mathfrak{p}$, and replacing $A_{\mathfrak{p}}$ by $A$ and $\mathfrak{p} A_{\mathfrak{p}}$ by
 $\mathfrak{m}$, we may assume that $A$ is a local ring with maximal ideal $\mathfrak{m}$ and residue field $k$. For each
 $l \in \{0, 1\}$, it is now enough to prove that there exist some $n_l, i_l \ge 0$ such that
 \begin{align*}
  \mbox{either}\quad \mathfrak{m} &\in \Ass_A\left( V_{(n,2i+l)} \right) \quad \mbox{for all }n \ge n_l \mbox{ and } i \ge i_l; \\
  \mbox{or}\quad     \mathfrak{m} &\notin \Ass_A\left( V_{(n,2i+l)} \right) \quad \mbox{for all }n \ge n_l \mbox{ and } i \ge i_l,
 \end{align*}
 which is equivalent to that
 \begin{align*}
  \mbox{either}\quad \Hom_A\left( k, V_{(n,2i+l)} \right) &\neq 0 \quad\mbox{for all }n \ge n_l \mbox{ and } i \ge i_l; \\
  \mbox{or}\quad     \Hom_A\left( k, V_{(n,2i+l)} \right) &= 0\quad\mbox{for all }n \ge n_l \mbox{ and } i \ge i_l.
 \end{align*}
 We show this in Lemma~\ref{lemma:well behaved polynomial}. 

\section{Asymptotic Stability of Complexities}
 
 Let $A$ be a local ring with residue field $k$. Let $M$ and $ N $ be finitely generated  $A$-module. The integer
 \[
  \beta_n^A(M) := \rank_k\left( \Ext_A^n(M,k) \right)
 \]
 is called the $n$th {\it Betti number}\index{Betti number} of $M$. It is equal to the rank of $F_n$ in a minimal free resolution
 $\mathbb{F}$ of $M$. The notion of complexity of a module was introduced by L. L. Avramov in \cite[Definition~(3.1)]{Avr89}. The
 {\it complexity}\index{complexity of a module} of $M$, denoted $\cx_A(M)$ is the smallest non-negative integer $b$ such that
 $\beta_n^A(M) \le a n^{b-1}$ for some real number $a > 0$ and for all sufficiently large $n$. If no such $b$ exists, then set
 $\cx_A(M) := \infty$.
 
 The complexity of a pair of modules $(M,N)$, introduced in \cite{AB00} by L. L. Avramov and R.-O. Buchweitz,
 \index{complexity of a pair of modules}is defined to be the number
 \[
  \cx_A(M,N) := \inf\left\{ b \in \mathbb{N} ~\middle|
    \begin{array}{c}
      \mu_A\left( \Ext_A^n(M,N) \right) \le a n^{b-1} \mbox{ for some} \\
      \mbox{ real number } a > 0 \mbox{ and for all } n \gg 0
    \end{array}
   \right\}, 
 \]
 where $\mu_A(D)$ denotes the minimal number of generators of an $A$-module $D$. Clearly, the complexity $\cx_A(M,N)$ measures
 `the size' of $\Ext_A^{*}(M,N)$. This notion encompasses the asymptotic invariant of $M$: its complexity $\cx_A(M) = \cx_A(M,k)$,
 measuring the polynomial rate of growth at infinity of its minimal free resolution.
 
 Let $A$ be a local complete intersection ring, and let $I$ be an ideal of $A$. Puthenpurakal (in \cite[Theorem 7.1]{Put13})
 proved that $\cx_A(M, I^n N)$\index{Puthenpurakal's result on complexity} is constant for all sufficiently large $n$.
 In Chapter~\ref{Chapter: Asymptotic Stability of Complexities} of this dissertation, we show that
 \begin{center}
  (C1) \hfill $\cx_A(M, N/I^n N)$\quad is independent of $n$ for all sufficiently large $n$. \hfill \;
 \end{center}
 To prove this result, we use the notion of support variety which was introduced by Avramov and Buchweitz
 in the same article \cite[2.1]{AB00}.

\section{Castelnuovo-Mumford Regularity of Powers of Several Ideals}
 
 Let $A$ be a standard $\mathbb{N}$-graded algebra, where {\it standard}\index{standard graded ring} means $A$ is generated
 by elements of $A_1$. Let $A_{+}$ be the ideal of $A$ generated by the homogeneous elements of positive degree. Let $M$ be a finitely generated 
 $\mathbb{N}$-graded $A$-module. For every integer $i \ge 0$, we denote the $i$th local cohomology module of $M$ with respect to
 $A_{+}$ by $H_{A_{+}}^i(M)$. The {\it Castelnuovo-Mumford regularity}\index{Castelnuovo-Mumford regularity} of $M$ is the invariant
 \[
  \reg(M) := \max\left\{ \eend\left( H_{A_{+}}^i(M) \right) + i : i \ge 0 \right\},
 \]
 where $\eend(D)$ denotes the maximal non-vanishing degree\index{end of an $\mathbb{N}$-graded module} of an $\mathbb{N}$-graded
 $A$-module $D$ with the convention $\eend(D) = - \infty$ if $D = 0$. It is a natural extension of the usual definition of the
 Castelnuovo-Mumford regularity in the case $A$ is a polynomial ring over a field.
 
 Let $S = k[X_1,\ldots,X_d]$ be a polynomial ring over a field $k$ with its usual grading, i.e., each $X_i$ has degree $1$. By the
 Hilbert's Syzygy Theorem\index{Hilbert's Syzygy Theorem}, every finitely generated $\mathbb{N}$-graded $S$-module $N$ has a
 finite graded minimal free resolution:
 \[
	  0 \longrightarrow F_p \longrightarrow \cdots \longrightarrow F_1 \longrightarrow F_0 \longrightarrow N \longrightarrow 0,
 \]
 where $F_i = \bigoplus_{j = 1}^{a_i} S(- b_{ij})$ for some integers $b_{ij}$. Then
 \[
	  \reg(N) = \max_{i,j}\{ b_{ij} - i \}.
 \]
 One can write it in terms of the maximal non-vanishing degree of Tor-modules:\index{Castelnuovo-Mumford regularity}
 \[
	  \reg(N) = \max\left\{ \eend\left( \Tor_i^S(N,k) \right) - i : i \ge 0 \right\}.
 \]
 
 There has been a surge of interest on the behaviour of the function $\reg(I^n)$, where $I$ is a homogeneous ideal in a polynomial
 ring over a field. A detailed survey of this topic can be found in Section~\ref{Survey on regularity} of
 Chapter~\ref{Review of Literature}. In \cite[Theorem~3.6]{Swa97}, I. Swanson proved that\index{Swanson's result} $\reg(I^n) \le kn$
 for all $n \ge 1$, where $k$ is some integer. Later, S. D. Cutkosky, J. Herzog and N. V. Trung \cite[Theorem~1.1]{CHT99}
 and\index{Cutkosky, Herzog and Trung}\index{Kodiyalam's result} V. Kodiyalam \cite[Theorem~5]{Kod00} independently showed that
 $\reg(I^n)$ can be expressed as a linear function of $n$ for all sufficiently large $n$.
 
 Recently, N. V. Trung and H.-J. Wang \cite[Theorem~3.2]{TW05} proved that if $A$ is a standard $\mathbb{N}$-graded ring, $I$ is a homogeneous ideal of $A$, and $M$ is a finitely generated $\mathbb{N}$-graded $A$-module,
 then $\reg(I^n M)$ is asymptotically a linear function of $n$.
 
 In this context, a natural question arises: What happens when we consider several ideals instead of just considering one ideal?
 More precisely, if $I_1,\ldots,I_t$ are homogeneous ideals of $A$, and $M$ is a finitely generated $\mathbb{N}$-graded $A$-module,
 then what will be the behaviour of $\reg(I_1^{n_1}\cdots I_t^{n_t} M)$ as a function of $(n_1,\ldots,n_t)$?

 Let $A = A_0[x_1,\ldots,x_d]$ be a standard $\mathbb{N}$-graded algebra over an Artinian local ring $(A_0,\mathfrak{m})$. In particular, $A$ can be the coordinate ring of a projective variety over a field with usual grading. Let $I_1,\ldots,I_t$ be homogeneous ideals of $A$, and let $M$ be a finitely generated $\mathbb{N}$-graded $A$-module. One of the main results of this
 dissertation is the following: There exist two integers $k_1$ and $k'_1$ such that
 \begin{center}
  (R1) \hfill $\reg\left(I_1^{n_1}\cdots I_t^{n_t} M\right) \le (n_1 + \cdots + n_t) k_1 + k'_1$\quad
  for all ~$n_1,\ldots,n_t \in \mathbb{N}$. \hfill \;
 \end{center}
 In Chapter~\ref{Chapter: Asymptotic linear bounds of Castelnuovo-Mumford regularity}, we prove this result in a quite general set-up.
 As a consequence, we also obtain the following: If $A_0$ is a field, then there exist two integers $k_2$ and $k'_2$ such that
 \begin{center}
  (R2) \hfill $\reg\left( \overline{I_1^{n_1}} \cdots \overline{I_t^{n_t}} M \right)
  \le (n_1 + \cdots + n_t) k_2 + k'_2$\quad for all ~$n_1,\ldots,n_t \in \mathbb{N}$, \hfill \;
 \end{center}
 where $\overline{I}$ denotes the integral closure of an ideal $I$ of $A$.
 
 The basic technique of the proof of our results is the use of $\mathbb{N}^{t+1}$-grading structures on
 \[
	  \bigoplus_{\underline{n} \in \mathbb{N}^t} \left( I_1^{n_1} \cdots I_t^{n_t} M \right) \quad\mbox{and}\quad
	  \bigoplus_{\underline{n} \in \mathbb{N}^t} \left( \overline{I_1^{n_1}} \cdots \overline{I_t^{n_t}} M \right).
 \]
 Let $R = A[I_1 T_1,\ldots, I_t T_t]$ be the Rees algebra of $I_1,\ldots,I_t$ over the graded ring $A$, and let
 $L = M[I_1 T_1,\ldots, I_t T_t]$ be the Rees module of $M$ with respect to the ideals $I_1,\ldots,I_t$. We give
 $\mathbb{N}^{t+1}$-grading structures on $R$ and $L$ by setting $(\underline{n},i)$th graded components of $R$ and $L$ as the $i$th
 graded components of the $\mathbb{N}$-graded $A$-modules $I_1^{n_1}\cdots I_t^{n_t} A$ and $I_1^{n_1}\cdots I_t^{n_t} M$
 respectively. Then clearly, $R$ is an $\mathbb{N}^{t+1}$-graded ring and $L$ is a finitely generated
 $\mathbb{N}^{t+1}$-graded $R$-module. Note that $R$ is not necessarily standard as an $\mathbb{N}^{t+1}$-graded ring. Also note that
 for every $\underline{n} \in \mathbb{N}^t$, we have
 \begin{align*}
	  R_{(\underline{n},\star)} &:= \bigoplus_{i \in \mathbb{N}} R_{(\underline{n},i)} = I_1^{n_1}\cdots I_t^{n_t} A \\
	  \mbox{and }\quad L_{(\underline{n},\star)} &:= \bigoplus_{i \in \mathbb{N}} L_{(\underline{n},i)} = I_1^{n_1}\cdots I_t^{n_t} M.
 \end{align*}
 Since $R = A[I_1 T_1,\ldots, I_t T_t] = R_{(\underline{0},\star)}[R_{(\underline{e}^1,\star)},\ldots,R_{(\underline{e}^t,\star)}]$,
 we may consider $R = \bigoplus_{\underline{n}\in \mathbb{N}^t}R_{(\underline{n},\star)}$ as a standard
 $\mathbb{N}^t$-graded ring.
 
 In a similar way as above, we can give an $\mathbb{N}^{t+1}$-graded $R$-module structure on
 \[
	  L := \bigoplus_{\underline{n} \in \mathbb{N}^t} \left( \overline{I_1^{n_1}} \cdots \overline{I_t^{n_t}} M \right).
 \]
 In this case also, $L$ is a finitely generated $\mathbb{N}^{t+1}$-graded $R$-module provided $A_0$ is a field; see
 Example~\ref{example 2 satisfying the hypothesis}. Keeping these two examples in mind, let us fix the following hypothesis for the
 rest of this section.
 
 \begin{hypothesis}\label{hypothesis: multigrading structure}
  Let
  \[
    R = \bigoplus_{(\underline{n},i)\in \mathbb{N}^{t+1}} R_{(\underline{n},i)}
  \]
  be an $\mathbb{N}^{t+1}$-graded ring, {\it which need not be standard}. Let
  \[
    L = \bigoplus_{(\underline{n},i)\in \mathbb{N}^{t+1}} L_{(\underline{n},i)}
  \]
  be a finitely generated $\mathbb{N}^{t+1}$-graded $R$-module. For each $\underline{n} \in \mathbb{N}^t$, we set
  \[ 
     R_{(\underline{n},\star)} := \bigoplus_{i \in \mathbb{N}} R_{(\underline{n},i)} \quad\mbox{ and }\quad
     L_{(\underline{n},\star)} := \bigoplus_{i \in \mathbb{N}} L_{(\underline{n},i)}.
  \]
  Also set $A := R_{(\underline{0},\star)}$. Suppose
  $R = \bigoplus_{\underline{n} \in \mathbb{N}^t} R_{(\underline{n},\star)}$ and $A = R_{(\underline{0},\star)}$
  are standard as $\mathbb{N}^t$-graded ring and $\mathbb{N}$-graded ring respectively. Assume that $A_0 = R_{(\underline{0},0)}$ is
  an Artinian local ring. Suppose $A = A_0[x_1,\ldots,x_d]$ for some $x_1,\ldots,x_d \in A_1$.
  Set $A_{+} := \langle x_1,\ldots,x_d \rangle$.
 \end{hypothesis}

 With the Hypothesis~\ref{hypothesis: multigrading structure}, we now give $\mathbb{N}^t$-grading structures on
 \[ 
   R = \bigoplus_{\underline{n} \in \mathbb{N}^t} R_{(\underline{n},\star)} \quad\mbox{and}\quad
   L = \bigoplus_{\underline{n} \in \mathbb{N}^t} L_{(\underline{n},\star)}
 \]
 in the obvious way, i.e., by setting $R_{(\underline{n},\star)}$ and $L_{(\underline{n},\star)}$ as the $\underline{n}$th graded
 components of $R$ and $L$ respectively. Then clearly, $R = \bigoplus_{\underline{n} \in \mathbb{N}^t} R_{(\underline{n},\star)}$ is
 a standard $\mathbb{N}^t$-graded ring and $L = \bigoplus_{\underline{n} \in \mathbb{N}^t} L_{(\underline{n},\star)}$ is a
 finitely generated $\mathbb{N}^t$-graded $R$-module. From now onwards, by $R$ and $L$, we mean $\mathbb{N}^t$-graded ring
  $\bigoplus_{\underline{n} \in \mathbb{N}^t} R_{(\underline{n},\star)}$ and $\mathbb{N}^t$-graded $R$-module
  $\bigoplus_{\underline{n} \in \mathbb{N}^t} L_{(\underline{n},\star)}$ (satisfying the
  Hypothesis~\ref{hypothesis: multigrading structure}) respectively. To prove our main results on regularity, it is now enough to
  show that there exist two integers $k$ and $k'$ such that
 \begin{center}
  (R3) \hfill $\reg\left( L_{(\underline{n},\star)} \right)
                   \le (n_1 + \cdots + n_t) k + k'$ \quad for all $~\underline{n} \in \mathbb{N}^t$. \hfill \;
 \end{center}
 
 We prove (R3) in several steps by using induction on an invariant which we introduce here. We call this invariant the saturated
 dimension of a multigraded module. Suppose $\mathscr{M} = \bigoplus_{\underline{n} \in \mathbb{N}^t} \mathscr{M}_{\underline{n}}$ is a
 finitely generated $\mathbb{N}^t$-graded module over a standard $\mathbb{N}^t$-graded ring
 $\mathscr{R} = \bigoplus_{\underline{n} \in \mathbb{N}^t} \mathscr{R}_{\underline{n}}$. Then we prove that there exists
 $\underline{v} \in \mathbb{N}^t$ such that
 \begin{align*}
  \ann_{\mathscr{R}_0}(\mathscr{M}_{\underline{n}}) &= \ann_{\mathscr{R}_0}(\mathscr{M}_{\underline{v}})
  \quad\mbox{for all } \underline{n} \ge \underline{v},\\
  \mbox{and hence }~ \dim_{\mathscr{R}_0}(\mathscr{M}_{\underline{n}}) &= \dim_{\mathscr{R}_0}(\mathscr{M}_{\underline{v}})
  \quad\mbox{for all } \underline{n} \ge \underline{v};
 \end{align*}
 see Lemma~\ref{lemma: annihilator stability for multigraded modules}. We call such a point $\underline{v} \in \mathbb{N}^t$ an
 annihilator stable point of $\mathscr{M}$. Then the saturated dimension of $\mathscr{M}$ is defined to be
 $s(\mathscr{M}) := \dim_{\mathscr{R}_0}(\mathscr{M}_{\underline{v}})$. We use induction on the saturated dimension $s(L)$ of
 $L= \bigoplus_{\underline{n} \in \mathbb{N}^t} L_{(\underline{n},\star)}$ in order to prove (R3).
 
 In the base case, i.e., in the case when $s(L) = 0$, we have $\dim_A\left( L_{(\underline{n},\star)} \right) = 0$ for all
 $\underline{n} \ge \underline{v}$, where $\underline{v}$ is an annihilator stable point of $L$. Then, in view of Grothendieck's Vanishing
 Theorem, we obtain that
 \begin{align*}
  & H_{A_{+}}^i( L_{(\underline{n},\star)} ) = 0 \quad\mbox{for all }~ i > 0 ~\mbox{ and }~\underline{n} \ge \underline{v},\\
  \mbox{which gives }~ & \reg(L_{(\underline{n},\star)}) = \max\left\{\mu : H_{A_{+}}^0(L_{(\underline{n},\star)})_{\mu} \neq 0\right\}
   \quad\mbox{for all }~\underline{n} \ge \underline{v}.
 \end{align*}
 In this situation, we show the linear boundedness result (in Theorem~\ref{theorem: bounds of regularity for saturated dimension 0}) by
 using the following fact: There exists a positive integer $k$ such that
 \[
  (A_{+})^k L_{(\underline{n},\star)} \cap H_{A_{+}}^0(L_{(\underline{n},\star)}) =0\quad\mbox{ for all }~\underline{n} \in\mathbb{N}^t;
 \]
 see Lemma~\ref{lemma: Artin-Rees}.

 The inductive step is shown in Theorem~\ref{theorem: bounds of regularity for saturated dimension positive} by using the following
 well-known result on regularity: For a finitely generated $\mathbb{N}$-graded $A$-module $N$, we have
 \[
  \reg(N) \le \max\{\reg(0 :_N x), \reg(N/xN) - l + 1\},
 \]
 where $x$ is a homogeneous element of $A$ with degree $l \ge 1$. In this step, we prove that there exist $\underline{u}\in\mathbb{N}^t$
 and an integer $k$ such that
 \[
  \reg(L_{(\underline{n},\star)}) < (n_1 + \cdots + n_t) k + k \quad\mbox{for all }~\underline{n} \ge \underline{u}.
 \]
 In particular, this shows that if $t = 1$, then there exist two integers $k$ and $k'$ such that
 \[
  \reg(L_{(n,\star)}) \le n k + k' \quad\mbox{for all }~n \in \mathbb{N}.
 \]
 
 Finally, in Theorem~\ref{theorem: bounds of regularity for multigraded module}, we prove (R3) by using induction on $t$.
 
\section{Characterizations of Regular Rings via Syzygy Modules}
 
 In the present section, $A$ always denotes a local ring with maximal ideal $\mathfrak{m}$ and residue field $k$.
 For every non-negative integer $n$, we denote the $n$th syzygy module of $k$ by $\Omega_n^A(k)$.
 
 One of the most influential results in commutative algebra is the result of Auslander, Buchsbaum and Serre: The local ring $A$
 is\index{Auslander-Buchsbaum-Serre's Criterion}\index{regular local ring} regular if and only if $\projdim_A(k)$ is finite.
 Note that $\projdim_A(k)$ is finite if and only if some syzygy module of $k$ is a
 free $A$-module. There are a number of characterizations of regular local rings in terms of syzygy
 modules of the residue field. In \cite[Corollary~1.3]{Dut89}, S. P. Dutta gave the following characterization of regular local rings. 
 
\begin{theorem}[Dutta]\label{theorem: Dutta}\index{Dutta's result}
 $A$ is regular if and only if $\Omega_n^A(k)$ has a non-zero free direct summand for some integer $n \ge 0$.
\end{theorem}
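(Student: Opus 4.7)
The forward implication is immediate from the Koszul complex: if $A$ is regular of Krull dimension $d$, then the Koszul complex on any regular system of parameters of $\mathfrak{m}$ is a minimal free resolution of $k$ whose top term is $A$ itself, so $\Omega_d^A(k) \cong A$ is a non-zero free module, furnishing the required summand (trivially).

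For the converse, assume $\Omega_n^A(k)$ has a non-zero free direct summand for some $n \ge 0$. Since $A$ is regular if and only if $\widehat{A}$ is, and since the minimal free resolution of $k$ over $A$ base-changes to a minimal free resolution over $\widehat{A}$, we may pass to the completion and assume $A$ is complete. The case $n = 0$ yields a surjection $k \twoheadrightarrow A$, which forces $\mathfrak{m} = 0$, so $A = k$ is a field (regular). Suppose $n \ge 1$, and write $\Omega_n^A(k) = A e \oplus M$ for some $e$ generating the free summand. Let $\mathbb{F}: \cdots \to F_n \xrightarrow{d_n} F_{n-1} \to \cdots \to F_0 \to 0$ be the minimal free resolution of $k$, so that $\Omega_n^A(k) = \Image(d_n)$. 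The composite
$$\phi \colon F_n \twoheadrightarrow \Omega_n^A(k) = A e \oplus M \twoheadrightarrow A$$
is a surjection; lifting $e$ to some $\tilde e \in F_n$ with $\phi(\tilde e) = 1$ shows that $A \tilde e$ is a free direct summand of $F_n$, and the restriction $d_n|_{A \tilde e}$ is injective because $\ann_A(e) = 0$ inside $\Omega_n^A(k)$.

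The key construction is the truncated complex
$$\mathbb{G} \colon 0 \longrightarrow A \tilde e \xrightarrow{d_n|_{A \tilde e}} F_{n-1} \to F_{n-2} \to \cdots \to F_0 \to 0,$$
a bounded complex of finitely generated free $A$-modules of length $n$. A direct calculation gives $H_0(\mathbb{G}) = k$, $H_i(\mathbb{G}) = 0$ for $1 \le i \le n-2$, $H_{n-1}(\mathbb{G}) \cong M$, and $H_n(\mathbb{G}) = 0$. In the favourable case $M = 0$ (no complementary summand), every homology of $\mathbb{G}$ has finite length, so the New Intersection Theorem of Peskine-Szpiro / Roberts forces $n \ge \dim A$; comparing the truncation $\mathbb{G}$ with $\mathbb{F}$ then gives $\projdim_A(k) \le n$, and Auslander-Buchsbaum-Serre concludes that $A$ is regular.

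The main technical obstacle is that in general the complementary summand $M$ need not have finite length, so the New Intersection Theorem does not apply directly to $\mathbb{G}$. The plan to remove this obstacle is twofold: one may iterate the splitting-off procedure by peeling free direct summands off all syzygies $\Omega_m^A(k)$ for $m \le n$ simultaneously, reducing to a truncated complex whose only non-trivial homology is $H_0 = k$; alternatively, the surjectivity of $\phi$ produces a class $\alpha \in \Ext_A^n(k, A)$ represented by a \emph{surjective} cocycle (in particular not killed modulo $\mathfrak{m}$ in a strong sense), which is precisely the input required by the Improved New Intersection Theorem of Evans-Griffith to conclude $n \ge \dim A$ without a finite-length hypothesis on the intermediate homology. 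Either route yields $\projdim_A(k) < \infty$, whence $A$ is regular by Auslander-Buchsbaum-Serre.
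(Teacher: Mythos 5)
Your forward direction is correct and is the standard Koszul-complex argument. The issue is the converse, which contains a genuine gap.

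First, a small slip: for the truncated complex $\mathbb{G}$ you claim $H_0(\mathbb{G})=k$, but this is only true for $n\ge 2$; when $n=1$ the complex is $0\to A\tilde e\to F_0\to 0$ and $H_0(\mathbb{G})=A/Ae$, which is not $k$ unless $Ae=\mathfrak{m}$. More seriously, neither of your two proposed routes around the non-finite-length homology $H_{n-1}(\mathbb{G})\cong M$ actually works. Route~1 (peel free summands off \emph{all} the syzygies $\Omega_m^A(k)$, $m\le n$) is not available: for $m<n$ the module $\Omega_m^A(k)$ has no reason to have a free direct summand, so there is nothing to peel. Route~2 rests on a misreading of the Improved New Intersection Theorem of Evans--Griffith: its improvement over the New Intersection Theorem is in relaxing the hypothesis on $H_0$ only (replacing ``$H_0$ has finite length'' by ``$H_0$ has a minimal generator annihilated by a power of $\mathfrak{m}$''); the requirement that $H_i$ have \emph{finite length} for $i\ge 1$ is retained. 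In your situation $M$ is a direct summand of $\Omega_n^A(k)\subseteq F_{n-1}$; when $A$ is, say, a complete local domain of positive dimension, $M$ is torsion-free, hence of positive dimension whenever nonzero, so the finite-length hypothesis on $H_{n-1}(\mathbb{G})$ fails. Thus the Improved New Intersection Theorem does not apply, and your argument does not close.

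The route taken in this thesis is quite different and considerably more elementary; it avoids intersection theorems entirely. Dutta's theorem is deduced (Remark~\ref{remark: Dutta and Takahashi's result}) from the general Theorem~\ref{theorem: RLR and surjection onto star module}: if a finite direct sum of syzygies of $k$ surjects onto a nonzero module satisfying a $(*)$-property, then $A$ is regular. A free direct summand of $\Omega_n^A(k)$ gives a surjection onto $A$, and $A$ is a semidualizing module, which is checked to be a $(*)$-property (Example~\ref{example: star property: semidualizing}). The proof of Theorem~\ref{theorem: RLR and surjection onto star module} is by induction on $\depth(A)$. The base case $\depth(A)=0$ uses only Lemma~\ref{lemma: socle, syzygy}, namely $\Soc(A)\subseteq\ann_A\left(\Omega_n^A(k)\right)$ for all $n$ when $A\ne k$; a surjection onto a faithful module would then force $\Soc(A)=0$, impossible at depth zero unless $A=k$. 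The inductive step chooses $x\in\mathfrak{m}\smallsetminus\mathfrak{m}^2$ regular and passes to $A/(x)$ via Nagata's decomposition $\overline{\Omega_n^A(k)}\cong\Omega_n^{\overline A}(k)\oplus\Omega_{n-1}^{\overline A}(k)$ (Proposition~\ref{proposition: Nagata}). You may wish to compare: the paper's argument needs no New Intersection Theorem in any form, works uniformly for all semidualizing targets (so recovers Takahashi's theorem as well), and is self-contained modulo Nagata's lemma.
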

 
 Later, R. Takahashi generalized Dutta's result by giving a characterization of regular local rings via the existence of a
 semidualizing direct summand of some syzygy module of the residue field. Let us recall the definition of a semidualizing module.
 
\begin{definition}[\cite{Gol84}]\label{definition: semidualizing module}
 A finitely generated $A$-module $M$ is said to be a {\it semidualizing module}\index{semidualizing module} if the following hold:
 \begin{enumerate}
  \item[(i)] The natural homomorphism $A \longrightarrow \Hom_A(M,M)$ is an isomorphism.
  \item[(ii)] $\Ext_A^i(M,M) = 0$ for all $i \ge 1$.
 \end{enumerate}
\end{definition}

 Note that $A$ itself is a semidualizing $A$-module. So the following theorem generalizes the above result of Dutta.
 
\begin{theorem}{\rm \cite[Theorem~4.3]{Tak06}}\label{theorem: Takahashi; regular; semidualizing}\index{Takahashi's result}
 $A$ is regular if and only if $\Omega_n^A(k)$ has a semidualizing direct summand for some integer $n \ge 0$.
\end{theorem}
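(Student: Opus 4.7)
The forward implication is immediate: if $A$ is regular of Krull dimension $d$, then the Koszul complex on a regular system of parameters is a minimal free resolution of $k$ of length $d$, so $\Omega_d^A(k) \cong A$ (and $\beta_d^A(k)=1$). Since $A$ is trivially a semidualizing module over itself, the whole of $\Omega_d^A(k)$ serves as the required semidualizing direct summand.

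For the converse, suppose $\Omega_n^A(k) \cong C \oplus D$ with $C$ semidualizing. My plan is to reduce to Dutta's theorem (Theorem~\ref{theorem: Dutta}) by proving that $C$ is isomorphic to $A$; then $A$ itself is a direct summand of $\Omega_n^A(k)$, and regularity follows. The case $n=0$ is handled directly: $C$ is a nonzero submodule of the simple module $k$, forcing $C=k$, and the defining isomorphism $A \cong \Hom_A(C,C)$ becomes $A \cong k$, so $A$ is a field.

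For $n \ge 1$, the central algebraic fact I would invoke is that \emph{a semidualizing $A$-module of finite projective dimension is isomorphic to $A$}. Indeed, the isomorphism $A \cong \Hom_A(C,C)$ combined with standard depth computations yields $\depth_A C = \depth A$, so by Auslander--Buchsbaum any finite projective dimension forces $\projdim_A C = 0$, i.e.\ $C$ is free; the identity $A \cong \End_A(C) \cong \mathrm{Mat}_r(A)$ with $r = \rank_A C$ then forces $r=1$. Hence the whole problem reduces to showing $\projdim_A C < \infty$. Since $\Omega_n^A(k)$ embeds into the free module $F_{n-1}$ of a minimal free resolution of $k$, so does $C$; combined with the dimension-shift isomorphism
\[
  \Ext^{n+i}_A(k, -) \;\cong\; \Ext^i_A(C, -) \,\oplus\, \Ext^i_A(D, -) \qquad (i \ge 1),
\]
and the semidualizing vanishing $\Ext^i_A(C,C) = 0$ for $i \ge 1$, I would aim to produce the vanishing $\Ext^i_A(C, k) = 0$ for $i \gg 0$, which is equivalent to $\projdim_A C < \infty$.

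The hard step is exactly this last one: the vanishing $\Ext^i_A(C,C)=0$ built into the semidualizing hypothesis concerns the ``wrong'' coefficients ($C$ rather than $k$). To bridge this, I expect one must exploit the non-degenerate pairing $A \cong \End_A(C)$, converting it via the embedding $C \hookrightarrow F_{n-1}$ into a finiteness statement for $C$ itself --- for instance by showing $C$ is self-orthogonal in a sufficiently large range against a suitably chosen auxiliary module constructed from the short exact sequence $0 \to C \to F_{n-1} \to F_{n-1}/C \to 0$, and then applying a Nakayama-type argument to descend the vanishing to the residue field $k$.
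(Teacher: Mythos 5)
Your forward direction is fine, and the $n=0$ case is handled correctly. However, for the converse your argument has a genuine gap precisely where you flag it: you never establish that $\projdim_A C < \infty$, you only outline a plan ("I would aim to produce...", "I expect one must exploit...", "applying a Nakayama-type argument"). The dimension-shift
\[
  \Ext^{n+i}_A(k, -) \cong \Ext^i_A(C, -) \oplus \Ext^i_A(D, -) \qquad (i \ge 1)
\]
with second argument $C$ only gives $\Ext^{n+i}_A(k,C) \cong \Ext^i_A(D,C)$ for $i\ge1$; nothing here connects the semidualizing vanishing $\Ext^{\ge1}_A(C,C)=0$ to the desired vanishing $\Ext^{\gg0}_A(C,k)=0$. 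In fact, a priori a direct summand of a syzygy module of $k$ over a non-regular ring would be expected to have \emph{infinite} projective dimension (e.g., $\Ext^i_A(k,k)\neq0$ for all $i$, so $\Ext^i_A(C,k)\oplus\Ext^i_A(D,k)\neq0$ eventually), so the claim you need is exactly the crux of the whole theorem. Your reduction "$\projdim_A C < \infty \Rightarrow C$ free $\Rightarrow C\cong A$ $\Rightarrow$ Dutta" is correct, but without the missing step the proof does not go through.

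It is also worth noting that the paper takes a completely different and more elementary route, and proves a stronger statement. It abstracts to a $(*)$-property satisfied by semidualizing modules (Definition~\ref{definition: star property}, Example~\ref{example: star property: semidualizing}), then proves Theorem~\ref{theorem: RLR and surjection onto star module} by induction on $\depth A$: the base case $\depth A = 0$ uses Lemma~\ref{lemma: socle, syzygy}, namely $\Soc(A) \subseteq \ann_A(\Omega_n^A(k))$, together with the fact that $\ann_A(L)=0$ for a semidualizing module to get a contradiction with $\Soc(A)\neq 0$; the inductive step factors out a regular element $x\in\mathfrak m\smallsetminus\mathfrak m^2$ and uses Nagata's splitting $\overline{\Omega_n^A(k)} \cong \Omega_n^{\overline A}(k) \oplus \Omega_{n-1}^{\overline A}(k)$ (Proposition~\ref{proposition: Nagata}). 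This avoids any analysis of $\projdim_A C$ entirely and yields the result for arbitrary surjections from finite direct sums of syzygy modules, not just direct summands. If you want to rescue your line of attack, you would effectively need to reproduce something like Takahashi's original argument for the finiteness of the $\mathrm{G}_C$-dimension of $k$, which is considerably more technical than what you have sketched.
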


 If $A$ is a Cohen-Macaulay local ring with canonical module $\omega$, then $\omega$ is a semidualizing $A$-module. Therefore, as a
 corollary of Theorem~\ref{theorem: Takahashi; regular; semidualizing}, Takahashi obtained the following:

\begin{corollary}{\rm \cite[Corollary~4.4]{Tak06}}\label{corollary: Takahashi; regular; canonical module}
 Let $A$ be a Cohen-Macaulay local ring with canonical module $\omega$. Then $A$ is regular if and only if $\Omega_n^A(k)$ has a
 direct summand isomorphic to $\omega$ for some integer $n \ge 0$.
\end{corollary}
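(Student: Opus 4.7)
The plan is to deduce this corollary directly from Theorem~\ref{theorem: Takahashi; regular; semidualizing}, using two standard facts: the canonical module of a Cohen-Macaulay local ring is semidualizing, and over a regular local ring the canonical module is isomorphic to the ring itself.

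For the backward implication, I would first verify that $\omega$ satisfies both conditions of Definition~\ref{definition: semidualizing module}, namely that the natural homothety map $A \longrightarrow \Hom_A(\omega,\omega)$ is an isomorphism and that $\Ext_A^i(\omega,\omega) = 0$ for all $i \ge 1$. These are classical properties of the canonical module in the Cohen-Macaulay setting (provable via local duality, or via the characterization of $\omega$ as a maximal Cohen-Macaulay module of finite injective dimension). With $\omega$ identified as a semidualizing module, the hypothesis that $\Omega_n^A(k)$ has $\omega$ as a direct summand is exactly the hypothesis of Theorem~\ref{theorem: Takahashi; regular; semidualizing}, and that theorem immediately yields that $A$ is regular.

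For the forward implication, suppose $A$ is regular. Then $A$ is Gorenstein, and for a Cohen-Macaulay local ring admitting a canonical module, being Gorenstein is equivalent to $\omega \cong A$. So it suffices to show that some $\Omega_n^A(k)$ has $A$ itself as a direct summand. This is precisely Dutta's Theorem~\ref{theorem: Dutta}; alternatively, one can argue by the Auslander--Buchsbaum--Serre criterion that $\projdim_A(k) < \infty$, whence some syzygy $\Omega_n^A(k)$ is free and therefore contains $A \cong \omega$ as a summand.

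I do not anticipate any substantial obstacle, since the corollary is a direct specialization of the semidualizing theorem. The only delicate point worth stressing is the identification $\omega \cong A$ for regular local rings (as opposed to merely being isomorphic up to shift, which is the phenomenon in the graded setting); this is what makes the forward implication cleanly reduce to the existence of a free summand of some syzygy.
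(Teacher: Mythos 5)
Your proof is correct and follows exactly the route the paper indicates: the paper derives the corollary from Theorem~\ref{theorem: Takahashi; regular; semidualizing} by noting that the canonical module of a Cohen--Macaulay local ring is semidualizing, which is precisely how you handle the ``if'' direction, while your ``only if'' direction via $\omega\cong A$ for Gorenstein rings and the freeness of some syzygy (from $\projdim_A(k)<\infty$ or Dutta's theorem) is the standard completion of the argument. No gaps.
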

 
 Now recall that the canonical module (if exists) over a Cohen-Macaulay local ring has finite injective dimension. Also it is
 well-known that $A$ is regular if and only if $k$ has finite injective dimension. So, in this theme, a natural question arises
 that ``if $\Omega_n^A(k)$ has a non-zero direct summand of finite injective dimension for some integer $n \ge 0$, then is the ring
 $A$ regular?''. In the present study, we show that this question has an affirmative answer
 (see Theorem~\ref{theorem: characterization of RLR, injdim}).
 
 Kaplansky conjectured\index{Kaplansky's conjecture} that if some power of the maximal ideal of $A$ is non-zero and of finite
 projective dimension, then $A$ is regular. In \cite[Theorem~1.1]{LV68}, G. Levin and W. V. Vasconcelos proved this conjecture.
 In fact, their result is even stronger:

\begin{theorem}[Levin and Vasconcelos]\index{Levin and Vasconcelos's Theorem}
 If $M$ is a finitely generated $A$-module such that $\mathfrak{m} M$ is non-zero and of finite projective dimension {\rm (}or of
 finite injective dimension{\rm )}, then $A$ is regular.
\end{theorem}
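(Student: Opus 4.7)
The plan is to exploit the short exact sequence
\[
 0 \longrightarrow \mathfrak{m}M \longrightarrow M \longrightarrow M/\mathfrak{m}M \longrightarrow 0.
\]
Since $\mathfrak{m}M \neq 0$ forces $M \neq 0$, Nakayama's lemma gives $M/\mathfrak{m}M \cong k^s$ for some integer $s \geq 1$. I would handle the finite projective dimension and finite injective dimension cases in parallel.

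In the projective dimension case, applying $\Tor_\bullet^A(-,k)$ to this sequence and using the vanishing $\Tor_i^A(\mathfrak{m}M, k) = 0$ for all $i > d := \projdim_A(\mathfrak{m}M)$, the long exact sequence collapses for $i \geq d + 2$ to give isomorphisms
\[
 \Tor_i^A(M, k) \;\cong\; \Tor_i^A(k, k)^s,
\]
induced by the quotient $M \twoheadrightarrow k^s$. Dually, the injective dimension case (with $d := \injdim_A(\mathfrak{m}M)$) yields via $\Ext_A^\bullet(k,-)$ the isomorphisms $\Ext_A^i(k,M) \cong \Ext_A^i(k,k)^s$ for all $i > d$. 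Since $A$ is regular if and only if $\projdim_A(k) < \infty$, equivalently $\Tor_i^A(k,k) = 0$ (respectively $\Ext_A^i(k,k) = 0$, using $\dim_k \Ext_A^i(k,k) = \beta_i^A(k)$) for $i \gg 0$, the task reduces to ruling out that these groups are non-zero for infinitely many $i$, or equivalently, to showing that $\projdim_A(M)$ (respectively $\injdim_A(M)$) is finite.

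To achieve the latter, I would lift a $k$-basis of $M/\mathfrak{m}M$ to minimal generators $m_1,\dots,m_s$ of $M$, forming a minimal free cover $A^s \twoheadrightarrow M$ with kernel $K = \Omega_1^A(M) \subseteq \mathfrak{m}A^s$. Restricting produces the companion short exact sequence
\[
 0 \longrightarrow K \longrightarrow \mathfrak{m}A^s \longrightarrow \mathfrak{m}M \longrightarrow 0,
\]
in which $\mathfrak{m}A^s$ is a direct sum of $s$ copies of $\mathfrak{m} = \Omega_1^A(k)$. The plan is to combine this sequence with the Auslander--Buchsbaum formula $\depth_A(\mathfrak{m}M) = \depth(A) - \projdim_A(\mathfrak{m}M)$ to constrain the syzygies of $M$ and force $\projdim_A(M) < \infty$, after which the Betti-number identification above closes the loop.

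The hard part, and the main obstacle, will be this final transfer: formal homological algebra alone does not promote finite projective dimension of $\mathfrak{m}M$ to finite projective dimension of $M$, since the extension by $k^s$ could in principle destroy finiteness. The argument must genuinely exploit the rigidity that $\mathfrak{m}M$ is the image of $\mathfrak{m}A^s$ under the minimal cover of $M$, not an abstract submodule. For the injective dimension variant there is a natural first step: invoke the Peskine--Szpiro--Roberts theorem (Bass's conjecture) to conclude that $A$ is Cohen--Macaulay, and then use the Bass formula $\injdim_A(\mathfrak{m}M) = \depth(A)$ to provide the extra rigidity needed for the parallel argument.
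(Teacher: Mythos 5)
The paper does not prove this theorem; it cites \cite{LV68} and moves on, so there is no internal argument to compare against. Evaluating the proposal on its own terms: the two short exact sequences $0 \to \mathfrak{m}M \to M \to k^s \to 0$ and $0 \to \Omega_1^A(M) \to \mathfrak{m}^s \to \mathfrak{m}M \to 0$ are the right objects, and the asymptotic identifications $\Tor_i^A(M,k) \cong \Tor_i^A(k,k)^s$ and $\Ext_A^i(k,M) \cong \Ext_A^i(k,k)^s$ for $i$ large are correctly derived. But the step you flag as the obstacle really is a genuine gap, and it is the theorem. The claimed ``reduction'' to showing $\projdim_A(M) < \infty$ is not a reduction: once the large-$i$ isomorphisms are in place, $\projdim_A(M) < \infty$, $\Tor_i^A(k,k) = 0$ for $i \gg 0$, and $A$ regular are all equivalent, so you have restated the conclusion rather than advanced toward it. The companion sequence plus Auslander--Buchsbaum do not close the gap either: applying $\Tor_{\bullet}^A(-,k)$ to $0 \to \Omega_1^A(M) \to \mathfrak{m}^s \to \mathfrak{m}M \to 0$ for $i > \projdim_A(\mathfrak{m}M)$ recovers exactly the same identity $\beta_i^A(M) = s\,\beta_i^A(k)$ you already had from the first sequence, and the formula $\projdim_A(\mathfrak{m}M) = \depth(A) - \depth(\mathfrak{m}M)$ by itself is perfectly compatible with $A$ being non-regular.

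What is missing is a structural ingredient special to $\mathfrak{m}M$, and supplying it is precisely the content of \cite{LV68}. A sanity check that such an ingredient must exist: when $\depth(A) = 0$, Auslander--Buchsbaum forces $\projdim_A(\mathfrak{m}M) = 0$, so $\mathfrak{m}M$ is free and hence faithful; but $0 \neq \Soc(A) \subseteq \ann_A(\mathfrak{m}) \subseteq \ann_A(\mathfrak{m}M)$, a contradiction, so the hypotheses already rule out $\depth(A) = 0$. The substance is then a reduction to this base case --- requiring an element simultaneously regular on $A$, $M$, and $\mathfrak{m}M$, which is not automatic since $\depth(M)$ may be $0$ --- or else a genuinely Tor-theoretic input, such as the fact that the inclusion $\mathfrak{m}M \hookrightarrow M$ induces the zero map on $\Tor_{\bullet}^A(-,k)$ in every degree, which splits the long exact sequence and turns your bookkeeping into an actual contradiction. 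Your proposal names neither, so it is incomplete. For the injective branch, invoking Bass's conjecture to obtain the Cohen--Macaulay property is a valid and helpful first move, but the analogous transfer problem from $\injdim_A(\mathfrak{m}M) < \infty$ to a contradiction remains open.
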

 
 In \cite[Proposition~7]{Mar96}, A. Martsinkovsky generalized Dutta's result in the following direction. He also showed that the
 above result of Levin and Vasconcelos is a special case of the following theorem:
 
\begin{theorem}[Martsinkovsky]\label{theorem: Martsinkovsky}\index{Martsinkovsky's result}
 If a finite direct sum of syzygy modules of $k$ maps onto a non-zero $A$-module of finite projective dimension,
 then $A$ is regular.
\end{theorem}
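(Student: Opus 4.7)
The argument will proceed by contradiction: assume $A$ is not regular, and aim to show that no finite direct sum of syzygies of $k$ can surject onto a non-zero module of finite projective dimension. The overall lever is Dutta's theorem (Theorem~\ref{theorem: Dutta}): since $A$ is not regular, no $\Omega_n^A(k)$ admits a non-zero free direct summand. So the strategy is to arrange, from the given surjection, that some $\Omega_m^A(k)$ acquires such a summand.

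First, I would pass to the $\mathfrak{m}$-adic completion. Since $\widehat{A}$ is regular iff $A$ is, $\widehat{A}$ is faithfully flat over $A$, completion preserves surjectivity and finite projective dimension, and minimal free resolutions over $A$ remain minimal when tensored with $\widehat{A}$ (so $\Omega_n^A(k)\otimes_A\widehat{A}\cong\Omega_n^{\widehat{A}}(k)$). Thus we may assume $A$ is complete and hence that the Krull--Schmidt decomposition holds for finitely generated $A$-modules.

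The main work is an induction on $d:=\projdim_A(M)$. In the base case $d=0$, the module $M$ is a non-zero free module, so the surjection $\pi\colon N:=\bigoplus_{i=1}^s\Omega_{n_i}^A(k)\twoheadrightarrow M$ splits and $N$ gains a non-zero free summand. By Krull--Schmidt this summand must lie inside a single $\Omega_{n_j}^A(k)$, contradicting Dutta's theorem. For $d\ge 1$, take a minimal free cover $p\colon F\twoheadrightarrow M$ with kernel $M':=\Omega^A(M)$, so $M'\subseteq\mathfrak{m}F$ and $\projdim_A(M')=d-1$. The key dichotomy is whether $\pi$ lifts along $p$. If a lift $\widetilde\pi\colon N\to F$ exists, then $p(\widetilde\pi(N))=M$ combined with $M'\subseteq\mathfrak{m}F$ gives $\widetilde\pi(N)+\mathfrak{m}F=F$, and Nakayama forces $\widetilde\pi(N)=F$; thus $N$ surjects onto the non-zero free module $F$, reducing to the base case. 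Otherwise, the obstruction class $\delta(\pi)$ is a non-zero element of
\[
\Ext_A^1(N,M')\;\cong\;\bigoplus_{i=1}^{s}\Ext_A^1\bigl(\Omega_{n_i}^A(k),M'\bigr)\;\cong\;\bigoplus_{i=1}^{s}\Ext_A^{n_i+1}(k,M').
\]
From this non-vanishing, combined with the pullback $P:=N\times_M F$ fitting into $0\to M'\to P\to N\to 0$ and $0\to\ker\pi\to P\to F\to 0$ (the latter splitting because $F$ is free, so $P\cong\ker\pi\oplus F$), I would extract a surjection onto $M'$ from a (possibly larger) finite direct sum of syzygies of $k$. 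The inductive hypothesis applied to this new surjection then forces $A$ to be regular, the desired contradiction.

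The main obstacle is precisely step (ii) of the inductive step: the pullback construction produces the sequence $0\to M'\to\ker\pi\oplus F\to N\to 0$, which naturally exhibits a surjection \emph{onto} $N$ rather than \emph{from} $N$. Converting this into a surjection \emph{from} a direct sum of syzygies of $k$ \emph{onto} $M'$ is the delicate point. The plan is to apply the horseshoe lemma to get the syzygy sequence $0\to\Omega^A(M')\to\Omega^A(\ker\pi)\to\Omega^A(N)\to 0$, where $\Omega^A(N)=\bigoplus_i\Omega_{n_i+1}^A(k)$ is once again a finite direct sum of syzygies of $k$; then, using the non-triviality of $\delta(\pi)$ and the minimality of the cover $p$, arrange that the resulting map onto $M'$ (built via a carefully chosen combination of components) is genuinely surjective. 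The crux is ensuring surjectivity---guaranteeing that the image is not a proper submodule of $M'$---which is where minimality of $F\twoheadrightarrow M$ and the non-vanishing of the extension class must be used decisively.
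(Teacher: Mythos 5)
This statement is not proved in the paper: it is cited from \cite[Proposition~7]{Mar96}, and Section~\ref{Survey on characterizations of rings via syzygy modules} observes that it follows from Avramov's stronger Theorem~\ref{theorem: Avramov, extremal complexity} (a non-zero quotient of a finite direct sum of syzygies of $k$ has complexity $\cx_A(k)$, and $\cx_A = 0$ iff the projective dimension is finite). The paper's own machinery, Theorem~\ref{theorem: RLR and surjection onto star module}, is deliberately not applied here: ``non-zero and of finite projective dimension'' is not readily a $(*)$-property in the sense of Definition~\ref{definition: star property}, because the $A$-regular element $x \in \mathfrak{m}\smallsetminus\mathfrak{m}^2$ used in that proof need not be regular on $M$, and then $\projdim_{A/(x)}(M/xM)$ can fail to be finite. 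So your proposal is an independent route and has to stand on its own; it is also quite different from Martsinkovsky's actual argument, which goes through a vanishing property of the Tate/stable cohomology transformation.

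Your reduction to the complete case, the base case $\projdim_A M = 0$ (split the surjection, invoke Krull--Schmidt and Dutta's Theorem~\ref{theorem: Dutta}), and case~(i) of the inductive step are all correct. But case~(ii) is a genuine gap, as you yourself flag. The pullback plus horseshoe lemma yield a sequence of the form $0 \to \Omega_1^A(M') \to \Omega_1^A(\ker\pi) \oplus (\mathrm{free}) \to \Omega_1^A(N) \to 0$, which places the direct sum of syzygies $\Omega_1^A(N)$ on the wrong side: it is the quotient, and there is no map from it, or from any direct sum of syzygies of $k$, onto $M'$ or $\Omega_1^A(M')$ in sight. The other natural candidate --- lift $\pi$ across minimal free covers $G_0 \twoheadrightarrow N$ and $F_0 \twoheadrightarrow M$ to get $\tilde\pi_0\colon G_0 \twoheadrightarrow F_0$ (surjective by minimality and Nakayama), and restrict to a map $\Omega_1^A(N) \to M'$ --- is not surjective in general: a snake-lemma chase on the two horizontal short exact sequences shows the cokernel of the restricted map is a quotient of $\ker\pi$, with no reason to vanish. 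The non-vanishing of the obstruction class $\delta(\pi)$ only says the pullback extension $0 \to M' \to P \to N \to 0$ fails to split; it does not produce a surjection from a sum of syzygies onto $M'$. The induction therefore does not close, and a different idea is required for this branch.
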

 
 In this direction, we prove the following result which considerably strengthens
 Theorem~\ref{theorem: Takahashi; regular; semidualizing}. The proof presented here is very simple and elementary.
 
\begin{theorem}[See Corollary~\ref{corollary: RLR and surjection onto semidualizing}]
                    \label{Theorem I: surjection onto semidualizing}
 If a finite direct sum of syzygy modules of $k$ maps onto a semidualizing $A$-module, then $A$ is regular.
\end{theorem}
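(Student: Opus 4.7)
The strategy is to reduce the statement to Takahashi's theorem (Theorem~\ref{theorem: Takahashi; regular; semidualizing}) by splitting the given surjection, thereby exhibiting $C$ as a direct summand of a single syzygy module of $k$. First I would pass to the completion $\widehat A$, which preserves regularity, the semidualizing property, and the syzygy modules of $k$, and makes the Krull--Schmidt theorem available for finitely generated modules.

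A semidualizing module over a local ring is automatically indecomposable: a non-trivial decomposition $C = C_1 \oplus C_2$ would supply non-trivial idempotents in $\Hom_A(C,C) \cong A$, impossible for a local ring. Consequently, once one shows that $C$ is a direct summand of $M := \bigoplus_{i=1}^{r} \Omega_{n_i}^A(k)$, Krull--Schmidt and the indecomposability of $C$ force $C$ to be (isomorphic to) a direct summand of some individual $\Omega_{n_i}^A(k)$, and Theorem~\ref{theorem: Takahashi; regular; semidualizing} concludes that $A$ is regular.

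Thus the heart of the matter is to split the surjection $\phi \colon M \twoheadrightarrow C$. Writing $K := \ker\phi$ and applying $\Hom_A(C, -)$ to $0 \to K \to M \to C \to 0$, the two defining identities of a semidualizing module, $\Hom_A(C,C) \cong A$ and $\Ext_A^1(C,C) = 0$, give the exact sequence
\[
 \Hom_A(C, M) \xrightarrow{\phi_*} A \xrightarrow{\delta} \Ext_A^1(C, K) \to \Ext_A^1(C, M) \to 0,
\]
and the splitting of $\phi$ is equivalent to the vanishing $\delta(1_A) = 0$, equivalently to producing a map $\psi \colon C \to M$ with $\phi \circ \psi = 1_C$.

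The main obstacle is establishing this splitting, and my planned lever is the minimality-induced containment $\Omega_n^A(k) \subseteq \mathfrak m F_{n-1}$, where $F_\bullet$ is the minimal free resolution of $k$. If each component $\phi_i \colon \Omega_{n_i}^A(k) \to C$ of $\phi$ extended to a map $\widetilde\phi_i \colon F_{n_i - 1} \to C$, then
\[
 \phi_i(\Omega_{n_i}^A(k)) = \widetilde\phi_i(\Omega_{n_i}^A(k)) \subseteq \widetilde\phi_i(\mathfrak m F_{n_i - 1}) \subseteq \mathfrak m C,
\]
and summing over $i$ would force $C = \mathfrak m C$, hence $C = 0$ by Nakayama---a contradiction since $\Hom_A(C,C) \cong A \ne 0$. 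The extension obstruction of $\phi_i$ lives in $\Ext_A^{n_i}(k, C)$, and the hard step will be to combine these obstructions with the stronger vanishing $\Ext_A^{\ge 1}(C, C) = 0$ (together possibly with a Schanuel-type analysis of $K$) to actually force $\delta(1_A) = 0$ rather than merely rule out the ``everything extends'' scenario.
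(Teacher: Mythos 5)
Your proposal has a genuine gap at precisely the step you flag as ``the hard step,'' and I do not think that gap can be filled in the way you hope. You aim to reduce the statement to Takahashi's theorem (Theorem~\ref{theorem: Takahashi; regular; semidualizing}) by showing that the surjection $\phi\colon M=\bigoplus_{i}\Omega_{n_i}^A(k)\twoheadrightarrow C$ must split, so that Krull--Schmidt and the indecomposability of $C$ (which you argue correctly, since nontrivial idempotents in $\Hom_A(C,C)\cong A$ are impossible for $A$ local) place $C$ as a summand of a single $\Omega_{n_i}^A(k)$. But the splitting $\delta(1_C)=0$ in $\Ext_A^1(C,K)$ is controlled by the kernel $K=\ker\phi$, about which the semidualizing hypothesis says nothing; the vanishing $\Ext_A^{\ge 1}(C,C)=0$ only controls extensions by copies of $C$, not by $K$. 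Your minimality observation --- that if every component $\phi_i$ extended to $F_{n_i-1}$ then $C=\mathfrak m C=0$ --- does show that at least one $\phi_i$ fails to extend, but this is a far weaker conclusion than $\phi$ splitting, and I see no way to bridge the gap using the tools you list. In fact, the present theorem is explicitly presented in the text as strictly stronger than Takahashi's, so any reduction to Takahashi would have to carry the whole weight of the strengthening in the splitting step.

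The paper's own proof is entirely different and avoids the splitting question altogether. It proceeds by induction on $\depth(A)$. In the base case $\depth(A)=0$, the key input is Lemma~\ref{lemma: socle, syzygy}: minimality of the free resolution of $k$ gives
\[
\Soc(A)\subseteq\ann_A\bigl(\Omega_n^A(k)\bigr)\quad\text{for all }n\ge 0\ (\text{when }\mathfrak m\neq 0),
\]
hence $\Soc(A)\subseteq\ann_A\bigl(\bigoplus_n(\Omega_n^A(k))^{j_n}\bigr)\subseteq\ann_A(C)=0$ (the last equality because $\Hom_A(C,C)\cong A$), contradicting $\Soc(A)\neq 0$; so $A=k$ is regular. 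For $\depth(A)\ge 1$ one picks an $A$-regular element $x\in\mathfrak m\smallsetminus\mathfrak m^2$, reduces modulo $x$, uses that $C/xC$ is semidualizing over $A/(x)$, and applies Nagata's decomposition $\overline{\Omega_n^A(k)}\cong\Omega_n^{\overline A}(k)\oplus\Omega_{n-1}^{\overline A}(k)$ (Proposition~\ref{proposition: Nagata}) to place the hypothesis over $A/(x)$, which has smaller depth. You should abandon the splitting strategy and look instead for an argument that exploits the annihilator of a syzygy module, which the socle lemma controls cleanly.
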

 
 Furthermore, we raise the following question:
 
\begin{question}\label{question: surjection onto finite injdim}
 If a finite direct sum of syzygy modules of $k$ maps onto a non-zero $A$-module of finite injective dimension,
 then is the ring $A$ regular?
\end{question}
 
 In Chapter~\ref{Chapter: Characterizations of Regular Local Rings via Syzygy Modules},
 we give a partial answer to this question as follows:
 
\begin{theorem}[See Corollary~\ref{corollary: RLR and surjection onto finite injdim}]
	       \label{Theorem II: surjection onto finite inj dim}
 If a finite direct sum of syzygy modules of $k$ maps onto a non-zero maximal Cohen-Macaulay $A$-module $L$ of finite
 injective dimension, then $A$ is regular.
\end{theorem}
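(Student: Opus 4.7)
The plan is to reduce to the complete case, show that $L$ must be isomorphic to a direct sum of copies of the canonical module $\omega$, and then invoke Theorem~\ref{Theorem I: surjection onto semidualizing} applied to a single such summand. Write $\varphi \colon \bigoplus_{j=1}^{m} \Omega_{n_j}^A(k) \twoheadrightarrow L$ for the given surjection. First I would observe that the existence of a non-zero finitely generated $A$-module of finite injective dimension forces $A$ to be Cohen-Macaulay; this is Bass's conjecture, settled in full generality by P. Roberts via local Chern characters. Since $L$ is MCM, the Bass formula then gives $\injdim_A L = \depth A = \dim A$. Next, I would pass to the $\mathfrak{m}$-adic completion $\widehat{A}$: completion is faithfully flat, $k = A/\mathfrak{m} = \widehat{A}/\widehat{\mathfrak{m}}$, and hence $\Omega_n^A(k) \otimes_A \widehat{A} \cong \Omega_n^{\widehat{A}}(k)$, while $\widehat{L} := L \otimes_A \widehat{A}$ is a non-zero MCM $\widehat{A}$-module of finite injective dimension (both depth and injective dimension of finitely generated modules are preserved, as $\Ext$ and $\mathrm{Tor}$ commute with flat base change). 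The base-changed surjection has exactly the required form, and since $A$ is regular if and only if $\widehat{A}$ is regular, we may assume $A$ is complete.

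Under this assumption, $A$ is a quotient of a complete regular local ring by Cohen's structure theorem, and therefore admits a canonical module $\omega$. The classical structure result of Sharp (see also Foxby) asserts that over a Cohen-Macaulay local ring admitting a canonical module, every MCM module of finite injective dimension is isomorphic to a direct sum of copies of $\omega$; over the complete (hence Henselian) ring $A$ the Krull-Schmidt theorem makes this unambiguous. Applied to $L \neq 0$, this yields $L \cong \omega^r$ for some $r \ge 1$. Composing $\varphi$ with the projection $\omega^r \twoheadrightarrow \omega$ onto any factor produces a surjection
\[
 \bigoplus_{j=1}^{m} \Omega_{n_j}^A(k) \twoheadrightarrow \omega.
\]
Since $\omega$ is a semidualizing $A$-module, Corollary~\ref{corollary: RLR and surjection onto semidualizing} (equivalently, Theorem~\ref{Theorem I: surjection onto semidualizing}) then delivers that $A$ is regular.

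The main obstacle in this plan is the invocation of the Sharp-Foxby structure theorem, which requires the canonical module to exist; this is precisely why the detour through the completion $\widehat{A}$ is unavoidable. The remaining ingredients---Bass's conjecture, the behaviour of depth/injective dimension/syzygies under flat base change, and the reduction to a surjection onto a semidualizing module---are either classical or already handled in Theorem~\ref{Theorem I: surjection onto semidualizing}, so once the identification $L \cong \omega^r$ is in hand the conclusion is essentially immediate.
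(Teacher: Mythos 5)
Your proof is correct, but it follows a genuinely different route from the paper. The paper's argument proceeds via the general $(*)$-property machinery (Theorem~\ref{theorem: RLR and surjection onto star module}): one verifies that `non-zero MCM modules of finite injective dimension over CM local rings' is a $(*)$-property, and then runs an induction on $\depth A$. The base case $\depth A = 0$ is handled by observing that $\Soc(A) \subseteq \ann_A\bigl(\Omega_n^A(k)\bigr)$, while condition (ii) of the $(*)$-property forces $\ann_A(L) = 0$ because in the Artinian case $L$ is a direct sum of copies of the injective hull $E$ of $k$, which is faithful. The inductive step reduces modulo an $A$-regular element $x \in \mathfrak{m}\smallsetminus\mathfrak{m}^2$ using Nagata's decomposition $\overline{\Omega_n^A(k)} \cong \Omega_n^{\overline{A}}(k) \oplus \Omega_{n-1}^{\overline{A}}(k)$.

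Your argument instead pulls in the Sharp--Foxby structure theorem in full strength: after completing (so that $\omega$ exists) you identify $L \cong \omega^r$ and thereby \emph{reduce} Theorem~\ref{Theorem II: surjection onto finite inj dim} to Theorem~\ref{Theorem I: surjection onto semidualizing}, since $\omega$ is semidualizing. This is a legitimate and conceptually clean reduction, and it shows explicitly that the finite-injective-dimension case is subsumed by the semidualizing case once one accepts Sharp's classification. It is interesting to note that the paper's verification of $(*)$-property (ii) is precisely a depth-zero instance of the same classification: over an Artinian local ring the canonical module \emph{is} the injective hull $E$ of $k$, and $L \cong E^r$ is Sharp's theorem in dimension zero. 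So the two proofs really differ in whether they apply the structure theorem globally (your approach, which then needs the completion to guarantee existence of $\omega$) or only at the base of an induction (the paper's approach, which stays self-contained and avoids citing Sharp). Two small remarks: the Krull--Schmidt observation is superfluous, since Sharp's theorem already gives $L \cong \omega^r$ directly rather than merely classifying the indecomposable summands; and your flat base-change bookkeeping (preservation of depth, finiteness of injective dimension, syzygies of $k$, and faithful flatness to keep $\widehat{L} \neq 0$) is all correct.
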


 If $A$ is a Cohen-Macaulay local ring with canonical module $\omega$, then one can take $L = \omega$ in the above theorem.
 
 Theorems~\ref{Theorem I: surjection onto semidualizing} and \ref{Theorem II: surjection onto finite inj dim} are deduced as
 consequences of a more general result appeared below. Let $\mathcal{P}$ be a property of modules over local rings. We say that
 $\mathcal{P}$ is a $(*)$-property if $\mathcal{P}$ satisfies the following:\index{$(*)$-property of modules}
 \begin{enumerate}[(i)]
  \item An $A$-module $M$ satisfies $\mathcal{P}$ implies that the $A/(x)$-module $M/xM$ satisfies $\mathcal{P}$, where $x \in A$ is an
        $A$-regular element.
  \item An $A$-module $M$ satisfies $\mathcal{P}$ and $\depth(A) = 0$ together imply that $\ann_A(M) = 0$.
 \end{enumerate}
 It can be noticed that the properties `semidualizing modules' and `non-zero maximal Cohen-Macaulay modules of finite injective
 dimension' are two examples of $(*)$-properties; see Examples~\ref{example: star property: semidualizing}
 and \ref{example: star property: MCM, finite injdim}. Therefore we obtain Theorems~\ref{Theorem I: surjection onto semidualizing}
 and \ref{Theorem II: surjection onto finite inj dim} as corollaries of the following general result. Here we denote a finite
 collection of non-negative integers by $\Lambda$.
 
\begin{theorem}[See Theorem~\ref{theorem: RLR and surjection onto star module}]
		\label{theorem: Intro: RLR and surjection onto star module}
 Assume that $\mathcal{P}$ is a $(*)$-property. Let
 \[
   f : \bigoplus_{n \in \Lambda} {\left( \Omega_n^A(k) \right)}^{j_n} \longrightarrow L
   \quad\quad \mbox{$(j_n \ge 1$ for each $n \in \Lambda)$}
 \]
 be a surjective $A$-module homomorphism, where $L$ {\rm(}$\neq 0${\rm)} satisfies $\mathcal{P}$. Then $A$ is regular.
\end{theorem}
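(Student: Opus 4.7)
The plan is to argue by induction on $d := \depth(A)$.

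\textbf{Base case} ($d=0$). If $A$ is a field there is nothing to prove. Otherwise $\Soc(A) = (0 :_A \mathfrak{m}) \subseteq \mathfrak{m}$ is non-zero, so I pick $0 \ne a \in \Soc(A)$. By $(*)$-property~(ii) applied to $L$, $\ann_A(L) = 0$, and surjectivity of $f$ gives
\[
  \bigcap_{n \in \Lambda} \ann_A(\Omega_n^A(k)) \;=\; \ann_A\Bigl(\bigoplus_{n \in \Lambda}(\Omega_n^A(k))^{j_n}\Bigr) \;\subseteq\; \ann_A(L) \;=\; 0.
\]
However, $a$ kills $k = \Omega_0^A(k)$ (since $a \in \mathfrak{m}$), and for $n \ge 1$ it kills $\Omega_n^A(k)$ because $\Omega_n^A(k) \subseteq \mathfrak{m} F_{n-1}$ in a minimal free resolution $F_\bullet$ of $k$ while $a\mathfrak{m} = 0$. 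So $a$ lies in the intersection above, contradicting $a \ne 0$. Hence $A$ must be a field.

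\textbf{Inductive step} ($d \ge 1$). Standard prime avoidance lets me choose an $A$-regular element $x \in \mathfrak{m} \setminus \mathfrak{m}^2$; set $A' := A/xA$, so that $A'$ has residue field $k$, $\depth(A') = d-1$, and -- because $x \notin \mathfrak{m}^2$ -- regularity of $A'$ forces regularity of $A$. Applying $-\otimes_A A'$ to $f$ produces a surjection
\[
  \bar f \colon \bigoplus_{n \in \Lambda} \bigl(\Omega_n^A(k) \otimes_A A'\bigr)^{j_n} \twoheadrightarrow L/xL.
\]
By Nakayama $L/xL \ne 0$, and by $(*)$-property~(i), $L/xL$ satisfies $\mathcal{P}$ as an $A'$-module.

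The crux is the classical syzygy change-of-rings identity: for $x \in \mathfrak{m}$ an $A$-regular element,
\[
  \Omega_n^A(k) \otimes_A A' \;\cong\; \Omega_n^{A'}(k) \oplus \Omega_{n-1}^{A'}(k) \quad (n \ge 1), \qquad \Omega_0^A(k) \otimes_A A' = k.
\]
I would establish this by tensoring the short exact sequence $0 \to \Omega_n^A(k) \to F_{n-1} \to \Omega_{n-1}^A(k) \to 0$ arising from a minimal $A$-free resolution of $k$ with $A'$; for $n \ge 2$, $\Omega_{n-1}^A(k) \hookrightarrow F_{n-2}$ is a submodule of a free module so $\Tor_1^A(A', \Omega_{n-1}^A(k)) = 0$, exhibiting $\Omega_n^A(k)\otimes A'$ as a first $A'$-syzygy of $\Omega_{n-1}^A(k)\otimes A'$; induction on $n$ (with $\Tor_1^A(A',k) \cong k$ supplying the case $n=1$) then yields the displayed splitting. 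Substituting this into $\bar f$ realises its source as a finite direct sum of $A'$-syzygies of $k$, so the induction hypothesis applied to $A'$ gives that $A'$, and hence $A$, is regular. The main obstacle is producing a clean proof of the syzygy identity; once that is in place, the rest of the argument is a direct pairing of the two $(*)$-property clauses with Nakayama's lemma and elementary depth reduction.
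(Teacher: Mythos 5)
Your proof is correct and follows essentially the same route as the paper: induction on $\depth(A)$, with a socle argument paired with $(*)$-property (ii) in the depth-zero base case, and reduction modulo a regular $x \in \mathfrak{m}\smallsetminus\mathfrak{m}^2$ combined with the Nagata syzygy decomposition $\Omega_n^A(k)\otimes_A A/(x) \cong \Omega_n^{A/(x)}(k)\oplus\Omega_{n-1}^{A/(x)}(k)$ and $(*)$-property (i) in the inductive step. The paper simply cites this syzygy identity (Proposition~\ref{proposition: Nagata}, from \cite[Corollary~5.3]{Tak06}) rather than reproving it, and packages the socle containment as Lemma~\ref{lemma: socle, syzygy}, but the underlying argument is identical to yours.
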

 
 We establish a relationship between the socle of the ring and the annihilator of the syzygy modules: If $A \neq k$
 {\rm (}i.e., if $\mathfrak{m} \neq 0${\rm )}, then
 \[
  \Soc(A) \subseteq \ann_A\left( \Omega_n^A(k) \right) \quad \mbox{for all } n \ge 0;
 \]
 see Lemma~\ref{lemma: socle, syzygy}. We use this fact in order to prove
 Theorem~\ref{theorem: Intro: RLR and surjection onto star module} when $\depth(A) = 0$. Considering this as the base case, the
 general case follows from an inductive argument; see the proof of Theorem~\ref{theorem: RLR and surjection onto star module}.
 
 We obtain one new characterization of regular local rings. It follows from Dutta's result (Theorem~\ref{theorem: Dutta})
 that $A$ is regular if and only if some syzygy module of $k$ has a non-zero direct summand of finite projective dimension.
 We prove the following counterpart for injective dimension.
 
\begin{theorem}[See Theorem~\ref{theorem: characterization of RLR, injdim}]
 $A$ is regular if and only if some syzygy module of $k$ has a non-zero direct summand of finite injective dimension.
\end{theorem}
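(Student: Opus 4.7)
The plan is to prove the two implications separately, with the reverse direction by induction on $d = \dim A$.

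For the easy direction ($\Rightarrow$), if $A$ is regular, then $\projdim_A k < \infty$ by Auslander-Buchsbaum-Serre, and the identity $\Ext^i_A(k,k) \cong k^{\beta_i^A(k)}$ together with the vanishing of $\Ext^{>\injdim_A k}_A(-,k)$ forces $\projdim_A k \le \injdim_A k$, so $\injdim_A k < \infty$; then $\Omega_0^A(k) = k$ has $L = k$ as a non-zero direct summand of finite injective dimension. For the reverse direction, let $L$ be a non-zero direct summand of $\Omega_n^A(k)$ with $\injdim_A L < \infty$. The case $n = 0$ forces $L = k$ by simplicity, reducing to the observation just made; assume $n \ge 1$. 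By the Bass conjecture (Peskine-Szpiro-Roberts), $A$ is Cohen-Macaulay, Bass's formula gives $\injdim_A L = d$, and the depth lemma applied iteratively to the minimal resolution of $k$ yields $\depth_A L \ge \depth_A \Omega_n^A(k) \ge \min(n,d) \ge 1$.

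The base case $d = 0$ is what I expect to be the main conceptual obstacle. Here $A$ is Artinian, $L$ is injective by Bass's formula, and hence contains the injective hull $E_A(k)$ as a direct summand. Since the minimality of the resolution forces $\Omega_n^A(k) \subseteq \mathfrak{m} F_{n-1}$, the module $E_A(k)$ is a direct summand of the free module $F_{n-1}$, hence projective, hence free, and a length count using $\lambda(E_A(k)) = \lambda(A)$ forces $E_A(k) \cong A$. The resulting embedding $A \cong E_A(k) \hookrightarrow \mathfrak{m} F_{n-1}$ sends $1$ to some $y$ with $\ann_A(y) = 0$; yet every element of $\mathfrak{m} F_{n-1}$ is killed by $\Soc(A)$, so $\Soc(A) \subseteq \ann_A(y) = 0$, which can only happen when $A$ is a field.

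For the inductive step $d \ge 1$, I would use prime avoidance to choose $x \in \mathfrak{m} \setminus \mathfrak{m}^2$ that is regular on $A$, $L$, and $\Omega_n^A(k)$; this is possible because none of the primes in $\Ass(A)\cup \Ass(L)\cup\Ass(\Omega_n^A(k))$ equals $\mathfrak{m}$ (by the depth estimate) and $\mathfrak{m}^2 \ne \mathfrak{m}$. Setting $\bar A = A/xA$ and $\bar L = L/xL$, one has $\bar L \ne 0$ (Nakayama) with $\injdim_{\bar A} \bar L = d - 1$. The standard change-of-rings identification
\[
  \Omega_n^A(k) / x\Omega_n^A(k) \;\cong\; \Omega_n^{\bar A}(k) \,\oplus\, \Omega_{n-1}^{\bar A}(k),
\]
valid because $x \in \mathfrak{m} \setminus \mathfrak{m}^2$ is $A$-regular, makes $\bar L$ a non-zero direct summand of the right-hand side. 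Applying Krull-Schmidt (after passing to $\widehat{A}$ if necessary), $\bar L$ must contribute a non-zero direct summand of either $\Omega_n^{\bar A}(k)$ or $\Omega_{n-1}^{\bar A}(k)$ of finite $\bar A$-injective dimension; the induction hypothesis then gives $\bar A$ regular, and since $x \in \mathfrak{m} \setminus \mathfrak{m}^2$ is $A$-regular, $A$ itself is regular, completing the argument.
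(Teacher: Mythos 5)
Your proof is correct and follows the same overall strategy as the paper's: reduce to a complete Cohen--Macaulay local ring (Bass conjecture), induct on $\dim A$, and in the inductive step pass to $\overline{A}=A/(x)$ for a suitable $x\in\mathfrak{m}\smallsetminus\mathfrak{m}^2$, use Nagata's decomposition $\overline{\Omega_n^A(k)}\cong\Omega_n^{\overline A}(k)\oplus\Omega_{n-1}^{\overline A}(k)$, and invoke Krull--Schmidt. The only real divergence is the base case $d=0$: the paper routes it through its $(*)$-property machinery (Corollary~\ref{corollary: RLR and surjection onto finite injdim}, resting on Lemma~\ref{lemma: socle, syzygy}), while you argue directly that $E_A(k)$ splits off from $\Omega_n^A(k)\subseteq\mathfrak{m}F_{n-1}$, is therefore free, and the faithful image of $1$ inside $\mathfrak{m}F_{n-1}$ contradicts $\Soc(A)\ne 0$. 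Both arguments hinge on the same socle observation; your version is self-contained but takes a small detour through $E_A(k)\cong A$ that is not strictly needed, since $\ann_A E_A(k)=0$ already holds for any Artinian local ring and gives the contradiction immediately. Two minor points worth tightening: the justification in your forward direction is written a bit circularly (that $\injdim_A k<\infty$ when $A$ is regular is standard and should just be cited), and in the inductive step the prime avoidance over $\Ass(A)\cup\Ass(L)\cup\Ass(\Omega_n^A(k))$ is more than you need --- once $x\in\mathfrak{m}\smallsetminus\mathfrak{m}^2$ is $A$-regular, it is automatically regular on $\Omega_n^A(k)$ (a submodule of a free module for $n\ge 1$) and hence on its direct summand $L$, which is exactly how the paper proceeds.
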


 Moreover, this result has a dual companion which says that $A$ is regular if and only if some cosyzygy module of $k$ has a non-zero
 finitely generated direct summand of finite projective dimension; see Corollary~\ref{corollary: dual result, cosyzygy}.
 
 Till now we have considered surjective homomorphisms from a finite direct sum of syzygy modules of $k$ onto a `special module'.
 One may ask ``what happens if we consider injective homomorphisms from a `special module' to a finite direct sum of syzygy modules
 of $k$?". More precisely, if
 \[
  f : L \longrightarrow \bigoplus_{n \in \Lambda} {\left( \Omega_n^A(k) \right)}^{j_n}
 \]
 is an injective $A$-module homomorphism, where $L$ is non-zero and of finite projective dimension (or of finite injective dimension),
 then is the ring $A$ regular? We give an example which shows that $A$ is not necessarily a regular local ring in this situation;
 see Example~\ref{example: counter, injective map}.

\newpage
\thispagestyle{empty}
\cleardoublepage

\chapter{Review of Literature}\label{Review of Literature}
 
 The literature survey in this chapter concentrates on the following subjects: (1) asymptotic prime divisors related to derived
 functors Ext and Tor, (2) Castelnuovo-Mumford regularity of powers of ideals, and (3) characterizations of regular local rings via
 syzygy modules of the residue field.

\section{Asymptotic Prime Divisors Related to Derived Functors}\label{Survey on asymptotic prime divisors}
 
 Throughout this section, let $A$ be a ring. Let $I$ be an ideal of $A$ and $M$ be a finitely generated $A$-module.
 M. Brodmann \cite{Bro79} proved that the set of associated prime ideals $\Ass_A(M/I^n M)$ is independent of $n$ for all sufficiently
 large $n$\index{Brodmann's result}. He deduced this result by proving that $\Ass_A(I^n M/I^{n+1} M)$ is independent of $n$ for all
 large $n$.
 
 Thereafter, in \cite[Theorem~1]{MS93}, L. Melkersson and P. Schenzel generalized Brodmann's result by showing that for every
 fixed $i \ge 0$, the sets\index{Melkersson and Schenzel's result}
 \[
  \Ass_A\left( \Tor_i^A(M, I^n/I^{n+1}) \right) \quad\mbox{and}\quad \Ass_A\left( \Tor_i^A(M, A/I^n) \right)
 \]
 are independent of $n$ for all sufficiently large $n$. By a similar argument, one obtains that for a given $i \ge 0$, the set
 \[
  \Ass_A\left( \Ext_A^i(M, I^n/I^{n+1}) \right)
 \]
 is independent of $n$ for all large $n$. Later, D. Katz and E. West proved the above results in a more general way
 \cite[Corollary~3.5]{KW04}; if $N$ is a finitely generated $A$-module, then for every fixed $i \ge 0$,
 the sets\index{Katz and West's result}
 \[
  \Ass_A\left( \Tor_i^A(M, N/I^n N) \right) \quad\mbox{and}\quad \Ass_A\left( \Ext_A^i(M, N/I^n N) \right)
 \]
 are stable for all sufficiently large $n$. So, in particular, for every fixed $i \ge 0$, the sets
 \[
  \bigcup_{n\ge 1}\Ass_A\left(\Tor_i^A(M,N/I^n N)\right) \quad\mbox{and}\quad \bigcup_{n\ge 1}\Ass_A\left(\Ext_A^i(M,N/I^n N)\right)
 \]
 are finite. However, for a given $i \ge 0$, the set
 \[
  \bigcup_{n \ge 1} \Ass_A\left( \Ext_A^i(A/I^n, M) \right)
 \]
 need not be finite, which follows from the fact that the set of associated prime ideals of the $i$th
 {\it local cohomology module}\index{local cohomology module}
 \[
  H_I^i(M) \cong \varinjlim_{n \in \mathbb{N}} \Ext_A^i(A/I^n, M)
 \]
 need not be finite, due to an example of A. K. Singh \cite[Section~4]{Sin00}\index{Singh's example}.
 
 Recently, T. J. Puthenpurakal \cite[Theorem~5.1]{Put13} proved that if $A$ is a local complete intersection ring and
 $\mathcal{N} = \bigoplus_{n \ge 0} N_n$ is a finitely generated graded module over the Rees ring $\mathscr{R}(I)$, then
 \[
  \bigcup_{n \ge 0} \bigcup_{i \ge 0} \Ass_A \left( \Ext_A^i(M, N_n) \right)
 \]
 is a finite set. Moreover, he proved that there exist $n_0, i_0 \ge 0$ such that
 \begin{align*}
  \Ass_A\left(\Ext_A^{2i}(M,N_n)\right) &= \Ass_A\left(\Ext_A^{2 i_0}(M,N_{n_0})\right), \\
  \Ass_A\left(\Ext_A^{2i+1}(M,N_n)\right) &= \Ass_A\left(\Ext_A^{2 i_0 + 1}(M,N_{n_0})\right)
 \end{align*}
 for all $n \ge n_0$ and $i \ge i_0$. In particular, $\mathcal{N}$ can be taken as
 \[
  \bigoplus_{n \ge 0} (I^n N) \quad\mbox{ or }\quad \bigoplus_{n \ge 0} (I^n N/I^{n+1} N).
 \]
 He showed these results by proving the finite generation of a family of Ext-modules: If $A = Q/{\bf f}$, where $Q$ is a local
 ring and ${\bf f} = f_1,\ldots,f_c$ is a $Q$-regular sequence, and if $\projdim_Q(M)$ is finite, then 
 \[
  \mathscr{E}(\mathcal{N}) := \bigoplus_{n \ge 0} \bigoplus_{i \ge 0} \Ext_A^i(M,N_n)
 \]
 is a finitely generated bigraded $\mathscr{S} = \mathscr{R}(I)[t_1,\ldots,t_c]$-module, where $t_j$ are the cohomology operators
 over $A$.
 
 In the present study, we prove that if $A$ is a local complete intersection ring, then the set of associate primes
 \[
  \bigcup_{n \ge 1} \bigcup_{i \ge 0} \Ass_A \left( \Ext_A^i(M, N/I^n N) \right)
 \]
 is finite. Moreover, there are non-negative integers $n_0$ and $i_0$ such that for all $n \ge n_0$ and $i \ge i_0$, the set
 $\Ass_A\left( \Ext_A^i(M, N/I^n N) \right)$ depends only on whether $i$ is even or odd; see
 Chapter~\ref{Chapter: Asymptotic Prime Divisors over Complete Intersection Rings}.
 
\section{Castelnuovo-Mumford Regularity of Powers of Ideals}\label{Survey on regularity}

 Let $I$ be a homogeneous ideal of a polynomial ring $S = K[X_1,\ldots,X_d]$ over a field $K$ with usual grading.
 In \cite[Proposition~1]{BEL91}, A. Bertram, L. Ein and R. Lazarsfeld have initiated the study of the Castelnuovo-Mumford regularity
 of $I^n$ as a function of $n$ by proving that if $I$ is the defining ideal of a smooth complex projective variety, then
 $\reg(I^n)$ is bounded by a linear function of $n$\index{Bertram, Ein and Lazarsfeld's result}.

 Thereafter, A. V. Geramita, A. Gimigliano and Y. Pitteloud \cite[Theorem~1.1]{GGP95} and K. A. Chandler \cite[Theorem~1]{Cha97}
 proved that\index{Geramita, Gimigliano and Pitteloud}\index{Chandler's result}
 \[
  \mbox{if $~\dim(S/I) \le 1$, then $~\reg(I^n) \le n \cdot \reg(I)~$ for all $n \ge 1$.}
 \]
 This result does not hold true for higher dimension. A first counter example was given by Terai in characteristic different from $2$.
 Later, B. Sturmfels \cite[Section~1]{Stu00}\index{Sturmfels's example} exhibited a monomial ideal $J$ with $8$ generators for which
 \[
  \reg(J^2) > 2 \reg(J)
 \]
 in any characteristic. However, in arbitrary dimension, I. Swanson (\cite[Theorem~3.6]{Swa97}) first proved that for a homogeneous
 ideal $I$, there exists an integer $k$ such that\index{Swanson's result}
 \[
  \reg(I^n) \le k n ~\mbox{ for all } n \ge 1.
 \]
 
 Later, S. D. Cutkosky, J. Herzog and N. V. Trung \cite[Theorem~1.1]{CHT99} and V. Kodiyalam \cite[Theorem~5]{Kod00}
 independently\index{Cutkosky, Herzog and Trung}\index{Kodiyalam's result} proved that $\reg(I^n)$ can be expressed as a linear function of $n$ for all sufficiently large $n$. Recently,
 in \cite[Theorem~3.2]{TW05}, N. V. Trung and H.-J. Wang proved this result in a more general way. Let $A$ be a standard
 graded ring, and let $M$ be a finitely generated graded $A$-module. Then they showed that for a homogeneous ideal $I$ of $A$, there exists
 an integer $e \ge \varepsilon(M)$ such that\index{Trung and Wang's result}
 \[
  \reg(I^n M) = \rho_M(I) \cdot n + e \quad\mbox{for all } n \gg 1,
 \]
 where the invariants $\rho_M(I)$ and $\varepsilon(M)$ are defined as follows. The number $\varepsilon(M)$ denotes the smallest degree
 of the (non-zero) homogeneous elements of $M$. A homogeneous ideal $J \subseteq I$ is said to be an $M$-reduction of $I$ if
 $I^{n+1} M = J I^n M$ for some $n \ge 0$\index{reduction ideal with respect to a module}. Define
 \[
	  d(J) := \min\{ d : J \mbox{ can be generated by homogeneous elements of degree } \le d \}.
 \]
 The invariant $\rho_M(I)$ is defined to be the number
 \[
	  \rho_M(I) := \min\{ d(J) : J \mbox{ is an $M$-reduction of } I \}.
 \]
 
 In the present study, we deal with several ideals instead of just considering one ideal. Let $A = A_0[x_1,\ldots,x_d]$ be a standard graded algebra over an Artinian local ring $A_0$. Let $I_1,\ldots,I_t$ be homogeneous ideals of $A$, and $M$ be a finitely generated graded $A$-module. In Chapter~\ref{Chapter: Asymptotic linear bounds of Castelnuovo-Mumford regularity}, we prove that there
 exist two integers $k$ and $k'$ such that
 \[
	  \reg(I_1^{n_1}\cdots I_t^{n_t} M) \le (n_1 + \cdots + n_t) k + k' \quad \mbox{for all } n_1,\ldots,n_t \in \mathbb{N}.
 \]
 
\section{Characterizations of Regular Rings via Syzygy Modules}\label{Survey on characterizations of rings via syzygy modules}
 
 Throughout this section, let $A$ denote a local ring with maximal ideal $\mathfrak{m}$ and residue field $k$.
 Let $\Omega_n^A(k)$ be the $n$th syzygy module of $k$. In \cite[Corollary~1.3]{Dut89}, S. P. Dutta gave the following
 characterization of regular local rings. 
 
\begin{theorem}[Dutta]\label{theorem: RoL: Dutta}\index{Dutta's result}
 $A$ is regular if and only if $\Omega_n^A(k)$ has a non-zero free direct summand for some integer $n \ge 0$.
\end{theorem}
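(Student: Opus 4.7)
The direction $(\Rightarrow)$ is immediate from Auslander--Buchsbaum--Serre. If $A$ is regular of Krull dimension $d$ and $x_1,\ldots,x_d$ is a minimal system of generators of $\mathfrak{m}$, the Koszul complex $K_\bullet(x_1,\ldots,x_d;A)$ is the minimal free resolution of $k$, of length exactly $d$, with top term $F_d \cong A$. By exactness the differential $F_d \to F_{d-1}$ is injective, so
\[
  \Omega_d^A(k) \;=\; \Ker(F_{d-1} \to F_{d-2}) \;=\; \Image(F_d \to F_{d-1}) \;\cong\; A
\]
is itself a non-zero free $A$-module. (For $d=0$, $A = k$ and $\Omega_0^A(k)=k=A$ is already free.)

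For the converse, assume $\Omega_n^A(k) \cong A \oplus L$ for some $n \ge 0$. When $n=0$ we have $k \cong A \oplus L$, which forces $\mathfrak{m} A = 0$ and so $A = k$ is a field, hence regular. For $n \ge 1$, fix a minimal free resolution $\cdots \to F_n \to F_{n-1} \to \cdots \to F_0 \to k \to 0$ and let $\sigma: A \hookrightarrow \Omega_n^A(k)$ and $\rho: \Omega_n^A(k) \twoheadrightarrow A$ denote the canonical inclusion and projection, satisfying $\rho\sigma = 1_A$. Minimality of the resolution forces $\Omega_n^A(k) \subseteq \mathfrak{m} F_{n-1}$; in particular $\sigma(1) \in \mathfrak{m} F_{n-1}$. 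Therefore $\rho$ cannot extend to any $\widetilde{\rho}: F_{n-1} \to A$, for any such extension would give $1 = \rho(\sigma(1)) = \widetilde{\rho}(\sigma(1)) \in \mathfrak{m}$. Consequently $\rho$ represents a non-zero class in
\[
  \Hom_A(\Omega_n^A(k), A)\big/\Image\!\big(\Hom_A(F_{n-1}, A) \to \Hom_A(\Omega_n^A(k), A)\big) \;\cong\; \Ext_A^1(\Omega_{n-1}^A(k), A) \;\cong\; \Ext_A^n(k, A).
\]

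The main obstacle is to promote this single non-vanishing $\Ext$ class to the finiteness of $\projdim_A(k)$ (after which Auslander--Buchsbaum--Serre delivers regularity). The conceptually cleanest route is to recognize that the statement is a special instance of Martsinkovsky's theorem (Theorem~\ref{theorem: Martsinkovsky}): the surjection $\Omega_n^A(k) \twoheadrightarrow A$ is precisely a surjection from a finite direct sum of syzygy modules of $k$ (in fact, a single syzygy module) onto a non-zero $A$-module of finite projective dimension, namely $A$ itself with $\projdim_A(A)=0$. A substantial generalization of this mechanism is established in Chapter~\ref{Chapter: Characterizations of Regular Local Rings via Syzygy Modules}. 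Alternatively, Dutta's original argument in \cite{Dut89} propagates the free summand through the resolution to conclude $\projdim_A(k) < \infty$ directly, and the regularity of $A$ follows.
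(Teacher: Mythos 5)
Your forward direction via the Koszul complex is correct and more explicit than what the paper records. For the converse, the reduction is sound: a non-zero free summand yields a surjection $\Omega_n^A(k) \twoheadrightarrow A$, and the theorem then falls out of a stronger statement. You route primarily through Martsinkovsky's Theorem~\ref{theorem: Martsinkovsky}, using $\projdim_A(A)=0$; the paper, in Remark~\ref{remark: Dutta and Takahashi's result}, instead derives Dutta from its own Corollary~\ref{corollary: RLR and surjection onto semidualizing}, using that $A$ is a semidualizing $A$-module. Both reductions are valid and parallel, but they differ in provenance: Martsinkovsky's theorem is only cited in this thesis, while Corollary~\ref{corollary: RLR and surjection onto semidualizing} is proved from scratch via the $(*)$-property machinery of Theorem~\ref{theorem: RLR and surjection onto star module}, whose key inputs are the elementary inclusion $\Soc(A) \subseteq \ann_A\left(\Omega_n^A(k)\right)$ of Lemma~\ref{lemma: socle, syzygy} together with Nagata's descent formula (Proposition~\ref{proposition: Nagata}). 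Your $\Ext$-class detour is a correct observation (the splitting lifts to a non-zero class in $\Ext_A^n(k,A)$), but as you yourself note it does not advance the argument, since a single non-vanishing Ext class against $A$ carries no regularity information in general; it can safely be omitted.
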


 Later, R. Takahashi generalized Dutta's result by giving a characterization of regular local rings in terms of the existence of a
 semidualizing direct summand of some syzygy module of the residue field. Note that $A$ itself is a semidualizing $A$-module; see
 Definition~\ref{definition: semidualizing module}. So the following theorem generalizes the above result of Dutta.

\begin{theorem}{\rm \cite[Theorem~4.3]{Tak06}}\index{Takahashi's result}
 $A$ is regular if and only if $\Omega_n^A(k)$ has a semidualizing direct summand for some integer $n \ge 0$.
\end{theorem}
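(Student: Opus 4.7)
My plan is to prove the two directions separately, with the nontrivial implication reducing immediately to Theorem~\ref{Theorem I: surjection onto semidualizing} stated in the introduction.

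First I would handle the forward direction by assuming $A$ is regular of Krull dimension $d$. By the Auslander--Buchsbaum--Serre characterization, $\projdim_A(k) = d$, and the minimal free resolution of $k$ can be realized as the Koszul complex on a regular system of parameters. Reading off the last module of this resolution yields $\Omega_d^A(k) \cong A$. Since $A$ is trivially a semidualizing $A$-module---the homothety map $A \to \Hom_A(A,A)$ is an isomorphism and $\Ext_A^i(A,A) = 0$ for all $i \ge 1$---this already exhibits a semidualizing direct summand (namely $\Omega_d^A(k)$ itself, taken as the whole module).

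For the backward direction, I would begin by writing $\Omega_n^A(k) = C \oplus N$ as $A$-modules, where $C$ is semidualizing. The canonical projection $\pi : \Omega_n^A(k) \twoheadrightarrow C$ onto the first summand is a surjective $A$-homomorphism from a syzygy module of $k$ (viewed trivially as a one-summand direct sum of syzygies with $\Lambda = \{n\}$ and $j_n = 1$) onto the semidualizing module $C$. Applying Theorem~\ref{Theorem I: surjection onto semidualizing} to $\pi$ then yields that $A$ is regular.

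The real difficulty has been outsourced: once Theorem~\ref{Theorem I: surjection onto semidualizing} is in hand, the forward direction is bookkeeping with the Koszul resolution, and the backward direction is simply the observation that a direct summand is the image of a split surjection. The genuine obstacle is hidden inside Theorem~\ref{Theorem I: surjection onto semidualizing}, which is substantially stronger---it permits finite direct sums of syzygies and does not require the surjection to split---and whose proof is the content of Chapter~\ref{Chapter: Characterizations of Regular Local Rings via Syzygy Modules} via the $(*)$-property formalism and the socle containment $\Soc(A) \subseteq \ann_A(\Omega_n^A(k))$ used in depth zero.
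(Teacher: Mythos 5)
Your proposal is correct and matches the paper's intent exactly: the paper records in Remark~\ref{remark: Dutta and Takahashi's result} that Takahashi's theorem is recovered from Corollary~\ref{corollary: RLR and surjection onto semidualizing} (which is Theorem~\ref{Theorem I: surjection onto semidualizing}), and your two directions — the Koszul-resolution observation $\Omega_d^A(k) \cong A$ for the easy implication, and the split projection onto the semidualizing summand feeding into the corollary for the hard implication — are precisely the bookkeeping that remark leaves implicit.
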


 If $A$ is a Cohen-Macaulay local ring with canonical module $\omega$, then $\omega$ is a semidualizing $A$-module.
 Therefore, as an application of the above theorem, Takahashi obtained the following:

\begin{corollary}{\rm \cite[Corollary~4.4]{Tak06}}
 Let $A$ be a Cohen-Macaulay local ring with canonical module $\omega$. Then $A$ is regular if and only if $\Omega_n^A(k)$ has a
 direct summand isomorphic to $\omega$ for some integer $n \ge 0$.
\end{corollary}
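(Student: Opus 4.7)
The plan is to deduce this corollary directly from Theorem~\ref{theorem: Takahashi; regular; semidualizing} (Takahashi's characterization via semidualizing summands), after observing that under the Cohen--Macaulay hypothesis the canonical module $\omega$ is itself a semidualizing module.

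First I would record the standard fact that for a Cohen--Macaulay local ring $A$ admitting a canonical module $\omega$, the module $\omega$ satisfies the two conditions of Definition~\ref{definition: semidualizing module}: the homothety map $A \longrightarrow \Hom_A(\omega,\omega)$ is an isomorphism, and $\Ext_A^i(\omega,\omega) = 0$ for all $i \ge 1$. Granting this, $\omega$ is a semidualizing $A$-module.

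For the ``if'' direction, suppose $\Omega_n^A(k)$ has a direct summand isomorphic to $\omega$ for some $n \ge 0$. Since $\omega$ is semidualizing, Theorem~\ref{theorem: Takahashi; regular; semidualizing} immediately yields that $A$ is regular.

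For the ``only if'' direction, suppose $A$ is regular with $d := \dim(A)$. Then by the Auslander--Buchsbaum--Serre criterion $\projdim_A(k) = d$, so the minimal free resolution of $k$ terminates in degree $d$; in particular $\Omega_d^A(k)$ is a non-zero free $A$-module, and hence has $A$ as a direct summand. On the other hand, over a regular local ring the canonical module satisfies $\omega \cong A$, so $\Omega_d^A(k)$ has a direct summand isomorphic to $\omega$, as required. There is no real obstacle here; the only point that requires care is the semidualizing property of $\omega$, but this is classical.
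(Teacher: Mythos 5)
Your proof is correct and follows essentially the same approach the paper indicates: it observes that $\omega$ is a semidualizing $A$-module and deduces the ``if'' direction from Theorem~\ref{theorem: Takahashi; regular; semidualizing}, while the ``only if'' direction is handled by noting that over a regular local ring $\omega \cong A$ is a direct summand of the free module $\Omega_d^A(k)$. You supply the detail the paper leaves implicit (Theorem~\ref{theorem: Takahashi; regular; semidualizing} alone would only give \emph{some} semidualizing summand, not specifically $\omega$, in the forward direction), which is a worthwhile clarification but not a different route.
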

 
 Kaplansky conjectured\index{Kaplansky's conjecture} that if some power of the maximal ideal of $A$ is non-zero and of finite
 projective dimension, then $A$ is regular. In \cite[Theorem~1.1]{LV68}, G. Levin and W. V. Vasconcelos proved this conjecture.
 In fact, their result is even stronger:

\begin{theorem}[Levin and Vasconcelos]\label{theorem: Levin and Vasconcelos}\index{Levin and Vasconcelos's Theorem}
 If $M$ is a finitely generated $A$-module such that $\mathfrak{m} M$ is non-zero and of finite projective dimension {\rm (}or of
 finite injective dimension{\rm )}, then $A$ is regular.
\end{theorem}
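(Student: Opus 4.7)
My strategy is induction on $\depth(A)$, anchored by the short exact sequence
$$
0 \to \mathfrak{m}M \to M \to M/\mathfrak{m}M \to 0,
$$
in which $M/\mathfrak{m}M \cong k^r$ with $r \geq 1$ (since $\mathfrak{m}M \neq 0$ forces $M \neq 0$). For the base case $\depth(A) = 0$ under the projective-dimension hypothesis, the Auslander--Buchsbaum formula forces $\projdim_A(\mathfrak{m}M) = 0$, so $\mathfrak{m}M$ is free and nonzero. Any basis element $m \in \mathfrak{m}M$ has $\ann_A(m) = 0$, yet writing $m = \sum a_i \mu_i$ with $a_i \in \mathfrak{m}$ shows that every $x \in \Soc(A) = (0 :_A \mathfrak{m})$ satisfies $xm = \sum (xa_i)\mu_i = 0$, so $x = 0$. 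Hence $\Soc(A) = 0$, which combined with $\depth(A) = 0$ forces $\mathfrak{m} = 0$ and $A = k$. Under the injective-dimension hypothesis at $\depth(A) = 0$, the Bass formula (unconditional via the New Intersection Theorem) together with the Bass conjecture forces $A$ to be Artinian and $\mathfrak{m}M$ to be injective; the defining sequence then splits to give $M \cong \mathfrak{m}M \oplus k^r$, whence $\mathfrak{m}M = \mathfrak{m}\cdot M = \mathfrak{m}^2 M$, and Nakayama yields $\mathfrak{m}M = 0$ --- contradicting the hypothesis unless $A = k$.

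For the inductive step $\depth(A) \geq 1$, prime avoidance provides $y \in \mathfrak{m} \setminus \mathfrak{m}^2$ that is a nonzerodivisor on $A$, $M$, and $\mathfrak{m}M$ (handling $\mathfrak{m} \in \Ass(M) \cup \Ass(\mathfrak{m}M)$ by first splitting off socles). Setting $\bar A = A/yA$ and $\bar M = M/yM$, the argument divides into two subcases. If $\mathfrak{m}M = yM$, then the isomorphism $\cdot y\colon M \to yM$ gives $\projdim_A(M) = \projdim_A(\mathfrak{m}M) < \infty$; the original sequence now has both outer terms of finite projective dimension, so $\projdim_A(k^r) < \infty$ and $A$ is regular directly. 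Otherwise $\bar{\mathfrak{m}}\bar M = \mathfrak{m}M/yM \neq 0$, and standard change-of-rings gives $\projdim_{\bar A}(\mathfrak{m}M/y\mathfrak{m}M) = \projdim_A(\mathfrak{m}M) < \infty$. The short exact sequence
$$
0 \to yM/y\mathfrak{m}M \to \mathfrak{m}M/y\mathfrak{m}M \to \mathfrak{m}M/yM \to 0,
$$
in which $yM/y\mathfrak{m}M \cong M/\mathfrak{m}M = k^r$ via multiplication by $y$, bridges $\mathfrak{m}M/y\mathfrak{m}M$ and $\bar{\mathfrak{m}}\bar M$. After applying the inductive hypothesis over $\bar A$ (which has $\depth(\bar A) = \depth(A) - 1$), $\bar A$ is regular and $A$ is regular too, $y$ being a regular parameter.

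The main obstacle is completing the generic subcase: deducing $\projdim_{\bar A}(\bar{\mathfrak{m}}\bar M) < \infty$ from $\projdim_{\bar A}(\mathfrak{m}M/y\mathfrak{m}M) < \infty$ through the bridging sequence requires $\projdim_{\bar A}(k^r) < \infty$, which is equivalent to the very regularity of $\bar A$ we are trying to conclude. Breaking this apparent circularity is the heart of the argument; I expect the resolution to involve either a strengthened simultaneous induction on the pair $\bigl(\depth(A),\projdim_A(\mathfrak{m}M)\bigr)$, or an alternative application of the inductive hypothesis to $\mathfrak{m}M/y\mathfrak{m}M$ recognized as $\bar{\mathfrak{m}}$ times a suitable auxiliary $\bar A$-module constructed from $M$. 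The injective-dimension version of the inductive step is handled analogously once the dual change-of-rings is in place.
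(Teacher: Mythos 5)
Your proposal has a genuine gap which you yourself identify, and I don't see a way to close it along the lines you sketch. The circularity you run into is not an artifact of a poor choice but is the real obstruction: the bridging sequence
$$
0 \longrightarrow yM/y\mathfrak{m}M \longrightarrow \mathfrak{m}M/y\mathfrak{m}M \longrightarrow \mathfrak{m}M/yM \longrightarrow 0
$$
has $yM/y\mathfrak{m}M \cong k^r$, and passing finite projective dimension from $\mathfrak{m}M/y\mathfrak{m}M$ to $\bar{\mathfrak{m}}\bar{M} = \mathfrak{m}M/yM$ over $\bar{A}$ requires exactly the regularity of $\bar{A}$ you are trying to prove. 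Neither suggested fix resolves this: a lexicographic induction on $(\depth A, \projdim_A \mathfrak{m}M)$ still needs control of $\projdim_{\bar{A}}(\bar{\mathfrak{m}}\bar{M})$, which the bridging sequence cannot supply; and the auxiliary module you seek with $\bar{\mathfrak{m}}N = \mathfrak{m}M/y\mathfrak{m}M$ is, after unwinding, just $\bar{M}$ again, which puts you back where you started. There is also a prior unresolved issue in the choice of $y$: you need it $A$-regular, $M$-regular, $\mathfrak{m}M$-regular, and outside $\mathfrak{m}^2$ all at once. When $\mathfrak{m} \in \Ass_A(M)$ or $\mathfrak{m} \in \Ass_A(\mathfrak{m}M)$ no such element exists, and ``splitting off socles'' is not a rigorous maneuver — the socle is rarely a direct summand, and removing it changes $\mathfrak{m}M$ in an uncontrolled way. (A small further quibble: your base case $\depth A = 0$ derives $A = k$, which contradicts $\mathfrak{m}M \neq 0$; the hypotheses are simply unsatisfiable there, so the base case is vacuous rather than producing a regular ring.)

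For comparison: the paper does not prove this theorem at all — it is cited from Levin and Vasconcelos \cite{LV68} — and the route the paper gestures at is structurally different from yours. Since $\mathfrak{m} = \Omega_1^A(k)$, the module $\mathfrak{m}M$ is a homomorphic image of $\mathfrak{m}^{\mu_A(M)} = \bigl(\Omega_1^A(k)\bigr)^{\mu_A(M)}$, so the statement is a special case of Martsinkovsky's theorem (Theorem~\ref{theorem: RoL: Martsinkovsky}) or of Avramov's complexity result (Theorem~\ref{theorem: Avramov, extremal complexity}), both quoted in the paper. Note that the paper's own general machinery (Theorem~\ref{theorem: RLR and surjection onto star module}) does \emph{not} directly recover the theorem either, because ``non-zero module of finite projective dimension'' is not a $(*)$-property: condition~(i) of Definition~\ref{definition: star property} fails when the chosen $A$-regular element is a zerodivisor on the module, which is exactly the difficulty you hit. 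The stronger theorems of Martsinkovsky and Avramov are designed precisely to bypass that obstruction, and I do not see how to avoid invoking one of them (or reconstructing Levin--Vasconcelos's original argument) from the framework you set up.
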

 
 Later, A. Martsinkovsky generalized Dutta's result in the following direction. He also showed that the
 above result of Levin and Vasconcelos is a special case of the following theorem. We denote a finite
 collection of non-negative integers by $\Lambda$.
 
\begin{theorem}{\rm \cite[Proposition~7]{Mar96}}\label{theorem: RoL: Martsinkovsky}\index{Martsinkovsky's result}
 Let
 \[
  f : \bigoplus_{n \in \Lambda} \left( \Omega_n^A(k) \right)^{j_n} \longrightarrow L \quad \mbox{$(j_n \ge 1$ for each $n \in \Lambda)$}
 \]
 be a surjective $A$-module homomorphism, where $L$ is non-zero and of finite projective dimension. Then $A$ is regular.
\end{theorem}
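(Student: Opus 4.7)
The plan is to prove Martsinkovsky's theorem by induction on $\depth(A)$, in close analogy with the strategy that the present dissertation uses to establish Theorem~\ref{theorem: RLR and surjection onto star module}. Write $D:=\bigoplus_{n\in\Lambda}\bigl(\Omega_n^A(k)\bigr)^{j_n}$ for the source of $f$ and $p:=\projdim_A(L)$.

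The base case $\depth(A)=0$ is the heart of the argument. If $\mathfrak{m}=0$, then $A=k$ is trivially regular. Otherwise $\Soc(A)\ne 0$, and one invokes the elementary fact that $\Soc(A)\cdot\Omega_n^A(k)=0$ for every $n\ge 0$: when $n\ge 1$ this is because minimality of the free resolution $F_\bullet\to k$ forces $\Omega_n^A(k)\subseteq\mathfrak{m}F_{n-1}$, and any element of $\Soc(A)$ annihilates $\mathfrak{m}$; for $n=0$ one has $\Omega_0^A(k)=k$ so $\Soc(A)\cdot k\subseteq\mathfrak{m}\cdot k=0$. Surjectivity of $f$ therefore gives $\Soc(A)\subseteq\ann_A(L)$. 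But the Auslander-Buchsbaum formula together with $\depth(A)=0$ and $p<\infty$ forces $p=0$, so $L$ is a non-zero free $A$-module, whence $\ann_A(L)=0$, contradicting $0\ne\Soc(A)\subseteq\ann_A(L)$.

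For the inductive step, assume $\depth(A)\ge 1$ and choose $x\in\mathfrak{m}\setminus\mathfrak{m}^2$ that is $A$-regular; this is possible by prime avoidance inside $\mathfrak{m}/\mathfrak{m}^2$, since $\mathfrak{m}\notin\Ass(A)$. Set $\bar A:=A/(x)$, so $\depth(\bar A)=\depth(A)-1$. Provided $\depth_A(L)\ge 1$, one can arrange $x$ to be $L$-regular as well, and then $L/xL$ is a non-zero $\bar A$-module with $\projdim_{\bar A}(L/xL)=p<\infty$. Tensoring $f$ with $\bar A$ and using the classical decomposition
\[
   \bigl(\Omega_n^A k\bigr)\otimes_A \bar A \;\cong\; \Omega_n^{\bar A}(k)\,\oplus\,\Omega_{n-1}^{\bar A}(k),
\]
valid whenever $x\in\mathfrak{m}\setminus\mathfrak{m}^2$ is $A$-regular, exhibits $L/xL$ as a surjective image of a finite direct sum of syzygies of $k$ over $\bar A$. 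The induction hypothesis then declares $\bar A$ regular, and regularity of $A$ follows because $x$ is $A$-regular.

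The principal obstacle is the remaining case $\depth_A(L)=0$, equivalently $p=\depth(A)$, in which no $x\in\mathfrak{m}$ is simultaneously $A$- and $L$-regular and the naive mod-$x$ reduction destroys control of $\projdim_{\bar A}(L/xL)$. To bypass it, I would replace $L$ by its first syzygy $L':=\Omega_1^A(L)$ obtained from $0\to L'\to F_0\to L\to 0$ with $F_0$ free: the depth lemma gives $\depth_A(L')\ge 1$ and $\projdim_A(L')=p-1<\infty$. Lifting $F_0\to L$ along $f$ (possible since $F_0$ is free and $f$ is surjective) and a snake-lemma comparison of the two presentations yields a short exact sequence that exhibits $L'$ as a quotient of a direct sum of syzygies of $k$ with shifted indices (using that each $\Omega_{n+1}^A(k)$ is itself the first syzygy of $\Omega_n^A(k)$, so a free resolution of $D$ is assembled from the $\Omega_{n+1}^A(k)$). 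One then applies the previous inductive argument to the pair $(L',A)$. Making this pullback-and-shift manoeuvre actually deliver a quotient of syzygies of $k$, rather than merely of some finitely generated module, is the technical crux on which the whole proof hinges.
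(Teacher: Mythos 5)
The paper does not supply its own proof of Martsinkovsky's theorem: it cites \cite{Mar96} and observes that the statement is also a consequence of Avramov's theorem (stated immediately afterwards) on the extremal complexity of homomorphic images of syzygies. Your attempt is instead a direct inductive proof modeled on the dissertation's Theorem~\ref{theorem: RLR and surjection onto star module}. The base case is correct. The inductive step, as you yourself observe, breaks down exactly when $\depth_A(L)=0$, and this is a genuine obstruction: ``non-zero of finite projective dimension'' is \emph{not} a $(*)$-property in the sense of Definition~\ref{definition: star property}. Condition (i) fails --- reduction by an $A$-regular element need not preserve finiteness of projective dimension when the element is a zerodivisor on $L$. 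Concretely, take $A = k[[s,t]]/(st)$, $x = s+t$ and $L = A/(s+t^2)$; then $x$ is $A$-regular, $\projdim_A(L)=1$, but $L/xL \cong k$ has infinite projective dimension over $A/(x) \cong k[[t]]/(t^2)$. So $\overline{L}$ need not satisfy the hypotheses over $\overline{A}$, and the induction does not close.

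Your proposed repair --- replace $L$ by $\Omega_1^A(L)$, which has positive depth and smaller finite projective dimension --- is the right idea, but it is not carried out: you must produce a surjection from a finite direct sum of syzygies of $k$ onto $\Omega_1^A(L)$, and the natural candidate constructions do not yield one. For instance, the pullback of the two surjections $D\to L$ and $F_0\to L$ (with $F_0\to L$ a minimal free cover) gives a short exact sequence $0\to\Omega_1^A(L)\to K\to D\to 0$, realising $\Omega_1^A(L)$ as a \emph{submodule} rather than a quotient; Schanuel-type comparisons of the kernels likewise place $\Omega_1^A(L)$ inside an extension rather than presenting it as a homomorphic image of syzygies of $k$. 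You flag this as ``the technical crux on which the whole proof hinges,'' and indeed it is; the proposal leaves it unresolved. Absent a new argument for that step, the efficient route is the one the paper records: deduce the theorem from Avramov's \cite[Corollary~9]{Avr96}, which gives $\cx_A(L)=\cx_A(k)$, so that $\projdim_A(L)<\infty$ forces $\cx_A(k)=0$ and hence $A$ regular.
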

 
 Thereafter, L. L. Avramov proved a much more stronger result than the above one.
 
\begin{theorem}{\rm \cite[Corollary~9]{Avr96}}\label{theorem: Avramov, extremal complexity}\index{Avramov's result on syzygy}
 Each non-zero homomorphic image $L$ of a finite direct sum of syzygy modules of $k$ has maximal complexity, i.e.,
 $\cx_A(L) = \cx_A(k)$.
\end{theorem}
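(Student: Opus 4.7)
The inequality $\cx_A(L) \le \cx_A(k)$ is standard: it is vacuous when $\cx_A(k) = \infty$, and when $\cx_A(k) < \infty$ Gulliksen's theorem forces $A$ to be a complete intersection of codimension $c := \cx_A(k)$, giving $\cx_A(M) \le c$ for every finitely generated $A$-module $M$.

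For the lower bound $\cx_A(L) \ge \cx_A(k)$, set $N := \bigoplus_{n \in \Lambda} (\Omega_n^A(k))^{j_n}$ and $K := \ker(N \twoheadrightarrow L)$. Since $\beta_i^A(N) = \sum_{n \in \Lambda} j_n \beta_{i+n}^A(k)$ is a finite positive linear combination of shifted Betti numbers of $k$, the asymptotic polynomial rate of growth is preserved and $\cx_A(N) = \cx_A(k)$. Applying $\Hom_A(-, k)$ to the short exact sequence $0 \to K \to N \to L \to 0$ and reading off the long exact sequence of Ext yields $\beta_i^A(N) \le \beta_i^A(L) + \beta_i^A(K)$ for every $i$, hence $\cx_A(k) = \cx_A(N) \le \max\{\cx_A(L), \cx_A(K)\}$. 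The task therefore reduces to preventing $\cx_A(K) > \cx_A(L)$.

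The natural tool is the support variety theory of Avramov--Buchweitz, which applies directly in the complete intersection situation (the only place the conclusion is not automatic). Writing $A = Q/(f_1, \ldots, f_c)$ with $Q$ regular local, and letting $\chi_1, \ldots, \chi_c$ be the Eisenbud cohomology operators, one has $\Ext_A^\star(M, k)$ a finitely generated graded module over $k[\chi_1, \ldots, \chi_c]$ with $\cx_A(M) = \dim V_A(M)$. The short exact sequence gives $V_A(N) \subseteq V_A(K) \cup V_A(L)$, and since $V_A(N) = V_A(k) = \mathbb{A}_k^c$ is irreducible, one of $V_A(L)$, $V_A(K)$ must equal $\mathbb{A}_k^c$.

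The principal obstacle is to exclude the alternative $V_A(K) = \mathbb{A}_k^c$ with $V_A(L) \subsetneq \mathbb{A}_k^c$. My plan is to exploit the $k[\chi]$-linear map $\Ext_A^\star(L, k) \to \Ext_A^\star(N, k)$, whose target is a direct sum of shifts of the full-support module $\Ext_A^\star(k, k)$. If $V_A(L)$ were strictly smaller, the image would be annihilated by some nonzero $p \in k[\chi]$; combining this with the injectivity of the degree-zero piece $\Hom_A(L, k) \hookrightarrow \Hom_A(N, k)$ (coming from surjectivity of $N \twoheadrightarrow L$) and tracking the Yoneda/$\chi$-action should produce a contradiction with the full-support structure of the target, forcing $V_A(L) = \mathbb{A}_k^c$ and hence $\cx_A(L) = c = \cx_A(k)$. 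The non-complete-intersection case ($\cx_A(k) = \infty$) is handled by a parallel argument in which curvatures play the role of complexities.
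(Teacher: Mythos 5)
This theorem appears in the literature review chapter as a quoted result of Avramov, \cite[Corollary~9]{Avr96}; the thesis does not reprove it, so there is no in-paper argument to compare against, and your sketch can only be assessed on its own terms. The preliminary structure is sound: the upper bound via Gulliksen, the identity $\cx_A(N) = \cx_A(k)$, the inequality $\cx_A(k) = \cx_A(N) \le \max\{\cx_A(L), \cx_A(K)\}$ from the long exact sequence of $\Ext$, and the reduction via irreducibility of $k^c$ to excluding the alternative $\mathscr{V}(L,k) \subsetneq k^c$ with $\mathscr{V}(K,k) = k^c$.

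The final step is genuinely gapped, and in a way that matters. A nonzero $\alpha$ in the image of $\Hom_A(L,k) \hookrightarrow \Hom_A(N,k)$ with $p \cdot \alpha = 0$ for some nonzero $p \in k[\chi_1,\ldots,\chi_c]$ exhibits $\alpha$ as a \emph{torsion} element of $\Ext_A^\star(N,k)$. ``Full support'' only says the global annihilator of $\Ext_A^\star(N,k)$ over $k[\chi]$ vanishes; it does not preclude individual torsion elements, so the plan as written does not close. What is actually needed is the stronger structural fact, particular to complete intersections, that $\Ext_A^\star(k,k)$ is \emph{free} as a $k[\chi_1,\ldots,\chi_c]$-module (equivalently $P^A_k(t) = (1+t)^{\mu(\mathfrak{m})}/(1-t^2)^c$), hence torsion-free; because the resolution of $k$ is minimal, $\Hom_A(\mathbb{F},k)$ has zero differential, so $\Ext_A^\star(\Omega_n^A(k),k)$ identifies with the shifted submodule $\bigoplus_{i \ge n}\Ext_A^i(k,k)$ of $\Ext_A^\star(k,k)$, and $\Ext_A^\star(N,k)$ inherits torsion-freeness; with this lemma stated and invoked, your contradiction does go through. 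Separately, the non-complete-intersection case ($\cx_A(k)=\infty$) is not a ``parallel argument'': there are no cohomology operators, no $k[\chi]$-module structure, and no support varieties outside the complete intersection setting, so the entire framework of your proof is unavailable there. Avramov's actual proof is uniform over all local rings and does not pass through this machinery, so for that case you would need a genuinely different argument (for instance, one that works directly with Poincar\'{e} series inequalities or with curvature).
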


 Recall that the complexity of a finitely generated $A$-module $M$ is defined to be the number\index{complexity of a module}
 \[
  \cx_A(M) := \inf\left\{ b \in \mathbb{N} ~\middle|~ 
  \limsup_{n \to \infty} \left( \dfrac{\rank_k\left(\Ext_A^n(M,k)\right)}{n^{b-1}} \right) < \infty \right\}.
 \]
 It can be noted that $\cx_A(M) \ge 0$ with equality if and only if $\projdim_A(M)$ is finite. Therefore one obtains
 Theorem~\ref{theorem: RoL: Martsinkovsky} as a consequence of Theorem~\ref{theorem: Avramov, extremal complexity}.
 
 In Chapter~\ref{Chapter: Characterizations of Regular Local Rings via Syzygy Modules}, we prove a few variations of the above results.
 Suppose $L$ is a non-zero homomorphic image of a finite direct sum of syzygy modules of $k$. We prove that $A$ is regular if $L$ is
 either `a semidualizing module' or `a maximal Cohen-Macaulay module of finite injective dimension'.
 We also obtain one new characterization of regular local rings. We show that $A$ is regular if and only if some syzygy module of
 $k$ has a non-zero direct summand of finite injective dimension. 
 
\newpage
\thispagestyle{empty}
\cleardoublepage

\chapter{Asymptotic Prime Divisors over Complete Intersection Rings}\label{Chapter: Asymptotic Prime Divisors over Complete Intersection Rings}
                                                    
 Assume $A$ is either a local complete intersection ring or a geometric locally complete intersection ring. Throughout this chapter, let $M$ and $N$ be finitely generated $A$-modules, and let $I$ be an ideal of $A$. The main goal of this chapter is to show that the set
 \[
  \bigcup_{n \ge 1} \bigcup_{i \ge 0} \Ass_A\left( \Ext_A^i(M, N/I^n N) \right) \quad \mbox{is finite}.
 \]
 Moreover, we analyze the asymptotic behaviour of the sets
 \begin{center}
  $\Ass_A\left( \Ext_A^i(M, N/I^n N) \right)$\quad if $n$ and $i$ both tend to $\infty$.
 \end{center}
 We prove that there are non-negative integers $n_0$ and $i_0$ such that for all $n \ge n_0$ and $i \ge i_0$, we have that
 \begin{align*}
  \Ass_A\left(\Ext_A^{2i}(M,N/I^nN)\right) &= \Ass_A\left(\Ext_A^{2 i_0}(M,N/I^{n_0}N)\right), \\
  \Ass_A\left(\Ext_A^{2i+1}(M,N/I^nN)\right) &= \Ass_A\left(\Ext_A^{2 i_0 + 1}(M,N/I^{n_0}N)\right).
 \end{align*}
 
 Here we describe in brief the contents of this chapter. In Section~\ref{Module structure}, we give some graded module structures
 which we use in order to prove our main results of this chapter. The finiteness results on asymptotic prime divisors are proved
 in Section~\ref{Asymptotic associated primes: Finiteness}; while the stability results are shown in
 Section~\ref{Asymptotic associated primes: Stability}. Finally, in Section~\ref{Asymptotic associated primes: The geometric case},
 we prove the analogous results on associate primes for complete intersection rings which arise in algebraic geometry.
 
 We refer the reader to \cite[\S6]{Mat86} for all the basic results on associate primes which we use in this chapter.
 
\section{Module Structures on Ext}\label{Module structure}

 In this section, we give the graded module structures which we are going to use in order to prove our main results.
 
 Let $Q$ be a ring, and let ${\bf f} = f_1,\ldots,f_c$ be a $Q$-regular sequence. Set $A := Q/({\bf f})$.
 Let $M$ and $D$ be finitely generated $A$-modules.
 
 \begin{para}[\textbf{Eisenbud Operators and Total Ext-module}]\label{para:module structure 1}
 Let
 \[
  \mathbb{F} : \quad \cdots \rightarrow F_n \rightarrow \cdots \rightarrow F_1\rightarrow F_0 \rightarrow 0
 \]
 be a projective resolution of $M$ by finitely generated free $A$-modules. Let
 \[
  t'_j : \mathbb{F}(+2) \longrightarrow \mathbb{F}, \quad 1 \le j \le c
 \]
 be the {\it Eisenbud operators}\index{Eisenbud operators} defined by ${\bf f} = f_1,\ldots,f_c$ (see \cite[Section~1]{Eis80}).
 In view of \cite[Corollary~1.4]{Eis80}, the chain maps $t'_j$ are determined uniquely up to
 homotopy\index{homotopy equivalence of chain maps}. In particular, they induce well-defined maps
 \[
  t_j : \Ext_A^i(M,D) \longrightarrow \Ext_A^{i+2}(M,D),
 \]
 (for all $i$ and $1 \le j \le c$), on the cohomology of $\Hom_A(\mathbb{F},D)$. In \cite[Corollary~1.5]{Eis80}, it is shown that
 the chain maps $t'_j$ ($j = 1,\ldots,c$) commute up to homotopy. Thus\index{graded module structure on Ext}
 \[
  \Ext_A^{\star}(M,D) := \bigoplus_{i \ge 0} \Ext_A^i(M,D)
 \]
 turns into a graded $\mathscr{T} := A[t_1,\ldots,t_c]$-module,\index{polynomial rings in cohomology (Eisenbud) operators}
 where $\mathscr{T}$ is the graded polynomial ring  over $A$ in the {\it cohomology operators}\index{cohomology operators} $t_j$
 defined by ${\bf f}$ with $\deg(t_j) = 2$ for all $1 \le j \le c$. We call $\Ext_A^{\star}(M,D)$ the {\it total Ext-module}
 \index{total Ext-module} of $M$ and $D$. These structures depend only on ${\bf f}$, are natural in both module arguments and
 commute with the connecting maps induced by short exact sequences.
 \end{para}
 
 \begin{para}[\textbf{Gulliksen's Finiteness Theorem}]\label{para:Gulliksen}
  T. H. Gulliksen\index{Gulliksen's Finiteness Theorem} \cite[Theorem~3.1]{Gul74} proved that if either $\projdim_Q(M)$ is finite or
  $\injdim_Q(D)$ is finite, then $\Ext_A^{\star}(M,D)$ is a finitely generated
  graded $\mathscr{T} = A[t_1,\ldots,t_c]$-module; see also \cite[Theorem~(2.1)]{Avr89}.
 \end{para}
 
 \begin{para}[\textbf{Bigraded Module Structure on Ext}]\label{para:module structure 2}
 Let $I$ be an ideal of $A$. Let $\mathscr{R}(I)$ be the Rees ring\index{Rees ring associated to an ideal}
 $\bigoplus_{n \ge 0} I^n X^n$ associated to $I$. We consider $\mathscr{R}(I)$ as a subring of the polynomial ring $A[X]$. Let
 $\mathcal{N} = \bigoplus_{n \ge 0} N_n$ be a graded $\mathscr{R}(I)$-module. Let $u\in\mathscr{R}(I)$ be a homogeneous element of
 degree $s$. Consider the $A$-linear maps given by multiplication with $u$:
 \[
  N_n \stackrel{u\cdot}{\longrightarrow} N_{n+s} \quad \mbox{for all } n.
 \]
 By applying $\Hom_A(\mathbb{F},-)$ on the above maps and using the naturality of the Eisenbud operators $t'_j$, we have the
 following commutative diagram of cochain complexes:
 \[
  \xymatrixrowsep{10mm} \xymatrixcolsep{14mm}
   \xymatrix{
   \Hom_A(\mathbb{F},N_n) \ar[d]^{u} \ar[r]^{t'_j} &\Hom_A(\mathbb{F}(+2),N_n) \ar[d]^{u} \\
   \Hom_A(\mathbb{F},N_{n+s}) \ar[r]^{t'_j} &\Hom_A(\mathbb{F}(+2),N_{n+s}).
   }
 \]
 Now, taking cohomology, we obtain the following commutative diagram of $A$-modules:
 \[
  \xymatrixrowsep{10mm} \xymatrixcolsep{14mm}
   \xymatrix{
   \Ext_A^i(M,N_n) \ar[d]^{u} \ar[r]^{t_j} & \Ext_A^{i+2}(M,N_n) \ar[d]^{u} \\
   \Ext_A^i(M,N_{n+s}) \ar[r]^{t_j} & \Ext_A^{i+2}(M,N_{n+s})
   }
 \]
 for all $n,i$ and $1 \le j \le c$. Thus\index{bigraded module structure on Ext}
 \[
   \mathscr{E}(\mathcal{N}) := \bigoplus_{n \ge 0} \bigoplus_{i \ge 0}\Ext_A^i(M,N_n)
 \]
 turns into a bigraded $\mathscr{S} := \mathscr{R}(I)[t_1,\ldots,t_c]$-module, where we set $\deg(t_j) = (0,2)$ for all
 $1 \le j \le c$ and $\deg(u X^s) = (s,0)$ for all $u \in I^s$, $s \ge 0$.\index{polynomial rings in cohomology (Eisenbud) operators}
\end{para}
 
\begin{para}[\textbf{Bigraded Module Structure on Ext}]\label{para:module structure 3}
 Suppose $N$ is a finitely generated $A$-module. Set $\mathcal{L} := \bigoplus_{n \ge 0}(N/I^{n+1}N)$. Note that
 $\mathscr{R}(I,N) = \bigoplus_{n \ge 0} I^n N$ and $N[X] = N \otimes_A A[X]$ are graded modules over $\mathscr{R}(I)$
 and $A[X]$ respectively. Since $\mathscr{R}(I)$ is a graded subring of $A[X]$, we set that $N[X]$ is a graded
 $\mathscr{R}(I)$-module. Therefore $\mathcal{L}$ is a graded $\mathscr{R}(I)$-module, where the graded structure is
 induced by the sequence
 \[
  0 \longrightarrow \mathscr{R}(I,N) \longrightarrow N[X] \longrightarrow \mathcal{L}(-1) \longrightarrow 0.
 \]
 Therefore, by the observations made in Section~\ref{para:module structure 2}, we have that
 \[
  \mathscr{E}(\mathcal{L}) = \bigoplus_{n \ge 0} \bigoplus_{i \ge 0} \Ext_A^i(M,N/I^{n+1}N)
 \]
 is a bigraded module over $\mathscr{S}=\mathscr{R}(I)[t_1,\ldots,t_c]$.
\end{para}
 
 One of the main ingredients we use in this chapter is the following finiteness result, due to T. J. Puthenpurakal
 \cite[Theorem~1.1]{Put13}.
 
\begin{theorem}[Puthenpurakal]\label{theorem:finitely generated}\index{Puthenpurakal's Finiteness Theorem}
 Let $Q$ be a ring of finite Krull dimension, and let ${\bf f} = f_1,\ldots,f_c$ be a $Q$-regular sequence. Set $A :=
 Q/({\bf f})$. Let $M$ be a finitely generated $A$-module, where $\projdim_Q(M)$ is finite. Let $I$ be an ideal of $A$, and let
 $\mathcal{N} = \bigoplus_{n \ge 0} N_n$ be a finitely generated graded $\mathscr{R}(I)$-module. Then 
 \[
	  \mathscr{E}(\mathcal{N}) := \bigoplus_{n \ge 0} \bigoplus_{i \ge 0} \Ext_A^i(M,N_n)
 \]
 is a finitely generated bigraded $\mathscr{S}=\mathscr{R}(I)[t_1,\ldots,t_c]$-module.
\end{theorem}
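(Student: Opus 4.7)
The plan is to prove the theorem by induction on $c$, the length of the $Q$-regular sequence $\mathbf{f}$.

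For the base case $c = 0$, we have $A = Q$ and $\mathscr{S} = \mathscr{R}(I)$. Since $\projdim_A(M) = \projdim_Q(M) < \infty$, there is a bounded resolution $\mathbb{F}: 0 \to F_p \to \cdots \to F_0 \to 0$ of $M$ by finitely generated free $A$-modules. Packaging over all $n$, each term of the cochain complex $\Hom_A(\mathbb{F}, \mathcal{N})$ is a finite direct sum of copies of the finitely generated graded $\mathscr{R}(I)$-module $\mathcal{N}$, so is itself a finitely generated graded $\mathscr{R}(I)$-module. Since $\mathscr{R}(I)$ is Noetherian, the cohomology $\mathscr{E}(\mathcal{N})$ is finitely generated over $\mathscr{R}(I) = \mathscr{S}$.

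For the inductive step, set $B := Q/(f_1,\ldots,f_{c-1})$ and let $g$ be the image of $f_c$ in $B$, a nonzerodivisor, so $A = B/(g)$. Let $J$ be the preimage of $I$ under $B \twoheadrightarrow A$; then $\mathcal{N}$ inherits the structure of a finitely generated graded $\mathscr{R}(J)$-module, and $\projdim_Q(M)$ is still finite. By the induction hypothesis applied to the pair $(B, J)$,
\[
  \mathscr{E}_B(\mathcal{N}) := \bigoplus_{n \ge 0} \bigoplus_{i \ge 0} \Ext_B^i(M, N_n)
\]
is finitely generated over $\mathscr{R}(J)[t_1,\ldots,t_{c-1}]$; since the $\mathscr{R}(J)$-action on $\mathscr{E}_B(\mathcal{N})$ factors through $\mathscr{R}(I)$ (as $g$ annihilates every $N_n$), it is finitely generated over the Noetherian ring $\mathscr{S}' := \mathscr{R}(I)[t_1,\ldots,t_{c-1}]$.

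The key tool is the Eisenbud--Avramov change-of-rings long exact sequence
\[
 \cdots \to \Ext_A^{i-2}(M, N_n) \xrightarrow{t_c} \Ext_A^i(M, N_n) \to \Ext_B^i(M, N_n) \to \Ext_A^{i-1}(M, N_n) \xrightarrow{t_c} \Ext_A^{i+1}(M, N_n) \to \cdots,
\]
in which the connecting homomorphism is precisely multiplication by the $c$-th Eisenbud operator, and which is natural in $N_n$ and compatible with $t_1,\ldots,t_{c-1}$. Splicing and summing over $n$ and $i$ produces a short exact sequence of graded $\mathscr{S}'$-modules (up to bigrading shifts)
\[
 0 \to \mathscr{E}(\mathcal{N})/t_c\,\mathscr{E}(\mathcal{N}) \to \mathscr{E}_B(\mathcal{N}) \to \bigl(0 :_{\mathscr{E}(\mathcal{N})} t_c\bigr) \to 0.
\]
Because $\mathscr{S}'$ is Noetherian and $\mathscr{E}_B(\mathcal{N})$ is finitely generated over it, the subquotients $\mathscr{E}(\mathcal{N})/t_c\,\mathscr{E}(\mathcal{N})$ and $(0 :_{\mathscr{E}(\mathcal{N})} t_c)$ are finitely generated over $\mathscr{S}'$. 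The proof closes with the standard graded lemma: if $E$ is a graded $\mathscr{S}'[t_c]$-module that is bounded below in the $t_c$-direction (satisfied here because the cohomological degree is non-negative) and both $E/t_c E$ and the $t_c$-torsion of $E$ are finitely generated over $\mathscr{S}'$, then $E$ is finitely generated over $\mathscr{S}'[t_c]$. Applied to $E = \mathscr{E}(\mathcal{N})$, this gives finite generation over $\mathscr{S}$.

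The main obstacle is assembling the change-of-rings long exact sequence with all the compatibilities one needs here: identifying the connecting map with the Eisenbud operator $t_c$ built from $f_c$ (so that the quotient by $t_c$ truly appears in the splicing), verifying naturality in the second variable to produce $\mathscr{R}(I)$-linearity once one sums over $n$, and checking commutation with the remaining operators $t_1,\ldots,t_{c-1}$ so that the entire sequence lives in the category of graded $\mathscr{S}'$-modules. Once this multi-structure compatibility is established, the induction runs cleanly.
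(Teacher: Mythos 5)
The paper does not give its own proof of this theorem --- it is stated with the attribution ``[Puthenpurakal]'' and the citation \cite[Theorem~1.1]{Put13}, and is then used as a black box throughout the chapter --- so there is no in-paper argument to compare against. Judged on its own merits, your plan is a sound route to the result. Inducting on $c$ via the Gulliksen--Eisenbud change-of-rings long exact sequence is precisely how Gulliksen proved the analogous finiteness of $\Ext_A^\star(M,N)$ over $A[t_1,\ldots,t_c]$ for a fixed second argument (see \cite{Gul74}); you upgrade that method by carrying the $\mathscr{R}(I)$-grading along, which is exactly what the present statement demands. The base case $c=0$ is fine: each term of $\Hom_A(\mathbb{F},\mathcal{N})$ is a finite direct sum of copies of the finitely generated graded $\mathscr{R}(I)$-module $\mathcal{N}$ and the complex is bounded, so cohomology is Noetherian over $\mathscr{R}(I)$. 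The splice of the long exact sequence into the displayed short exact sequence is correct modulo the acknowledged shift in cohomological degree, and since $\mathscr{S}'$ is Noetherian the two outer terms are finitely generated as soon as $\mathscr{E}_B(\mathcal{N})$ is.

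Two remarks. First, the closing lemma carries a redundant hypothesis: because $\mathscr{E}(\mathcal{N})$ lives in non-negative cohomological degree and $t_c$ raises that degree by $2$, for each fixed bidegree $(n,i)$ one has $\left(t_c^m\,\mathscr{E}(\mathcal{N})\right)_{(n,i)}=0$ once $2m>i$, so the graded Nakayama argument needs only finite generation of $\mathscr{E}(\mathcal{N})/t_c\,\mathscr{E}(\mathcal{N})$ over $\mathscr{S}'$; finite generation of the $t_c$-torsion is never used. Second --- and this is the substantive gap, which you correctly single out as the main obstacle --- the entire argument rests on a package of compatibilities that is asserted but not proved: that the connecting homomorphism in the change-of-rings sequence is the Eisenbud operator $t_c$ defined by $f_c$; that the maps in the sequence intertwine the operators $t_1,\ldots,t_{c-1}$ acting on $\Ext_A^\star$ and on $\Ext_B^\star$; and that the whole sequence is natural in the second argument, so that summing over $n$ really produces an exact sequence of bigraded $\mathscr{S}'$-modules rather than merely of abelian groups. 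These facts are true and are part of the standard theory of cohomology operators (cf.\ \cite{Eis80}, \cite{Gul74}, \cite{Avr98}, and Section~\ref{Module structure}), but as written they constitute the bulk of the content, so what you have is a correct and viable outline rather than a complete proof.
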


We first prove the main results of this chapter for a ring $A$ which is of the form $Q/({\bf f})$, where $Q$ is a regular local ring
and ${\bf f} = f_1,\ldots,f_c$ is a $Q$-regular sequence. Then we deduce the main results for a local complete intersection ring with
the help of the following well-known lemma:

\begin{lemma}\label{lemma:associated}\index{completion and associate primes}
 Let $(A,\mathfrak{m})$ be a local ring, and let $\widehat{A}$ be the $\mathfrak{m}$-adic completion of $A$.
 Let $D$ be a finitely generated $A$-module. Then
 \[
  \Ass_A(D) = \left\{ \mathfrak{q} \cap A : \mathfrak{q} \in \Ass_{\widehat{A}}\left(D \otimes_A \widehat{A}\right) \right\}.
 \]
\end{lemma}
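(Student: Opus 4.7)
The plan is to reduce the statement to the standard description of associated primes under a flat base change, applied to the faithfully flat extension $A \to \widehat{A}$. Recall that for any local Noetherian ring $A$, its $\mathfrak{m}$-adic completion $\widehat{A}$ is faithfully flat over $A$. The key tool is the formula
\[
  \Ass_{\widehat A}\bigl(D \otimes_A \widehat{A}\bigr) \;=\; \bigcup_{\mathfrak{p} \in \Ass_A(D)} \Ass_{\widehat A}\bigl(\widehat{A}/\mathfrak{p}\widehat{A}\bigr),
\]
which holds for any flat homomorphism $A\to B$ of Noetherian rings and any finitely generated $A$-module $D$ (this is Matsumura, \emph{Commutative Ring Theory}, Theorem~23.2). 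Once this formula is available, only two elementary facts are needed: (i) for each $\mathfrak{p} \in \Ass_A(D)$, the module $\widehat{A}/\mathfrak{p}\widehat{A}$ is nonzero by faithful flatness, so $\Ass_{\widehat A}(\widehat{A}/\mathfrak{p}\widehat{A})$ is nonempty; and (ii) every prime $\mathfrak{q} \in \Ass_{\widehat A}(\widehat{A}/\mathfrak{p}\widehat{A})$ satisfies $\mathfrak{q} \cap A = \mathfrak{p}$.

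For the inclusion $\supseteq$, I would take $\mathfrak{q} \in \Ass_{\widehat A}(D \otimes_A \widehat{A})$. By the displayed formula, $\mathfrak{q} \in \Ass_{\widehat A}(\widehat{A}/\mathfrak{p}\widehat{A})$ for some $\mathfrak{p} \in \Ass_A(D)$. The containment $\mathfrak{p} \subseteq \mathfrak{q} \cap A$ is automatic since $\mathfrak{p}\widehat{A} \subseteq \mathfrak{q}$. For the reverse containment, I would use the flatness of $A \to \widehat{A}$: any $a \in A \setminus \mathfrak{p}$ is a nonzerodivisor on $A/\mathfrak{p}$, and flatness preserves nonzerodivisors, so $a$ is a nonzerodivisor on $\widehat{A}/\mathfrak{p}\widehat{A}$; hence $a$ cannot belong to the associated prime $\mathfrak{q}$. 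Thus $\mathfrak{q} \cap A = \mathfrak{p} \in \Ass_A(D)$.

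For the inclusion $\subseteq$, I would fix $\mathfrak{p} \in \Ass_A(D)$. By (i), there exists $\mathfrak{q} \in \Ass_{\widehat A}(\widehat{A}/\mathfrak{p}\widehat{A})$, and the formula immediately places this $\mathfrak{q}$ inside $\Ass_{\widehat A}(D \otimes_A \widehat{A})$; the previous paragraph's argument then yields $\mathfrak{q} \cap A = \mathfrak{p}$, which exhibits $\mathfrak{p}$ as a contraction of the required form.

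There is no serious obstacle here, as the whole result is a direct consequence of the flat base-change formula for $\Ass$; the only subtle point is verifying that the contraction $\mathfrak{q} \cap A$ equals exactly $\mathfrak{p}$ rather than some larger prime, and this is handled by the nonzerodivisor argument above. If one prefers not to cite Matsumura's formula, the inclusion $\subseteq$ can instead be obtained directly by tensoring an injection $A/\mathfrak{p} \hookrightarrow D$ with the flat module $\widehat{A}$ to produce $\widehat{A}/\mathfrak{p}\widehat{A} \hookrightarrow D \otimes_A \widehat{A}$ and then passing to an associated prime; and the inclusion $\supseteq$ can be obtained from $\mathrm{Hom}_A(A/\mathfrak{p},D)\otimes_A \widehat{A} \cong \mathrm{Hom}_{\widehat{A}}(\widehat{A}/\mathfrak{p}\widehat{A}, D\otimes_A\widehat{A})$ combined with faithful flatness.
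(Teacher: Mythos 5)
Your proof is correct and complete. The paper gives no proof of this lemma --- it is stated as a well-known fact, with only the blanket reference at the start of the chapter to Matsumura, \S 6, for basics on associated primes --- and your argument via the flat base change formula $\Ass_{\widehat A}(D\otimes_A\widehat A)=\bigcup_{\mathfrak p\in\Ass_A(D)}\Ass_{\widehat A}(\widehat A/\mathfrak p\widehat A)$, faithful flatness of $A\to\widehat A$ to ensure $\widehat A/\mathfrak p\widehat A\neq 0$, and the nonzerodivisor argument showing $\mathfrak q\cap A=\mathfrak p$, is exactly the standard route that reference supplies.
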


\section{Asymptotic Associate Primes: Finiteness}\label{Asymptotic associated primes: Finiteness}

 In this section, we prove the announced finiteness result for the set of associated prime ideals of the family of Ext-modules
 $\Ext_A^i(M, N/I^n N)$, $(n, i \ge 0)$, where $M$ and $N$ are finitely generated modules over a local complete intersection ring $A$,
 and $I \subseteq A$ is an ideal; see Corollary~\ref{corollary:finiteness}.
 
\begin{theorem}\label{theorem:Q mod f finiteness}\index{finiteness of associate primes}
 Let $Q$ be a ring of finite Krull dimension, and let ${\bf f} = f_1,\ldots,f_c$ be a $Q$-regular sequence. Set $A :=
 Q/({\bf f})$. Let $M$ and $N$ be finitely generated $A$-modules, where $\projdim_Q(M)$ is finite, and let $I$ be an ideal of $A$. Then the set
 \[
  \bigcup_{n \ge 1} \bigcup_{i \ge 0} \Ass_A \left( \Ext_A^i(M,N/I^n N) \right) \quad \mbox{is finite}.
 \]
\end{theorem}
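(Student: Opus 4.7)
The strategy is to reduce the finiteness claim to Puthenpurakal's Theorem~\ref{theorem:finitely generated} via the short exact sequence
\[
 0 \to \mathscr{R}(I, N) \to N[X] \to \mathcal{L}(-1) \to 0
\]
of graded $\mathscr{R}(I)$-modules constructed in paragraph~\ref{para:module structure 3}, where $\mathcal{L}(-1) = \bigoplus_{n \ge 0} N/I^n N$ is precisely the module whose associated primes we wish to bound. Since $M$ is finitely generated over the Noetherian ring $A$, the functor $\Ext_A^i(M, -)$ commutes with arbitrary direct sums in its second argument; applying $\Ext_A^*(M, -)$ degree-wise therefore yields a long exact sequence
\[
 \cdots \to \mathscr{E}(\mathscr{R}(I, N)) \xrightarrow{\alpha} \mathscr{E}(N[X]) \xrightarrow{\beta} \mathscr{E}(\mathcal{L}(-1)) \xrightarrow{\delta} \mathscr{E}(\mathscr{R}(I, N)) \to \cdots
\]
of bigraded $\mathscr{S}$-modules, in which the final copy of $\mathscr{E}(\mathscr{R}(I, N))$ is shifted by one in the cohomological index. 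Splitting this into the short exact sequence
\[
 0 \to \Image(\beta) \to \mathscr{E}(\mathcal{L}(-1)) \to \Image(\delta) \to 0,
\]
it suffices to show that $\Ass_A(\Image(\beta))$ and $\Ass_A(\Image(\delta))$ are both finite.

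The easy half is $\Image(\delta)$: it is a bigraded $\mathscr{S}$-submodule of $\mathscr{E}(\mathscr{R}(I, N))$, which is a finitely generated bigraded module over the Noetherian graded ring $\mathscr{S}$ by Theorem~\ref{theorem:finitely generated}. The prime filtration argument for finitely generated graded modules (together with the observation that $\Ass_A(\mathscr{S}/\mathfrak{P}) = \{\mathfrak{P} \cap A\}$ for every homogeneous prime $\mathfrak{P}$ of $\mathscr{S}$) then shows $\Ass_A(\mathscr{E}(\mathscr{R}(I, N)))$, and hence $\Ass_A(\Image(\delta))$, is finite. Likewise, $\mathscr{E}(N[X]) = \bigoplus_n \Ext_A^*(M, N)$ is a direct sum of copies of $\Ext_A^*(M, N)$, which is finitely generated over $\mathscr{T} = A[t_1, \ldots, t_c]$ by Gulliksen's theorem (paragraph~\ref{para:Gulliksen}); so $\Ass_A(\mathscr{E}(N[X])) = \Ass_A(\Ext_A^*(M, N))$ is finite.

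The main obstacle is controlling $\Ass_A(\Image(\beta)) = \Ass_A(\mathscr{E}(N[X])/\Image(\alpha))$, since associated primes of a quotient are not a priori controlled by those of the ambient module. The key input will be the containment $\Image(\alpha_i^n) \supseteq I^n \Ext_A^i(M, N)$: for $u \in I^n$ the multiplication map $N \xrightarrow{u} N$ factors as $N \to I^n N \hookrightarrow N$, which on applying $\Ext_A^i(M, -)$ produces a factorization through $\alpha_i^n$. Consequently, each graded piece of $\mathscr{E}(N[X])/\Image(\alpha)$ is a quotient of $\Ext_A^i(M, N)/I^n \Ext_A^i(M, N)$. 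Applying a Brodmann-style inductive estimate to the finitely generated graded $\mathscr{T}$-module $\Ext_A^*(M, N)$, via the short exact sequences
\[
 0 \to I^n \Ext_A^*(M, N)/I^{n+1}\Ext_A^*(M, N) \to \Ext_A^*(M, N)/I^{n+1}\Ext_A^*(M, N) \to \Ext_A^*(M, N)/I^n\Ext_A^*(M, N) \to 0,
\]
shows that $\bigcup_n \Ass_A(\Ext_A^*(M, N)/I^n \Ext_A^*(M, N)) \subseteq \Ass_A(\gr_I(\Ext_A^*(M, N)))$, which is finite because $\gr_I(\Ext_A^*(M, N))$ is finitely generated over the Noetherian graded algebra $\gr_I(A) \otimes_A \mathscr{T}$. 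The hardest step will then be to transfer this Brodmann-type bound through the further quotient $\Ext_A^i(M, N)/I^n \Ext_A^i(M, N) \twoheadrightarrow \Ext_A^i(M, N)/\Image(\alpha_i^n)$ uniformly in both indices; I plan to accomplish this by exploiting the finite generation of $\Image(\alpha)$ as a bigraded $\mathscr{S}$-submodule of $\mathscr{E}(N[X])$---an Artin--Rees type statement yielding $\Image(\alpha_i^n) = I^{n - k}\,\Image(\alpha_i^k)$ for $n \ge k$ with a uniform $k$ coming from the finitely many $\mathscr{S}$-generators of $\Image(\alpha)$---and thereby absorbing the additional quotient into the already-finite Brodmann bound.
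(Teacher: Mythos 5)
Your route is genuinely different from the paper's. You apply $\mathscr{E}$ to $0\to\mathscr{R}(I,N)\to N[X]\to\mathcal{L}(-1)\to 0$, whereas the paper uses the degree-shift sequence $0\to\gr_I(N)\to\mathcal{L}\to\mathcal{L}(-1)\to 0$. There, setting $U:=\Image\left(\mathscr{E}(\gr_I(N))\to\mathscr{E}(\mathcal{L})\right)$, exactness gives injections $V_{(n,i)}/U_{(n,i)}\hookrightarrow V_{(n-1,i)}$ (with $V_{(n,i)}=\Ext_A^i(M,N/I^{n+1}N)$), which telescope using $V_{(-1,i)}=0$ to yield $\Ass_A(V_{(n,i)})\subseteq\bigcup_{j\le n}\Ass_A(U_{(j,i)})$; West's lemma applied to the finitely generated bigraded $\mathscr{S}$-module $U$ then finishes in two lines. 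The reason the paper avoids your sequence is that its middle term $\mathscr{E}(N[X])=\bigoplus_{n}\Ext_A^{\star}(M,N)$ is not finitely generated over $\mathscr{S}$, and this is exactly where your argument runs into trouble.

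Your handling of $\Image(\delta)$ is correct, as are the containment $\Image(\alpha_i^n)\supseteq I^n\Ext_A^i(M,N)$, the Brodmann-type inclusion $\bigcup_n\Ass_A(E/I^nE)\subseteq\Ass_A(\gr_I(E))$, and the Artin--Rees statement $\Image(\alpha_i^n)=I^{n-k}\Image(\alpha_i^k)$ for $n\ge k$ with $k$ uniform in $i$ (taken as the maximal first-coordinate degree of a finite homogeneous $\mathscr{S}$-generating set of $\Image(\alpha)$, using $I^aI^b=I^{a+b}$). But the step you call ``the hardest'' is a genuine gap: the finiteness of $\bigcup_{n,i}\Ass_A(E_i/I^nE_i)$ says nothing about $\bigcup_{n,i}\Ass_A(E_i/\Image(\alpha_i^n))$, because a surjection does not control associated primes of the target. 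The fix needs one more splitting that your sketch does not supply. Put $C_i:=\Image(\alpha_i^k)$ and $C:=\bigoplus_i C_i=\Image(\alpha)_{(k,\star)}$, a finitely generated graded $\mathscr{T}$-submodule of $\Ext_A^{\star}(M,N)$. For $n\ge k$ the short exact sequence
\[
0\longrightarrow C_i/I^{n-k}C_i\longrightarrow E_i/I^{n-k}C_i\longrightarrow E_i/C_i\longrightarrow 0
\]
identifies the middle term with $\Image(\beta)_{(n,i)}$ and the right-hand term with $\Image(\beta)_{(k,i)}$, so $\Ass_A(\Image(\beta)_{(n,i)})\subseteq\Ass_A(C_i/I^{n-k}C_i)\cup\Ass_A(\Image(\beta)_{(k,i)})$. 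Now $\bigcup_i\Ass_A(\Image(\beta)_{(k,i)})\subseteq\Ass_A\left(\Ext_A^{\star}(M,N/I^kN)\right)$ is finite by Gulliksen, while $\bigcup_{m,i}\Ass_A(C_i/I^mC_i)\subseteq\Ass_A(\gr_I(C))$ is finite by your Brodmann argument applied to $C$ rather than to $\Ext_A^{\star}(M,N)$; the finitely many $n<k$ are each covered by Gulliksen on $\Ext_A^{\star}(M,N/I^nN)$. So your route can be closed, but as written the proposal applies the Brodmann bound to the wrong module and leaves the decisive splitting implicit.
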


\begin{proof}
 For every fixed $n \ge 0$, we consider the short exact sequence of $A$-modules:
 \[ 
	   0 \longrightarrow I^nN/I^{n+1}N \longrightarrow N/I^{n+1}N \longrightarrow N/I^nN \longrightarrow 0.
 \]
 Taking direct sum over $n \ge 0$ and setting
 \[
	  \mathcal{L} := \bigoplus_{n \ge 0}(N/I^{n+1}N),
 \]
 we obtain the following short exact sequence of graded $\mathscr{R}(I)$-modules:
 \[
	   0 \longrightarrow \gr_I(N) \longrightarrow \mathcal{L} \longrightarrow \mathcal{L}(-1) \longrightarrow 0,
 \]
 which induces an exact sequence of graded $\mathscr{R}(I)$-modules for each $i \ge 0$:
 \[ 
	   \Ext_A^i(M,\gr_I(N)) \longrightarrow \Ext_A^i(M,\mathcal{L}) \longrightarrow \Ext_A^i(M,\mathcal{L}(-1)).
 \]
 Taking direct sum over $i \ge 0$ and using the naturality of the cohomology operators $t_j$, we get the following exact sequence of
 bigraded $\mathscr{S} = \mathscr{R}(I)[t_1,\ldots,t_c]$-modules:
 \[
	  \bigoplus_{n, i \ge 0} \Ext_A^i\left(M,\dfrac{I^nN}{I^{n+1}N}\right) \stackrel{\Phi}{\longrightarrow} \bigoplus_{n, i \ge 0} V_{(n,i)} \stackrel{\Psi}{\longrightarrow}  \bigoplus_{n, i \ge 0} V_{(n-1,i)},
 \]
 where
 \[
	  V_{(n,i)} := \Ext_A^i(M,N/I^{n+1}N)
 \]
 for each $n \ge -1$ and $i \ge 0$. Now we set
 \[
   U = \bigoplus_{n, i \ge 0} U_{(n,i)} := \Image(\Phi).
 \]
 Then, for each $n, i \ge 0$, considering the exact sequence of $A$-modules:
 \[
	  0 \rightarrow U_{(n,i)} \rightarrow V_{(n,i)} \rightarrow V_{(n-1,i)},
 \]
 we have 
 \begin{align*}
	  \Ass_A\left( V_{(n,i)} \right) & \subseteq \Ass_A\left( U_{(n,i)} \right) \cup \Ass_A\left( V_{(n-1,i)} \right) \\
				  &\subseteq \Ass_A\left( U_{(n,i)} \right) \cup \Ass_A\left( U_{(n-1,i)} \right)
					\cup \Ass_A\left( V_{(n-2,i)} \right) \\
				  & ~~\vdots \\
				  &\subseteq \bigcup_{0\le j\le n}\Ass_A\left( U_{(j,i)} \right) \quad
					\mbox{[as $\Ass_A\left( V_{(-1,i)} \right) = \phi$ for each $i \ge 0$]}.
 \end{align*}
 Taking union over $n, i \ge 0$, we obtain that
 \begin{equation}\label{VU containment}
  \bigcup_{n, i \ge 0}\Ass_A\left( V_{(n,i)} \right) \subseteq \bigcup_{n, i \ge 0}\Ass_A\left( U_{(n,i)} \right).
 \end{equation}
 
 Since $\gr_I(N)$ is a finitely generated graded $\mathscr{R}(I)$-module, by Theorem~\ref{theorem:finitely generated},
 \[ 
  \bigoplus_{n, i \ge 0} \Ext_A^i\left(M, \dfrac{I^n N}{I^{n+1} N}\right)
 \]
 is a finitely generated bigraded $\mathscr{S}$-module, and hence $U$ is a finitely generated bigraded $\mathscr{S}$-module.
 Therefore, in view of \cite[Lemma~3.2]{Wes04}, we obtain that
 \begin{equation}\label{U finite}
  \bigcup_{n, i \ge 0}\Ass_A\left( U_{(n,i)} \right)\quad\mbox{ is a finite set.}
 \end{equation}
 Now the result follows from \eqref{VU containment} and \eqref{U finite}.
\end{proof}

As an immediate corollary, we obtain the following desired result.

\begin{corollary}\label{corollary:finiteness}\index{finiteness of associate primes}
 Let $(A,\mathfrak{m})$ be a local complete intersection ring. Let $M$ and $N$ be finitely generated $A$-modules, and let
 $I$ be an ideal of $A$. Then the set
 \[
  \bigcup_{n \ge 1} \bigcup_{i \ge 0}\Ass_A\left(\Ext_A^i(M,N/I^n N)\right) \quad \mbox{is finite}.
 \]
\end{corollary}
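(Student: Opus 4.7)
The plan is to reduce the claim to Theorem~\ref{theorem:Q mod f finiteness} by passing to the $\mathfrak{m}$-adic completion, exactly in the spirit indicated in the Introduction. Since $A$ is a local complete intersection ring, by definition $\widehat{A} = Q/({\bf f})$ for some complete regular local ring $Q$ and some $Q$-regular sequence ${\bf f} = f_1,\ldots,f_c$. In particular $Q$ has finite Krull dimension, and any finitely generated $\widehat{A}$-module (hence a fortiori a finitely generated $Q$-module) has finite projective dimension over the regular local ring $Q$. Thus the hypotheses of Theorem~\ref{theorem:Q mod f finiteness} are automatically verified after completion.

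Concretely, I would set $\widehat{M} := M \otimes_A \widehat{A}$, $\widehat{N} := N \otimes_A \widehat{A}$, and $\widehat{I} := I\widehat{A}$. Since $\widehat{M}$ is a finitely generated $\widehat{A}$-module, the observation in the previous paragraph shows $\projdim_Q(\widehat{M})$ is finite. Applying Theorem~\ref{theorem:Q mod f finiteness} to $Q$, ${\bf f}$, $\widehat{M}$, $\widehat{N}$, $\widehat{I}$ therefore yields that
\[
\bigcup_{n \ge 1} \bigcup_{i \ge 0} \Ass_{\widehat{A}}\bigl(\Ext_{\widehat{A}}^i(\widehat{M},\widehat{N}/\widehat{I}^n\widehat{N})\bigr)
\]
is a finite subset of $\Spec(\widehat{A})$.

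Next, I would transfer this finiteness back to $A$. The crucial point is that completion is faithfully flat over the Noetherian local ring $A$, and $M$ is finitely generated, so the standard flat base change formula for Ext yields natural isomorphisms
\[
\Ext_A^i(M, N/I^n N) \otimes_A \widehat{A} \;\cong\; \Ext_{\widehat{A}}^i(\widehat{M}, \widehat{N}/\widehat{I}^n \widehat{N})
\]
for every $n \ge 1$ and every $i \ge 0$ (using also that $\widehat{N}/\widehat{I}^n\widehat{N} \cong (N/I^nN) \otimes_A \widehat{A}$). Then Lemma~\ref{lemma:associated} gives
\[
\Ass_A\bigl(\Ext_A^i(M,N/I^nN)\bigr) \;=\; \bigl\{\, \mathfrak{q} \cap A \;:\; \mathfrak{q} \in \Ass_{\widehat{A}}\bigl(\Ext_{\widehat{A}}^i(\widehat{M}, \widehat{N}/\widehat{I}^n \widehat{N})\bigr) \,\bigr\}.
\]
Taking the union over $n \ge 1$ and $i \ge 0$ and applying the contraction map $\mathfrak{q} \mapsto \mathfrak{q} \cap A$ to the (already established) finite set on the right finishes the proof.

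There is no real obstacle here; the argument is purely formal. The only minor point to be careful about is the flat base change isomorphism for Ext, which holds because $M$ is finitely generated over the Noetherian ring $A$ and $A \to \widehat{A}$ is flat, so one may resolve $M$ by finitely generated free $A$-modules and commute the finitely generated Hom with the flat tensor product $- \otimes_A \widehat{A}$. Everything else amounts to invoking the already-proven Theorem~\ref{theorem:Q mod f finiteness} and Lemma~\ref{lemma:associated}.
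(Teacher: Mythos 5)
Your proof is correct and follows the same route as the paper's proof of Corollary~\ref{corollary:finiteness}: pass to the completion $\widehat{A} = Q/({\bf f})$, note that $\projdim_Q$ of the completed module is finite since $Q$ is regular local, apply Theorem~\ref{theorem:Q mod f finiteness}, and transfer the finiteness back via the flat base change isomorphism for Ext and Lemma~\ref{lemma:associated}. You are a bit more explicit than the paper about justifying the base change isomorphism and about distinguishing $\projdim_Q(\widehat{M})$ from $\projdim_Q(M)$, but this is the same argument.
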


\begin{proof}
 Since $A$ is a local complete intersection ring, $\widehat{A} = Q/({\bf f})$, where $Q$ is a regular local ring
 and ${\bf f} = f_1,\ldots,f_c$ is a $Q$-regular sequence. Since $Q$ is a regular local ring, $\projdim_Q(M)$ is finite.
 Then, by applying Theorem~\ref{theorem:Q mod f finiteness} for the ring $\widehat{A}$, we have that
 \[
   \bigcup_{n, i \ge 0} \Ass_{\widehat{A}}\left(\Ext_A^i(M,N/I^nN)\otimes_A\widehat{A}\right) = 
   \bigcup_{n, i \ge 0} \Ass_{\widehat{A}}
   \left(\Ext_{\widehat{A}}^i\left(\widehat{M},\widehat{N}/{(I\widehat{A})}^n\widehat{N}\right)\right)
 \]
 is a finite set, and hence the result follows from Lemma~\ref{lemma:associated}. 
\end{proof}

\section{Asymptotic Associate Primes: Stability}\label{Asymptotic associated primes: Stability}
 
 In the present section, we analyze the asymptotic behaviour of the sets of associated prime ideals of Ext-modules
 $\Ext_A^i(M, N/I^n N)$, $(n, i \ge 0)$, where $M$ and $N$ are finitely generated modules over a local complete intersection
 ring $A$, and $I \subseteq A$ is an ideal (see Corollary~\ref{corollary:stability}). We first prove the following theorem:
 
\begin{theorem}\label{theorem:Q mod f stability}\index{stability of associate primes}
 Let $Q$ be a ring of finite Krull dimension, and let ${\bf f} = f_1,\ldots,f_c$ be a $Q$-regular sequence. Set $A :=
 Q/({\bf f})$. Let $M$ and $N$ be finitely generated $A$-modules, where $\projdim_Q(M)$ is finite, and let $I$ be an ideal of $A$.
 Then there exist two non-negative integers $n_0, i_0$ such that for all $n \ge n_0$ and $i \ge i_0$, we have that
 \begin{align*}
  \Ass_A\left(\Ext_A^{2i}(M,N/I^nN)\right) &= \Ass_A\left(\Ext_A^{2 i_0}(M,N/I^{n_0}N)\right), \\
  \Ass_A\left(\Ext_A^{2i+1}(M,N/I^nN)\right) &= \Ass_A\left(\Ext_A^{2 i_0 + 1}(M,N/I^{n_0}N)\right).
 \end{align*}
\end{theorem}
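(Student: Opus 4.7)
The plan is to reduce the stability statement to a per-prime dichotomy over a finite set, pass to the local case by localization, and then invoke a Hilbert-function-type lemma on the asymptotic behaviour of $\Hom_A(k, V_{(n,i)})$. Set $V_{(n,i)} := \Ext_A^i(M, N/I^n N)$. By Theorem~\ref{theorem:Q mod f finiteness} the union
\[
\mathcal{A} := \bigcup_{n, i \ge 0} \Ass_A(V_{(n,i)})
\]
is finite. It therefore suffices to show that for each $\mathfrak{p} \in \mathcal{A}$ and each parity $l \in \{0,1\}$, there exist $n_l, i_l \ge 0$ such that either $\mathfrak{p} \in \Ass_A(V_{(n, 2i+l)})$ for all $n \ge n_l,\ i \ge i_l$, or $\mathfrak{p} \notin \Ass_A(V_{(n, 2i+l)})$ for all such $(n,i)$. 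Taking the maxima over the finite set $\mathcal{A} \times \{0, 1\}$ then produces the required uniform $n_0, i_0$.

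To treat a single $\mathfrak{p}$, I would localize. Writing $\mathfrak{q}$ for the preimage of $\mathfrak{p}$ in $Q$, one has $A_\mathfrak{p} = Q_\mathfrak{q}/(\mathbf{f})$; the sequence $\mathbf{f}$ is still $Q_\mathfrak{q}$-regular, $Q_\mathfrak{q}$ has finite Krull dimension, $\projdim_{Q_\mathfrak{q}}(M_\mathfrak{p})$ remains finite, and Ext commutes with localization for finitely generated modules. Hence all hypotheses are preserved, and one may assume $A$ is local with maximal ideal $\mathfrak{m} = \mathfrak{p}$ and residue field $k$. The problem is then reduced, via the standard criterion $\mathfrak{m} \in \Ass_A(W) \Longleftrightarrow \Hom_A(k, W) \neq 0$, to showing the corresponding dichotomy for the function $(n,i) \mapsto \Hom_A(k, V_{(n, 2i+l)})$.

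The core is a Hilbert-function-type lemma (to be recorded as Lemma~\ref{lemma:well behaved polynomial}) which asserts that for each $l \in \{0, 1\}$ the above function is either identically zero or identically nonzero on a cofinal region $\{n \ge n_l,\ i \ge i_l\}$. The plan for this lemma is to exploit the bigraded $\mathscr{S}$-module machinery of Section~\ref{Module structure}, where $\mathscr{S} = \mathscr{R}(I)[t_1, \ldots, t_c]$ with $\deg(t_j) = (0, 2)$. Applying $\Hom_A(k, -)$ to the short exact sequences
\[
0 \longrightarrow I^n N/I^{n+1} N \longrightarrow N/I^{n+1} N \longrightarrow N/I^n N \longrightarrow 0
\]
and summing over $n$ and $i$ produces long exact sequences of bigraded objects relating $V$ to the finitely generated bigraded $\mathscr{S}$-module $\mathscr{E}(\gr_I(N))$ guaranteed by Theorem~\ref{theorem:finitely generated}, and in particular to the finitely generated image module $U$ defined in the proof of Theorem~\ref{theorem:Q mod f finiteness}. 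Standard Hilbert-polynomial theory for finitely generated bigraded $\mathscr{S}$-modules, together with the fact that $\deg(t_j) = 2$ forces a period-$2$ behaviour in the $i$-direction, should show that $\dim_k \Hom_A(k, U_{(n,i)})$ is, on each parity class of $i$ and for $n, i$ large, a polynomial in $(n, i)$; such a polynomial is either identically zero or positive for all sufficiently large arguments, yielding the dichotomy for $U$. One then transfers the dichotomy from $U$ to $V$ by iterating the comparison $V_{(n,i)}/U_{(n,i)} \hookrightarrow V_{(n-1, i)}$ in the $n$-direction, using again that each stage is controlled by $U$.

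The main obstacle is the Hilbert-function step just described. Two difficulties stand out: first, $\Hom_A(k, -)$ is only left exact, so carrying polynomial control through the long exact sequences requires simultaneous control of the bigraded $\Ext_A^1(k, -)$ terms; and second, the bigraded module $\bigoplus_{n, i} V_{(n, i)}$ is not itself finitely generated over $\mathscr{S}$, so quasi-polynomial behaviour of its socle dimensions must be bootstrapped from that of the finitely generated submodule $U$ via the chain of maps $V_{(n, i)} \to V_{(n-1, i)}$. Making this bootstrap uniform in both $n$ and $i$, while respecting the period-$2$ structure in $i$ introduced by the cohomology operators, is the technical heart of the argument.
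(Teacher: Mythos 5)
Your high-level strategy matches the paper's: reduce to a finite set of primes via Theorem~\ref{theorem:Q mod f finiteness}, localize at each prime, translate membership of the maximal ideal in the associated primes to non-vanishing of $\Hom_A(k, V_{(n,2i+l)})$, and then prove a dichotomy lemma for this quantity using the bigraded $\mathscr{S}$-module machinery. You also correctly flag the two obstacles, namely left-exactness of $\Hom_A(k,-)$ and the failure of $\bigoplus_{n,i} V_{(n,i)}$ to be finitely generated over $\mathscr{S}$. However, the step where you ``transfer the dichotomy from $U$ to $V$ by iterating $V_{(n,i)}/U_{(n,i)} \hookrightarrow V_{(n-1,i)}$'' has a genuine gap.

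After applying $\Hom_A(k,-)$ to the sequences coming from $0 \to I^n N/I^{n+1}N \to N/I^{n+1}N \to N/I^n N \to 0$, one obtains exact sequences relating $\Hom_A(k, V_{(n+1,2i+l)})$ and $\Hom_A(k, V_{(n,2i+l)})$ through three finitely generated bigraded $\mathscr{S}$-modules built from the comparison maps (the socle of the image module $X$, and the two cokernel modules $C$, $D$ in the paper's notation). By a West-type either/or stability for finitely generated bigraded $\mathscr{S}$-modules (Lemma~\ref{lemma: West Proposition 5.1}), each of these three is either eventually non-zero or eventually zero for $n,i \gg 0$. In the three subcases where one of them is eventually non-zero, non-vanishing propagates directly to $\Hom_A(k, V_{(n,2i+l)})$. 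But in the fourth subcase, where all three vanish for $n,i \gg 0$, the sequences yield only the stability $\Hom_A(k, V_{(n+1,2i+l)}) \cong \Hom_A(k, V_{(n,2i+l)})$ for large $n,i$. Your decreasing chain $\Hom_A(k, V_{(n,2i+l)}) \hookrightarrow \Hom_A(k, V_{(n-1,2i+l)})$ stabilizes in $n$ for each fixed $i$, but the point of stabilization may depend on $i$, and the argument as written gives no information on whether the stable value is zero or non-zero as $i \to \infty$.

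The paper closes this gap with Lemma~\ref{lemma:polynomial}: $(n,i) \mapsto \lambda_A\left(\Hom_A\left(k, V_{(n,2i+l)}\right)\right)$ is given by a polynomial in $(n,i)$ with rational coefficients for $n,i \gg 0$. This is established not from the sequence you use, but from the \emph{other} exact sequence $0 \to I^n N \to N \to N/I^n N \to 0$, whose outer Ext-terms $\bigoplus_{n,i}\Ext_A^i(M, I^n N)$ and $\bigoplus_{i}\Ext_A^i(M,N)$ are controlled by Theorem~\ref{theorem:finitely generated} and Gulliksen's theorem respectively, combined with a bigraded Hilbert--Serre theorem adapted to the weighting $\deg(t_j) = (0,2)$ (Theorem~\ref{theorem: Hilbert-Serre, Even-Odd}). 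With this polynomial formula in hand, the relation $f(n+1,i) = f(n,i)$ for $n,i \gg 0$ in the fourth subcase forces the two-variable polynomial to be independent of $n$, so $f(n,i) = h_0(i)$, and a non-zero one-variable polynomial has only finitely many roots, giving the dichotomy. Your sketch does not contain this ingredient, nor an equivalent substitute (such as fixing a single large $n_1$, applying the single-variable Hilbert--Serre theorem to the Gulliksen-finite $A[t_1,\ldots,t_c]$-module $\bigoplus_i \Hom_A(k, V_{(n_1,i)})$, and then propagating via the $n$-isomorphisms from the fourth subcase), so the bootstrap does not close as stated.
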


 We give the following example which shows that two sets of stable values of associate primes can occur.
 
 \begin{example}\label{example of two sets of stable values of associate primes}\index{example on stability of associate primes}
  Let $Q = k[[u,x]]$ be a ring of formal power series in two indeterminates over a field $k$. We set $A := Q/(ux)$,
  $M = N := Q/(u)$ and $I = 0$. Clearly, $A$ is a local complete intersection ring, and $M$, $N$ are $A$-modules.
  Then, for each $i \ge 1$, we have that
  \[
   \Ext_A^{2i-1}(M,N) = 0 \quad\mbox{and}\quad \Ext_A^{2i}(M,N) \cong k;
  \]
  see \cite[Example~4.3]{AB00}. So, in this example, we see that
  \begin{align*}
   & \Ass_A\left(\Ext_A^{2i-1}(M,N/I^n N)\right) = \phi \quad\mbox{and} \\
   & \Ass_A\left(\Ext_A^{2i}(M,N/I^n N)\right) = \Ass_A(k) ~(\neq \phi)
  \end{align*}
  for all positive integers $n$ and $i$.
 \end{example}
 
 Now we prove Theorem~\ref{theorem:Q mod f stability}. To prove this, we assume the following lemma which we prove at the end of
 this section.
 
\begin{lemma}\label{lemma:well behaved polynomial}\index{Hilbert function}
 Let $(Q,\mathfrak{n})$ be a local ring with residue field $k$, and let ${\bf f} = f_1,\ldots,f_c$ be a $Q$-regular
 sequence. Set $A := Q/({\bf f})$. Let $M$ and $N$ be finitely generated $A$-modules, where $\projdim_Q(M)$ is finite, and let $I$ be
 an ideal of $A$. Then, for every fixed $l = 0, 1$, we have that
 \begin{align*}
  \mbox{either}\quad & \Hom_A\left(k,\Ext_A^{2i + l}(M,N/I^nN)\right) \neq 0 \quad \mbox{for all }~ n, i \gg 0;\\
  \mbox{or}\quad     & \Hom_A\left(k,\Ext_A^{2i + l}(M,N/I^nN)\right) = 0 \quad \mbox{for all }~ n, i \gg 0.
 \end{align*}
\end{lemma}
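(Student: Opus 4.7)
The plan is to combine the bigraded module structures of Section~\ref{Module structure} with Theorem~\ref{theorem:finitely generated} and classical Hilbert function facts. First I would assume that $I$ is a proper ideal of $A$, i.e.\ $I \subseteq \mathfrak{m}$ (otherwise $N/I^n N = 0$ for $n \geq 1$ and there is nothing to prove). Following the proof of Theorem~\ref{theorem:Q mod f finiteness}, I set $\mathcal{V} := \bigoplus_{n, i \geq 0} V_{(n, i)}$ and $U := \Image(\Phi) = \Ker(\Psi)$; by Theorem~\ref{theorem:finitely generated} applied to the finitely generated graded $\mathscr{R}(I)$-module $\gr_I(N)$, $U$ is a finitely generated bigraded $\mathscr{S}$-module. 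The key observation is that $\Hom_A(k, U)$, viewed as a bigraded $\mathscr{S}$-submodule of $U$, is annihilated by $\mathfrak{m}$ (trivially) and by $\mathscr{R}(I)_+$ (since $I \subseteq \mathfrak{m}$). Hence $\Hom_A(k, U)$ is a module over $\mathscr{S}/(\mathfrak{m} + \mathscr{R}(I)_+)\mathscr{S} \cong k[t_1, \ldots, t_c]$, and Noetherianity of $\mathscr{S}$ makes it finitely generated as such. Since each $t_j$ has bidegree $(0, 2)$, the $k[t_1, \ldots, t_c]$-action preserves the first index, so the nonzero bigraded components of $\Hom_A(k, U)$ are concentrated in finitely many first-index values. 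Writing $N_0$ for the largest such value, I would obtain $\Hom_A(k, U_{(n, i)}) = 0$ for all $n > N_0$ and $i \geq 0$.

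Next, applying $\Hom_A(k, -)$ to the short exact sequence $0 \to U_{(n, i)} \to V_{(n, i)} \to V_{(n, i)}/U_{(n, i)} \to 0$ and composing with the embedding $V_{(n, i)}/U_{(n, i)} \hookrightarrow V_{(n-1, i)}$ gives, for every $n > N_0$, an injection $\Hom_A(k, V_{(n, i)}) \hookrightarrow \Hom_A(k, V_{(n-1, i)})$. Iterating, $\Hom_A(k, V_{(n, i)}) \hookrightarrow \Hom_A(k, V_{(N_0, i)})$ for all $n \geq N_0$. By Gulliksen's Finiteness Theorem (Section~\ref{para:Gulliksen}), the module $\bigoplus_i V_{(N_0, i)} = \Ext_A^{\star}(M, N/I^{N_0 + 1} N)$ is a finitely generated graded $\mathscr{T} = A[t_1, \ldots, t_c]$-module, whence its socle $\bigoplus_i \Hom_A(k, V_{(N_0, i)})$ is a finitely generated graded $k[t_1, \ldots, t_c]$-module. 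Since $\deg(t_j) = 2$, standard Hilbert function theory shows that the function $i \mapsto \dim_k \Hom_A(k, V_{(N_0, 2i + l)})$ is, for each parity $l \in \{0, 1\}$, eventually a polynomial in $i$, hence either identically $0$ or strictly positive for $i \gg 0$. In the vanishing case, the injection above forces $\Hom_A(k, V_{(n, 2i + l)}) = 0$ for all $n \geq N_0$ and $i \gg 0$, which gives the ``eventually zero'' alternative.

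The main obstacle is the opposite case: a priori the non-increasing sequence $n \mapsto \dim_k \Hom_A(k, V_{(n, 2i + l)})$ could drop to $0$ for large $n$, and I must rule this out uniformly in $i$. To handle it I would apply Theorem~\ref{theorem:finitely generated} a second time, now to the Rees module $\mathscr{R}(I, N) = \bigoplus_{n \geq 0} I^n N$, so that $\bigoplus_{n, i} \Ext_A^i(M, I^n N)$ is a finitely generated bigraded $\mathscr{S}$-module. Combined with Puthenpurakal's stability Theorem~\ref{theorem: Tony's result} and the long exact Ext sequence coming from $0 \to I^n N \to N \to N/I^n N \to 0$, this controls the kernels and cokernels of the comparison maps $\Ext_A^i(M, I^n N) \to \Ext_A^i(M, N)$ through which each $V_{(n, i)}$ is assembled. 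A further Hilbert function argument --- again exploiting that the socle of any finitely generated bigraded $\mathscr{S}$-module is finitely generated over $k[t_1, \ldots, t_c]$ and thus supported in finitely many first-index values --- should yield a uniform positive lower bound, establishing that $\Hom_A(k, V_{(n, 2i + l)}) \neq 0$ for all $n, i \gg 0$ and completing the proof.
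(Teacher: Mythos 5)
The argument breaks at the claimed key observation that $\Hom_A(k,U)$ is annihilated by $\mathscr{R}(I)_+$. An element $uX^s \in \mathscr{R}(I)_+$ (with $u \in I^s$, $s \geq 1$) acts on the bigraded module $U$ as an $A$-linear map $U_{(n,i)} \to U_{(n+s,i)}$ between \emph{different} graded components, induced on Ext by the multiplication map on the Rees (or associated graded) module; it is not multiplication by the element $u \in \mathfrak{m}$ on the single $A$-module $U_{(n,i)}$. The element $u$ viewed in $\mathscr{R}(I)_0 = A$ does kill the socle, but $uX^s \in \mathscr{R}(I)_s$ is a different element of $\mathscr{R}(I)$ and acts quite differently. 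In fact $\Hom_A(k,U)$ is a finitely generated module over $\mathscr{S}\otimes_A k = F(I)[t_1,\ldots,t_c]$, where $F(I)$ is the fiber cone, and $F(I) \neq k$ whenever $I \neq 0$. For a concrete counterexample to the observation take $Q=A=k[[x]]$ (so $c=0$), $M=k$, $N=A$, $I=(x)$: then $U_{(n,0)} \cong k$ for every $n \geq 0$, the action of $xT \in \mathscr{R}(I)_1$ gives an isomorphism $U_{(n,0)} \to U_{(n+1,0)}$, and $\Hom_A(k,U_{(n,0)}) \neq 0$ for all $n$, so the first-index support of $\Hom_A(k,U)$ is unbounded.

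Once this collapses, the injections $\Hom_A(k,V_{(n,i)}) \hookrightarrow \Hom_A(k,V_{(n-1,i)})$ for $n > N_0$ do not exist, and everything downstream is vacated; moreover, your last paragraph only sketches a ``uniform positive lower bound'' without actually supplying it. The paper handles exactly this difficulty by a different device. A bigraded Hilbert--Serre theorem (Theorem~\ref{theorem: Hilbert-Serre, Even-Odd}, via Corollary~\ref{corollary: Hilbert-Serre}) is invoked in Lemma~\ref{lemma:polynomial} to show that $\lambda_A\left(\Hom_A(k,V_{(n,2i+l)})\right)$ agrees with a genuine two-variable polynomial $P(n,i)$ for all $n,i \gg 0$. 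The long exact sequence you write down is then combined with West's dichotomy (Lemma~\ref{lemma: West Proposition 5.1}), applied to the finitely generated bigraded $\mathscr{S}$-modules furnished by Theorem~\ref{theorem:finitely generated}: either one of those modules contributes a nonvanishing socle, which directly forces $\Hom_A(k,V_{(n,2i+l)}) \neq 0$ for $n,i\gg 0$, or they all vanish, forcing $P(n+1,i)=P(n,i)$ so that $P$ depends only on $i$; a nonzero single-variable polynomial is eventually nonzero. The two-variable polynomiality is what uniformizes the double limit $n,i\to\infty$, and your sketch has no substitute for it.
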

 
\begin{proof}[Proof of Theorem~\ref{theorem:Q mod f stability}]
 By virtue of Theorem~\ref{theorem:Q mod f finiteness}, we may assume that
 \[
  \bigcup_{n \ge 1} \bigcup_{i \ge 0}\Ass_A\left(\Ext_A^i(M,N/I^n N)\right) = \{\mathfrak{p}_1,\mathfrak{p}_2,\ldots,\mathfrak{p}_l\}.
 \]
 Set $V_{(n,i)} := \Ext_A^i(M,N/I^n N)$ for each $n, i \ge 0$, and $V := \bigoplus_{n, i \ge 0} V_{(n,i)}$.
 
 We first prove that there exist some $n', i' \ge 0$ such that
 \begin{equation}\label{equation:even stability}
  \Ass_A\left( V_{(n,2i)} \right) = \Ass_A\left( V_{(n',2i')} \right) \quad \mbox{for all }n \ge n' \mbox{ and } i \ge i'.
 \end{equation}
 To prove the claim \eqref{equation:even stability}, it is enough to prove that for each $\mathfrak{p}_j$, where $1 \le j \le l$,
 there exist some $n_{j_0}, i_{j_0} \ge 0$ such that exactly one of the following alternatives must hold:
 \begin{align*}
  \mbox{either}\quad \mathfrak{p}_j &\in \Ass_A\left( V_{(n,2i)} \right)\quad\mbox{for all }n \ge n_{j_0} \mbox{ and } i \ge i_{j_0}; \\
  \mbox{or}\quad     \mathfrak{p}_j &\notin \Ass_A\left( V_{(n,2i)} \right)\quad\mbox{for all }n \ge n_{j_0} \mbox{ and } i \ge i_{j_0}.
 \end{align*}
 Localizing at $\mathfrak{p}_j$, and replacing $A_{\mathfrak{p}_j}$ by $A$ and $\mathfrak{p}_j A_{\mathfrak{p}_j}$ by $\mathfrak{m}$,
 it is now enough to prove that there exist some $n', i' \ge 0$ such that
 \begin{align}
  \mbox{either}\quad \mathfrak{m} &\in \Ass_A\left( V_{(n,2i)} \right) \quad \mbox{for all } n \ge n' \mbox{ and } i \ge i'; \label{equation:m in} \\
  \mbox{or}\quad     \mathfrak{m} &\notin \Ass_A\left( V_{(n,2i)} \right)\quad\mbox{for all }n \ge n' \mbox{ and } i \ge i'. \label{equation:m not in}
 \end{align}
 But, in view of Lemma~\ref{lemma:well behaved polynomial}, we get that there exist some $n', i' \ge 0$ such that
 \begin{align*}
  \mbox{either}\quad \Hom_A\left( k, V_{(n,2i)} \right) &\neq 0 \quad\mbox{for all }n \ge n' \mbox{ and } i \ge i'; \\
  \mbox{or}\quad     \Hom_A\left( k, V_{(n,2i)} \right) &= 0\quad\mbox{for all }n \ge n' \mbox{ and } i \ge i',
 \end{align*}
 which is equivalent to that either \eqref{equation:m in} is true, or \eqref{equation:m not in} is true.
 
 Applying a similar procedure as in the even case, we obtain that there exist some $n'', i'' \ge 0$ such that
 \begin{equation*}
  \Ass_A\left( V_{(n,2i+1)} \right) = \Ass_A\left( V_{(n'',2i''+1)} \right) \quad \mbox{for all }n \ge n'' \mbox{ and } i \ge i''.
 \end{equation*}
 Now $(n_0,i_0) := \max\{(n',i'),(n'',i'')\}$ satisfies the required result of the theorem.
\end{proof}

An immediate corollary of the Theorem~\ref{theorem:Q mod f stability} is the following:

\begin{corollary}\label{corollary:stability}\index{stability of associate primes}
 Let $(A,\mathfrak{m})$ be a local complete intersection ring. Let $M$ and $N$ be finitely generated $A$-modules, and let $I$ be
 an ideal of $A$. Then there exist two non-negative integers $n_0$ and $i_0$ such that for all $n \ge n_0$ and $i \ge i_0$, we have that
 \begin{align*}
  \Ass_A\left(\Ext_A^{2i}(M,N/I^nN)\right) &= \Ass_A\left(\Ext_A^{2 i_0}(M,N/I^{n_0}N)\right), \\
  \Ass_A\left(\Ext_A^{2i+1}(M,N/I^nN)\right) &= \Ass_A\left(\Ext_A^{2 i_0 + 1}(M,N/I^{n_0}N)\right).
 \end{align*}
\end{corollary}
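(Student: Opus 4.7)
The strategy is to reduce to Theorem~\ref{theorem:Q mod f stability} via $\mathfrak{m}$-adic completion, exactly in the spirit of how Corollary~\ref{corollary:finiteness} was deduced from Theorem~\ref{theorem:Q mod f finiteness}. Since $A$ is a local complete intersection, by definition we have $\widehat{A} = Q/({\bf f})$ for some complete regular local ring $Q$ and some $Q$-regular sequence ${\bf f} = f_1,\ldots,f_c$. Because $Q$ is regular local, every finitely generated $Q$-module has finite projective dimension; in particular $\projdim_Q(\widehat{M}) < \infty$. So the hypotheses of Theorem~\ref{theorem:Q mod f stability} are satisfied by $\widehat{A}$ together with the $\widehat{A}$-modules $\widehat{M}$, $\widehat{N}$ and the ideal $I\widehat{A}$.

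Applying Theorem~\ref{theorem:Q mod f stability} in that setting yields non-negative integers $n_0$ and $i_0$ such that for all $n \ge n_0$ and $i \ge i_0$, the sets
\[
 \Ass_{\widehat{A}}\!\left(\Ext_{\widehat{A}}^{2i}\!\left(\widehat{M},\widehat{N}/(I\widehat{A})^n\widehat{N}\right)\right)
 \quad\text{and}\quad
 \Ass_{\widehat{A}}\!\left(\Ext_{\widehat{A}}^{2i+1}\!\left(\widehat{M},\widehat{N}/(I\widehat{A})^n\widehat{N}\right)\right)
\]
are equal to their respective values at $(n_0, i_0)$. The next step is to transfer this stability back to $A$. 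Since $\widehat{A}$ is flat over $A$ and $M$ is finitely generated, the standard base-change identity gives
\[
 \Ext_A^i(M, N/I^n N) \otimes_A \widehat{A} \;\cong\; \Ext_{\widehat{A}}^i\!\left(\widehat{M},\widehat{N}/(I\widehat{A})^n\widehat{N}\right)
\]
for all $n, i \ge 0$. Combining this isomorphism with Lemma~\ref{lemma:associated} applied to the finitely generated $A$-module $D = \Ext_A^i(M, N/I^n N)$, we obtain
\[
 \Ass_A\!\left(\Ext_A^i(M, N/I^n N)\right)
 \;=\; \left\{\mathfrak{q}\cap A \,:\, \mathfrak{q}\in \Ass_{\widehat{A}}\!\left(\Ext_{\widehat{A}}^i\!\left(\widehat{M},\widehat{N}/(I\widehat{A})^n\widehat{N}\right)\right)\right\}.
\]
Since contraction to $A$ is a function of sets, equality of the $\widehat{A}$-associated prime sets for $n \ge n_0$, $i \ge i_0$ (separately in the even and odd parities) forces equality of their images under $\mathfrak{q}\mapsto \mathfrak{q}\cap A$, which is exactly the two asserted equalities of $A$-associated prime sets.

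There is no real obstacle here: the argument is a mechanical composition of the affine/complete statement (Theorem~\ref{theorem:Q mod f stability}) with the completion lemma (Lemma~\ref{lemma:associated}), and the only point worth being careful about is the base-change isomorphism for Ext, which is immediate from flatness of $A \to \widehat{A}$ together with the fact that $M$ admits a resolution by finitely generated free $A$-modules. The same pair $(n_0, i_0)$ produced for $\widehat{A}$ serves as the pair required for $A$.
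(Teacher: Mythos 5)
Your proof is correct and follows exactly the same route as the paper: apply Theorem~\ref{theorem:Q mod f stability} to $\widehat{A}=Q/({\bf f})$, use flat base change to identify $\Ext_A^i(M,N/I^nN)\otimes_A\widehat{A}$ with the corresponding $\Ext$ over $\widehat{A}$, and descend via Lemma~\ref{lemma:associated}. The paper leaves the base-change isomorphism implicit, but that is the only difference.
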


\begin{proof}
 Assume $\widehat{A} = Q/({\bf f})$, where $Q$ is a regular local ring and ${\bf f}=f_1,\ldots,f_c$ is a $Q$-regular
 sequence. Then, by applying Theorem~\ref{theorem:Q mod f stability} for the ring $\widehat{A}$, we see that there exist $n_0, i_0
 \ge 0$ such that for all $n \ge n_0$ and $i \ge i_0$, we have that
 \begin{align*}
  \Ass_{\widehat{A}}\left(\Ext_A^{2i}(M,N/I^nN)\otimes_A\widehat{A}\right) & = 
  \Ass_{\widehat{A}}\left(\Ext_A^{2i_0}(M,N/I^{n_0}N)\otimes_A\widehat{A}\right), \\
  \Ass_{\widehat{A}}\left(\Ext_A^{2i+1}(M,N/I^nN)\otimes_A\widehat{A}\right) & = 
  \Ass_{\widehat{A}}\left(\Ext_A^{2i_0+1}(M,N/I^{n_0}N)\otimes_A\widehat{A}\right).
 \end{align*}
 The result now follows from Lemma~\ref{lemma:associated}.
\end{proof}

 We need the following result to prove Lemma~\ref{lemma:well behaved polynomial}.
 
\begin{lemma}\label{lemma:polynomial}\index{lemma on Hilbert function}\index{Hilbert function}
 Let $(Q,\mathfrak{n})$ be a local ring with residue field $k$, and let ${\bf f} = f_1,\ldots,f_c$ be a $Q$-regular
 sequence. Set $A := Q/({\bf f})$. Let $M$ and $N$ be finitely generated $A$-modules, where $\projdim_Q(M)$ is finite, and let $I$ be
 an ideal of $A$. Then
 \[
   \lambda_A\left(\Hom_A\left(k,\Ext_A^{2i}(M,N/I^nN)\right)\right) ~\mbox{ and }~
   \lambda_A\left(\Hom_A\left(k,\Ext_A^{2i+1}(M,N/I^nN)\right)\right)
 \]
 are given by polynomials in $n,i$ with rational coefficients for all sufficiently large $(n,i)$.
\end{lemma}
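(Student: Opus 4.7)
The plan is to exploit the bigraded $\mathscr{S}$-module structure on $V := \mathscr{E}(\mathcal{L}) = \bigoplus_{n,i \ge 0} \Ext_A^i(M, N/I^{n+1}N)$ introduced in Section~\ref{para:module structure 3}. Although $V$ itself is not finitely generated over $\mathscr{S} = \mathscr{R}(I)[t_1,\ldots,t_c]$, the argument used in the proof of Theorem~\ref{theorem:Q mod f finiteness}, combined with Theorem~\ref{theorem: finite generation result} applied to $\gr_I(N)$, produces a four-term exact sequence of bigraded $\mathscr{S}$-modules
\[
 0 \longrightarrow U \longrightarrow V \stackrel{\Psi}{\longrightarrow} V(-1,0) \longrightarrow C \longrightarrow 0,
\]
in which $U = \Image(\Phi)$ is a quotient of $\mathscr{E}(\gr_I(N))$ and $C$, the cokernel of $\Psi$, embeds (via the next connecting map of the long exact Ext-sequence) into a shift of $\mathscr{E}(\gr_I(N))$; both $U$ and $C$ are therefore finitely generated over $\mathscr{S}$. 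I would split this into the short exact sequences $(\alpha):\, 0 \to U \to V \to T \to 0$ and $(\beta):\, 0 \to T \to V(-1,0) \to C \to 0$, where $T := \Image(\Psi)$.

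The key observation is that for any bigraded $\mathscr{S}$-module $X$ and any integer $j \ge 0$, the action of $\mathscr{S}$ on $X$ by bihomogeneous $A$-linear maps induces a bigraded $\mathscr{S}$-action on $\Ext_A^j(k,X)$ (computed from a free resolution $P_\bullet \to k$); since $\mathfrak{m}$ annihilates each $\Ext_A^j(k,X)$, the resulting module is bigraded over $\mathscr{S}/\mathfrak{m}\mathscr{S} = F(I)[t_1,\ldots,t_c]$, where $F(I)$ denotes the fiber cone of $I$. Moreover, when $X$ is finitely generated over $\mathscr{S}$, each $\Ext_A^j(k,X)$ is a subquotient of a finite direct sum of copies of $X$ and is therefore finitely generated over $F(I)[t_1,\ldots,t_c]$. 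Now $F(I)$ is standard $\mathbb{N}$-graded over $k$ and the $t_j$ sit in bidegree $(0,2)$, so $F(I)[t_1,\ldots,t_c]$ is bigenerated in bidegrees $(1,0)$ and $(0,2)$; hence the Hilbert function of any finitely generated bigraded $F(I)[t_1,\ldots,t_c]$-module is quasi-polynomial in $(n,i)$ of period dividing $2$ in the second index, i.e.\ a polynomial in $(n,i)$ for each fixed parity of $i$ and all sufficiently large $(n,i)$.

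Applying $\Hom_A(k,-)$ to $(\alpha)$ and $(\beta)$ and taking $A$-lengths in bidegree $(n,i)$ yields
\begin{align*}
 \lambda_A\bigl(\Hom_A(k,V_{(n,i)})\bigr) &= \lambda_A\bigl(\Hom_A(k,U_{(n,i)})\bigr) + \lambda_A\bigl(\Hom_A(k,T_{(n,i)})\bigr) - \lambda_A\bigl(D_1(n,i)\bigr), \\
 \lambda_A\bigl(\Hom_A(k,T_{(n,i)})\bigr) &= \lambda_A\bigl(\Hom_A(k,V_{(n-1,i)})\bigr) - \lambda_A\bigl(D_2(n,i)\bigr),
\end{align*}
where $D_1$ is the cokernel of $\Hom_A(k,V) \to \Hom_A(k,T)$ and embeds into $\Ext_A^1(k,U)$ via the connecting homomorphism, while $D_2$ is the cokernel of $\Hom_A(k,T) \to \Hom_A(k,V(-1,0))$ and embeds into $\Hom_A(k,C)$. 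By the naturality of the long exact Ext-sequence, both embeddings are morphisms of bigraded $F(I)[t_1,\ldots,t_c]$-modules, so $D_1$ and $D_2$ are finitely generated (being submodules of finitely generated modules over the Noetherian ring $F(I)[t_1,\ldots,t_c]$). Writing $h(n,i) := \lambda_A(\Hom_A(k,V_{(n,i)}))$ and substituting gives
\[
 h(n,i) - h(n-1,i) = \lambda_A\bigl(\Hom_A(k,U_{(n,i)})\bigr) - \lambda_A\bigl(D_1(n,i)\bigr) - \lambda_A\bigl(D_2(n,i)\bigr),
\]
whose right-hand side is polynomial in $(n,i)$ for each fixed parity of $i$ at large $(n,i)$. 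The base value $h(0,i) = \lambda_A(\Hom_A(k, \Ext_A^i(M, N/IN)))$ is itself polynomial in $i$ for each parity, being the Hilbert function of $\Hom_A(k,-)$ applied to the finitely generated $\mathscr{T}$-module $\Ext_A^{\star}(M,N/IN)$ furnished by Section~\ref{para:Gulliksen}. Telescoping then produces the desired polynomial expression for $h(n,i)$, and the trivial shift $n \mapsto n-1$ converts the indexing $V_{(n,i)} = \Ext_A^i(M, N/I^{n+1}N)$ into the $\Ext_A^i(M, N/I^n N)$ appearing in the statement of the lemma.

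The main technical point that will require care is verifying that the connecting homomorphisms in the long exact Ext-sequences of $(\alpha)$ and $(\beta)$ are morphisms of bigraded $\mathscr{S}$-modules, so that $D_1$ and $D_2$ really do inherit the bigraded $F(I)[t_1,\ldots,t_c]$-structure used above. This follows from the naturality of the $\delta$-functor construction applied to multiplication maps by homogeneous elements of $\mathscr{S}$, but it has to be checked carefully because $\mathscr{S}$ carries a non-standard bigrading (with generators in bidegrees $(1,0)$ and $(0,2)$).
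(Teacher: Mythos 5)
Your proof is correct but takes a genuinely different route from the paper's. The paper derives the relevant long exact sequence from $0 \to I^n N \to N \to N/I^n N \to 0$, producing a four-term exact sequence $U \to T \to V \to U(0,1)$ of bigraded $\mathscr{S}$-modules in which $T := \bigoplus_{n,i}\Ext_A^i(M,N)$ has graded pieces independent of $n$; the lengths $\lambda_A\left(\Hom_A\left(k,T_{(n,2i)}\right)\right)$ are then governed by the single-graded even--odd Hilbert--Serre theorem, and the polynomiality of $\lambda_A\left(\Hom_A\left(k,V_{(n,2i)}\right)\right)$ drops out in one pass by additivity of length along the two short exact sequences spliced out of the four-term one. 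You instead use $0 \to I^nN/I^{n+1}N \to N/I^{n+1}N \to N/I^n N \to 0$ (the sequence the paper reserves for Theorem~\ref{theorem:Q mod f finiteness} and Lemma~\ref{lemma:well behaved polynomial}); because both middle terms of your four-term sequence are shifts of $V$ itself, this forces a telescoping recursion $h(n,i)-h(n-1,i) = \lambda_A\left(\Hom_A\left(k,U_{(n,i)}\right)\right)-\lambda_A(D_1)-\lambda_A(D_2)$. That does work, but it needs a bit more care with thresholds than you give it: the right-hand side is polynomial only for $(n,i)$ beyond some $(n_0,i_0)$, so one must telescope from a base level $h(n_0-1,\cdot)$ --- or else note that for each fixed $m<n_0$ the increment $h(m,i)-h(m-1,i)$ is a polynomial in $i$ for large $i$ by Gulliksen's theorem, so the finite sum over $1\le m\le n_0-1$ can be absorbed into the base --- rather than directly from $h(0,\cdot)$ as you write. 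Your flagged concern about the bigradedness of the connecting homomorphisms is genuine but is resolved by the naturality of the cohomology operators and their compatibility with connecting maps of short exact sequences, as recorded in Section~\ref{para:module structure 1}. Both routes rest on the same two ingredients, Theorem~\ref{theorem: finite generation result} and the even--odd Hilbert--Serre result for $\mathscr{R}(I)[t_1,\ldots,t_c]$-modules. The paper's choice of short exact sequence buys a direct, one-shot argument; yours keeps everything inside the family $\{N/I^nN\}$ at the cost of the telescoping and its threshold bookkeeping.
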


 Here we need to use the Hilbert-Serre Theorem for standard bigraded rings. Let us recall the definition of standard bigraded rings.
 
 \begin{definition}\label{definition: standard bigraded ring}\index{standard bigraded ring}
  A bigraded ring $R = \bigoplus_{(n,i) \in \mathbb{N}^2} R_{(n,i)}$ is said to be a {\it standard bigraded ring} if there exist
  $a_1,\ldots,a_r \in R_{(1,0)}$ and $b_1,\ldots,b_s \in R_{(0,1)}$ such that
  \[
   R = R_{(0,0)}[a_1,\ldots,a_r,b_1,\ldots,b_s].
  \]
 \end{definition}
 
 Let us now recall the Hilbert-Serre Theorem for standard bigraded rings.

\begin{theorem}[Hilbert-Serre]{\rm \cite[Theorem~2.1.7]{Rob98}}\label{theorem: Hilbert-Serre}\index{Hilbert-Serre Theorem}
 Let $R = \bigoplus_{(n,i) \in \mathbb{N}^2} R_{(n,i)}$ be a standard bigraded ring, where $R_{(0,0)}$ is an Artinian ring. Let
 $L = \bigoplus_{(n,i) \in \mathbb{N}^2} L_{(n,i)}$ be a finitely generated bigraded $R$-module. Then there is a polynomial
 $P(z,w) \in \mathbb{Q}[z,w]$ and an element $(n_0,i_0) \in \mathbb{N}^2$ such that
 \[
  \lambda_{R_{(0,0)}}\left( L_{(n,i)} \right) = P(n,i) \quad \mbox{for all }~ (n,i) \ge (n_0,i_0).
 \]
\end{theorem}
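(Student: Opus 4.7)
The plan is to proceed by induction on $r + s$, the total number of bigraded generators of $R$ over $R_{(0,0)}$.

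For the base case $r = s = 0$, one has $R = R_{(0,0)}$, which is Artinian by hypothesis. Since $L$ is finitely generated and bigraded over this ring, and there are no bigraded ring elements of positive bidegree to propagate the finitely many homogeneous generators into higher bidegrees, $L_{(n,i)} = 0$ for all $(n,i)$ outside a finite set; hence we may take $P = 0$.

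For the inductive step I may assume, by symmetry, that $r \geq 1$. Multiplication by $a_1$ gives a bigraded $R$-linear map $L(-\underline{e}) \to L$ of bidegree $(0,0)$, where $\underline{e} = (1,0)$; let $K$ and $C$ denote its kernel and cokernel. Both $K$ and $C$ are annihilated by $a_1$ and are therefore finitely generated bigraded modules over the standard bigraded ring $R/(a_1)$, which is generated over $R_{(0,0)}$ by $r + s - 1$ elements. The induction hypothesis supplies polynomials $P_K, P_C \in \mathbb{Q}[z,w]$ with $\lambda(K_{(n,i)}) = P_K(n,i)$ and $\lambda(C_{(n,i)}) = P_C(n,i)$ for all sufficiently large $(n,i)$. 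Extracting lengths from the four-term exact sequence
\[
  0 \to K_{(n,i)} \to L_{(n-1,i)} \stackrel{\cdot a_1}{\longrightarrow} L_{(n,i)} \to C_{(n,i)} \to 0
\]
then yields $\lambda(L_{(n,i)}) - \lambda(L_{(n-1,i)}) = P_C(n,i) - P_K(n,i)$ for all large $(n,i)$, so the first finite difference of $\lambda(L_{(\cdot,\cdot)})$ in the $n$-direction is eventually polynomial.

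The main obstacle is to promote this one-direction difference statement into genuine polynomiality of $\lambda(L_{(n,i)})$ in both $n$ and $i$. I plan to split into two subcases. If $s \geq 1$, the symmetric argument using multiplication by $b_1$ shows that the first difference in the $i$-direction is also eventually polynomial, and a short bivariate discrete-calculus lemma (if both partial first differences of a function on $\mathbb{N}^2$ are eventually polynomial, the function itself is eventually polynomial on a cofinal translate of $\mathbb{N}^2$) then supplies the desired $P$ once one initial value is fixed. If instead $s = 0$, then $R$ has homogeneous elements only in bidegrees of the form $(\ast,0)$, so the second coordinates of any finite homogeneous generating set of $L$ are bounded by some $i_\ast$ and $L_{(n,i)} = 0$ for every $i > i_\ast$; hence $P = 0$ works for all $(n,i) \geq (0, i_\ast + 1)$. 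The technical heart of the argument lies in the bivariate telescoping lemma used in the first subcase; it is standard, but it must be formulated carefully so that the telescoped values agree on the overlap of the two regions where the two partial differences first become polynomial.
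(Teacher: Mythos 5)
The paper does not supply its own argument here: it cites the result directly from Roberts' \emph{Multiplicities and Chern classes in local algebra}, Theorem~2.1.7, and moves on. So there is nothing in the paper to compare against; what matters is whether your argument is sound, and it is. The induction on the number $r+s$ of algebra generators, using the bidegree-$(0,0)$ map given by multiplication by $a_1$ and the resulting four-term exact sequence
\[
0 \longrightarrow K_{(n,i)} \longrightarrow L_{(n-1,i)} \xrightarrow{\;\cdot a_1\;} L_{(n,i)} \longrightarrow C_{(n,i)} \longrightarrow 0,
\]
is exactly the classical ``cut by a hyperplane'' proof of Hilbert--Serre, transported to the bigraded setting, and is also in the spirit of Roberts' multigraded treatment. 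Your base case and your $s=0$ degenerate case are both handled correctly (in the latter, $R$ lives entirely in bidegrees $(\ast,0)$, so $L$ is supported on finitely many rows $i$, and $P=0$ works once $i$ exceeds the largest second coordinate of a generator). The bivariate telescoping lemma you invoke is genuinely true and has the proof you anticipate: choosing $(N,I)$ beyond both thresholds, one writes
\[
f(n,i)=f(N,I)+\sum_{k=N+1}^{n}g_1(k,I)+\sum_{l=I+1}^{i}g_2(n,l),
\]
and since the discrete antiderivative in one variable of a polynomial in $(k,i)$ (resp.\ $(n,l)$) is again a polynomial, the right-hand side is a polynomial in $(n,i)$ on $(n,i)\geq(N,I)$. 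The ``overlap consistency'' you worry about is automatic, because both telescoping routes compute the same actual value $f(n,i)$; the only thing to be careful about is that every lattice point visited in the sums lies in the region where the relevant difference formula is valid, which your choice of $(N,I)$ ensures. One small housekeeping point worth making explicit in a final write-up: $K$ is finitely generated because it is a submodule of the finitely generated module $L(-1,0)$ over the Noetherian ring $R$, and both $K$ and $C$ are killed by $a_1$ and hence are finitely generated bigraded modules over the standard bigraded ring $R/(a_1)$ with the same Artinian degree-$(0,0)$ piece, so the inductive hypothesis genuinely applies to them.
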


 As a corollary of this theorem, one obtains the following:
 
\begin{corollary}[Hilbert-Serre]\label{corollary: Hilbert-Serre}
 Let $R = \bigoplus_{(n,i) \in \mathbb{N}^2} R_{(n,i)}$ be a standard bigraded ring. Let
 $L = \bigoplus_{(n,i)\in\mathbb{N}^2}L_{(n,i)}$ be a finitely generated bigraded $R$-module, where
 $\lambda_{R_{(0,0)}}\left( L_{(n,i)} \right)$ is finite for every $(n,i) \in \mathbb{N}^2$. Then there is
 a polynomial $P(z,w) \in \mathbb{Q}[z,w]$ and an element $(n_0,i_0) \in \mathbb{N}^2$ such that
 \[
  \lambda_{R_{(0,0)}}\left( L_{(n,i)} \right) = P(n,i) \quad \mbox{for all }~ (n,i) \ge (n_0,i_0).
 \]
\end{corollary}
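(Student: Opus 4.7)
The plan is to reduce Corollary~\ref{corollary: Hilbert-Serre} to Theorem~\ref{theorem: Hilbert-Serre} by replacing $R$ with a suitable quotient whose degree-$(0,0)$ component is Artinian. The hypothesis is weaker than in the theorem (we only require finite length of each $L_{(n,i)}$, not Artinianness of $R_{(0,0)}$), but the finite generation of $L$ concentrates all the relevant information on finitely many generators, and the finite length of the cyclic submodules generated by these elements will produce an ideal of $R_{(0,0)}$ with Artinian quotient that annihilates $L$.

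First, I would choose bihomogeneous generators $x_1,\ldots,x_s$ of $L$ over $R$, of bidegrees $(a_1,b_1),\ldots,(a_s,b_s)$. For each $j$, the cyclic $R_{(0,0)}$-submodule $R_{(0,0)}\cdot x_j$ sits inside $L_{(a_j,b_j)}$, which has finite length by hypothesis. Setting $\mathfrak{a}_j := \ann_{R_{(0,0)}}(x_j)$, we have $R_{(0,0)}/\mathfrak{a}_j \cong R_{(0,0)}\cdot x_j$, so $R_{(0,0)}/\mathfrak{a}_j$ is of finite length and hence Artinian. Let $\mathfrak{a} := \bigcap_{j=1}^{s} \mathfrak{a}_j$. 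The canonical map
\[
   R_{(0,0)}/\mathfrak{a} \hookrightarrow \prod_{j=1}^{s} R_{(0,0)}/\mathfrak{a}_j
\]
is injective, and the target has finite length as an $R_{(0,0)}$-module, so $R_{(0,0)}/\mathfrak{a}$ has finite length and is therefore Artinian.

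Next, I would observe that $\mathfrak{a}$ annihilates every $x_j$, and since $R$ is commutative, $\mathfrak{a}\cdot (r x_j) = r\cdot (\mathfrak{a} x_j) = 0$ for every $r \in R$. Hence $\mathfrak{a}$ annihilates all of $L$. Set $\bar R := R/\mathfrak{a} R$; this is again a standard bigraded ring with $\bar R_{(0,0)} = R_{(0,0)}/\mathfrak{a}$ Artinian, and $L$ is naturally a finitely generated bigraded $\bar R$-module (generated by the images of $x_1,\ldots,x_s$). Because $\mathfrak{a}$ kills $L$, the $R_{(0,0)}$-module structure on each $L_{(n,i)}$ factors through $\bar R_{(0,0)}$, so
\[
   \lambda_{R_{(0,0)}}\left( L_{(n,i)} \right) = \lambda_{\bar R_{(0,0)}}\left( L_{(n,i)} \right).
\]
Applying Theorem~\ref{theorem: Hilbert-Serre} to the bigraded $\bar R$-module $L$ then yields the required polynomial $P(z,w) \in \mathbb{Q}[z,w]$ and the point $(n_0,i_0) \in \mathbb{N}^2$ beyond which the length function agrees with $P(n,i)$.

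The only subtle point is checking that the intersection of finitely many ideals whose quotients are Artinian again gives an Artinian quotient, and that this quotient actually annihilates all of $L$ (and not merely the generators). Both are handled by the embedding into the finite product and by the commutativity of $R$ respectively; neither requires any nontrivial machinery, so I do not anticipate a genuine obstacle beyond packaging these observations cleanly.
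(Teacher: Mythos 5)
Your proof is correct and follows essentially the same route as the paper: both reduce to the Artinian case of Theorem~\ref{theorem: Hilbert-Serre} by passing to a quotient of $R$ along the annihilator of $L$, and both certify that the new degree-$(0,0)$ piece is Artinian by embedding it into a finite direct sum (or product) of the finite-length components $L_{(a_j,b_j)}$ containing the generators. The only cosmetic difference is that you work directly with $\mathfrak{a} = \bigcap_j \ann_{R_{(0,0)}}(x_j)$ and the quotient $R/\mathfrak{a}R$, whereas the paper passes to $R/\ann_R(L)$ and restricts a bigraded embedding $R/\ann_R(L) \hookrightarrow \bigoplus_j L(a_j,b_j)$ to degree $(0,0)$; since $\mathfrak{a} = (\ann_R(L))_{(0,0)}$, these yield the same degree-$(0,0)$ ring.
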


\begin{proof}
 We set $S := R/\ann_R(L)$. Since $R$ is a standard bigraded ring, it can be observed that $S$ is also a standard bigraded ring. Note
 that $\lambda_{R_{(0,0)}} \left( L_{(n,i)} \right) = \lambda_{S_{(0,0)}} \left( L_{(n,i)} \right)$ for all $(n,i) \in \mathbb{N}^2$.
 Therefore, by virtue of Theorem~\ref{theorem: Hilbert-Serre}, it is enough to show that $S_{(0,0)}$ is Artinian.
 
 Assume that $L$ is generated (as an $R$-module) by a collection $\{ m_1,\ldots,m_l \}$ of homogeneous elements of $L$.
 Let $\deg(m_j) = (a_j,b_j)$ for all $1 \le j \le l$. We now consider the following map:
 \begin{align*}
  \varphi : R & \longrightarrow \bigoplus_{j = 1}^l L(a_j,b_j)\\
            r & \longmapsto (r m_1, \ldots, r m_l) \quad \mbox{for all }~ r \in R.
 \end{align*}
 Clearly, $\varphi$ is a homogeneous $R$-module homomorphism, where $\Ker(\varphi) = \ann_R(L)$. So we have an embedding of graded
 $R$-modules:
 \[
   \xymatrixrowsep{4mm} \xymatrixcolsep{10mm}
   \xymatrix{\displaystyle S ~~~\ar@{^{(}->}[r] & \bigoplus_{j = 1}^l L(a_j,b_j),}
 \]
 which yields an embedding of $R_{(0,0)}$-modules:
 \[
   \xymatrixrowsep{4mm} \xymatrixcolsep{10mm}
   \xymatrix{\displaystyle S_{(0,0)} ~~~\ar@{^{(}->}[r] & \bigoplus_{j = 1}^l L_{(a_j,b_j)}.}
 \]
 Therefore $S_{(0,0)}$ has finite length, and hence it is Artinian, which completes the proof of the corollary.
\end{proof}

 The following result is well-known. But we give a proof for the reader's convenience.
 
\begin{theorem}\label{theorem: Hilbert-Serre, Even-Odd}
 Let $A$ be a ring and $I$ an ideal of $A$. Suppose that $\mathscr{R}(I)$ is the Rees ring of $I$.
 Let $\mathscr{S} = \mathscr{R}(I)[t_1,\ldots,t_c]$ with $\deg(t_j) = (0,2)$ for all $1 \le j \le c$ and $\deg(I^s) = (s,0)$ for all
 $s \ge 0$. Let $L = \bigoplus_{(n,i)\in\mathbb{N}^2}L_{(n,i)}$ be a finitely generated bigraded $\mathscr{S}$-module, where
 $\lambda_A\left( L_{(n,i)} \right)$ is finite for every $(n,i) \in \mathbb{N}^2$. Set
 \begin{align*}
  f_1(n,i) & := \lambda_A\left( L_{(n,2i)} \right) \quad \mbox{for all }~ (n,i) \in \mathbb{N}^2,\\
  f_2(n,i) & := \lambda_A\left( L_{(n,2i+1)} \right) \quad \mbox{for all }~ (n,i) \in \mathbb{N}^2.
 \end{align*}
 Then there are polynomials $P_1(z,w), P_2(z,w) \in \mathbb{Q}[z,w]$ such that
 \begin{align*}
  f_1(n,i) & = P_1(n,i) \quad \mbox{for all }~ n,i \gg 0,\\
  f_2(n,i) & = P_2(n,i) \quad \mbox{for all }~ n,i \gg 0.
 \end{align*}
\end{theorem}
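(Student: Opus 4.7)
The main difficulty is that $\mathscr{S}$ is \emph{not} standard as a bigraded ring: the cohomology operators $t_j$ sit in bidegree $(0,2)$ instead of $(0,1)$, so Corollary~\ref{corollary: Hilbert-Serre} cannot be applied directly. My plan is to separate $L$ into its ``even'' and ``odd'' parts with respect to the second grading, and then rescale the second coordinate so that a genuinely standard bigraded ring governs each part.

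First, I would set
\[
  L^{\mathrm{ev}} := \bigoplus_{(n,i) \in \mathbb{N}^2} L_{(n,2i)} \quad\mbox{and}\quad L^{\mathrm{od}} := \bigoplus_{(n,i) \in \mathbb{N}^2} L_{(n,2i+1)},
\]
and observe that both are bigraded $\mathscr{S}$-submodules of $L$: the $\mathscr{R}(I)$-action lives in bidegrees of the form $(s,0)$ and thus does not touch the second coordinate, while each $t_j$ shifts the second coordinate by $2$, preserving parity. Since $A$ is Noetherian, the Rees ring $\mathscr{R}(I)$ is a finitely generated $A$-algebra and hence Noetherian; consequently so is $\mathscr{S}$, and both $L^{\mathrm{ev}}$ and $L^{\mathrm{od}}$ are finitely generated bigraded $\mathscr{S}$-modules admitting bigraded-homogeneous generating sets.

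Next I would introduce the auxiliary bigraded ring $\mathscr{T} := \mathscr{R}(I)[t_1,\ldots,t_c]$ equipped with the \emph{rescaled} bigrading $\deg(u X^s) := (s,0)$ for $u \in I^s$ and $\deg(t_j) := (0,1)$. Then $\mathscr{T}_{(0,0)} = A$, and $\mathscr{T}$ is generated over $A$ by elements of bidegrees $(1,0)$ (generators of $I$) and $(0,1)$ (the operators $t_j$), so $\mathscr{T}$ is standard bigraded. Define
\[
  \bigl(\widetilde{L}^{\mathrm{ev}}\bigr)_{(n,i)} := L_{(n,2i)} \quad\mbox{and}\quad \bigl(\widetilde{L}^{\mathrm{od}}\bigr)_{(n,i)} := L_{(n,2i+1)}.
\]
A direct check shows that these are bigraded $\mathscr{T}$-modules (the shift $t_j \colon L_{(n,2i)} \to L_{(n,2i+2)}$ now matches bidegree $(0,1)$ in the rescaled grading, and similarly for the odd part). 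Any homogeneous $\mathscr{S}$-generating set of $L^{\mathrm{ev}}$ (resp.\ $L^{\mathrm{od}}$) transports to a homogeneous $\mathscr{T}$-generating set of $\widetilde{L}^{\mathrm{ev}}$ (resp.\ $\widetilde{L}^{\mathrm{od}}$), since the original generators already sit at even (resp.\ odd) second degrees and the integer rescaling $2i \leftrightarrow i$ (resp.\ $2i+1 \leftrightarrow i$) is a bijection.

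Finally, the length hypothesis transports verbatim: $\lambda_A\bigl((\widetilde{L}^{\mathrm{ev}})_{(n,i)}\bigr) = \lambda_A(L_{(n,2i)})$ is finite by assumption, and analogously for the odd part. Applying Corollary~\ref{corollary: Hilbert-Serre} to the standard bigraded ring $\mathscr{T}$ and to each of the finitely generated bigraded modules $\widetilde{L}^{\mathrm{ev}}$ and $\widetilde{L}^{\mathrm{od}}$ produces polynomials $P_1, P_2 \in \mathbb{Q}[z,w]$ with $f_1(n,i) = P_1(n,i)$ and $f_2(n,i) = P_2(n,i)$ for all $(n,i) \gg 0$, which is the desired conclusion. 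I expect the only genuinely delicate step to be verifying that the rescaled $\mathscr{S}$-action descends to a well-defined standard $\mathscr{T}$-module structure while preserving finite generation; this is essentially bookkeeping once the parity-preservation observation is in place, so no serious obstacle is anticipated.
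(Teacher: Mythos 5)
Your proof is correct and follows the same overall strategy as the paper's: decompose $L$ into even and odd pieces with respect to the second degree, rescale so that the $t_j$ land in bidegree $(0,1)$, and then apply Corollary~\ref{corollary: Hilbert-Serre} to the resulting standard bigraded modules. The one genuine point of difference is how finite generation of $L^{\mathrm{ev}}$ and $L^{\mathrm{od}}$ is secured. You observe that they are bigraded $\mathscr{S}$-submodules of $L$ --- correct, because $\mathscr{R}(I)$ acts in bidegrees $(s,0)$ and each $t_j$ shifts the second coordinate by $2$, preserving parity --- and then invoke Noetherianity of $\mathscr{S}$ to get finite generation for free. The paper instead works more concretely: it picks bihomogeneous $\mathscr{S}$-generators of $L$, sorts them into those of even second degree $m_1,\ldots,m_r$ and odd second degree $m'_1,\ldots,m'_s$, and shows by tracking degrees that in any $\mathscr{S}$-linear expansion of an element of $L^{\mathrm{even}}$ the coefficient of each $m'_l$ must annihilate $m'_l$ (since every bihomogeneous element of $\mathscr{S}$ shifts the second coordinate by an even amount). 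This yields the explicit conclusion that $\{m_1,\ldots,m_r\}$ generates $L^{\mathrm{even}}$ over the rescaled ring $\mathscr{S}'=\mathscr{R}(I)[u_1,\ldots,u_c]$. Your Noetherian shortcut is cleaner and entirely adequate; the paper's calculation buys an explicit generating set, which is not actually needed downstream. After finite generation, both arguments transport the generating set and the finite-length hypothesis across the regrading unchanged and apply Corollary~\ref{corollary: Hilbert-Serre} identically.
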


\begin{proof}
 We set
 \[
  L^{\mbox{even}} := \bigoplus_{(n,i) \in \mathbb{N}^2} L_{(n,2i)} \quad \mbox{ and } \quad
  L^{\mbox{odd}} := \bigoplus_{(n,i) \in \mathbb{N}^2} L_{(n,2i+1)}.
 \]
 In view of $\mathscr{S} = \mathscr{R}(I)[t_1,\ldots,t_c]$, we build a new standard bigraded ring
 \[
  \mathscr{S}' := \mathscr{R}(I)[u_1,\ldots,u_c],
 \]
 where $\deg(u_j) := (0,1)$ for all $1 \le j \le c$ and $\deg(I^s) := (s,0)$ for all $s \ge 0$. We give $\mathbb{N}^2$-graded
 structures on $L^{\mbox{even}}$ and $L^{\mbox{odd}}$ by setting the $(n,i)$th components of $L^{\mbox{even}}$ and $L^{\mbox{odd}}$
 as $L_{(n,2i)}$ and $L_{(n,2i+1)}$ respectively. Now we define the action of $\mathscr{S}' := \mathscr{R}(I)[u_1,\ldots,u_c]$ on
 $L^{\mbox{even}}$ and $L^{\mbox{odd}}$ as follows: Elements of $\mathscr{R}(I)$ act on $L^{\mbox{even}}$ and $L^{\mbox{odd}}$ as
 before; while the action of $u_j$ ($1 \le j \le c$) is defined by
 \[
  u_j \cdot m := t_j \cdot m \quad \mbox{for all }~ m \in L^{\mbox{even}}  ~~\left(\mbox{resp. }L^{\mbox{odd}} \right).
 \]
 It can be noticed that for every $1 \le j \le c$, we have
 \begin{align*}
  u_j \left( L_{(n,2i)} \right) \subseteq L_{(n,2(i+1))}, & \mbox{ i.e., } u_j \left( L^{\mbox{even}}_{(n,i)} \right)
    \subseteq L^{\mbox{even}}_{(n,i+1)} \quad \mbox{for all }~ (n,i) \in \mathbb{N}^2;\\
  u_j \left( L_{(n,2i+1)} \right) \subseteq L_{(n,2(i+1)+1)}, & \mbox{ i.e., } u_j \left( L^{\mbox{odd}}_{(n,i)} \right)
    \subseteq L^{\mbox{odd}}_{(n,i+1)} \quad \mbox{for all }~ (n,i) \in \mathbb{N}^2.
 \end{align*}
 In this way, we obtain that $L^{\mbox{even}}$ and $L^{\mbox{odd}}$ are bigraded modules over the standard bigraded ring
 $\mathscr{S}' := \mathscr{R}(I)[u_1,\ldots,u_c]$. Note that
 \begin{align*}
  \lambda_A\left( L^{\mbox{even}}_{(n,i)} \right) & = \lambda_A\left( L_{(n,2i)} \right) < \infty \quad \mbox{for all }~
  (n,i) \in \mathbb{N}^2;\\
  \lambda_A\left( L^{\mbox{odd}}_{(n,i)} \right) & = \lambda_A\left( L_{(n,2i+1)} \right) < \infty \quad \mbox{for all }~
  (n,i) \in \mathbb{N}^2.
 \end{align*}
 So, in view of Corollary~\ref{corollary: Hilbert-Serre}, it is now enough to show that $L^{\mbox{even}}$ and $L^{\mbox{odd}}$ are
 finitely generated as $\mathscr{S}' := \mathscr{R}(I)[u_1,\ldots,u_c]$-modules. We only show that $L^{\mbox{even}}$ is finitely
 generated as $\mathscr{S}'$-module. $L^{\mbox{odd}}$ can be shown to be finitely generated $\mathscr{S}'$-module in a similar manner.
 
 Let $L$ be generated (as an $\mathscr{S}$-module) by a collection $\{ m_1,m_2,\ldots,m_r,m'_1,m'_2,\ldots,m'_s \}$ of homogeneous
 elements of $L$, where
 \begin{align*}
  & m_j \in L_{(n_j,2i_j)} \quad \mbox{for some }~ (n_j,i_j) \in \mathbb{N}^2, 1 \le j \le r;\\
  & m'_l \in L_{(n'_l, 2i'_l + 1)} \quad \mbox{for some }~ (n'_l,i'_l) \in \mathbb{N}^2, 1 \le l \le s.
 \end{align*}
 We claim that $L^{\mbox{even}}$ is generated by $\{ m_1,m_2,\ldots,m_r \}$ as an $\mathscr{S}'$-module. To prove this claim,
 we consider an arbitrary homogeneous element $m$ of $L^{\mbox{even}}$. Then $m \in L_{(n,2i)}$ for some $(n,i) \in \mathbb{N}^2$.
 Since $L$ is generated by $\{ m_1,\ldots,m_r,m'_1,\ldots,m'_s \}$, the element $m$ can be written as an
 $\mathscr{S}$-linear combination as follows:
 \begin{equation}\label{theorem: Hilbert-Serre, Even-Odd: equation 1}
  m = \alpha_1 m_1 + \cdots + \alpha_r m_r + \beta_1 m'_1 + \cdots + \beta_s m'_s
 \end{equation}
 for some homogeneous elements $\alpha_1,\ldots,\alpha_r,\beta_1,\ldots,\beta_s$ of $\mathscr{S} = \mathscr{R}(I)[t_1,\ldots,t_c]$,
 where $\alpha_j m_j$ and $\beta_l m'_l$ are in $L_{(n,2i)}$. Note that any
 homogeneous element $\alpha$ of $\mathscr{S}$ can be written as a finite sum of homogeneous elements (with same degree) of the
 following type:
 \[
  a\; t_1^{l_1} t_2^{l_2} \cdots t_c^{l_c} \quad \mbox{for some } a \in I^p ~(p \ge 0) \mbox{ and for some } l_1,l_2,\ldots,l_c \ge 0.
 \]
 Observe that
 \[
  \left( a\; t_1^{l_1} \cdots t_c^{l_c} \right) L_{(n',i')} \subseteq L_{(n'+p,i'+2(l_1+\cdots+l_c))} \quad \mbox{for all }
  (n',i') \in \mathbb{N}^2.
 \]
 Thus, for any homogeneous element $\alpha$ of $\mathscr{S}$, we obtain that
 \[
  \alpha L_{(n',i')} \subseteq L_{(n'+p,i'+2q)} \quad \mbox{for some }~ p, q \ge 0.
 \]
 Therefore, for every $1 \le l \le s$, since $m'_l \in L_{(n'_l, 2i'_l + 1)}$, we get that
 \[
  \beta_l m'_l \in L_{(n'_l + p_l, 2(i'_l + q_l) + 1)} \quad \mbox{for some }~ p_l, q_l \ge 0.
 \]
 But, in view of \eqref{theorem: Hilbert-Serre, Even-Odd: equation 1}, we have that $\beta_l m'_l \in L_{(n,2i)}$. Therefore
 $\beta_l m'_l$ must be zero for all $1 \le l \le s$, and hence $m = \alpha_1 m_1 + \cdots + \alpha_r m_r$. Replacing $t_j$ by
 $u_j$ in $\alpha_1,\ldots,\alpha_r$, we obtain that $m$ can be written as an $\mathscr{S}' := \mathscr{R}(I)[u_1,\ldots,u_c]$-linear
 combination of $m_1,\ldots,m_r$. Thus $L^{\mbox{even}}$ is generated by $\{ m_1,m_2,\ldots,m_r \}$ as an $\mathscr{S}'$-module,
 which completes the proof of the theorem.
\end{proof}
 
 In a similar way as above, one can prove the following well-known result for single graded case:
 
\begin{theorem}\label{theorem: Hilbert-Serre: even-odd: single grade case}
 Let $A$ be a ring. Let $\mathscr{T} = A[t_1,\ldots,t_c]$ with $\deg(t_j) = 2$ for all $1 \le j \le c$.
 Let $L = \bigoplus_{i \in \mathbb{N}} L_i$ be a finitely generated graded $\mathscr{T}$-module, where
 $\lambda_A( L_i )$ is finite for every $i \in \mathbb{N}$. Then there are polynomials $P_1(z), P_2(z) \in \mathbb{Q}[z]$ such that
 \begin{align*}
  \lambda_A\left( L_{2i} \right) & = P_1(i) \quad \mbox{for all }~ i \gg 0,\\
  \lambda_A\left( L_{2i+1} \right) & = P_2(i) \quad \mbox{for all }~ i \gg 0.
 \end{align*}
\end{theorem}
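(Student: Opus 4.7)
The plan is to imitate the parity trick used in the proof of Theorem~\ref{theorem: Hilbert-Serre, Even-Odd}, but now with only one grading variable. First I would split $L$ into even and odd parts,
\[
 L^{\mathrm{even}} := \bigoplus_{i \in \mathbb{N}} L_{2i} \quad\text{and}\quad L^{\mathrm{odd}} := \bigoplus_{i \in \mathbb{N}} L_{2i+1},
\]
and regrade each of them so that the $i$th component of $L^{\mathrm{even}}$ is $L_{2i}$, and similarly for $L^{\mathrm{odd}}$. Since each $t_j$ has degree $2$, multiplication by $t_j$ sends $L_{2i}\to L_{2(i+1)}$ and $L_{2i+1}\to L_{2(i+1)+1}$, so the parity is preserved and $L^{\mathrm{even}}$, $L^{\mathrm{odd}}$ are naturally $\mathbb{N}$-graded modules over $A$ equipped with operators that shift degree by $1$.

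Next I would build the auxiliary standard graded ring $\mathscr{T}' := A[u_1,\ldots,u_c]$ with $\deg(u_j) = 1$ and let $u_j$ act on $L^{\mathrm{even}}$ (respectively $L^{\mathrm{odd}}$) by $u_j \cdot m := t_j \cdot m$. Then both $L^{\mathrm{even}}$ and $L^{\mathrm{odd}}$ become graded $\mathscr{T}'$-modules. To see they are finitely generated over $\mathscr{T}'$, take a finite set of homogeneous $\mathscr{T}$-generators of $L$, separate those of even degree from those of odd degree, and verify as in the proof of Theorem~\ref{theorem: Hilbert-Serre, Even-Odd} that the even-degree generators generate $L^{\mathrm{even}}$ and the odd-degree ones generate $L^{\mathrm{odd}}$: any homogeneous element of even degree, written as a $\mathscr{T}$-linear combination, must pick up nonzero contributions only from generators of matching parity, since each monomial in the $t_j$'s shifts degree by an even amount. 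Replacing each $t_j$ by $u_j$ in the coefficients then exhibits the required $\mathscr{T}'$-expression.

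Finally I would invoke the single-graded analogue of Corollary~\ref{corollary: Hilbert-Serre}: for a finitely generated graded module $N$ over a standard graded ring $\mathscr{R}$ with $\lambda_{\mathscr{R}_0}(N_i) < \infty$ for every $i$, the function $i \mapsto \lambda_{\mathscr{R}_0}(N_i)$ agrees with a polynomial in $\mathbb{Q}[z]$ for all $i \gg 0$. This is proved by exactly the same embedding argument as in Corollary~\ref{corollary: Hilbert-Serre}: replacing $\mathscr{R}$ by $\mathscr{R}/\ann_{\mathscr{R}}(N)$ reduces to the case where $\mathscr{R}_0$ is Artinian, at which point the classical Hilbert--Serre theorem applies. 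Applying this to $L^{\mathrm{even}}$ and $L^{\mathrm{odd}}$ over $\mathscr{T}'$ produces polynomials $P_1$ and $P_2$ with $\lambda_A(L_{2i}) = P_1(i)$ and $\lambda_A(L_{2i+1}) = P_2(i)$ for all $i \gg 0$.

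The only mildly delicate step is the finite generation of $L^{\mathrm{even}}$ and $L^{\mathrm{odd}}$ over $\mathscr{T}'$; this is the place where the parity-preservation of the $t_j$ action is used in an essential way, but it goes through verbatim as in the proof of Theorem~\ref{theorem: Hilbert-Serre, Even-Odd} with the Rees-ring factor removed. Everything else is just the standard Hilbert--Serre theorem applied to a finitely generated module over a standard graded ring.
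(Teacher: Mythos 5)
Your proposal is correct and matches the paper's intent exactly: the paper states this result with the remark that it follows ``in a similar way as above'' from the bigraded Theorem~\ref{theorem: Hilbert-Serre, Even-Odd}, and your parity-splitting, regrading over the auxiliary standard graded ring $A[u_1,\ldots,u_c]$, finite-generation argument via parity of the $t_j$-monomials, and final appeal to the single-graded Hilbert--Serre theorem (reducing to an Artinian base via the annihilator) is precisely that adaptation with the Rees-ring factor removed.
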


 Here we give:
 
\begin{proof}[Proof of Lemma~\ref{lemma:polynomial}]
 We set $V_{(n,i)} := \Ext_A^i(M,N/I^n N)$ for every $n, i \ge 0$, and $V := \bigoplus_{n, i \ge 0} V_{(n,i)}$. We only prove that the
 length $\lambda_A\left( \Hom_A\left( k, V_{(n,2i)} \right) \right)$ is given by a polynomial in $n,i$ with rational coefficients for
 all sufficiently large $(n,i)$. For $\lambda_A\left( \Hom_A\left( k, V_{(n,2i + 1)} \right) \right)$, the proof goes through exactly
 the same way.
 
 For every fixed $n \ge 0$, we consider the short exact sequence of $A$-modules:
 \[ 
  0 \longrightarrow I^n N \longrightarrow N \longrightarrow N/I^n N \longrightarrow 0,
 \]
 which induces an exact sequence of $A$-modules for each $n,i$:
 \[
  \Ext_A^i(M,I^n N) \longrightarrow \Ext_A^i(M,N) \longrightarrow \Ext_A^i(M,N/I^n N) \longrightarrow \Ext_A^{i+1}(M,I^n N).
 \]
 Taking direct sum over $n,i$ and using the naturality of the cohomology operators $t_j$, we obtain an exact sequence of bigraded
 $\mathscr{S} = \mathscr{R}(I)[t_1,\ldots,t_c]$-modules:
 \[
  U \longrightarrow T \longrightarrow V \longrightarrow U(0,1),
 \]
 where
 \begin{align*}
  U &= \bigoplus_{n, i \ge 0}U_{(n,i)} := \bigoplus_{n, i \ge 0}\Ext_A^i(M,I^n N), \\
  T &= \bigoplus_{n, i \ge 0}T_{(n,i)} := \bigoplus_{n, i \ge 0}\Ext_A^i(M,N), \\
  V &= \bigoplus_{n, i \ge 0}V_{(n,i)} := \bigoplus_{n, i \ge 0}\Ext_A^i(M,N/I^n N), \quad\mbox{and}
 \end{align*}
 $U(0,1)$ is same as $U$ but the grading is twisted by $(0,1)$. Setting
 \[
  X := \Image(U \to T), ~ Y := \Image(T \to V) ~\mbox{ and }~ Z := \Image\big(V \to U(0,1)\big),
 \]
 we have the following commutative diagram of exact sequences of bigraded $\mathscr{S}$-modules:
 \[
  \xymatrixrowsep{2mm} \xymatrixcolsep{5mm}
  \xymatrix{
	    U \ar[rd] \ar[rr] && T \ar[rd] \ar[rr] && V \ar[rd] \ar[rr] && U(0,1) \\
	    & X \ar[ru] \ar[rd] && Y \ar[ru] \ar[rd] && Z \ar[ru] \ar[rd]     \\
	    0 \ar[ru] && 0 \ar[ru] && 0 \ar[ru] && 0, }
 \]
 which gives the following short exact sequences of bigraded $\mathscr{S}$-modules:
 \[
  0 \rightarrow X \rightarrow T \rightarrow Y \rightarrow 0 \quad \mbox{and} \quad
  0 \rightarrow Y \rightarrow V \rightarrow Z \rightarrow 0.
 \]
 Now applying $\Hom_A(k,-)$ to these short exact sequences, we get the following exact sequences of bigraded $\mathscr{S}$-modules:
 \begin{align}
  &0 \longrightarrow \Hom_A(k,X) \longrightarrow \Hom_A(k,T) \longrightarrow \Hom_A(k,Y) \longrightarrow C \longrightarrow 0,
													    \label{equation:es 1} \\
  &0 \longrightarrow \Hom_A(k,Y) \longrightarrow \Hom_A(k,V) \longrightarrow D \longrightarrow 0,        \label{equation:es 2}
 \end{align}
 where
 \begin{align*}
  C & := \Image\left( \Hom_A(k,Y) \longrightarrow \Ext_A^1(k,X) \right) \quad\mbox{and} \\
  D & := \Image\big( \Hom_A(k,V) \longrightarrow \Hom_A(k,Z) \big).
 \end{align*}

 By virtue of Theorem~\ref{theorem:finitely generated}, we have that
 \[
  U = \bigoplus_{n \ge 0} \bigoplus_{i \ge 0} \Ext_A^i(M, I^n N)
 \]
 is a finitely generated bigraded
 $\mathscr{S}$-module, and hence $X = \Image(U \rightarrow T)$ is so. Therefore $\Hom_A(k,X)$ and $\Ext_A^1(k,X)$ are finitely
 generated bigraded $\mathscr{S}$-modules. Being an $\mathscr{S}$-submodule of $\Ext_A^1(k,X)$, the bigraded $\mathscr{S}$-module
 $C$ is also finitely generated. Since $\Hom_A(k,X)$ and $\Ext_A^1(k,X)$ are annihilated by the maximal ideal of
 $A$, $\Hom_A\left( k, X_{(n,i)} \right)$ and $C_{(n,i)}$ both are finitely generated $k$-modules, and hence they have finite length as $A$-modules
 for each $n, i \ge 0$. Therefore, by applying Theorem~\ref{theorem: Hilbert-Serre, Even-Odd} to the bigraded
 $\mathscr{S} = \mathscr{R}(I)[t_1,\ldots,t_c]$-modules $\Hom_A(k,X)$ and $C$ (where $\deg(t_j) = (0,2)$ for all $j = 1,\ldots, c$, and
 $\deg(I^s) = (s,0)$ for all $s \ge 0$), we obtain that
 \begin{align}
  (n,i)                 & \longmapsto \lambda_A\left( \Hom_A\left( k, X_{(n,2i)} \right) \right)   \label{equation:series X} \\
  \mbox{and}\quad (n,i) & \longmapsto \lambda_A\left( C_{(n,2i)} \right)             \label{equation:series C}
 \end{align}
 are given by polynomials in $n,i$ with rational coefficients for all sufficiently large $(n,i)$.
 
 For every fixed $n \ge 0$, we have that $\bigoplus_{i \ge 0} T_{(n,i)} = \bigoplus_{i \ge 0} \Ext_A^i(M,N)$
 is a finitely generated graded $A[t_1,\ldots,t_c]$-module (where $\deg(t_j) = 2$ for all $j = 1,\ldots, c$), and hence
 $\Hom_A\left(k,\bigoplus_{i\ge 0}T_{(n,i)}\right)$ is also so. By a similar argument as before, we have that
 $\lambda_A\left( \Hom_A\left( k, T_{(n,i)} \right) \right)$ is finite for every $i \ge 0$. Therefore, in view of
 Theorem~\ref{theorem: Hilbert-Serre: even-odd: single grade case}, for each $n \ge 0$, we obtain that
 \begin{align*}
  i \longmapsto \lambda_A\left( \Hom_A\left( k, T_{(n,2i)} \right) \right)
 \end{align*}
 is given by a polynomial in $i$ for all sufficiently large $i$.
 Note that these polynomials are the same polynomial for all $n \ge 0$. Therefore
 \begin{equation}\label{equation:series T}
  (n,i) \longmapsto \lambda_A\left( \Hom_A\left( k, T_{(n,2i)} \right) \right)
 \end{equation}
 is asymptotically given by a polynomial in $n,i$ (which is independent of $n$).
 
 Now considering \eqref{equation:es 1}, we have an exact sequence of $A$-modules:
 \[
  0 \longrightarrow \Hom_A\left( k, X_{(n,i)} \right) \longrightarrow \Hom_A\left( k, T_{(n,i)} \right)
  \longrightarrow \Hom_A\left( k, Y_{(n,i)} \right) \longrightarrow C_{(n,i)} \longrightarrow 0
 \]
 for each $n, i \ge 0$. So, the additivity of the length function\index{additivity of the length function} gives
 \[
  \lambda_A\left( \Hom_A\left( k, X_{(n,2i)} \right) \right) - \lambda_A\left( \Hom_A\left( k, T_{(n,2i)} \right) \right) +
  \lambda_A\left( \Hom_A\left( k, Y_{(n,2i)} \right) \right) - \lambda_A\left( C_{(n,2i)} \right) = 0
 \]
 for each $n, i \ge 0$. So, in view of \eqref{equation:series X}, \eqref{equation:series C} and \eqref{equation:series T},
 we obtain that
 \begin{equation}\label{equation:series Y}
  (n,i) \longmapsto \lambda_A\left( \Hom_A\left( k, Y_{(n,2i)} \right) \right)
 \end{equation}
 is asymptotically given by a polynomial in $n,i$.
 
 Recall that by Theorem~\ref{theorem:finitely generated}, $U$ is a finitely generated bigraded $\mathscr{S}$-module. Now observe that
 $Z$ is a bigraded submodule of $U(0,1)$. Therefore $Z$ is a finitely generated bigraded
 $\mathscr{S} = \mathscr{R}(I)[t_1,\ldots,t_c]$-module, hence $\Hom_A(k,Z)$ is so, and hence
 \[
  D = \Image\big( \Hom_A(k,V) \longrightarrow \Hom_A(k,Z) \big)
 \]
 is also so. Observe that $\lambda_A\left( D_{(n,i)} \right)$ is finite for each $n, i \ge 0$.
 Therefore, once again by applying Theorem~\ref{theorem: Hilbert-Serre, Even-Odd}, we have that
 \begin{equation}\label{equation:series D}
  (n,i) \longmapsto \lambda_A\left( D_{(n,2i)} \right)
 \end{equation}
 is asymptotically given by a polynomial in $n,i$.
 
 Now considering \eqref{equation:es 2}, we have an exact sequence of $A$-modules:
 \[
  0 \longrightarrow \Hom_A\left( k, Y_{(n,i)} \right) \longrightarrow \Hom_A\left( k, V_{(n,i)} \right) \longrightarrow D_{(n,i)} \longrightarrow 0
 \]
 for each $n, i \ge 0$, which gives
 \[\lambda_A\left( \Hom_A\left( k, Y_{(n,2i)} \right) \right) - \lambda_A\left( \Hom_A\left( k, V_{(n,2i)} \right) \right) + \lambda_A\left( D_{(n,2i)} \right) = 0.\]
 Therefore, in view of \eqref{equation:series Y} and \eqref{equation:series D}, we obtain that
 \begin{equation*}
  (n,i) \longmapsto \lambda_A\left( \Hom_A\left( k, V_{(n,2i)} \right) \right)
 \end{equation*}
 is asymptotically given by a polynomial in $n,i$ with rational coefficients, which completes the proof of the lemma.
\end{proof}

 To prove Lemma~\ref{lemma:well behaved polynomial}, we also need the following result:
 
 \begin{lemma}\label{lemma: West Proposition 5.1}
  Let $A$ be a ring and $I$ an ideal of $A$. Let $\mathscr{R}(I)$ be the Rees ring of $I$.
  Let $\mathscr{S} = \mathscr{R}(I)[t_1,\ldots,t_c]$, where $\deg(t_j) = (0,2)$ for all $j = 1,\ldots,c$ and $\deg(I^s) = (s,0)$
  for all $s \ge 0$. Let $L = \bigoplus_{(n,i) \in \mathbb{N}^2} L_{(n,i)}$ be a finitely generated bigraded $\mathscr{S}$-module.
  Then, for every fixed $l = 0, 1$, we have that
   \begin{align*}
    \mbox{either} \quad & L_{(n,2i+l)} \neq 0 ~\mbox{ for all }~ n, i \gg 0;\\
    \mbox{or} \quad     & L_{(n,2i+l)} = 0 ~\mbox{ for all }~ n, i \gg 0.
   \end{align*}
 \end{lemma}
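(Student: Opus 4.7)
The plan is to reduce to a standard bigraded setting via the even-odd trick of Theorem~\ref{theorem: Hilbert-Serre, Even-Odd}, and then establish the dichotomy using a bigraded prime filtration. Define
\[
  L^{\mathrm{even}} := \bigoplus_{(n,i) \in \mathbb{N}^2} L_{(n,2i)}, \qquad L^{\mathrm{odd}} := \bigoplus_{(n,i) \in \mathbb{N}^2} L_{(n,2i+1)},
\]
each bigraded by placing $L_{(n,2i+l)}$ in bidegree $(n,i)$, and regard them as modules over the standard bigraded Noetherian ring $\mathscr{S}' := \mathscr{R}(I)[u_1,\ldots,u_c]$ with $\deg(u_j) := (0,1)$ and $\deg(I^s) := (s,0)$, where $u_j$ acts as $t_j$. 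Since $t_j$ shifts the second coordinate by $2$, this action preserves parity and has bidegree $(0,1)$ in the new grading. Exactly as in the proof of Theorem~\ref{theorem: Hilbert-Serre, Even-Odd}, sorting a set of homogeneous $\mathscr{S}$-generators of $L$ by the parity of their second coordinate exhibits $L^{\mathrm{even}}$ and $L^{\mathrm{odd}}$ as finitely generated bigraded $\mathscr{S}'$-modules; the length-finiteness used there is unnecessary for this step.

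It therefore suffices to prove the following claim: for any standard bigraded Noetherian ring $R = R_{(0,0)}[x_1,\ldots,x_r,y_1,\ldots,y_s]$ with $\deg(x_i) = (1,0)$ and $\deg(y_j) = (0,1)$, and any finitely generated bigraded $R$-module $M$, either $M_{(n,i)} = 0$ for all $(n,i) \gg 0$ or $M_{(n,i)} \neq 0$ for all $(n,i) \gg 0$. I would prove this via a bigraded prime filtration $0 = M_0 \subsetneq M_1 \subsetneq \cdots \subsetneq M_p = M$ with $M_k/M_{k-1} \cong (R/\mathfrak{p}_k)(\underline{a_k})$ for homogeneous primes $\mathfrak{p}_k \subset R$ and shifts $\underline{a_k} = (a_{k,1},a_{k,2}) \in \mathbb{Z}^2$, and classify each $\mathfrak{p}_k$ into three mutually exclusive types: (I) $\mathfrak{p}_k \supseteq (x_1,\ldots,x_r)$, so the $k$th factor vanishes once $n \geq 1 - a_{k,1}$; (II) $\mathfrak{p}_k \supseteq (y_1,\ldots,y_s)$, so the $k$th factor vanishes once $i \geq 1 - a_{k,2}$; or (III) some $x_{i_0} \notin \mathfrak{p}_k$ and some $y_{j_0} \notin \mathfrak{p}_k$, in which case $\bar{x}_{i_0}^{\,n+a_{k,1}}\,\bar{y}_{j_0}^{\,i+a_{k,2}}$ is a nonzero element of the domain $R/\mathfrak{p}_k$ in bidegree $(n+a_{k,1},i+a_{k,2})$, making the $k$th factor nonzero for all $(n,i) \geq (-a_{k,1},-a_{k,2})$.

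If every $\mathfrak{p}_k$ is of type (I) or (II), taking $n$ past all type-(I) thresholds and $i$ past all type-(II) thresholds yields $M_{(n,i)} = 0$ for $(n,i) \gg 0$. If at least one $\mathfrak{p}_k$ is of type (III), the corresponding nonzero class in $M_k/M_{k-1}$ lifts to a nonzero element of $M_k \subseteq M$, forcing $M_{(n,i)} \neq 0$ for $(n,i) \gg 0$. Applying the claim to $L^{\mathrm{even}}$ and to $L^{\mathrm{odd}}$ separately establishes the lemma for both $l = 0$ and $l = 1$. The main technical obstacle is the trichotomy on homogeneous primes together with correct tracking of the shifts $\underline{a_k}$; once the case analysis is in hand, the rest is routine bookkeeping.
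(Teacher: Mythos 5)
Your proof is correct and takes a genuinely different route from the paper. The paper's proof is essentially a two-line citation of \cite[Proposition~5.1]{Wes04}, which supplies eventual stability of $\Ass_A\bigl(L_{(n,2i+l)}\bigr)$ for $(n,i)\ge(n_0,i_0)$, and then closes with the observation that a module vanishes if and only if its set of associated primes is empty. You instead prove the dichotomy from scratch: after the same even--odd regrading used in Theorem~\ref{theorem: Hilbert-Serre, Even-Odd} to replace the degree-$(0,2)$ operators $t_j$ by degree-$(0,1)$ operators $u_j$, you reduce to a statement about finitely generated bigraded modules over a standard bigraded Noetherian ring and prove it with a bigraded prime filtration, splitting the occurring primes by whether they contain all the degree-$(1,0)$ generators, all the degree-$(0,1)$ generators, or neither. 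The trichotomy is sound: the Type~I/II vanishing thresholds and the Type~III generic nonvanishing via $\bar{x}_{i_0}^{\,n+a_{k,1}}\bar{y}_{j_0}^{\,i+a_{k,2}}\neq 0$ in the domain $R/\mathfrak{p}_k$ are both correct, and it is harmless that Types~I and~II are not literally disjoint, since the argument only needs to detect whether some $\mathfrak{p}_k$ is of Type~III. What your route buys is a self-contained and transparent mechanism that does not depend on an external stability theorem for multigraded associated primes; it also makes explicit the even--odd reduction that West's hypotheses (a standard multigrading) require but which the paper's appeal to \cite{Wes04} leaves implicit. The paper's route is shorter but less informative about why the dichotomy holds.
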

 
 \begin{proof}
  In view of \cite[Proposition~5.1]{Wes04}, we get that there is $(n_0,i_0) \in \mathbb{N}^2$ such that
  \begin{align*}
   \Ass_A\left( L_{(n,2i)} \right) & = \Ass_A\left( L_{(n_0,2i_0)} \right) \quad \mbox{for all }~ (n,i) \ge (n_0,i_0);\\
   \Ass_A\left( L_{(n,2i+1)} \right) & = \Ass_A\left( L_{(n_0,2i_0 + 1)} \right) \quad \mbox{for all }~ (n,i) \ge (n_0,i_0).
  \end{align*}
  It is well-known that an $A$-module $M$ is zero if and only if $\Ass_A(M) = \phi$. Therefore the lemma follows from the above
  stability of the sets of associated prime ideals.
 \end{proof}

 We now give:

\begin{proof}[Proof of Lemma~\ref{lemma:well behaved polynomial}]
 We prove the lemma for $l = 0$ only. For $l = 1$, the proof goes through exactly the same way. We set
 \begin{equation}\label{lemma:well behaved polynomial: equation 1}
  f(n,i) := \lambda_A\left(\Hom_A\left(k,\Ext_A^{2i}(M,N/I^nN)\right)\right) \quad\mbox{for all }~ n, i \ge 0.
 \end{equation}
 By virtue of Lemma~\ref{lemma:polynomial}, we get that $f(n,i)$ is given by a polynomial in $n,i$ with rational
 coefficients for all sufficiently large $(n,i)$. If $f(n,i) = 0$ for all $n, i \gg 0$, then we have nothing to prove. Suppose this
 is not the case. Then we claim that
 \begin{equation}\label{well behaved claim}
  \Hom_A\left(k,\Ext_A^{2i}(M,N/I^nN)\right) \neq 0 \quad \mbox{for all }n, i \gg 0.
 \end{equation}
 
 For every fixed $n \ge 0$, we consider the following short exact sequence of $A$-modules:
 \[
  0 \longrightarrow I^n N/I^{n+1} N \longrightarrow N/I^{n+1} N \longrightarrow N/I^n N \longrightarrow 0,
 \]
 which yields an exact sequence of $A$-modules for each $n,i$:
 \begin{align*}
  &\Ext_A^i\left(M, \dfrac{I^n N}{I^{n+1} N}\right) \longrightarrow \Ext_A^i\left(M, \dfrac{N}{I^{n+1} N}\right) \longrightarrow
  \Ext_A^i\left(M, \dfrac{N}{I^n N}\right) \\
  \longrightarrow &\Ext_A^{i+1}\left(M, \dfrac{I^n N}{I^{n+1} N}\right).
 \end{align*}
 Taking direct sum over $n,i$, and using the naturality of the cohomology operators $t_j$, we obtain an exact sequence of bigraded
 $\mathscr{S} = \mathscr{R}(I)[t_1,\ldots,t_c]$-modules:
 \[
  U \longrightarrow V(1,0) \longrightarrow V \longrightarrow U(0,1),
 \]
 where
 \begin{align*}
  U &= \bigoplus_{n, i \ge 0} U_{(n,i)} := \bigoplus_{n, i \ge 0} \Ext_A^i(M, I^n N/I^{n+1} N) ~~\mbox{ and}\\
  V &= \bigoplus_{n, i \ge 0} V_{(n,i)} := \bigoplus_{n, i \ge 0} \Ext_A^i(M, N/I^n N).
 \end{align*}
 Setting
 \[
  X := \Image\big( U \rightarrow V(1,0) \big), ~ Y := \Image\big( V(1,0) \rightarrow V \big)
  ~\mbox{ and }~ Z := \Image\big( V \rightarrow U(0,1) \big),
 \]
 we have the following short exact sequences of bigraded $\mathscr{S}$-modules:
 \[
  0 \to X \to V(1,0) \to Y \to 0 \quad \mbox{and} \quad 0 \to Y \to V \to Z \to 0.
 \]
 Now applying $\Hom_A(k,-)$ to these short exact sequences, we obtain the following exact sequences of bigraded $\mathscr{S}$-modules:
 \begin{align}
  &0 \longrightarrow \Hom_A(k,X) \longrightarrow \Hom_A\big(k,V(1,0)\big) \longrightarrow \Hom_A(k,Y) \longrightarrow C\longrightarrow 0,
  \label{lemma:well behaved polynomial: equation 2}\\
  &0 \longrightarrow \Hom_A(k,Y) \longrightarrow \Hom_A(k,V) \longrightarrow D \longrightarrow 0,
  \label{lemma:well behaved polynomial: equation 3}
 \end{align}
 where
 \begin{align*}
  C & := \Image\left( \Hom_A(k,Y) \longrightarrow \Ext_A^1(k,X) \right) \quad\mbox{and}\\
  D & := \Image\big( \Hom_A(k,V) \longrightarrow \Hom_A(k,Z) \big).
 \end{align*}
 
 In view of Theorem~\ref{theorem:finitely generated}, we have that
 \[
  U = \bigoplus_{n \ge 0} \bigoplus_{i \ge 0} \Ext_A^i\left( M, \dfrac{I^n N}{I^{n+1} N} \right)
 \]
 is a finitely generated bigraded $\mathscr{S}$-module, and hence
 \[
  X = \Image\big( U \rightarrow V(1,0) \big) \quad \mbox{and} \quad Z = \Image\big( V \rightarrow U(0,1) \big)
 \]
 are so. Therefore $\Hom_A(k,X)$, $\Ext_A^1(k,X)$ and $\Hom_A(k,Z)$ are finitely generated bigraded $\mathscr{S}$-modules.
 Hence $C$ and $D$ are finitely generated bigraded $\mathscr{S} = \mathscr{R}(I)[t_1,\ldots,t_c]$-modules.
 So, in view of Lemma~\ref{lemma: West Proposition 5.1}, we obtain that
 \begin{align}
  \label{lemma:well behaved polynomial: equation 4.i}
  & \left. \begin{array}{r}
           \mbox{either} \quad \Hom_A\left(k,X_{(n,2i)}\right) \neq 0 ~\mbox{ for all }~ n, i \gg 0,\\
           \mbox{or}     \quad \Hom_A\left(k,X_{(n,2i)}\right) = 0 ~\mbox{ for all }~ n, i \gg 0; \end{array} \right\}\\
  \label{lemma:well behaved polynomial: equation 4.ii}
  & \left. \begin{array}{r}
           \mbox{either} \quad C_{(n,2i)} \neq 0 ~\mbox{ for all }~ n, i \gg 0,\\
           \mbox{or}     \quad C_{(n,2i)} = 0 ~\mbox{ for all }~ n, i \gg 0; \end{array} \right\}\\
  \label{lemma:well behaved polynomial: equation 4.iii}
  & \left. \begin{array}{r}
           \mbox{either} \quad D_{(n,2i)} \neq 0 ~\mbox{ for all }~ n, i \gg 0,\\
           \mbox{or}     \quad D_{(n,2i)} = 0 ~\mbox{ for all }~ n, i \gg 0. \end{array} \right\}
 \end{align}
 
 For every $n, i \ge 0$, the $(n,2i)$th components of \eqref{lemma:well behaved polynomial: equation 2}
 and \eqref{lemma:well behaved polynomial: equation 3} give the following exact sequences of $A$-modules:
 \begin{align}
  & 0 \longrightarrow \Hom_A\left( k, X_{(n,2i)} \right) \longrightarrow \Hom_A\left( k, V_{(n+1,2i)} \right) \longrightarrow
  \Hom_A\left( k, Y_{(n,2i)} \right) \longrightarrow C_{(n,2i)} \longrightarrow 0,
								\label{lemma:well behaved polynomial: equation ses 1} \\
  & 0 \longrightarrow \Hom_A\left( k, Y_{(n,2i)} \right) \longrightarrow \Hom_A\left( k, V_{(n,2i)} \right) \longrightarrow D_{(n,2i)}
  \longrightarrow 0.  						\label{lemma:well behaved polynomial: equation ses 2}
 \end{align}
 We now prove the claim \eqref{well behaved claim} (i.e., $\Hom_A\left( k, V_{(n,2i)} \right) \neq 0$ for all $n, i \gg 0$)
 by considering the following four cases:
 
 {\bf Case 1.} Assume that $\Hom_A\left( k, X_{(n,2i)} \right) \neq 0$ for all $n, i \gg 0$. Then,
 in view of \eqref{lemma:well behaved polynomial: equation ses 1}, we have that $\Hom_A\left( k, V_{(n,2i)} \right) \neq 0$ for all $n, i \gg 0$.
 So, in this case, we are done.
 
 {\bf Case 2.} Assume that $C_{(n,2i)} \neq 0$ for all $n, i \gg 0$. So again,
 in view of \eqref{lemma:well behaved polynomial: equation ses 1}, we get that $\Hom_A\left( k, Y_{(n,2i)} \right) \neq 0$ for all $n, i \gg 0$.
 Hence \eqref{lemma:well behaved polynomial: equation ses 2} yields that $\Hom_A\left( k, V_{(n,2i)} \right) \neq 0$ for all $n, i \gg 0$. So, in
 this case also, we are done.
 
 {\bf Case 3.} Assume that $D_{(n,2i)} \neq 0$ for all $n, i \gg 0$. In this case,
 by considering \eqref{lemma:well behaved polynomial: equation ses 2},
 we obtain that $\Hom_A\left( k, V_{(n,2i)} \right) \neq 0$ for all $n, i \gg 0$, and we are done.
 
 Now note that if none of the above three cases holds, then by virtue of \eqref{lemma:well behaved polynomial: equation 4.i},
 \eqref{lemma:well behaved polynomial: equation 4.ii} and \eqref{lemma:well behaved polynomial: equation 4.iii}, we have
 the following case:
 
 {\bf Case 4.} Assume that $\Hom_A\left( k, X_{(n,2i)} \right) = 0$ for all $n, i \gg 0$, $C_{(n,2i)} = 0$ for all $n, i \gg 0$, and
 $D_{(n,2i)} = 0$ for all $n, i \gg 0$. Then the exact sequences \eqref{lemma:well behaved polynomial: equation ses 1}
 and \eqref{lemma:well behaved polynomial: equation ses 2} yield the following isomorphisms:
 \begin{equation}\label{lemma:well behaved polynomial: equation 5}
  \Hom_A\left( k, V_{(n+1,2i)} \right) \cong \Hom_A\left( k, Y_{(n,2i)} \right) \cong \Hom_A\left( k, V_{(n,2i)} \right) \quad \mbox{for all } n, i \gg 0.
 \end{equation}
 These isomorphisms (with the setting \eqref{lemma:well behaved polynomial: equation 1}) give the following equalities:
 \begin{equation}\label{lemma:well behaved polynomial: equation 6}
  f(n+1,i) = f(n,i) \quad \mbox{for all }~ n, i \gg 0.
 \end{equation}
 We write the polynomial expression of $f(n,i)$ in the following way:
 \begin{equation}\label{lemma:well behaved polynomial: equation 7}
  f(n,i) = h_0(i) n^a + h_1(i) n^{a-1} + \cdots + h_{a-1}(i) n + h_a(i) \quad \mbox{for all }~ n, i \gg 0,
 \end{equation}
 where $h_j(i)$ ($j = 0,1,\ldots,a$) are polynomials in $i$ over $\mathbb{Q}$. Without loss of generality, we may assume that $h_0$
 is a non-zero polynomial. So $h_0$ may have only finitely many roots. Let $i' \ge 0$ be such that $h_0(i) \neq 0$ for all $i \ge i'$.
 In view of \eqref{lemma:well behaved polynomial: equation 6} and \eqref{lemma:well behaved polynomial: equation 7}, there exist some
 $n_0$ ($\ge 0$) and $i_0$ ($\ge i'$, say) such that for all $n \ge n_0$ and $i \ge i_0$, we have
 \begin{align*}
  &f(n+1,i) = f(n,i) \quad \mbox{and}\\
  &f(n,i) = h_0(i) n^a + h_1(i) n^{a-1} + \cdots + h_{a-1}(i) n + h_a(i).
 \end{align*}
 Therefore $a$ must be equal to $0$, and hence $f(n,i) = h_0(i)$ for all $n \ge n_0$ and $i \ge i_0$. Thus we obtain that
 $f(n,i) \neq 0$ for all $n \ge n_0$ and $i \ge i_0$, i.e.,
 \[
  \Hom_A\left( k, V_{(n,2i)} \right) \neq 0 \quad \mbox{for all $n \ge n_0$ and $i \ge i_0$},
 \]
 which completes the proof of the lemma.
\end{proof}

\section{Asymptotic Associate Primes: The Geometric Case}\label{Asymptotic associated primes: The geometric case}

 Let $V$ be an affine\index{affine variety} or projective variety\index{projective variety} over an algebraically closed field $K$.
 Let $A$ be the coordinate ring\index{coordinate ring of a variety} of $V$. Then $V$ is said to be a {\it locally complete
 intersection variety}\index{locally complete intersection variety} if all its local rings are complete intersection local rings.
 Thus:\index{geometric locally complete intersection}
\begin{itemize}
 \item[(i)] in the affine case, $A_\mathfrak{p}$ is a local complete intersection ring for every $\mathfrak{p}\in \Spec(A)$;
 \item[(ii)] in the projective case, $A_{(\mathfrak{p})}$ is a local complete intersection ring for every
       $\mathfrak{p}\in \Proj(A)$\index{Proj of a graded ring}. Recall that $A_{(\mathfrak{p})}$ is called the
       {\it homogeneous localization}\index{homogeneous localization} (or {\it degree zero localization})\index{degree zero localization}
       of $A$ at the homogeneous prime ideal $\mathfrak{p}$ which is defined to be the degree zero part of the graded ring $S^{-1}A$,
       where $S$ is the collection of all homogeneous elements in $A \smallsetminus \mathfrak{p}$.
\end{itemize}

In this section, we prove the results analogous to Theorems \ref{theorem:Q mod f finiteness} and
\ref{theorem:Q mod f stability} for the coordinate rings of locally complete intersection varieties.
In the affine case, we prove the following general results.

\begin{theorem}\label{theorem:affine case}\index{finiteness of associate primes}\index{stability of associate primes}
 Let $A = Q/\mathfrak{a}$, where $Q$ is a regular ring of finite Krull dimension and $\mathfrak{a} \subseteq Q$ is an ideal so that
 $\mathfrak{a}_{\mathfrak{q}} \subseteq Q_{\mathfrak{q}}$ is generated by a $Q_{\mathfrak{q}}$-regular sequence for each
 $\mathfrak{q} \in \Var(\mathfrak{a})$\index{variety of an ideal}. Let $M$ and $N$ be finitely generated $A$-modules, and let $I$
 be an ideal of $A$. Then the set
 \[
  \bigcup_{n \ge 0} \bigcup_{i \ge 0} \Ass_A\left( \Ext_A^i(M, N/I^n N) \right)
 \]
 is finite. Moreover, there exist some non-negative integers $n_0$ and $i_0$ such that for all $n \ge n_0$ and $i \ge i_0$, we have that
 \begin{align*}
  \Ass_A\left(\Ext_A^{2i}(M,N/I^nN)\right) &= \Ass_A\left(\Ext_A^{2 i_0}(M,N/I^{n_0}N)\right), \\
  \Ass_A\left(\Ext_A^{2i+1}(M,N/I^nN)\right) &= \Ass_A\left(\Ext_A^{2 i_0 + 1}(M,N/I^{n_0}N)\right).
 \end{align*}
\end{theorem}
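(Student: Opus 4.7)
The plan is to deduce Theorem~\ref{theorem:affine case} from the ``absolute'' Theorems~\ref{theorem:Q mod f finiteness} and \ref{theorem:Q mod f stability} by covering $\Var(\mathfrak{a})$ by finitely many principal open subsets of $\Spec(Q)$ on which $\mathfrak{a}$ is globally generated by a regular sequence. The first step is to show that for each $\mathfrak{q} \in \Var(\mathfrak{a})$ there exist $g \in Q \setminus \mathfrak{q}$ and elements $f_1, \ldots, f_c \in \mathfrak{a}$ such that $f_1, \ldots, f_c$ form a $Q_g$-regular sequence and $\mathfrak{a}_g = (f_1, \ldots, f_c) Q_g$. One starts with $Q_\mathfrak{q}$-regular generators of $\mathfrak{a}_\mathfrak{q}$, clears denominators into $\mathfrak{a}$, and then uses two open conditions on $\Spec(Q)$: the locus where ``$f_i$ is a non-zero-divisor modulo $(f_1, \ldots, f_{i-1})$'' is the complement of finitely many associated primes of $Q/(f_1, \ldots, f_{i-1})$, and the finitely generated $Q$-module $\mathfrak{a}/(f_1, \ldots, f_c)$ vanishes at $\mathfrak{q}$ hence on a basic open around $\mathfrak{q}$. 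Intersecting these with a basic open missing $\mathfrak{q}$ produces the desired $g$. Since $\Var(\mathfrak{a})$ is closed in the Noetherian space $\Spec(Q)$, quasi-compactness yields finitely many such $g_1, \ldots, g_r$ with $\Var(\mathfrak{a}) \subseteq D(g_1) \cup \cdots \cup D(g_r)$.

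Next I would apply the absolute theorems to each piece. Each $Q_{g_j}$ is a regular ring of finite Krull dimension (being a localization of $Q$), hence $\projdim_{Q_{g_j}}(M_{g_j})$ is finite; and by Step~1, $\mathfrak{a}_{g_j}$ is generated by a $Q_{g_j}$-regular sequence. Theorem~\ref{theorem:Q mod f finiteness} applied to $(Q_{g_j}, A_{g_j}, M_{g_j}, N_{g_j}, I A_{g_j})$ therefore gives finiteness of
\[
 T_j := \bigcup_{n \ge 1} \bigcup_{i \ge 0} \Ass_{A_{g_j}}\!\left( \Ext_{A_{g_j}}^i\!\left( M_{g_j}, N_{g_j}/(IA_{g_j})^n N_{g_j} \right) \right),
\]
and Theorem~\ref{theorem:Q mod f stability} yields pairs $(n_j, i_j)$ past which the even-indexed and odd-indexed sets above are constant. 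To glue, any $\mathfrak{p} \in \Spec(A)$ lifts to a prime of $Q$ lying in $\Var(\mathfrak{a})$ and hence in some $D(g_j)$. Then using the compatibilities
\[
 \Ext_A^i(M, N/I^n N)_{g_j} \cong \Ext_{A_{g_j}}^i\!\left( M_{g_j}, N_{g_j}/(IA_{g_j})^n N_{g_j} \right)
\]
together with the standard bijection $\mathfrak{p} \leftrightarrow \mathfrak{p} A_{g_j}$ of associated primes under localization on $D(g_j)$, the global union injects into $T_1 \cup \cdots \cup T_r$, which is finite. Setting $(n_0, i_0) := (\max_j n_j, \max_j i_j)$ and running the same correspondence parity by parity delivers the stability statement.

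The main obstacle I anticipate is the first step: producing a basic open $D(g)$ on which one and the same sequence $f_1, \ldots, f_c$ is simultaneously a regular sequence on $Q_g$ and a generating set of $\mathfrak{a}_g$. Clearing denominators is routine, but arranging both conditions (and keeping $g \notin \mathfrak{q}$) requires intersecting several open subsets of $\Spec(Q)$ and a careful verification that the hypotheses of Theorems~\ref{theorem:Q mod f finiteness} and \ref{theorem:Q mod f stability}, in particular the finite-Krull-dimension and finite-projective-dimension bookkeeping, survive all the subsequent localizations. The rest of the argument is formal manipulation with the localization of $\Ext$ and of associated primes.
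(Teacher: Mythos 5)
Your proposal is correct and follows the same route as the paper's proof: cover by finitely many basic opens on which $\mathfrak{a}$ becomes generated by a global regular sequence, apply Theorems~\ref{theorem:Q mod f finiteness} and~\ref{theorem:Q mod f stability} piecewise, and glue via the standard compatibilities of $\Ext$ and $\Ass$ with flat localization. The only cosmetic differences are that you spell out the construction of the covering, which the paper delegates to \cite[Proof of Theorem~6.1]{Put13}, and that you work with basic opens in $\Spec(Q)$ covering $\Var(\mathfrak{a})$ rather than directly with basic opens in $\Spec(A)$ covering $\Spec(A)$ --- an equivalent point of view.
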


\begin{proof}
 For each $x \in A$, we set\index{basic open set of $\Spec(A)$}
 \[
  D(x) := \{ \mathfrak{p} \in \Spec(A) : x \notin \mathfrak{p} \}.
 \]
 As in \cite[page 384, {\it Proof of Theorem~6.1}]{Put13}, we have
 \begin{equation}\label{equation:affine 0}
  \Spec(A) = D(g_1) \cup \cdots \cup D(g_m)
 \end{equation}
 for some $g_1, \ldots, g_m \in A$ such that the localization $A_{g_j}$ by $\{{g_j}^l : l \in \mathbb{N}\}$ has the form
 $Q_j/\mathfrak{a}_j$ for some regular ring $Q_j$ of finite Krull dimension and some ideal $\mathfrak{a}_j$ of $Q_j$ generated by
 a $Q_j$-regular sequence.
 
 It can be easily observed from \eqref{equation:affine 0} that for a finitely generated $A$-module $E$, we have
 \begin{equation}\label{equation:affine 1}
  \Ass_A(E) = \bigcup\left\{ \mathfrak{q}\cap A : \mathfrak{q} \in \Ass_{A_{g_j}}(E_{g_j})\mbox{ for some } j = 1,\ldots,m \right\}.
 \end{equation}
 Since localization $A_{g_j}$ is flat over $A$, we have
 \begin{equation}\label{equation:affine 2}
 \left(\Ext_A^i(M,N/I^n N)\right)_{g_j} = ~\Ext_{A_{g_j}}^i\left(M_{g_j},N_{g_j}/(IA_{g_j})^n N_{g_j}\right)
 \end{equation}
 for all $n, i \ge 0$ and $j=1,\ldots,m$. Therefore, in view of \eqref{equation:affine 1} and \eqref{equation:affine 2}, it is
 enough to prove the result for the ring $A_{g_j}=Q_j/\mathfrak{a}_j$ for each $j$. Note that $Q_j$ is a regular ring
 of finite Krull dimension, and hence $\projdim_{Q_j}(M_{g_j})$ is finite. Therefore the result now follows by
 applying the Theorems \ref{theorem:Q mod f finiteness} and \ref{theorem:Q mod f stability} to each
 $A_{g_j}=Q_j/\mathfrak{a}_j$.
\end{proof}
 
 Now we prove the analogous results to Theorem~\ref{theorem:affine case} in the projective case. Let us fix the following hypothesis:
 
\begin{hypothesis}\label{hypothesis: the projective locally CI variety}
  Let $K$ be a field not necessarily algebraically closed, and let $Q = K[X_0,X_1,\ldots,X_r]$ be a polynomial ring over $K$,
  where $\deg(X_i) = 1$ for all $i = 0,1,\dots,r$. Let $\mathfrak{a}$ be a homogeneous ideal of $Q$. Set $A := Q/\mathfrak{a}$. Suppose
  $A_{(\mathfrak{p})}$ is a complete intersection ring for every $\mathfrak{p} \in \Proj(A)$.
\end{hypothesis}
 
 Let $\mathfrak{m}$ be the unique maximal homogeneous ideal of $A$. Let $E$ be a finitely generated graded $A$-module. Note that all the associate
 primes of $E$ are homogeneous prime ideals. Define the set of {\it relevant associated prime ideals}\index{relevant associate primes}
 of $E$ as
 \[
  {}^*\Ass_A(E) := \Ass_A(E) \cap \Proj(A) = \Ass_A(E) \smallsetminus \{\mathfrak{m}\}.
 \]  
 
 In the projective case, we prove the following:
 
\begin{theorem}\label{theorem:projective case}\index{finiteness of associate primes}\index{stability of associate primes}
  With the {\rm Hypothesis~\ref{hypothesis: the projective locally CI variety}}, let $M$ and $N$ be finitely generated graded
  $A$-modules, and let $I$ be a homogeneous ideal of $A$. Then the set
  \[
    \bigcup_{n \ge 0} \bigcup_{i \ge 0} {}^*\Ass_A \left( \Ext_A^i(M, N/I^n N) \right)
  \]
  is finite. Moreover, there exist some non-negative integers $n_0$ and $i_0$ such that for all $n \ge n_0$ and $i \ge i_0$, we have that
  \begin{align*}
   {}^*\Ass_A\left(\Ext_A^{2i}(M,N/I^nN)\right) &= {}^*\Ass_A\left(\Ext_A^{2 i_0}(M,N/I^{n_0}N)\right), \\
   {}^*\Ass_A\left(\Ext_A^{2i+1}(M,N/I^nN)\right) &= {}^*\Ass_A\left(\Ext_A^{2 i_0 + 1}(M,N/I^{n_0}N)\right).
  \end{align*}
\end{theorem}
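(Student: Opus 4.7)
The plan is to parallel the proof of Theorem~\ref{theorem:affine case} by replacing the affine open cover $\Spec(A) = \bigcup D(g_j)$ with an affine open cover of $\Proj(A)$ by basic relevant opens ${}^*D(g_j)$, and by replacing ordinary localizations $A_{g_j}$ with homogeneous (degree-zero) localizations $A_{(g_j)}$. Since $A = Q/\mathfrak{a}$ with $Q = K[X_0,\ldots,X_r]$ polynomial, for any homogeneous $g \in A$ of positive degree we have $A_{(g)} = Q_{(g)}/\mathfrak{a} Q_{(g)}$, where $Q_{(g)}$ is a regular ring of finite Krull dimension. The hypothesis that $A_{(\mathfrak{p})}$ is a complete intersection for every $\mathfrak{p} \in \Proj(A)$ means precisely that $\mathfrak{a} Q_{(\mathfrak{p})}$ is generated by a $Q_{(\mathfrak{p})}$-regular sequence for each such $\mathfrak{p}$.

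The first step would be to use quasi-compactness of $\Proj(A)$ together with the fact that being generated by a regular sequence in an ambient regular ring is an open condition on $\Proj(Q)$: for each $\mathfrak{p}\in \Proj(A)$ there is a homogeneous $g \in A$ of positive degree with $g \notin \mathfrak{p}$ such that $\mathfrak{a} Q_{(g)}$ is generated by a $Q_{(g)}$-regular sequence. Extracting a finite subcover then yields homogeneous elements $g_1,\ldots,g_m$ of positive degree with
\[
   \Proj(A) = {}^*D(g_1) \cup \cdots \cup {}^*D(g_m), \qquad A_{(g_j)} = Q_j/\mathfrak{a}_j,
\]
where $Q_j := Q_{(g_j)}$ is a regular ring of finite Krull dimension and $\mathfrak{a}_j := \mathfrak{a}Q_j$ is generated by a $Q_j$-regular sequence.

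Next, for every finitely generated graded $A$-module $E$, the standard bijection $\mathfrak{p} \leftrightarrow \mathfrak{p}_{(g_j)}$ between relevant homogeneous primes of $A$ lying in ${}^*D(g_j)$ and prime ideals of $A_{(g_j)}$ restricts to a bijection between ${}^*\Ass_A(E) \cap {}^*D(g_j)$ and $\Ass_{A_{(g_j)}}(E_{(g_j)})$, so that
\[
   {}^*\Ass_A(E) = \bigcup_{j=1}^{m} \left\{\, \mathfrak{p}\cap A : \mathfrak{p} \in \Ass_{A_{(g_j)}}(E_{(g_j)}) \,\right\}.
\]
Since homogeneous localization is flat and $M$ is finitely generated, we get the crucial isomorphism
\[
   \bigl(\Ext_A^i(M, N/I^n N)\bigr)_{(g_j)} \;\cong\; \Ext_{A_{(g_j)}}^i\!\left(M_{(g_j)},\, N_{(g_j)}/(I A_{(g_j)})^n N_{(g_j)}\right)
\]
for all $n,i \ge 0$ and $j = 1,\ldots,m$.

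Finally, because each $Q_j$ is a regular ring of finite Krull dimension, $\projdim_{Q_j}(M_{(g_j)})$ is finite, so Theorem~\ref{theorem:Q mod f finiteness} applied to $A_{(g_j)} = Q_j/\mathfrak{a}_j$ gives the finiteness of $\bigcup_{n,i}\Ass_{A_{(g_j)}}\bigl(\Ext_{A_{(g_j)}}^i(M_{(g_j)}, N_{(g_j)}/(I A_{(g_j)})^n N_{(g_j)})\bigr)$, and Theorem~\ref{theorem:Q mod f stability} gives the even/odd stability with indices $(n_0^{(j)}, i_0^{(j)})$ on each chart. Combining over $j$ via the covering identity for ${}^*\Ass_A$, and taking $(n_0,i_0) := \max_j (n_0^{(j)}, i_0^{(j)})$, yields both conclusions of the theorem. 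The main obstacle is the very first step: producing the cover so that the presentation $A_{(g_j)} = Q_j/\mathfrak{a}_j$ has $\mathfrak{a}_j$ generated by a regular sequence globally on each chart (not just at a single point); once this openness of the complete-intersection locus is in hand, the rest is a direct transcription of the affine proof.
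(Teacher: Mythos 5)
Your proposal has the same high-level architecture as the paper's proof (cover $\Proj(A)$ by basic relevant opens ${}^*D(g_j)$ extracted via quasi-compactness and the openness of the complete-intersection locus, reduce to the affine theorems chart by chart, and reassemble the sets of associate primes), but there is one substantive deviation that is worth flagging: you localize to the \emph{degree-zero} rings $A_{(g_j)}$, while the paper localizes to the \emph{full} ($\mathbb{Z}$-graded) rings $A_{g_j}$ and works with $M_{g_j}$, $N_{g_j}$, etc.\ directly. That choice is not cosmetic, and as written it creates a gap.

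The issue is your step ``Since homogeneous localization is flat and $M$ is finitely generated, we get the crucial isomorphism $\bigl(\Ext_A^i(M,N/I^nN)\bigr)_{(g_j)} \cong \Ext_{A_{(g_j)}}^i\bigl(M_{(g_j)},\ldots\bigr)$.'' There is no ring homomorphism $A \to A_{(g_j)}$ when $g_j$ has positive degree (only $A_0 \hookrightarrow A_{(g_j)}$), so $(-)_{(g_j)}$ is not a base-change along a flat map and the Ext-commutation you invoke does not follow from flatness of localization. What one actually needs is the full localization isomorphism $\bigl(\Ext_A^i(M,-)\bigr)_{g_j} \cong \Ext_{A_{g_j}}^i(M_{g_j},-)$, followed by an equivalence of categories between $\mathbb{Z}$-graded $A_{g_j}$-modules and $A_{(g_j)}$-modules. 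That equivalence holds cleanly only when $A_{g_j}$ contains a homogeneous unit of degree $1$; for $g_j$ of degree $d>1$ over a non-algebraically-closed $K$ (which the hypothesis explicitly allows) this can fail, so one must either restrict the cover to $g_j \in A_1$ (possible, since $A$ is generated in degree one) or add an argument. For the same reason, the presentation ``$A_{(g)} = Q_{(g)}/\mathfrak{a}Q_{(g)}$'' needs care: $\mathfrak{a}Q_{(g)}$ has no immediate meaning since $Q$ does not map to $Q_{(g)}$; what one means is the degree-zero part $(\mathfrak{a}Q_g)_0$. Similarly, the bijection between ${}^*\Ass_A(E)\cap {}^*D(g)$ and $\Ass_{A_{(g)}}(E_{(g)})$, while true, is a statement about $\Proj$ that requires this graded equivalence, not a direct consequence of localization.

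The paper's route avoids all of this: $A \to A_{g_j}$ is an honest flat localization, so $\bigl(\Ext_A^i(M,N/I^nN)\bigr)_{g_j} \cong \Ext_{A_{g_j}}^i(M_{g_j}, N_{g_j}/(IA_{g_j})^nN_{g_j})$ is immediate, $Q_{g_j}$ is regular as a localization of the polynomial ring $Q$, and
\[
  {}^*\Ass_A(E) \;=\; \bigcup_{j=1}^{m}\bigl\{\, \mathfrak{q}\cap A : \mathfrak{q}\in \Ass_{A_{g_j}}(E_{g_j}) \,\bigr\}
\]
holds for any finitely generated graded $A$-module $E$: every $\mathfrak{q}\in\Ass_{A_{g_j}}(E_{g_j})$ contracts to a homogeneous associated prime of $E$ avoiding $g_j\in A_{+}$, hence to a relevant prime, and conversely every relevant associated prime lies in some ${}^*D(g_j)$. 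The rest of your argument (applying Theorems~\ref{theorem:Q mod f finiteness} and \ref{theorem:Q mod f stability} chartwise and taking the maximum of the stability indices) matches the paper. So replace the degree-zero localizations $A_{(g_j)}$, $M_{(g_j)}$, $E_{(g_j)}$ with the ordinary localizations $A_{g_j}$, $M_{g_j}$, $E_{g_j}$ throughout, and drop the appeal to flatness of ``homogeneous localization''; then the proof closes and coincides with the paper's.
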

 
\begin{proof}
  For each homogeneous element $x \in A$, we set\index{basic open set of $\Spec(A)$}
  \[
   {}^*D(x) = \{\mathfrak{p} \in \Proj(A) : x \notin \mathfrak{p}\}.
  \]
  As in \cite[page 386, {\it Proof of Theorem~6.3}]{Put13}, we have
 \[
  \Proj(A) = {}^*D(g_1) \cup \cdots \cup {}^*D(g_m)
 \]
 for some homogeneous $g_1,\ldots,g_m \in A$ such that $A_{g_j} = Q_j/\mathfrak{a}_j$ for some regular ring $Q_j$ of finite Krull
 dimension and some ideal $\mathfrak{a}_j$ of $Q_j$ generated by a $Q_j$-regular sequence. Clearly, for any graded
 $A$-module $E$, we obtain
 \[
  {}^*\Ass_A(E) = \bigcup\left\{ \mathfrak{q} \cap A : \mathfrak{q} \in \Ass_{A_{g_j}}(E_{g_j}) \mbox{ for some } j=1,\ldots,m \right\}.
 \]
 Similarly, as in the proof of Theorem~\ref{theorem:affine case}, the result now follows by applying
 Theorems~\ref{theorem:Q mod f finiteness} and \ref{theorem:Q mod f stability} to each $A_{g_j} = Q_j/\mathfrak{a}_j$.
\end{proof}
 
\newpage
\thispagestyle{empty}
\cleardoublepage

\chapter{Asymptotic Stability of Complexities over Complete Intersection Rings}\label{Chapter: Asymptotic Stability of Complexities}

 Throughout this chapter, let $(A,\mathfrak{m},k)$ be a local complete intersection ring of codimension $c$. Let $M$ and $N$ be finitely generated  $A$-modules. Recall that the {\it complexity of the pair}\index{complexity of a pair of modules} $(M,N)$ is
 defined to be the number
 \[
  \cx_A(M,N) = \inf\left\{b\in\mathbb{N} ~\middle|~ \limsup_{n\rightarrow\infty}\dfrac{\mu(\Ext_A^n(M,N))}{n^{b-1}}<\infty \right\},
 \]
 where $\mu(D)$ denotes the minimal number of generators of an  $A$-module $D$. Let $I$ be an ideal of $A$.
 In this chapter, we show that
 \begin{center}
  (C1) \hfill $\cx_A(M, N/I^j N)$\quad is independent of $j$ for all sufficiently large $j$; \hfill \;
 \end{center}
 see Theorem~\ref{theorem:Stability of complexity}. To prove this result, we take advantage of the notion of support variety which was
 introduced by L. L. Avramov and R.-O. Buchweitz in \cite[2.1]{AB00}.
 
 The organization of this chapter is as follows. In Section~\ref{Section: Support Varieties}, we recall how the notions of
 complexity and support variety are related. We also give some preliminaries which we need to prove our result on complexity.
 Finally, in Section~\ref{Section: Stability of Complexities}, we prove Theorem~\ref{theorem:Stability of complexity}.

\section{Support Varieties}\label{Section: Support Varieties}
 
 To prove Theorem~\ref{theorem:Stability of complexity}, we first reduce to the case when our local complete intersection ring
 $A$ is complete and its residue field $k$ is algebraically closed.
 
\begin{para}[\textbf{Complexity through Flat Local Extension}]\label{para: flat local extension}
 \index{complexity through flat local extension} Suppose $(A',\mathfrak{m}')$ is a flat local extension of $(A,\mathfrak{m})$ such that
 $\mathfrak{m}' = \mathfrak{m} A'$. It is shown in \cite[Theorem~7.4.3]{Avr98} that $A'$ is also a local complete intersection ring.
 For an $A$-module $E$, we set $E' = E \otimes_A A'$. Note that $I' \cong I A'$. So we may consider $I'$ as an ideal of $A'$.
 It can be easily checked that
 \[
  \cx_{A'}\big(M', N'/{(I')}^j N'\big) = \cx_A(M, N/I^j N) \quad\mbox{for all } j \ge 0.
 \]
\end{para}

\begin{para}[\textbf{Reduction to the Case when $A$ is Complete with Algebraically Closed Residue Field}]
                                                                       \label{para: reduction to A s.t. k is alg closed}
 By \cite[Chapitre 9, appendice, corollaire du th\'{e}or\'{e}me 1, p. IX.41]{Bou83}, there exists a flat local extension
 $A \subseteq \widetilde{A}$ such that $\widetilde{\mathfrak{m}} = \mathfrak{m} \widetilde{A}$ is the maximal ideal of $\widetilde{A}$
 and the residue field $\widetilde{k}$ of $\widetilde{A}$ is an algebraically closed extension of $k$. Therefore, by the observations
 made in Section~\ref{para: flat local extension}, we may assume $k$ to be algebraically closed field\index{algebraically closed field}.
 We now consider the completion $\widehat{A}$ of $A$. Since $\widehat{A}$ is a flat local extension of $A$ such that
 $\mathfrak{m} \widehat{A}$ is the maximal ideal of $\widehat{A}$, we may as well assume that our local complete intersection ring $A$:
 \begin{itemize}
  \item[(i)] is complete, and hence $A = Q/({\bf f})$, where $(Q,\mathfrak{n})$ is a regular local ring and
             ${\bf f} = f_1, \ldots, f_c \in {\mathfrak{n}}^2$ is a $Q$-regular sequence;
  \item[(ii)] has an algebraically closed residue field $k$.
 \end{itemize}
\end{para}

\begin{para}[\textbf{Total Ext-module}]
 Let $U$ and $V$ be two finitely generated $A$-modules. Recall from Section~\ref{para:module structure 1} that
 \[
  \Ext_A^{\star}(U,V) := \bigoplus_{i \ge 0} \Ext_A^i(U,V)
 \]
 is the total Ext-module\index{total Ext-module} of $U$ and $V$ over the graded ring $A[t_1,\ldots,t_c]$ of cohomology operators $t_j$
 defined by ${\bf f}$, where $\deg(t_j) = 2$ for all $j = 1,\ldots,c$. We set
 \[
  \mathcal{C}(U,V) := \Ext_A^{\star}(U,V)\otimes_A k.
 \]
 Since $\Ext_A^{\star}(U,V)$ is a finitely generated graded $A[t_1,\ldots,t_c]$-module, $\mathcal{C}(U,V)$ is a finitely
 generated graded module over 
 \[
  \overline{T} := A[t_1,\ldots,t_c] \otimes_A k = k[t_1,\ldots,t_c].
 \]
\end{para}

\begin{para}[\textbf{Support Variety and Complexity}]\label{para: Support Variety defn}
 Define the {\it support variety}\index{support variety} $\mathscr{V}(U,V)$ of $U, V$ as the zero set in $k^c$ of the annihilator
 of $\mathcal{C}(U,V)$ in $\overline{T}$, that is
 \[
  \mathscr{V}(U,V) := \left\{ (b_1,\ldots,b_c) \in k^c :
  P(b_1,\ldots,b_c)=0 \mbox{ for all } P \in \ann_{\overline{T}}\big(\mathcal{C}(U,V)\big) \right\} \cup \{0\}.
 \]
 It is shown in \cite[Proposition~2.4(2)]{AB00} that
 \begin{center}\index{complexity via support variety}
  ($\dag$)\hfill $\cx_A(U,V) = \dim(\mathscr{V}(U,V)) = \dim_{\overline{T}}(\mathcal{C}(U,V))$.\index{dimension of a variety} \hfill \;
 \end{center}
\end{para}

 The following lemma is well-known. We give its proof also for the reader's convenience.

\begin{lemma}\label{lemma:stability of dim}
 Let $\mathscr{R} = \bigoplus_{n \ge 0} \mathscr{R}_n$ be a standard $\mathbb{N}$-graded ring, and let
 $\mathscr{M} = \bigoplus_{n \ge 0} \mathscr{M}_n$ be a finitely generated $\mathbb{N}$-graded $\mathscr{R}$-module. Then there
 exists some non-negative integer $j_0$ such that
 \[
  \dim_{\mathscr{R}_0}(\mathscr{M}_j) = \dim_{\mathscr{R}_0}(\mathscr{M}_{j_0})\quad\mbox{for all }j\ge j_0.
 \]
\end{lemma}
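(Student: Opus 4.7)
The plan is to show the stronger statement that the annihilators $\ann_{\mathscr{R}_0}(\mathscr{M}_j)$ themselves stabilize for large $j$, and then invoke the identity $\dim_{\mathscr{R}_0}(\mathscr{M}_j) = \dim(\mathscr{R}_0/\ann_{\mathscr{R}_0}(\mathscr{M}_j))$. Since $\mathscr{R}_0$ is a Noetherian ring, it suffices to produce an ascending chain of ideals of $\mathscr{R}_0$, starting from some degree onward, and then the ACC forces stabilization.

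First I would use that $\mathscr{M}$ is a finitely generated graded $\mathscr{R}$-module: there exists an integer $j_1 \ge 0$ such that $\mathscr{M}$ is generated, as an $\mathscr{R}$-module, by homogeneous elements of degree $\le j_1$. Combined with the standing hypothesis that $\mathscr{R} = \mathscr{R}_0[\mathscr{R}_1]$ is standard graded, this yields the surjection
\[
  \mathscr{R}_1 \cdot \mathscr{M}_j = \mathscr{M}_{j+1} \quad \mbox{for all } j \ge j_1.
\]

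Next I would verify the key inclusion $\ann_{\mathscr{R}_0}(\mathscr{M}_j) \subseteq \ann_{\mathscr{R}_0}(\mathscr{M}_{j+1})$ for $j \ge j_1$. Indeed, pick $a \in \ann_{\mathscr{R}_0}(\mathscr{M}_j)$ and any $m \in \mathscr{M}_{j+1}$; by the previous step, write $m = \sum_i r_i m_i$ with $r_i \in \mathscr{R}_1$ and $m_i \in \mathscr{M}_j$. Commutativity of $\mathscr{R}$ then gives $am = \sum_i r_i(am_i) = 0$. Consequently
\[
  \ann_{\mathscr{R}_0}(\mathscr{M}_{j_1}) \subseteq \ann_{\mathscr{R}_0}(\mathscr{M}_{j_1+1}) \subseteq \ann_{\mathscr{R}_0}(\mathscr{M}_{j_1+2}) \subseteq \cdots
\]
is an ascending chain of ideals in the Noetherian ring $\mathscr{R}_0$, hence stabilizes at some index $j_0 \ge j_1$.

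Finally, for all $j \ge j_0$ one obtains $\mathscr{R}_0/\ann_{\mathscr{R}_0}(\mathscr{M}_j) = \mathscr{R}_0/\ann_{\mathscr{R}_0}(\mathscr{M}_{j_0})$, so taking Krull dimensions yields $\dim_{\mathscr{R}_0}(\mathscr{M}_j) = \dim_{\mathscr{R}_0}(\mathscr{M}_{j_0})$, which is the desired conclusion. I do not anticipate a serious obstacle here: the only subtle point is ensuring that the standard-generation hypothesis on $\mathscr{R}$ together with finite generation of $\mathscr{M}$ is really what produces the inclusion $\mathscr{R}_1 \mathscr{M}_j = \mathscr{M}_{j+1}$ beyond the generating degree, and this is immediate from unwinding the definition of a finitely generated graded module over a standard graded ring.
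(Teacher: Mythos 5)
Your proposal is correct and follows essentially the same route as the paper: establish $\mathscr{M}_{j+1} = \mathscr{R}_1\mathscr{M}_j$ beyond the generating degree, deduce the ascending chain of annihilators in $\mathscr{R}_0$, invoke the ascending chain condition, and pass to Krull dimensions. No discrepancy to note.
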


\begin{proof}
 Since $\mathscr{M}$ is a finitely generated $\mathbb{N}$-graded module over a standard $\mathbb{N}$-graded ring
 $\mathscr{R}$, there exists some $j' \ge 0$ such that
 \[
  \mathscr{M}_j = {\left(\mathscr{R}_{1}\right)}^{j - j'} \mathscr{M}_{j'} \quad\mbox{for all } j \ge j',
 \]
 which gives the following ascending chain of ideals:
 \[ 
  \ann_{\mathscr{R}_0}(\mathscr{M}_{j'}) \subseteq \ann_{\mathscr{R}_0}(\mathscr{M}_{j' + 1})
  \subseteq \ann_{\mathscr{R}_0}(\mathscr{M}_{j' + 2}) \subseteq \cdots.
 \]
 Since $\mathscr{R}_0$ is Noetherian, there exists some $j_0$ ($\ge j'$) such that $\ann_{\mathscr{R}_0}(\mathscr{M}_j) =
 \ann_{\mathscr{R}_0}(\mathscr{M}_{j_0})$, and hence $\dim_{\mathscr{R}_0}(\mathscr{M}_j) = \dim_{\mathscr{R}_0}(\mathscr{M}_{j_0})$
 for all $j \ge j_0$.
\end{proof}

\begin{para}[\textbf{Asymptotic Stability of Complexities $\cx_A(M,I^jN/I^{j+1}N)$}]\label{observation}
 Since
 \[
  \gr_I(N) = \bigoplus_{j \ge 0}I^jN/I^{j+1}N
 \]
 is a finitely generated graded $\mathscr{R}(I)$-module, in view of Theorem~\ref{theorem:finitely generated}, we have that
 \[
  \bigoplus_{j \ge 0} \Ext_A^{\star}(M,I^jN/I^{j+1}N) = \bigoplus_{j \ge 0} \bigoplus_{i \ge 0} \Ext_A^i(M,I^jN/I^{j+1}N)
 \]
 is a finitely generated graded $\mathscr{R}(I)[t_1,\ldots,t_c]$-module, and hence 
 \[
  \bigoplus_{j \ge 0}\Ext_A^{\star}(M,I^jN/I^{j+1}N) \otimes_A k
 \]
 is a finitely generated graded module over
 \[
  \mathscr{R}(I)[t_1,\ldots,t_c] \otimes_A k = F(I)[t_1,\ldots,t_c],
 \]
 where $F(I)$ is the fiber cone\index{fiber cone} of $I$ which is a finitely generated $k$-algebra. Writing
 \[
  F(I)[t_1,\ldots,t_c] = k[x_1,\ldots,x_m][t_1,\ldots,t_c] = \overline{T}[x_1,\ldots,x_m],
 \]
 we can say that
 \[
  \bigoplus_{j \ge 0}\Ext_A^{\star}(M, I^j N/I^{j+1} N) \otimes_A k
 \]
 is a finitely generated graded $\overline{T}[x_1,\ldots,x_m]$-module. So, by using Lemma~\ref{lemma:stability of dim}, we have
 \[ 
  \dim_{\overline{T}}\left(\Ext_A^\star(M,I^jN/I^{j+1}N) \otimes_A k\right) \quad \mbox{is constant for all }j \gg 0.
 \]
 Therefore, in view of ($\dag$) in Section~\ref{para: Support Variety defn}, we obtain that
 \begin{center}
  (C2) \hfill $\cx_A(M,I^jN/I^{j+1}N)$ \quad is constant for all $j \gg 0$. \hfill \;
 \end{center}
\end{para}

\section{Asymptotic Stability of Complexities}\label{Section: Stability of Complexities}
  
 Now we are in a position to prove the main result of this chapter.

\begin{theorem}\label{theorem:Stability of complexity}\index{stability of complexities}
 Let $(A,\mathfrak{m},k)$ be a local complete intersection ring. Let $M$ and $N$ be two finitely generated $A$-modules, and let
 $I$ be an ideal of $A$. Then
 \[
  \cx_A(M,N/I^jN)\quad\mbox{is constant for all $j \gg 0$}.
 \]
\end{theorem}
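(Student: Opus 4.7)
The plan is to combine the reduction and bigraded-module strategies of Chapter~\ref{Chapter: Asymptotic Prime Divisors over Complete Intersection Rings} with the support-variety characterization of complexity recalled in Section~\ref{Section: Support Varieties}. First, by the reduction in Section~\ref{para: reduction to A s.t. k is alg closed}, I may assume that $A = Q/({\bf f})$ is complete with algebraically closed residue field $k$, so that $(\dag)$ of Section~\ref{para: Support Variety defn} gives $\cx_A(M, N/I^{j+1}N) = \dim_{\overline{T}} \mathcal{C}(M, N/I^{j+1}N)$ for every $j \ge 0$. It therefore suffices to show that this last dimension is eventually constant in $j$.

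Following Sections~\ref{para:module structure 2}--\ref{para:module structure 3}, I form the bigraded $\mathscr{S} = \mathscr{R}(I)[t_1,\ldots,t_c]$-modules
\[
G := \bigoplus_{j, i \ge 0} \Ext_A^i(M, I^j N/I^{j+1}N) \quad\text{and}\quad V := \bigoplus_{j, i \ge 0} \Ext_A^i(M, N/I^{j+1}N).
\]
By Theorem~\ref{theorem:finitely generated} the module $G$ is finitely generated over $\mathscr{S}$, and this is precisely what drove the stability statement (C2) in Section~\ref{observation}. The essential obstacle is that $V$ is \emph{not} finitely generated over $\mathscr{S}$. To get around this, I apply $\Ext_A^\cdot(M,-)$ to the short exact sequence of graded $\mathscr{R}(I)$-modules $0 \to \gr_I N \to \mathcal{L} \to \mathcal{L}(-1) \to 0$ used in the proof of Theorem~\ref{theorem:Q mod f finiteness}, producing a four-term exact sequence of bigraded $\mathscr{S}$-modules
\[
G \xrightarrow{\Phi} V \xrightarrow{\Psi} V(-1,0) \xrightarrow{\delta} G(0,1).
\]
Here $U := \Image(\Phi) \subseteq V$ and $\Image(\delta) \subseteq G(0,1)$ are \emph{finitely generated} bigraded sub-$\mathscr{S}$-modules, so $V/U \cong \Image(\Psi)$ embeds in $V(-1,0)$ with finitely generated cokernel.

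Next I tensor everything with $k$ over $A$ to obtain bigraded modules over $\overline{\mathscr{S}} := F(I)[t_1,\ldots,t_c]$, writing $\overline{U}, \overline{V}, \overline{G}$ for the respective tensor products. In each grade $j$, the space $\overline{V}_{(j,*)} = \mathcal{C}(M, N/I^{j+1}N)$ is trapped, up to $\Tor_1^A(k,-)$ error terms that come from $V/U$ and from $G(0,1)/\Image(\Psi)$ and are themselves graded components of finitely generated $\overline{\mathscr{S}}$-modules, between $\overline{U}_{(j,*)}$ and $\overline{V}_{(j-1,*)}$ modulo a quotient of the $j$-th graded component of $\overline{G(0,1)/\Image\Psi}$. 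Applying Lemma~\ref{lemma:stability of dim} in the $j$-direction to $\overline{U}$, $\overline{G(0,1)/\Image\Psi}$, and the relevant $\Tor_1^A$ modules, their $\dim_{\overline{T}}$-values stabilize to constants $\beta$ and $\gamma$, which yields, for $\alpha_j := \cx_A(M, N/I^{j+1}N)$ and all $j \gg 0$, sandwich bounds of the form $\alpha_j \le \max(\beta, \alpha_{j-1})$ and $\alpha_{j-1} \le \max(\gamma, \alpha_j)$. Combining these with the analogous inequalities from the short exact sequence $0 \to I^{j+1}N \to N \to N/I^{j+1}N \to 0$ together with Puthenpurakal's stability of $\cx_A(M, I^j N)$ from~\cite{Put13}, a short case analysis forces $\alpha_j$ to be eventually constant. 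The hardest part will be the honest $\Tor$-bookkeeping required to convert the four-term exact sequence of $\mathscr{S}$-modules into the sandwich bounds on $\overline{V}_{(j,*)}$; everything else is dimension bookkeeping that parallels the proof of~(C2).
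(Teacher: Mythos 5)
Your setup is essentially the one the paper uses: reduce to $A$ complete with algebraically closed residue field, use $(\dag)$ to convert the problem into a statement about $\dim_{\overline{T}}\mathcal{C}(M,N/I^{j+1}N)$, assemble the Ext-modules into bigraded $\mathscr{S}$-modules, and exploit the finite generation of $\bigoplus_{j,i}\Ext_A^i(M,I^jN/I^{j+1}N)$ coming from Theorem~\ref{theorem:finitely generated}. Up to the point where you have the four-term exact sequence and pass to $-\otimes_A k$, you are tracking the paper's proof faithfully.

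The gap is in the closing step. The inequalities you propose to extract, namely $\alpha_j \le \max(\beta,\alpha_{j-1})$ and $\alpha_{j-1}\le\max(\gamma,\alpha_j)$ where $\alpha_j:=\cx_A(M,N/I^{j+1}N)$, do \emph{not} force $\alpha_j$ to be eventually constant: a sequence oscillating between two values $\le\max(\beta,\gamma)$ satisfies both bounds for every $j$. The extra ingredients you invoke (the other short exact sequence $0\to I^{j+1}N\to N\to N/I^{j+1}N\to 0$ and the eventual constancy of $\cx_A(M,I^jN)$) give you an upper bound on $\alpha_j$, but not constancy, and you do not explain how a case analysis would rule out oscillation. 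The paper closes the argument with a sharper observation that your sandwich bounds do not capture: from the two short exact sequences $0\to X'_j\to V_{j+1}\otimes k\to Y_j\otimes k\to 0$ and $0\to Y'_j\to V_j\otimes k\to Z_j(1)\otimes k\to 0$, one gets \emph{equalities} $\dim(V_{j+1}\otimes k)=\max\{\dim X'_j,\dim(Y_j\otimes k)\}$ and $\dim(V_j\otimes k)=\max\{\dim Y'_j,z\}\ge z$ for $j\gg0$, where $z$ is the stable value of $\dim(Z_j\otimes k)$. The key consequence is a conditional monotonicity: if $\dim(V_j\otimes k)>z$ then $\dim(V_j\otimes k)=\dim Y'_j\le\dim(Y_j\otimes k)\le\dim(V_{j+1}\otimes k)$, so once the sequence exceeds $z$ it is non-decreasing, and being bounded by $\dim\overline{T}$ it must stabilize. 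If it never exceeds $z$ it is identically $z$ for $j\gg0$. This dichotomy is what actually forces eventual constancy, and it is the piece your proposal is missing.
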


\begin{proof}
 By the observations made in Section~\ref{para: reduction to A s.t. k is alg closed}, we may assume that $A$ is complete and its
 residue field $k$ is algebraically closed.
 
 Fix $j \ge 0$. Consider the short exact sequence of $A$-modules
 \[
  0 \longrightarrow I^jN/I^{j+1}N \longrightarrow N/I^{j+1}N \longrightarrow N/I^jN \longrightarrow 0,
 \]
 which induces the following exact sequence of $A$-modules for each $i$:
 \begin{align*}
				              \Ext_A^{i-1}(M,N/I^jN) &\longrightarrow \\
  \Ext_A^i(M,I^jN/I^{j+1}N) \longrightarrow \Ext_A^i(M,N/I^{j+1}N) \longrightarrow \Ext_A^i(M,N/I^jN) &\longrightarrow \\
  \Ext_A^{i+1}(M,I^jN/I^{j+1}N). {~} \qquad\qquad\qquad\qquad\qquad\qquad\qquad\qquad\qquad {~} \quad  &
 \end{align*}
 Taking direct sum over $i$ and setting
 \[
   U_j := \bigoplus_{i \ge 0}\Ext_A^i(M,I^jN/I^{j+1}N) \quad\mbox{and}\quad V_j := \bigoplus_{i \ge 0}\Ext_A^i(M,N/I^jN),
 \]
 we obtain an exact sequence of $A[t_1,\ldots,t_c]$-modules:
 \[ 
   V_j(-1) \stackrel{\varphi_1}{\longrightarrow} U_j \stackrel{\varphi_2}{\longrightarrow}
   V_{j+1} \stackrel{\varphi_3}{\longrightarrow} V_j  \stackrel{\varphi_4}{\longrightarrow} U_j(1). 
 \]
 Now we set
 \[
  Z_j := \Image(\varphi_1), {~} X_j := \Image(\varphi_2) {~}\mbox{ and }{~} Y_j := \Image(\varphi_3).
 \]
 Thus we have the following commutative diagram of exact sequences:
 \[
  \xymatrixrowsep{2mm} \xymatrixcolsep{4mm}
  \xymatrix{
	      V_j(-1) \ar[rd] \ar[rr] && U_j \ar[rd] \ar[rr] && V_{j+1} \ar[rd] \ar[rr] && V_j \ar[rd] \ar[rr] && U_j(1) \\
	      & Z_j \ar[ru] \ar[rd] && X_j \ar[ru] \ar[rd] && Y_j \ar[ru] \ar[rd] && Z_j(1) \ar[ru] \ar[rd] \\
	      0 \ar[ru] && 0 \ar[ru] && 0 \ar[ru] && 0 \ar[ru] && 0. }
 \]
 Consider the following two short exact sequences of $A[t_1,\ldots,t_c]$-modules:
 \[
  0 \longrightarrow X_j \longrightarrow V_{j+1} \longrightarrow Y_j \longrightarrow 0 \quad\mbox{ and }\quad
  0 \longrightarrow Y_j \longrightarrow V_j \longrightarrow Z_j(1) \longrightarrow 0.
 \]
 Tensoring these sequences with $k$ over $A$, we get the following exact sequences of $\overline{T} = k[t_1,\ldots,t_c]$-modules:
 \begin{align*}
  X_j\otimes_A k \stackrel{\Phi_j}{\longrightarrow} &V_{j+1}\otimes_A k \longrightarrow Y_j\otimes_A k \longrightarrow 0,\\
  Y_j\otimes_A k \stackrel{\Psi_j}{\longrightarrow} &V_j\otimes_A k  \longrightarrow Z_j(1)\otimes_A k \longrightarrow 0.
 \end{align*}
 Now for each $j \ge 0$, we set
 \[
  X'_j := \Image(\Phi_j) {~}\mbox{ and }{~} Y'_j := \Image(\Psi_j)
 \]
 to get the following short exact sequences of $\overline{T}$-modules:
 \begin{align}
  0 \longrightarrow X'_j \longrightarrow & V_{j+1}\otimes_A k \longrightarrow Y_j\otimes_A k \longrightarrow 0,
											      \label{equation:support 1}\\
  0 \longrightarrow Y'_j \longrightarrow & V_j\otimes_A k  \longrightarrow Z_j(1)\otimes_A k \longrightarrow 0.
											      \label{equation:support 2}
 \end{align}
 
 By the observations made in Section~\ref{observation}, we have that $\bigoplus_{j \ge 0} U_j$ is a finitely generated graded
 $\mathscr{R}(I) [t_1, \ldots, t_c]$-module, and hence its submodule $\bigoplus_{j \ge 0} Z_j$ is also so.
 Therefore $\bigoplus_{j \ge 0} (Z_j\otimes_A k)$ is a finitely generated graded module over
 \[
  \mathscr{R}(I)[t_1,\ldots,t_c]\otimes_A k = F(I)[t_1,\ldots,t_c] = \overline{T}[x_1,\ldots,x_m].
 \]
 Therefore, by Lemma~\ref{lemma:stability of dim}, $\dim_{\overline{T}}(Z_j \otimes_A k) = z$ for all sufficiently large $j$, where $z$
 is some constant. Now considering the short exact sequences \eqref{equation:support 1} and \eqref{equation:support 2}, we obtain that
 \begin{align}
  \dim_{\overline{T}}(V_{j+1}\otimes_A k) &= \max\{ \dim_{\overline{T}}(X'_j), \dim_{\overline{T}}(Y_j\otimes_A k)\},
											  \label{equation:support dim 1} \\
  \dim_{\overline{T}}(V_j\otimes_A k) &= \max\{ \dim_{\overline{T}}(Y'_j), z \} \ge z  \label{equation:support dim 2}
 \end{align}
 for all sufficiently large $j$, say $j\ge j_0$.
 
 Note that $\dim_{\overline{T}}(V_j \otimes_A k) = \cx_A(M,N/I^jN)$ for all $j \ge 0$; see ($\dag$)
 in Section~\ref{para: Support Variety defn}. Therefore it is enough to prove that the stability of
 $\dim_{\overline{T}}(V_j \otimes_A k)$ holds for all sufficiently large $j$.
 
 If $\dim_{\overline{T}}(V_j \otimes_A k) = z$ for all $j \ge j_0$, then we are done. Otherwise there exists some $j \ge j_0$
 such that $\dim_{\overline{T}}(V_j \otimes_A k) > z$, and hence for this $j$, we have
 \[
  \dim_{\overline{T}}(V_j \otimes_A k) = \dim_{\overline{T}}(Y'_j) \le \dim_{\overline{T}}(Y_j \otimes_A k)
  \le \dim_{\overline{T}}(V_{j+1} \otimes_A k).
 \]
 First equality above occurs from \eqref{equation:support dim 2}, second inequality occurs because $Y'_j$ is a
 quotient module of $Y_j\otimes_A k$, and the last inequality occurs from \eqref{equation:support dim 1}.
 Note that $\dim_{\overline{T}}(V_{j+1} \otimes_A k) > z$. So, by applying a similar procedure, we have that
 \[
  \dim_{\overline{T}}(V_{j+1} \otimes_A k) \le \dim_{\overline{T}}(V_{j+2} \otimes_A k).
 \]
 In this way, we obtain a bounded non-decreasing sequence of non-negative integers:
 \[ 
  \dim_{\overline{T}}(V_j \otimes_A k) \le \dim_{\overline{T}}(V_{j+1} \otimes_A k) \le \dim_{\overline{T}}(V_{j+2} \otimes_A k)
  \le \cdots \le \dim\left(\overline{T}\right) < \infty,
 \]
 which eventually stabilizes somewhere, and hence the required stability holds.
\end{proof}

\newpage
\thispagestyle{empty}
\cleardoublepage

\chapter{Asymptotic Linear Bounds of Castelnuovo-Mumford Regularity}
						    \label{Chapter: Asymptotic linear bounds of Castelnuovo-Mumford regularity}
 
 Suppose $A$ is a standard $\mathbb{N}$-graded algebra over an Artinian local ring $A_0$. Let $I_1,\ldots,I_t$ be
 homogeneous ideals of $A$, and let $M$ be a finitely generated $\mathbb{N}$-graded $A$-module. Our main goal in this chapter is to show
 that there exist two integers $k_1$ and $k'_1$ such that
 \[
	  \reg(I_1^{n_1} \cdots I_t^{n_t} M) \le (n_1 + \cdots + n_t) k_1 + k'_1 \quad\mbox{for all }~ n_1,\ldots,n_t \in \mathbb{N}.
 \]
 We prove this result in a quite general set-up; see Hypothesis~\ref{hypothesis}
 and Theorem~\ref{theorem: bounds of regularity for multigraded module}.
 As a consequence, we also obtain the following: If $A_0$ is a field, then there exist two integers $k_2$ and $k'_2$ such that
 \[
	  \reg\left( \overline{I_1^{n_1}} \cdots \overline{I_t^{n_t}} M \right) \le (n_1 + \cdots + n_t) k_2 + k'_2
	  \quad\mbox{for all }~ n_1,\ldots,n_t \in \mathbb{N},
 \]
 where $\overline{I}$ denotes the integral closure of an ideal $I$ of $A$; see Definition~\ref{definition: integral closure}.
 
 We use the following notations throughout this chapter.

\begin{customnotations}{\ref{Chapter: Asymptotic linear bounds of Castelnuovo-Mumford regularity}.1}
 Throughout, $\mathbb{N}$ denotes the set of all non-negative integers and $t$ is any fixed positive integer. We use
 small letters with underline (e.g., $\underline{n}$) to denote elements of $\mathbb{N}^t$, and we use subscripts mainly to denote
 the coordinates of such an element, e.g., $\underline{n} = (n_1,n_2,\ldots,n_t)$. In particular, for every $1 \le i \le t$,
 $\underline{e}^i$ denotes the $i$th standard basis element of $\mathbb{N}^t$. We denote $\underline{0}$ the element of
 $\mathbb{N}^t$ with all components $0$. Throughout, we use the partial order on $\mathbb{N}^t$ defined by
 $\underline{n} \ge \underline{m}$ if and only if $n_i \ge m_i$ for all $1 \le i \le t$. For every $\underline{n} \in \mathbb{N}^t$,
 we set $|\underline{n}| := n_1 + \cdots + n_t$.
 If $R$ is an $\mathbb{N}^t$-graded ring and $L$ is an $\mathbb{N}^t$-graded $R$-module, then by $L_{\underline{n}}$,
 we always mean the $\underline{n}$th graded component of $L$.
\end{customnotations}
 
 By a {\it standard multigraded ring}\index{standard graded ring}, we mean a multigraded ring which is generated in total degree one,
 i.e., $R$ is a standard $\mathbb{N}^t$-graded ring if $R = R_{\underline{0}}[R_{\underline{e}^1},\ldots,R_{\underline{e}^t}]$.
 
 The rest of this chapter is organized as follows. We start by recalling some of the well-known basic results on Castelnuovo-Mumford
 regularity in Section~\ref{Castelnuovo-Mumford Regularity}; while in Section~\ref{section: preliminaries on multigraded modules}, we
 give some preliminaries on multigraded modules which we use in order to prove our main results on regularity. The announced linear
 boundedness results of regularity are proved in Section~\ref{section: linear bounds of regularity} through several steps.
 Finally, in Section~\ref{About linearity of regularity}, we discuss about asymptotic linearity of regularity
 for powers of several ideals.
 
\section{Castelnuovo-Mumford Regularity}\label{Castelnuovo-Mumford Regularity}
 
 Let $A = A_0[x_1,\ldots,x_d]$ be a standard $\mathbb{N}$-graded ring. Let $A_{+}$ be the irrelevant ideal
 $\langle x_1,\ldots,x_d \rangle$ of $A$ generated by the homogeneous elements of positive degree. Let $M$ be a finitely generated $\mathbb{N}$-graded $A$-module. For every integer $i \ge 0$, we denote the $i$th local cohomology module of $M$ with respect to $A_{+}$ by $H_{A_{+}}^i(M)$. For every integer $i \ge 0$, we set
 \[ 
	  a_i(M) := \left\{  \begin{array}{l l}
                     \max\left\{ \mu : H_{A_{+}}^i(M)_{\mu} \neq 0 \right\} & \quad \text{if } H_{A_{+}}^i(M) \neq 0\\
                    -\infty                                                & \quad \text{if } H_{A_{+}}^i(M) = 0.  \end{array} \right.
 \]
 Recall that the {\it Castelnuovo-Mumford regularity} of $M$ is defined by\index{Castelnuovo-Mumford regularity}
 \[
  \reg(M) := \max\left\{ a_i(M) + i : i \ge 0 \right\}.
 \]
 
 For a given short exact sequence of graded modules, by considering the corresponding long exact sequence of local
 cohomology modules, we can prove the following well-known result:
 
 \begin{lemma}\label{lemma: properties of regularity}\index{regularity and short exact sequence}
  Let $A$ be a standard $\mathbb{N}$-graded ring. Let $ 0 \longrightarrow M_1 \longrightarrow M_2 \longrightarrow M_3 \longrightarrow 0 $ be a short exact sequence of finitely generated $\mathbb{N}$-graded $A$-modules. Then we have the following inequalities:
  \begin{enumerate}[{\rm (i)}]
   \item $\reg(M_1) \le \max\{ \reg(M_2), \reg(M_3) + 1\}$.
   \item $\reg(M_2) \le \max\{ \reg(M_1), \reg(M_3)\}$.
   \item $\reg(M_3) \le \max\{ \reg(M_1) - 1, \reg(M_2)\}$.
  \end{enumerate}
 \end{lemma}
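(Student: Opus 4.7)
The plan is to derive all three inequalities in one stroke from the long exact sequence of local cohomology modules associated to the given short exact sequence. Since $A_+$ is fixed, I abbreviate $H^i(N) := H^i_{A_+}(N)$, and I keep track of the graded components, since each $H^i(N)$ inherits an $\mathbb{N}$-grading from $N$.

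First, I would write down the long exact sequence of local cohomology:
\[
\cdots \to H^{i-1}(M_3)_{\mu} \to H^{i}(M_1)_{\mu} \to H^{i}(M_2)_{\mu} \to H^{i}(M_3)_{\mu} \to H^{i+1}(M_1)_{\mu} \to \cdots
\]
in each graded degree $\mu \in \mathbb{Z}$. From exactness at each spot I would read off three pointwise inequalities on the invariants $a_i(-)$:
\begin{align*}
a_i(M_1) &\le \max\{\, a_{i-1}(M_3),\, a_i(M_2)\,\},\\
a_i(M_2) &\le \max\{\, a_i(M_1),\, a_i(M_3)\,\},\\
a_i(M_3) &\le \max\{\, a_i(M_2),\, a_{i+1}(M_1)\,\},
\end{align*}
where each inequality follows from the fact that if the middle term in a three-term exact sequence of vector spaces is non-zero in degree $\mu$, then at least one of its neighbours is non-zero in degree $\mu$ as well. (The convention $a_i(-) = -\infty$ when the local cohomology vanishes handles the boundary case $i=0$.)

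Next I would add $i$ to each side and rewrite the bounds so that the right-hand sides become $\reg(M_\bullet)$ (possibly shifted by $\pm 1$):
\begin{align*}
a_i(M_1)+i &\le \max\{\, (a_{i-1}(M_3)+(i-1))+1,\, a_i(M_2)+i\,\} \le \max\{\reg(M_3)+1,\, \reg(M_2)\},\\
a_i(M_2)+i &\le \max\{\reg(M_1),\, \reg(M_3)\},\\
a_i(M_3)+i &\le \max\{\reg(M_2),\, (a_{i+1}(M_1)+(i+1))-1\} \le \max\{\reg(M_2),\, \reg(M_1)-1\}.
\end{align*}
Taking the maximum over $i \ge 0$ on each left-hand side yields exactly (i), (ii) and (iii).

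There is no real obstacle here; the only mild point to be careful about is the boundary case in (i), where the term $H^{-1}(M_3)$ has to be interpreted as zero (i.e.\ $a_{-1}(M_3) = -\infty$), so that the bound for $i=0$ reads $a_0(M_1) \le a_0(M_2)$, which is consistent with the proposed inequality. Everything else is a direct accounting of indices in the long exact sequence.
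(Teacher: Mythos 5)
Your proof is correct and takes essentially the same approach the paper intends: the paper itself only remarks that the lemma follows from the long exact sequence of local cohomology, and your argument fills in exactly that accounting of the invariants $a_i(-)$ degree by degree.
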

 
 The following lemma is well-known, but we cannot locate a reference for it. So we give a proof here for the completeness.
 
 \begin{lemma}\label{lemma: dimension reduction relation of regularity}\index{lemma on regularity}
  Let $A$ be a standard $\mathbb{N}$-graded ring, and let $M$ be a finitely generated $\mathbb{N}$-graded $A$-module. Let $x$ be a homogeneous element in $A$ of positive degree $l$. Then we have the following inequality:
  \[
    \reg(M) \le \max\{\reg(0 :_M x), \reg(M/xM) - l + 1\}.
  \]  
  Over polynomial rings over fields, if $x$ is such that $\dim(0 :_M x) \le 1$, then the inequality could be replaced
  by equality.
 \end{lemma}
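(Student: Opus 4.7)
The plan is to extract the two short exact sequences arising from multiplication by $x$ and chain together the regularity inequalities from Lemma~\ref{lemma: properties of regularity}. Since $x$ has degree $l$, the map $\mu_x \colon M \to M(l)$, $m \mapsto xm$, is a degree-preserving homomorphism of graded $A$-modules with kernel $(0 :_M x)$ and cokernel $(M/xM)(l)$. This yields the four-term exact sequence
\[
  0 \longrightarrow (0 :_M x) \longrightarrow M \xrightarrow{\ \mu_x\ } M(l) \longrightarrow (M/xM)(l) \longrightarrow 0,
\]
which I split into the two short exact sequences
\[
  0 \longrightarrow (0 :_M x) \longrightarrow M \longrightarrow N \longrightarrow 0, \qquad
  0 \longrightarrow N \longrightarrow M(l) \longrightarrow (M/xM)(l) \longrightarrow 0,
\]
where $N := \operatorname{Image}(\mu_x) \subseteq M(l)$.

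Applying part (ii) of Lemma~\ref{lemma: properties of regularity} to the first short exact sequence gives
\[
  \reg(M) \le \max\{\reg(0 :_M x),\, \reg(N)\}.
\]
Applying part (i) of the same lemma to the second short exact sequence and using the identities $\reg(M(l)) = \reg(M) - l$ and $\reg((M/xM)(l)) = \reg(M/xM) - l$ yields
\[
  \reg(N) \le \max\{\reg(M) - l,\ \reg(M/xM) - l + 1\}.
\]
Combining these two estimates gives
\[
  \reg(M) \le \max\{\reg(0 :_M x),\ \reg(M) - l,\ \reg(M/xM) - l + 1\}.
\]
Since $l \ge 1$, we have $\reg(M) - l < \reg(M)$ whenever $\reg(M)$ is finite, so the maximum cannot be realized by the middle term (otherwise $\reg(M) \le \reg(M) - l$, a contradiction). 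Dropping it delivers the desired inequality; the degenerate case $M = 0$ is immediate.

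For the equality claim over a polynomial ring over a field under the hypothesis $\dim(0 :_M x) \le 1$, the strategy is to pass to the long exact sequence of local cohomology associated to the two short exact sequences above and to track each $a_i(-)$ individually. Grothendieck's vanishing theorem forces $H_{A_{+}}^i(0 :_M x) = 0$ for all $i \ge 2$, which collapses the connecting maps into isomorphisms in high cohomological degrees and pins down $a_i(M)$ in terms of $a_i(M/xM)$ and $a_{i-1}(M/xM)$, while the low-degree contribution from $0 :_M x$ matches $\reg(0 :_M x)$ exactly. The main obstacle is precisely this equality half: one must verify that no cancellation occurs in the long exact sequence at the degree where the maximum is attained, and this is where the dimension hypothesis and the base field assumption (ensuring standard behavior of the relevant graded pieces and that the local cohomology of finite-length summands is concentrated in degree zero of the appropriate shift) are used crucially.
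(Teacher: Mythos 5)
Your proof of the inequality uses exactly the same two short exact sequences arising from multiplication by $x$ and the same Lemma~\ref{lemma: properties of regularity} as the paper, differing only cosmetically in that you chain the two bounds directly and drop the dominated term $\reg(M)-l$, whereas the paper packages the identical estimates as a proof by contradiction. For the equality claim the paper merely cites Chardin's Remark~1.4.1 without giving an argument, so your outline (long exact sequence of local cohomology together with Grothendieck vanishing under the hypothesis $\dim(0:_M x)\le 1$) is at the same level of detail as the source, even though, as you acknowledge, it is not a complete proof.
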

 
 \begin{proof}
  We set $N := (0 :_M x)$. To prove the first part of the lemma, it is enough to prove that at least one of the following
  is true:
  \begin{equation}\label{lemma: dimension reduction relation of regularity: equation 1}
   \reg(M) \le \reg(N)\quad\mbox{or}\quad \reg(M) \le \reg(M/xM) - l + 1.
  \end{equation}
  We prove \eqref{lemma: dimension reduction relation of regularity: equation 1} by contradiction. If
  \eqref{lemma: dimension reduction relation of regularity: equation 1} is not true, then we have the following:
  \begin{equation}\label{lemma: dimension reduction relation of regularity: equation 2}
   \reg(M) > \reg(N)\quad\mbox{and}\quad \reg(M) > \reg(M/xM) - l + 1.
  \end{equation}
  Now consider the following short exact sequences of graded $A$-modules:
  \begin{align}
   0 \longrightarrow N(-l) \longrightarrow & M(-l) \stackrel{x\cdot}{\longrightarrow} xM \longrightarrow 0,
	\label{lemma: dimension reduction relation of regularity: equation 3} \\
   0 \longrightarrow xM \longrightarrow & M \longrightarrow M/xM \longrightarrow 0,
	\label{lemma: dimension reduction relation of regularity: equation 4}
  \end{align}
  where $M(-l)$ is same as $M$ but the grading is twisted by $-l$, and the second map of each of the sequences is the
  inclusion map. It directly follows from the definition of regularity that
  \[
   \reg(M(-l)) = \reg(M) + l.
  \]
  Then, from the short exact sequences \eqref{lemma: dimension reduction relation of regularity: equation 3} and
  \eqref{lemma: dimension reduction relation of regularity: equation 4}, by using Lemma~\ref{lemma: properties of regularity}, we have
  \begin{align}
   \reg(M) + l & \le \max\left\{ \reg(N) + l, \reg(xM) \right\} \nonumber \\
               & = \reg(xM) \quad \mbox{ [as $\reg(M) > \reg(N)$]},
                 \label{lemma: dimension reduction relation of regularity: equation 5}\\
     \reg(xM)  & \le \max\left\{ \reg(M), \reg(M/xM) + 1 \right\} \nonumber \\
               & = \reg(M/xM) + 1
      \quad \mbox{ [as $\reg(xM) > \reg(M)$ by \eqref{lemma: dimension reduction relation of regularity: equation 5}]}\nonumber \\
               & < \reg(M) + l \quad \mbox{ [by \eqref{lemma: dimension reduction relation of regularity: equation 2}]}.
                 \label{lemma: dimension reduction relation of regularity: equation 6}
  \end{align}
  Clearly \eqref{lemma: dimension reduction relation of regularity: equation 5} and
  \eqref{lemma: dimension reduction relation of regularity: equation 6} give a contradiction.
  
  For the second part of the lemma, we refer the reader to \cite[Remark~1.4.1]{Cha07}.
 \end{proof}
 
\section{Preliminaries on Multigraded Modules}\label{section: preliminaries on multigraded modules}
 
 Here we give some preliminaries on $\mathbb{N}^t$-graded modules which we use in the next section.
 We start with the following lemma:
 
\begin{lemma}\label{lemma: Artin-Rees}\index{lemmas on multigraded modules}
 Let $R = \bigoplus_{\underline{n}\in\mathbb{N}^t}R_{\underline{n}}$ be an $\mathbb{N}^t$-graded ring, and let
 $L = \bigoplus_{\underline{n} \in \mathbb{N}^t}L_{\underline{n}}$ be a finitely generated $\mathbb{N}^t$-graded $R$-module.
 Set $A := R_{\underline{0}}$. Let $J$ be an ideal of $A$. Then there exists a positive integer $k$ such that
 \[
  J^m L_{\underline{n}}\cap H_J^0(L_{\underline{n}}) = 0 \quad\mbox{for all }~ \underline{n} \in \mathbb{N}^t \mbox{ and }~ m\ge k.
 \]
\end{lemma}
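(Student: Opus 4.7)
The plan is to apply the classical Artin--Rees lemma to $H := H_J^0(L)$ viewed as an $R$-submodule of the finitely generated $R$-module $L$, with respect to the $R$-ideal $JR$.

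First I will observe that $H = \bigoplus_{\underline{n} \in \mathbb{N}^t} H_J^0(L_{\underline{n}})$ is an $\mathbb{N}^t$-graded $R$-submodule of $L$. Since $J \subseteq A = R_{\underline{0}}$, multiplication by elements of $J$ preserves the grading, so the annihilator condition defining $H_J^0$ splits across graded components. Moreover, $H$ is stable under multiplication by any $r \in R$: if $x \in H$ with $J^{m_0}x = 0$, then $J^{m_0}(rx) = r(J^{m_0}x) = 0$, so $rx \in H$. Because $R$ is Noetherian (by the blanket convention of the dissertation) and $L$ is finitely generated over $R$, the submodule $H$ is finitely generated over $R$. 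Choosing finitely many homogeneous $R$-generators of $H$, each annihilated by some power of $J$, and taking the maximum, I obtain an integer $k_1$ with
\[
  J^{k_1} H = 0.
\]

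Next, I will invoke the Artin--Rees lemma for the Noetherian $R$-module $L$, its submodule $H$, and the ideal $JR$ of $R$. Note that $(JR)^m L = J^m L$, since $J$ is central in the commutative ring $R$ and $RL = L$. This produces an integer $k_2$ such that
\[
  J^m L \cap H \;=\; J^{m - k_2}\bigl(J^{k_2} L \cap H\bigr) \quad \text{for all } m \ge k_2.
\]
Combining with $J^{k_1} H = 0$ forces $J^m L \cap H = 0$ for every $m \ge k_1 + k_2$. Finally, since multiplication by $J$ preserves the grading, one has $J^m L \cap L_{\underline{n}} = J^m L_{\underline{n}}$ and $H \cap L_{\underline{n}} = H_J^0(L_{\underline{n}})$. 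Intersecting the previous identity with $L_{\underline{n}}$ therefore yields the desired equality with $k := k_1 + k_2$, uniformly in $\underline{n}$.

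The substantive inputs are (i) the identification of $H_J^0(L)$ as a finitely generated graded $R$-submodule of $L$ with a uniform power-of-$J$ annihilator, which rests on the Noetherianness of $R$, and (ii) the classical Artin--Rees lemma applied at the level of $L$ over $R$. I do not foresee any real obstacle; the key conceptual point is that while the graded pieces $L_{\underline{n}}$ need not be finitely generated over $A$, one never needs to apply Artin--Rees to them directly --- the graded bookkeeping lets the ungraded statement over $R$ do all the work.
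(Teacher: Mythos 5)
Your proof is correct and follows essentially the same route as the paper: apply Artin--Rees to $H_J^0(L)\subseteq L$ over $R$ with the ideal $JR$, obtain a uniform power of $J$ killing $H_J^0(L)$, and then read off the graded components. The only cosmetic difference is that you derive the uniform annihilating power $J^{k_1}H_J^0(L)=0$ from finite generation of $H_J^0(L)$, whereas the paper derives it from the stabilization of the chain $(0:_L I)\subseteq(0:_L I^2)\subseteq\cdots$; these are equivalent manifestations of Noetherianity.
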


\begin{proof}
 Let $I = JR$ be the ideal of $R$ generated by $J$. Since $R$ is Noetherian and $L$ is a finitely generated $R$-module,
 then by the Artin-Rees Lemma\index{Artin-Rees Lemma}, there exists a positive integer $c$ such that 
 \begin{align}
 (I^m L)\cap H_I^0(L) & = I^{m-c} \left((I^c L)\cap H_I^0(L)\right)\quad\mbox{for all }m\ge c\nonumber \\
                      & \subseteq I^{m-c} H_I^0(L) \quad\mbox{for all }m\ge c.    \label{lemma: Artin_Rees: equation 1}
 \end{align}
 Now consider the ascending chain of submodules of $L$:
 \[
  (0:_L I) \subseteq (0:_L I^2) \subseteq (0:_L I^3)\subseteq \cdots.
 \]
 Since $L$ is a Noetherian $R$-module, there exists some $l$ such that
 \begin{equation}\label{lemma: Artin_Rees: equation 2}
  (0:_L I^l) = (0:_L I^{l+1}) = (0:_L I^{l+2}) = \cdots = H_I^0(L).
 \end{equation}
 Set $k:= c+l$. Then, in view of \eqref{lemma: Artin_Rees: equation 1} and \eqref{lemma: Artin_Rees: equation 2}, we have
 \[
  (I^m L)\cap H_I^0(L) \subseteq I^{m-c} (0 :_L I^{m-c}) = 0 \quad\mbox{for all }m\ge k,
 \]
 which gives
 \[
  J^m L_{\underline{n}} \cap H_J^0(L_{\underline{n}}) = 0 \quad\mbox{for all }~ \underline{n} \in \mathbb{N}^t \mbox{ and }~ m\ge k.
 \]
\end{proof}

 Now we are aiming to obtain some invariant of multigraded module with the help of the following lemma:
 
 \begin{lemma}\label{lemma: annihilator stability for multigraded modules}\index{lemmas on multigraded modules}
  Let $R$ be a standard $\mathbb{N}^t$-graded ring and $L$ an $\mathbb{N}^t$-graded $R$-module finitely
  generated in degrees $\le \underline{u}$. Set $A := R_{\underline{0}}$. Then we have the following:
  \begin{enumerate}
   \item[\rm (i)] For every $\underline{v} \ge \underline{u}$, $\ann_A(L_{\underline{v}}) \subseteq
   \ann_A(L_{\underline{n}})$ for all $\underline{n} \ge \underline{v}$, and hence
   $\dim_A(L_{\underline{v}}) \ge \dim_A(L_{\underline{n}})$ for all $\underline{n} \ge \underline{v}$.
   \item[\rm (ii)] There exists $\underline{v} \in \mathbb{N}^t$ such that
    $\ann_A(L_{\underline{n}}) = \ann_A(L_{\underline{v}})$ for all $\underline{n} \ge \underline{v}$, and hence 
    $\dim_A(L_{\underline{n}}) = \dim_A(L_{\underline{v}})$ for all $\underline{n} \ge \underline{v}$.
  \end{enumerate}
 \end{lemma}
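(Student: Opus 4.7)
For part (i), my plan is to exploit the hypothesis that $R$ is standard together with the assumption that $L$ is generated in degrees $\le \underline{u}$. Suppose $L$ is generated by finitely many homogeneous elements of degrees $\underline{w}_1,\ldots,\underline{w}_r$, each $\underline{w}_j \le \underline{u}$. For any $\underline{v}\ge \underline{u}$, and hence $\underline{v}\ge \underline{w}_j$ for every $j$, we have $L_{\underline{v}} = \sum_j R_{\underline{v} - \underline{w}_j}\, L_{\underline{w}_j}$. Now for $\underline{n}\ge \underline{v}$, standardness of $R$ gives the factorization $R_{\underline{n} - \underline{w}_j} = R_{\underline{n} - \underline{v}}\cdot R_{\underline{v} - \underline{w}_j}$ (each monomial of multidegree $\underline{n} - \underline{w}_j$ in the generators of $R_{\underline{e}^1},\ldots,R_{\underline{e}^t}$ splits into a subproduct of multidegree $\underline{v}-\underline{w}_j$ times one of multidegree $\underline{n}-\underline{v}$). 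Consequently,
\[
  L_{\underline{n}} = \sum_j R_{\underline{n}-\underline{w}_j}\,L_{\underline{w}_j}
  = R_{\underline{n}-\underline{v}}\cdot\sum_j R_{\underline{v}-\underline{w}_j}\,L_{\underline{w}_j}
  = R_{\underline{n}-\underline{v}}\cdot L_{\underline{v}}.
\]
Any $a\in A = R_{\underline{0}}$ killing $L_{\underline{v}}$ therefore also kills $L_{\underline{n}}$, giving $\ann_A(L_{\underline{v}})\subseteq \ann_A(L_{\underline{n}})$. The dimension inequality follows at once, since $A/\ann_A(L_{\underline{v}})$ surjects onto $A/\ann_A(L_{\underline{n}})$.

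For part (ii), the key observation is that the family $\mathscr{F} := \{\ann_A(L_{\underline{n}}) : \underline{n}\ge \underline{u}\}$ is directed under inclusion. Indeed, given $\underline{n}^{(1)},\underline{n}^{(2)}\ge\underline{u}$, their componentwise maximum $\underline{m}$ satisfies $\underline{m}\ge \underline{n}^{(i)}\ge\underline{u}$, so applying part (i) with $\underline{v}=\underline{n}^{(i)}$ yields $\ann_A(L_{\underline{n}^{(i)}})\subseteq \ann_A(L_{\underline{m}})$ for both $i$. Hence $J := \bigcup_{\underline{n}\ge\underline{u}}\ann_A(L_{\underline{n}})$ is an ideal of $A$. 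Since $A$ is Noetherian, $J$ is finitely generated; choose generators $a_1,\ldots,a_s$ with $a_j\in \ann_A(L_{\underline{n}^{(j)}})$ for some $\underline{n}^{(j)}\ge\underline{u}$, and let $\underline{v}$ be the componentwise maximum of $\underline{u},\underline{n}^{(1)},\ldots,\underline{n}^{(s)}$. By part (i) each $a_j$ lies in $\ann_A(L_{\underline{v}})$, so $J = \ann_A(L_{\underline{v}})$. For any $\underline{n}\ge \underline{v}$, part (i) gives $\ann_A(L_{\underline{v}})\subseteq \ann_A(L_{\underline{n}})\subseteq J = \ann_A(L_{\underline{v}})$, forcing equality, and the dimension stability follows.

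The only slightly delicate point is the factorization $R_{\underline{n}-\underline{w}_j} = R_{\underline{n}-\underline{v}}\cdot R_{\underline{v}-\underline{w}_j}$ in part (i), which genuinely uses standardness of $R$; without it, multiplication by $R_{\underline{n}-\underline{v}}$ on $L_{\underline{v}}$ need not reach all of $L_{\underline{n}}$, and the containment of annihilators could fail. Everything else is routine Noetherian bookkeeping, and part (ii) is essentially the standard trick that a directed family of ideals in a Noetherian ring has a maximum element.
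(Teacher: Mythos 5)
Your proof is correct and follows essentially the same strategy as the paper. Part (i) is identical in substance: the paper writes $L_{\underline{n}} = R_{\underline{e}^1}^{n_1 - v_1} \cdots R_{\underline{e}^t}^{n_t - v_t} L_{\underline{v}}$, which is exactly your identity $L_{\underline{n}} = R_{\underline{n}-\underline{v}}\,L_{\underline{v}}$ once standardness is unpacked; you simply spell out the intermediate step $L_{\underline{m}} = \sum_j R_{\underline{m}-\underline{w}_j} L_{\underline{w}_j}$ and the factorization $R_{\underline{n}-\underline{w}_j} = R_{\underline{n}-\underline{v}}\cdot R_{\underline{v}-\underline{w}_j}$.

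In part (ii) you take a slightly more roundabout route. The paper invokes Noetherianity in the form ``every nonempty collection of ideals has a maximal element'': it picks $\ann_A(L_{\underline{v}})$ maximal in $\mathcal{C} := \{\ann_A(L_{\underline{n}}) : \underline{n}\ge\underline{u}\}$, and then part (i) (which gives $\ann_A(L_{\underline{v}})\subseteq\ann_A(L_{\underline{n}})$ for $\underline{n}\ge\underline{v}$) combined with maximality immediately forces equality — two lines. You instead establish that the family is directed, form the union ideal $J$, use finite generation of $J$ to locate a single index $\underline{v}$ with $J = \ann_A(L_{\underline{v}})$, and then conclude. Both are standard and equivalent incarnations of the ascending chain condition; the paper's is shorter because it never needs to argue directedness (part (i) alone supplies the containments it needs, since $\underline{n}\ge\underline{v}\Rightarrow\ann_A(L_{\underline{n}})\in\mathcal{C}$), whereas your version has the mild advantage of making explicit that $J$ is the eventual stable annihilator. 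No gaps in either part.
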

 
 \begin{proof}
  (i) Let $\underline{v} \ge \underline{u}$. Since $R$ is standard and $L$ is an $\mathbb{N}^t$-graded $R$-module
  finitely generated in degrees $\le \underline{u}$, for every $\underline{n} \ge \underline{v}$ ($\ge \underline{u}$), we have
  \[ 
    L_{\underline{n}} = R_{\underline{e}^1}^{n_1 - v_1} R_{\underline{e}^2}^{n_2 - v_2} \cdots
    R_{\underline{e}^t}^{n_t - v_t} L_{\underline{v}},
  \]
  which gives $\ann_A(L_{\underline{v}}) \subseteq \ann_A(L_{\underline{n}})$, and hence
  $\dim_A(L_{\underline{v}}) \ge \dim_A(L_{\underline{n}})$ for all $\underline{n} \ge \underline{v}$.
  
  (ii) Consider $\mathcal{C} := \{\ann_A(L_{\underline{n}}) : \underline{n} \ge \underline{u}\}$, a collection of
  ideals of $A$. Since $A$ is Noetherian, $\mathcal{C}$ has a maximal element $\ann_A(L_{\underline{v}})$, say.
  Then, by part (i), it follows that $\ann_A(L_{\underline{n}}) = \ann_A(L_{\underline{v}})$ for all
  $\underline{n} \ge \underline{v}$, and hence $\dim_A(L_{\underline{n}}) = \dim_A(L_{\underline{v}})$
  for all $\underline{n} \ge \underline{v}$.
 \end{proof}
 
 Let us introduce the following invariant of multigraded module on which we apply induction to prove our main result.
 
 \begin{definition}\label{definition: saturated dimension}
  Let $R$ be a standard $\mathbb{N}^t$-graded ring and $L$ a finitely generated $\mathbb{N}^t$-graded
  $R$-module. We call $\underline{v} \in \mathbb{N}^t$ an {\it annihilator stable point}\index{annihilator stable point} of $L$ if 
  \[
    \ann_{R_{\underline{0}}}(L_{\underline{n}}) = \ann_{R_{\underline{0}}}(L_{\underline{v}})\quad
    \mbox{for all }~\underline{n} \ge \underline{v}.
  \]
  In this case, we call $s := \dim_{R_{\underline{0}}}(L_{\underline{v}})$ the
  {\it saturated dimension}\index{saturated dimension} of $L$.
 \end{definition}
 
 \begin{remark}\label{remark: saturated dimension existence}
  Existence of an annihilator stable point of $L$ (with the hypothesis given in the
  Definition~\ref{definition: saturated dimension}) follows from
  Lemma~\ref{lemma: annihilator stability for multigraded modules}(ii). Let $\underline{v}, \underline{w} \in
  \mathbb{N}^t$ be two annihilator stable points of $L$, i.e., 
  \begin{align*}
   &\ann_{R_{\underline{0}}}(L_{\underline{n}}) = \ann_{R_{\underline{0}}}(L_{\underline{v}})\quad
     \mbox{for all }~\underline{n} \ge \underline{v} \\
   \mbox{and}\quad &\ann_{R_{\underline{0}}}(L_{\underline{n}}) = \ann_{R_{\underline{0}}}(L_{\underline{w}})
   \quad \mbox{for all }~\underline{n} \ge \underline{w}.
  \end{align*}
  If we denote $\dim_{R_{\underline{0}}}(L_{\underline{v}})$ and $\dim_{R_{\underline{0}}}(L_{\underline{w}})$ by
  $s(\underline{v})$ and $s(\underline{w})$ respectively, then observe that $s(\underline{v}) = s(\underline{w})$. Thus
  the saturated dimension of $L$ is well-defined.
 \end{remark}
 
 Let us recall the following result from \cite[Lemma~3.3]{Wes04}.
 
 \begin{lemma}\label{lemma: fixing one component of gradings}\index{lemmas on multigraded modules}
  Let $R$ be a standard $\mathbb{N}^t$-graded ring, and let $L$ be a finitely generated $\mathbb{N}^t$-graded
  $R$-module. For any fixed integers $1 \le i \le t$ and $\lambda \in \mathbb{N}$, set
  \[ S_i := \bigoplus_{\{\underline{n}\in\mathbb{N}^t : n_i = 0\}} R_{\underline{n}}\quad\quad\mbox{ and }\quad
     M_{i\lambda} := \bigoplus_{\{\underline{n}\in\mathbb{N}^t : n_i = \lambda \}} L_{\underline{n}}.
  \]
  Then $S_i$ is a Noetherian standard $\mathbb{N}^{t-1}$-graded ring and $M_{i\lambda}$ is a finitely
  generated $\mathbb{N}^{t-1}$-graded $S_i$-module.
 \end{lemma}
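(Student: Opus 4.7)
The plan is to verify in order that $S_i$, viewed as $\mathbb{N}^{t-1}$-graded after omitting the $i$th coordinate, is standard, that it is Noetherian, and that $M_{i\lambda}$ is a finitely generated $S_i$-module. The only inputs needed are the standardness of $R$, its Noetherianity, and commutativity.

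For the first two points, reindex $\mathbb{N}^t \to \mathbb{N}^{t-1}$ by dropping the $i$th coordinate. Since $R$ is standard and commutative, any homogeneous element of $R$ in a degree $\underline{n}$ with $n_i=0$ is a sum of products of generators drawn only from $\bigcup_{j\neq i}R_{\underline{e}^j}$ (no $R_{\underline{e}^i}$-factor can appear, for degree reasons). Hence $S_i=R_{\underline{0}}[R_{\underline{e}^j}:j\neq i]$ is standard as an $\mathbb{N}^{t-1}$-graded ring. Now $R_{\underline{0}}$ is Noetherian (for instance, distinct ideals $I\subseteq R_{\underline{0}}$ extend to distinct ideals $IR$ in the Noetherian ring $R$, since $IR\cap R_{\underline{0}}=I$), and each $R_{\underline{e}^j}$ is a finitely generated $R_{\underline{0}}$-module (read off from a finite set of homogeneous generators of the irrelevant ideal of $R$). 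Consequently $S_i$ is a finitely generated $R_{\underline{0}}$-algebra, and Hilbert's basis theorem yields Noetherianity.

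For the finite generation of $M_{i\lambda}$, pick homogeneous $R$-generators $l_1,\ldots,l_s$ of $L$ with multidegrees $\underline{d}_1,\ldots,\underline{d}_s$, together with a finite $R_{\underline{0}}$-generating set $x_{i,1},\ldots,x_{i,p_i}$ of $R_{\underline{e}^i}$. For each $k$ with $(d_k)_i\le\lambda$, form the finite list of elements
\[
 x_{i,j_1}\cdots x_{i,j_{\lambda-(d_k)_i}}\,l_k \;\in\; M_{i\lambda},
\]
indexed by tuples $(j_1,\ldots,j_{\lambda-(d_k)_i})\in\{1,\ldots,p_i\}^{\lambda-(d_k)_i}$. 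I claim this finite collection generates $M_{i\lambda}$ over $S_i$: given a homogeneous $m=\sum r_k l_k\in M_{i\lambda}$, only indices $k$ with $(d_k)_i\le\lambda$ contribute (otherwise the $i$th coordinate of the degree of $r_k$ would be negative), and for each such $k$ the standardness and commutativity of $R$ let one expand $r_k$ as an $R_{\underline{0}}$-linear combination of products of the form $s_\alpha\cdot x_{i,j_1}\cdots x_{i,j_{\lambda-(d_k)_i}}$, where $s_\alpha$ is a monomial in $\bigcup_{j\neq i}R_{\underline{e}^j}$ and hence lies in $S_i$. Distributing over $l_k$ expresses $m$ as an $S_i$-linear combination of the listed generators. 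The only subtle point is this separation of $R_{\underline{e}^i}$-factors from the rest, which depends essentially on commutativity; everything else is routine bookkeeping on finite generation.
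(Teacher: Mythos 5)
Your proof is correct. The paper does not prove this lemma itself; it simply cites it from West (\cite[Lemma~3.3]{Wes04}), so there is no in-paper argument to compare against. Your argument is the natural one and fills the citation in correctly: the degree bookkeeping shows $S_i = R_{\underline{0}}[R_{\underline{e}^j} : j\neq i]$, Noetherianity follows from $R_{\underline{0}}$ Noetherian (your contraction argument $IR\cap R_{\underline{0}}=I$ is valid, using that $R_{\underline{0}}$ is a graded direct summand) plus Hilbert's basis theorem, and for $M_{i\lambda}$ the separation of $R_{\underline{e}^i}$-factors via standardness and commutativity correctly reduces a homogeneous element $\sum r_k l_k$ to an $S_i$-combination of the finitely many elements $x_{i,j_1}\cdots x_{i,j_{\lambda-(d_k)_i}}\,l_k$; the observation that only $k$ with $(d_k)_i\le\lambda$ can contribute is the key degree constraint and you handle it correctly. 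One small remark: since the dissertation's global convention is that all rings are Noetherian, the Noetherianity of $R_{\underline{0}}$ could have been invoked directly, but your independent argument is harmless and makes the proof self-contained.
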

 
 \begin{discussion}\label{discussion}
  Let
  \[ 
    R = \bigoplus_{(\underline{n},i)\in \mathbb{N}^{t+1}} R_{(\underline{n},i)} \quad\quad\mbox{and}\quad
    L = \bigoplus_{(\underline{n},i)\in \mathbb{N}^{t+1}} L_{(\underline{n},i)}
  \]
  be an $\mathbb{N}^{t+1}$-graded ring and a finitely generated $\mathbb{N}^{t+1}$-graded
  $R$-module respectively. For every $\underline{n} \in \mathbb{N}^t$, we set
  \[ 
    R_{(\underline{n},\star)} := \bigoplus_{i \in \mathbb{N}} R_{(\underline{n},i)} \quad\quad\mbox{and}\quad
    L_{(\underline{n},\star)} := \bigoplus_{i \in \mathbb{N}} L_{(\underline{n},i)}.
  \]
  We give $\mathbb{N}^t$-grading structures on
  \[ R = \bigoplus_{\underline{n} \in \mathbb{N}^t} R_{(\underline{n},\star)} \quad\quad\mbox{and}\quad
     L = \bigoplus_{\underline{n} \in \mathbb{N}^t} L_{(\underline{n},\star)}
   \]
  in the obvious way, i.e., by setting $R_{(\underline{n},\star)}$ and $L_{(\underline{n},\star)}$ as the
  $\underline{n}$th graded components of $R$ and $L$ respectively. Then clearly, for any
  $\underline{m}, \underline{n} \in \mathbb{N}^t$, we have
  \[
   R_{(\underline{m},\star)} \cdot R_{(\underline{n},\star)} \subseteq R_{(\underline{m} + \underline{n},\star)}
   \quad\mbox{and}\quad 
   R_{(\underline{m},\star)} \cdot L_{(\underline{n},\star)} \subseteq L_{(\underline{m} + \underline{n},\star)}.
  \]
  Thus $R$ is an $\mathbb{N}^t$-graded ring and $L$ is an $\mathbb{N}^t$-graded $R$-module. Since we are changing
  only the grading, $R$ is anyway Noetherian. Since $L$ is finitely generated $\mathbb{N}^{t+1}$-graded $R$-module,
  it is just an observation that $L = \bigoplus_{\underline{n} \in \mathbb{N}^t} L_{(\underline{n},\star)}$ is
  finitely generated as $\mathbb{N}^t$-graded $R$-module. Now we set $A := R_{(\underline{0},\star)}$. Note that
  $A$ is a Noetherian $\mathbb{N}$-graded ring, and for every $\underline{n} \in \mathbb{N}^t$,
  $R_{(\underline{n},\star)}$ and $L_{(\underline{n},\star)}$ are finitely generated $\mathbb{N}$-graded
  $A$-modules.
 \end{discussion}
 
  We are going to refer the following hypothesis repeatedly in the rest of the present chapter.
  
 \begin{hypothesis}\label{hypothesis}
  Let
  \[ R = \bigoplus_{(\underline{n},i)\in \mathbb{N}^{t+1}} R_{(\underline{n},i)} \]
  be an $\mathbb{N}^{t+1}$-graded ring, {\it which need not be standard}. Let
  \[ L = \bigoplus_{(\underline{n},i)\in \mathbb{N}^{t+1}} L_{(\underline{n},i)} \]
  be a finitely generated $\mathbb{N}^{t+1}$-graded $R$-module. For every $\underline{n} \in \mathbb{N}^t$, we set
  \[ 
     R_{(\underline{n},\star)} := \bigoplus_{i \in \mathbb{N}} R_{(\underline{n},i)} \quad\quad\mbox{and}\quad
     L_{(\underline{n},\star)} := \bigoplus_{i \in \mathbb{N}} L_{(\underline{n},i)}.
  \]
  Also set $A := R_{(\underline{0},\star)}$. Suppose
  $R = \bigoplus_{\underline{n} \in \mathbb{N}^t} R_{(\underline{n},\star)}$ and $A = R_{(\underline{0},\star)}$
  are standard as $\mathbb{N}^t$-graded ring and $\mathbb{N}$-graded ring respectively, i.e.,
  \[
    R = R_{(\underline{0},\star)} [R_{(\underline{e}^1,\star)}, R_{(\underline{e}^2,\star)}, \ldots,
                                   R_{(\underline{e}^t,\star)}]
    \quad\mbox{and}\quad R_{(\underline{0},\star)} = R_{(\underline{0},0)}[R_{(\underline{0},1)}].
  \]
  Assume $A_0 = R_{(\underline{0},0)}$ is Artinian local with the
  maximal ideal $\mathfrak{m}$. Since $A$ is a Noetherian standard $\mathbb{N}$-graded ring, we assume that
  $A = A_0[x_1,\ldots,x_d]$ for some $x_1,\ldots,x_d \in A_1$. Let $A_{+} = \langle x_1,\ldots,x_d \rangle$.
 \end{hypothesis} 
 
  With the Hypothesis~\ref{hypothesis}, in view of Discussion~\ref{discussion}, we have the following:
  
  \begin{enumerate}
   \item[(0)] $R = \bigoplus_{(\underline{n},i)\in \mathbb{N}^{t+1}} R_{(\underline{n},i)}$ is not necessarily
   standard as an $\mathbb{N}^{t+1}$-graded ring.
   \item[(1)] $R = \bigoplus_{\underline{n} \in \mathbb{N}^t} R_{(\underline{n},\star)}$ is a standard
   $\mathbb{N}^t$-graded ring.
   \item[(2)] $A = R_{(\underline{0},\star)}$ is a standard $\mathbb{N}$-graded ring.
   \item[(3)] $L = \bigoplus_{\underline{n} \in \mathbb{N}^t} L_{(\underline{n},\star)}$ is a finitely generated
   $\mathbb{N}^t$-graded $R$-module.
   \item[(4)] For every $\underline{n} \in \mathbb{N}^t$, $R_{(\underline{n},\star)}$ and
   $L_{(\underline{n},\star)}$ are finitely generated $\mathbb{N}$-graded $A$-modules.
  \end{enumerate}

  We now give two examples which satisfy the Hypothesis~\ref{hypothesis}.
  
 \begin{example}\label{example 1 satisfying the hypothesis}\index{example on multigraded Rees module}
  Let $A$ be a standard $\mathbb{N}$-graded algebra over an Artinian local ring $A_0$. Let $I_1,\ldots,I_t$ be homogeneous ideals of $A$, and let $M$ be a finitely generated $\mathbb{N}$-graded $A$-module.
  Let $R = A[I_1 T_1,\ldots, I_t T_t]$ be the Rees algebra of $I_1,\ldots,I_t$ over the graded ring $A$, and let
  $L = M[I_1 T_1,\ldots, I_t T_t]$ be the Rees module of $M$ with respect to the ideals $I_1,\ldots,I_t$.
  We give $\mathbb{N}^{t+1}$-grading structures on $R$ and $L$ by setting $(\underline{n},i)$th graded
  components of $R$ and $L$ as the $i$th graded components of the $\mathbb{N}$-graded $A$-modules
  $I_1^{n_1}\cdots I_t^{n_t} A$ and $I_1^{n_1}\cdots I_t^{n_t} M$ respectively. Then clearly, $R$ is
  an $\mathbb{N}^{t+1}$-graded ring, and $L$ is a finitely generated $\mathbb{N}^{t+1}$-graded $R$-module.
  Note that $R$ is not necessarily standard as an $\mathbb{N}^{t+1}$-graded ring. Also note that for every
  $\underline{n} \in \mathbb{N}^t$, we have
  \begin{align*}
   R_{(\underline{n},\star)}
   &= \bigoplus_{i \in \mathbb{N}} R_{(\underline{n},i)} = I_1^{n_1}\cdots I_t^{n_t} A \\
   \mbox{and }\quad L_{(\underline{n},\star)}
   &= \bigoplus_{i \in \mathbb{N}} L_{(\underline{n},i)} = I_1^{n_1}\cdots I_t^{n_t} M.
  \end{align*}
  Recall that $\underline{e}^i$ denotes the $i$th standard basis element of $\mathbb{N}^t$. So
  $R_{(\underline{e}^i,\star)} = I_i$ for all $1 \le i \le t$. Therefore $R = A[I_1 T_1,\ldots, I_t T_t] =
  R_{(\underline{0},\star)}[R_{(\underline{e}^1,\star)},\ldots,R_{(\underline{e}^t,\star)}]$, and hence
  $R = \bigoplus_{\underline{n}\in \mathbb{N}^t}R_{(\underline{n},\star)}$ is standard as an $\mathbb{N}^t$-graded ring.
  Thus $R$ and $L$ are satisfying the Hypothesis~\ref{hypothesis}.
 \end{example}

 Let us recall the definition of the integral closure of an ideal.
 
 \begin{definition}\label{definition: integral closure}
  Let $I$ be an ideal of a ring $A$. An element $r \in A$ is said to be {\it integral} over $I$ if there exist an integer $n$ and
  elements $a_i \in I^i$, $i = 1,\ldots,n$ such that\index{integral element over an ideal}
  \[
   r^n + a_1 r^{n-1} + a_2 r^{n-2} + \cdots + a_{n-1} r + a_n = 0.
  \]
  The set of all elements that are integral over $I$ is called the {\it integral closure}\index{integral closure of an ideal} of $I$,
  and is denoted $\overline{I}$.
 \end{definition}

 Here is another example satisfying the Hypothesis~\ref{hypothesis}.
  
  \begin{example}\label{example 2 satisfying the hypothesis}
   Let $A, I_1, \ldots, I_t$ and $M$ be as in Example~\ref{example 1 satisfying the hypothesis}. Let $A_0$ be a field.
   Set $R := A[I_1 T_1, \ldots, I_t T_t]$ as above. Here we set
   \[
    L := \bigoplus_{\underline{n} \in \mathbb{N}^t} \left( \overline{I_1^{n_1}} \cdots \overline{I_t^{n_t}} M \right)
    T_1^{n_1} \cdots T_t^{n_t}.
   \]
   We give $\mathbb{N}^{t+1}$-grading structure on $L$ by setting $(\underline{n},i)$th graded
   component of $L$ as the $i$th graded component of the $\mathbb{N}$-graded $A$-module
   $\overline{I_1^{n_1}}\cdots \overline{I_t^{n_t}} M$. For a homogeneous ideal $I$ of $A$, since
   \[
    I^m ~\overline{I^n} \subseteq \overline{I^m}~\overline{I^n} \subseteq \overline{I^{m+n}} \quad \mbox{for all }m,n\in\mathbb{N},
   \]
   $L$ is an $\mathbb{N}^{t+1}$-graded $R$-module. Again for a homogeneous ideal $I$ of $A$, there exists an integer
   $n_0$ such that for all $n \ge n_0$, we have $\overline{I^n} = I^{n - n_0}~\overline{I^{n_0}}$ (see \cite[5.3.4(2)]{SH06}).
   Therefore $L$ is a finitely generated $\mathbb{N}^{t+1}$-graded $R$-module. Hence in this case also $R$ and $L$ are
   satisfying the Hypothesis~\ref{hypothesis}.
  \end{example}
  
  From now onwards, by $R$ and $L$, we mean $\mathbb{N}^t$-graded ring
  $\bigoplus_{\underline{n} \in \mathbb{N}^t} R_{(\underline{n},\star)}$ and $\mathbb{N}^t$-graded $R$-module
  $\bigoplus_{\underline{n} \in \mathbb{N}^t} L_{(\underline{n},\star)}$
  (satisfying the Hypothesis~\ref{hypothesis}) respectively.
  
\section{Linear Bounds of Regularity}\label{section: linear bounds of regularity}
 
 In this section, we are aiming to prove that the regularity of $L_{(\underline{n},\star)}$ as an $\mathbb{N}$-graded
 $A$-module is bounded by a linear function of $\underline{n}$ by using induction on the saturated dimension of the
 $\mathbb{N}^t$-graded $R$-module $L$. Here is the base case.
 
 \begin{theorem}\label{theorem: bounds of regularity for saturated dimension 0}\index{linear bounds of regularity}
  With the {\rm Hypothesis~\ref{hypothesis}}, let $L = \bigoplus_{\underline{n} \in \mathbb{N}^t} L_{(\underline{n},\star)}$
  be generated in degrees $\le \underline{u}$. If $\dim_A(L_{(\underline{v},\star)}) = 0$ for some
  $\underline{v} \ge \underline{u}$, then there exists an integer $k$ such that
  \[ \reg(L_{(\underline{n},\star)}) < |\underline{n} - \underline{u}| k + k \quad
     \mbox{for all }~\underline{n} \ge \underline{v}.\]
 \end{theorem}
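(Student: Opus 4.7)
The plan is first to reduce $\reg(L_{(\underline{n},\star)})$ to the end $\eend(L_{(\underline{n},\star)})$ in the dimension-zero regime, and then to bound this end linearly in $|\underline{n}-\underline{u}|$. Since $R$ is standard as an $\mathbb{N}^t$-graded ring and $L$ is a finitely generated $\mathbb{N}^t$-graded $R$-module generated in degrees $\le\underline{u}$, Lemma~\ref{lemma: annihilator stability for multigraded modules}(i) applied with $\underline{v}\ge\underline{u}$ yields $\dim_A(L_{(\underline{n},\star)}) \le \dim_A(L_{(\underline{v},\star)}) = 0$ for every $\underline{n}\ge\underline{v}$. Grothendieck's Vanishing Theorem then forces $H_{A_+}^i(L_{(\underline{n},\star)})=0$ for all $i\ge 1$, so
\[
 \reg(L_{(\underline{n},\star)}) = \eend\bigl(H_{A_+}^0(L_{(\underline{n},\star)})\bigr) \quad \text{for every } \underline{n}\ge\underline{v}.
\]

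Next I invoke Lemma~\ref{lemma: Artin-Rees} with the ideal $J=A_+$ of $A$ to obtain a single integer $k_0$, independent of $\underline{n}$, such that $A_+^{k_0}L_{(\underline{n},\star)}\cap H_{A_+}^0(L_{(\underline{n},\star)})=0$ for all $\underline{n}\in\mathbb{N}^t$. The dimension-zero condition forces every element of $L_{(\underline{n},\star)}$ (for $\underline{n}\ge\underline{v}$) to be killed by some power of $A_+$, hence $H_{A_+}^0(L_{(\underline{n},\star)})=L_{(\underline{n},\star)}$, and therefore $A_+^{k_0}L_{(\underline{n},\star)}=0$. In particular $\reg(L_{(\underline{n},\star)})=\eend(L_{(\underline{n},\star)})$ for every $\underline{n}\ge\underline{v}$.

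To bound this end, I control the $A$-degrees of $A$-module generators of the graded piece $L_{(\underline{n},\star)}$. Since $R$ is Noetherian, choose finitely many $\mathbb{N}^{t+1}$-homogeneous generators of $R$ as an $A_0$-algebra, and arrange that $x_1,\ldots,x_d$ (which lie in $R_{(\underline{0},1)}$ and already generate $A$ over $A_0$) are among them; call the remaining generators \emph{Rees-type}. Each Rees-type generator $g$ has some multi-degree $\underline{p}_g\ne\underline{0}$ (so $|\underline{p}_g|\ge 1$) and $A$-degree $q_g$; let $D:=\max_g q_g$. Every $A$-module generator of $R_{(\underline{n},\star)}$ may then be taken to be a pure Rees-type monomial $\prod_g g^{b_g}$ of multi-degree $\underline{n}$; since $\sum_g b_g \le \sum_g b_g|\underline{p}_g| = |\underline{n}|$, such a monomial has $A$-degree at most $D|\underline{n}|$. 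Fixing $R$-generators $\xi_1,\ldots,\xi_s$ of $L$ in multi-degrees $\underline{m}_j\le\underline{u}$ and $A$-degrees $i_j\le I$, the decomposition $L_{(\underline{n},\star)} = \sum_j R_{(\underline{n}-\underline{m}_j,\star)}\,\xi_j$ (valid for $\underline{n}\ge\underline{u}$) shows that $L_{(\underline{n},\star)}$ is $A$-generated in $A$-degrees $\le D|\underline{n}-\underline{u}|+C$, with $C:=D|\underline{u}|+I$.

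Finally, since $A=A_0[x_1,\ldots,x_d]$ is standard-graded we have $A_+^{k_0}=A_{\ge k_0}$, so the killing $A_+^{k_0}L_{(\underline{n},\star)}=0$ together with the generator bound forces every $A$-graded piece of $L_{(\underline{n},\star)}$ in degree $\ge D|\underline{n}-\underline{u}|+C+k_0$ to vanish. Hence
\[
 \reg(L_{(\underline{n},\star)}) = \eend(L_{(\underline{n},\star)}) \le D|\underline{n}-\underline{u}|+C+k_0-1,
\]
and choosing $k:=\max\{D,\,C+k_0\}$ yields the desired strict bound $\reg(L_{(\underline{n},\star)}) < |\underline{n}-\underline{u}|\,k + k$ for all $\underline{n}\ge\underline{v}$. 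The main obstacle is the degree bookkeeping in the third paragraph: because $R$ is not standard as an $\mathbb{N}^{t+1}$-graded ring, linear growth of the $A$-module generating degrees of $R_{(\underline{n},\star)}$ in $|\underline{n}|$ is not automatic and relies on separating the Rees-type generators (which feed the $\underline{n}$-grading) from the $A$-generators $x_1,\ldots,x_d$ (which only contribute to the $A$-grading).
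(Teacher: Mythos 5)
Your proof is correct and takes essentially the same approach as the paper: the same reduction via Lemma~\ref{lemma: annihilator stability for multigraded modules}(i) and Grothendieck's Vanishing Theorem, the same uniform Artin--Rees bound from Lemma~\ref{lemma: Artin-Rees}, and the same linear growth of $A$-generating degrees of $L_{(\underline{n},\star)}$ in $|\underline{n}-\underline{u}|$. Your packaging --- observing that $\dim_A(L_{(\underline{n},\star)})=0$ gives $H_{A_+}^0(L_{(\underline{n},\star)})=L_{(\underline{n},\star)}$, hence $A_+^{k_0}L_{(\underline{n},\star)}=0$ outright --- is a tidy simplification of the paper's pigeonhole count over the $|\underline{n}-\underline{u}|+1$ factors in $R_{(\underline{e}^1,\star)}^{n_1-u_1}\cdots R_{(\underline{e}^t,\star)}^{n_t-u_t}L_{(\underline{u},\star)}$.
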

 
 \begin{proof}
  Let $\dim_A(L_{(\underline{v},\star)}) = 0$ for some $\underline{v} \ge \underline{u}$. Then, by virtue of
  Lemma~\ref{lemma: annihilator stability for multigraded modules}(i), we obtain
  \[
    \dim_A(L_{(\underline{n},\star)}) = 0 \quad\mbox{ for all }~\underline{n} \ge \underline{v}.
  \]
  In view of Grothendieck's Vanishing Theorem (\cite[6.1.2]{BS13})\index{Grothendieck's Vanishing Theorem}, we get
  \[
    H_{A_{+}}^i(L_{(\underline{n},\star)}) = 0 \quad\mbox{ for all }~i > 0\mbox{ and }~\underline{n} \ge \underline{v}.
  \]
  Therefore in this case, we have
  \begin{equation}\label{theorem: bounds of regularity for saturated dimension 0: equation 1}
   \reg(L_{(\underline{n},\star)}) = \max\left\{\mu : H_{A_{+}}^0(L_{(\underline{n},\star)})_{\mu} \neq 0\right\}
   \quad\mbox{ for all }~\underline{n} \ge \underline{v}.
  \end{equation}
  
  Now consider the finite collection
  \[
    \mathcal{D} := \{R_{(\underline{e}^1,\star)},R_{(\underline{e}^2,\star)},\ldots,R_{(\underline{e}^t,\star)},
    L_{(\underline{u},\star)}\}.
  \]
  Since every member of $\mathcal{D}$ is a finitely generated $\mathbb{N}$-graded $A = A_0[x_1,\ldots,x_d]$-module, we may assume
  that every member of $\mathcal{D}$ is generated in degrees $\le k_1$ for some $k_1 \in \mathbb{N}$.
  Since $L$ is a finitely generated $\mathbb{N}^t$-graded $R$-module and $A_{+}$ is an ideal of
  $A$ ($= R_{(\underline{0},\star)}$), in view of Lemma~\ref{lemma: Artin-Rees},
  there exists a positive integer $k_2$ such that
  \begin{equation}\label{theorem: bounds of regularity for saturated dimension 0: equation 2}
   (A_{+})^{k_2} L_{(\underline{n},\star)} \cap H_{A_{+}}^0(L_{(\underline{n},\star)}) = 0 \quad
   \mbox{ for all }~\underline{n} \in \mathbb{N}^t.
  \end{equation}
  Now set $k := k_1 + k_2$. We claim that
  \begin{equation}\label{theorem: bounds of regularity for saturated dimension 0: equation 3}
   H_{A_{+}}^0(L_{(\underline{n},\star)})_{\mu} = 0\quad\mbox{for all }~\underline{n} \ge \underline{v}
   ~\mbox{ and }~\mu \ge |\underline{n} - \underline{u}| k + k.
  \end{equation}
  
  To show \eqref{theorem: bounds of regularity for saturated dimension 0: equation 3},
  fix $\underline{n} \ge \underline{v}$ and $\mu \ge |\underline{n} - \underline{u}| k + k$. Assume
  $X \in H_{A_{+}}^0(L_{(\underline{n},\star)})_{\mu}$. Note that
  the homogeneous (with respect to $\mathbb{N}$-grading over $A$) element $X$ of
  \[
  L_{(\underline{n},\star)} =
  R_{(\underline{e}^1,\star)}^{n_1 - u_1} R_{(\underline{e}^2,\star)}^{n_2 - u_2} \cdots
  R_{(\underline{e}^t,\star)}^{n_t - u_t} L_{(\underline{u},\star)}
  \]
  can be written as a finite sum of elements of the following type:
  \[(r_{11}r_{12}\cdots r_{1~n_1-u_1}) (r_{21}r_{22}\cdots r_{2~n_2-u_2})\cdots (r_{t1}r_{t2}\cdots r_{t~n_t-u_t}) Y\]
  for some homogeneous (with respect to $\mathbb{N}$-grading over $A$) elements
  \[
  r_{i1}, r_{i2},\ldots, r_{i~n_i - u_i} \in R_{(\underline{e}^i,\star)}\quad
  \mbox{for all }~ 1 \le i \le t,\mbox{ and }~Y \in L_{(\underline{u},\star)}.
  \]
  Considering the homogeneous degree with respect to $\mathbb{N}$-grading over $A$, we have
  \[ \deg(Y) + \sum_{i=1}^t\left\{\deg(r_{i1})+\deg(r_{i2})+\cdots+\deg(r_{i~n_i-u_i})\right\} = \mu
     \ge |\underline{n} - \underline{u}| k + k,
  \]
  which gives at least one of the elements
  \[r_{11},r_{12},\ldots,r_{1~n_1-u_1},\ldots,r_{t1},r_{t2},\ldots,r_{t~n_t-u_t}\quad\mbox{and }~Y\]
  is of degree $\ge k$. In first case, we consider $\deg(r_{ij}) \ge k$ for some $i,j$. Since
  $R_{(\underline{e}^i,\star)}$ is an $\mathbb{N}$-graded $A$-module generated in degrees $\le k_1$, we have
  \begin{align*}
   r_{ij} \in \left(R_{(\underline{e}^i,\star)}\right)_{\deg(r_{ij})}
   &= (A_1)^{\deg(r_{ij}) - k_1} \left(R_{(\underline{e}^i,\star)}\right)_{k_1}\\
   &\subseteq (A_{+})^{k_2} R_{(\underline{e}^i,\star)}\quad \mbox{[as $\deg(r_{ij})-k_1 \ge k-k_1 = k_2$]}.
  \end{align*}
  In another case, we consider $\deg(Y) \ge k$. In this case also, since $L_{(\underline{u},\star)}$ is an
  $\mathbb{N}$-graded $A$-module generated in degrees $\le k_1$, we have
  \begin{align*}
   Y \in \left(L_{(\underline{u},\star)}\right)_{\deg(Y)}
   &= (A_1)^{\deg(Y) - k_1} \left(L_{(\underline{u},\star)}\right)_{k_1}\\
   &\subseteq (A_{+})^{k_2} L_{(\underline{u},\star)}\quad \mbox{[as $\deg(Y)-k_1 \ge k-k_1 = k_2$]}.
  \end{align*}
  In both cases, the typical element
  $(r_{11}r_{12}\cdots r_{1~n_1-u_1}) \cdots (r_{t1}r_{t2}\cdots r_{t~n_t-u_t}) Y$ is in
  \[ 
   (A_{+})^{k_2} R_{(\underline{e}^1,\star)}^{n_1-u_1} R_{(\underline{e}^2,\star)}^{n_2-u_2} \cdots
   R_{(\underline{e}^t,\star)}^{n_t-u_t} L_{(\underline{u},\star)} = (A_{+})^{k_2} L_{(\underline{n},\star)},
  \]
  and hence $X \in (A_{+})^{k_2} L_{(\underline{n},\star)}$. Therefore
  \[
   X \in (A_{+})^{k_2} L_{(\underline{n},\star)} \cap H_{A_{+}}^0(L_{(\underline{n},\star)}),
  \]
  which gives $X = 0$ by \eqref{theorem: bounds of regularity for saturated dimension 0: equation 2}. Thus we have
  \[
   H_{A_{+}}^0(L_{(\underline{n},\star)})_{\mu} = 0\quad\mbox{for all }~\underline{n} \ge \underline{v}
   ~\mbox{ and }~\mu \ge |\underline{n} - \underline{u}| k + k,
  \]
  and hence the theorem follows from \eqref{theorem: bounds of regularity for saturated dimension 0: equation 1}.
 \end{proof}
 
 Now we give the inductive step to prove the following linear boundedness result.
 
 \begin{theorem}\label{theorem: bounds of regularity for saturated dimension positive}\index{linear bounds of regularity}
  With the {\rm Hypothesis~\ref{hypothesis}}, there exist $\underline{u} \in \mathbb{N}^t$ and an integer $k$ such that
  \[
    \reg(L_{(\underline{n},\star)}) < |\underline{n}| k + k \quad\mbox{for all }~\underline{n} \ge \underline{u}.
  \]
  In particular, if $t = 1$, then there exist two integers $k$ and $k'$ such that
  \[
    \reg(L_{(n,\star)}) \le n k + k' \quad\mbox{for all }~n \in \mathbb{N}.
  \]
 \end{theorem}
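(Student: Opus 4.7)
The plan is to induct on the saturated dimension $s := s(L)$ of $L$ as an $\mathbb{N}^t$-graded $R$-module (in the sense of Definition~\ref{definition: saturated dimension}). For the base case $s = 0$, I would choose $\underline{u}$ to be both an annihilator stable point of $L$ and an upper bound for the degrees of a finite set of homogeneous generators of $L$; Lemma~\ref{lemma: annihilator stability for multigraded modules} then forces $\dim_A(L_{(\underline{n},\star)}) = 0$ for every $\underline{n} \ge \underline{u}$, and Theorem~\ref{theorem: bounds of regularity for saturated dimension 0} delivers the required linear bound directly.

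For the inductive step, suppose $s \ge 1$. First I would pass to the case where the residue field of $A_0$ is infinite via the faithfully flat extension $A_0 \to A_0[Y]_{\mathfrak{m} A_0[Y]}$, which preserves Hypothesis~\ref{hypothesis} and the regularity of each graded component. Fix an annihilator stable point $\underline{v}$ of $L$, so that $\dim_A(L_{(\underline{n},\star)}) = s$ for all $\underline{n} \ge \underline{v}$. I would then select a single linear form $x \in A_1$ that avoids every prime of dimension $\ge s$ appearing in $\Ass_A(L_{(\underline{n},\star)})$ for some $\underline{n} \ge \underline{v}$, other than the irrelevant ideal $A_{+}$ itself. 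Such a choice guarantees
\[
  \dim_A\bigl((0 :_{L_{(\underline{n},\star)}} x)\bigr) < s \quad \text{and} \quad \dim_A\bigl(L_{(\underline{n},\star)}/x L_{(\underline{n},\star)}\bigr) < s
\]
for all $\underline{n}$ sufficiently large.

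Next I would form the finitely generated $\mathbb{N}^t$-graded $R$-modules $K := (0 :_L x)$ and $L' := L/xL$. Their graded components satisfy $K_{(\underline{n},\star)} = (0 :_{L_{(\underline{n},\star)}} x)$ and $L'_{(\underline{n},\star)} = L_{(\underline{n},\star)}/xL_{(\underline{n},\star)}$, and the previous step yields $s(K) < s$ and $s(L') < s$. The inductive hypothesis then produces $\underline{u}_1, \underline{u}_2 \in \mathbb{N}^t$ and integers $k_1, k_2$ with $\reg(K_{(\underline{n},\star)}) < |\underline{n}| k_1 + k_1$ for $\underline{n} \ge \underline{u}_1$ and $\reg(L'_{(\underline{n},\star)}) < |\underline{n}| k_2 + k_2$ for $\underline{n} \ge \underline{u}_2$. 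Applying Lemma~\ref{lemma: dimension reduction relation of regularity} componentwise, with the degree-one element $x$, gives
\[
  \reg(L_{(\underline{n},\star)}) \le \max\bigl\{\reg(K_{(\underline{n},\star)}),\; \reg(L'_{(\underline{n},\star)})\bigr\},
\]
so setting $k := \max\{k_1, k_2\}$ and taking $\underline{u}$ to be the coordinatewise maximum of $\underline{u}_1$ and $\underline{u}_2$ yields the desired bound. The ``in particular'' assertion when $t = 1$ follows by absorbing the finitely many values of $\reg(L_{(n,\star)})$ with $n < u$ into a sufficiently large constant $k'$.

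The hard part will be the uniform selection of $x$: while $\Ass_A(L_{(\underline{n},\star)})$ is finite for each individual $\underline{n}$, one must show that the collection of primes of dimension at least $s$ occurring in $\Ass_A(L_{(\underline{n},\star)})$ as $\underline{n}$ varies over all sufficiently large multi-indices is still a finite set, so that generic avoidance in $A_1$ over the infinite residue field is possible. I would establish this by exploiting the finitely generated multigraded $R$-module structure on $L$, along the lines of the West-type finiteness arguments (\cite[Lemma~3.2]{Wes04}) used in Chapter~\ref{Chapter: Asymptotic Prime Divisors over Complete Intersection Rings}.
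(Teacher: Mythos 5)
Your overall framework -- induction on the saturated dimension, base case via Theorem~\ref{theorem: bounds of regularity for saturated dimension 0}, inductive step by choosing a homogeneous element $x$ and invoking Lemma~\ref{lemma: dimension reduction relation of regularity} -- agrees with the paper. However, you miss the key simplification that makes the inductive step go through cleanly, and you flag this yourself as ``the hard part''. The paper never needs to control $\Ass_A(L_{(\underline{n},\star)})$ over varying $\underline{n}$ and does not invoke any West-type finiteness. The point is that since $\underline{v}$ is an annihilator stable point, the ideal $\ann_A(L_{(\underline{n},\star)})$ equals $\ann_A(L_{(\underline{v},\star)})$ for \emph{all} $\underline{n}\ge\underline{v}$; one therefore only has to avoid the fixed finite set $\Min\left(A/\ann_A(L_{(\underline{v},\star)})\right)$. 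The dimension drop is then immediate from
\[
\ann_A\left(L_{(\underline{n},\star)}/xL_{(\underline{n},\star)}\right),\;
\ann_A\left(0 :_{L_{(\underline{n},\star)}} x\right) \;\supseteq\;
\left\langle \ann_A(L_{(\underline{n},\star)}),\, x \right\rangle
= \left\langle \ann_A(L_{(\underline{v},\star)}),\, x \right\rangle,
\]
uniformly in $\underline{n}\ge\underline{v}$. This single observation replaces the finiteness argument you were planning.

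The passage to an infinite residue field is also unnecessary, and tied to the same overreach. You want $x\in A_1$, but Lemma~\ref{lemma: dimension reduction relation of regularity} gives the bound $\reg(M)\le\max\{\reg(0:_M x),\,\reg(M/xM)-l+1\}$ with $-l+1\le 0$ for any homogeneous $x$ of degree $l\ge 1$, so a homogeneous element of arbitrary positive degree suffices. To produce such an $x$ over the given Artinian local $(A_0,\mathfrak{m})$, the paper observes that $\mathfrak{n}=\mathfrak{m}\oplus A_+$ is the only homogeneous prime of $A$ containing $A_+$, and $\mathfrak{n}\notin\Min\left(A/\ann_A(L_{(\underline{v},\star)})\right)$ because $s>0$. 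Hence $A_+$ is not contained in the union of these (homogeneous) minimal primes, and the graded version of prime avoidance yields a homogeneous $x$ of positive degree avoiding all of them, with no base change required. From there your completion of the inductive step (setting $k:=\max\{k_1,k_2\}$, $\underline{u}$ the coordinatewise maximum) and the $t=1$ postprocessing match the paper.
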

 
 \begin{proof}
  Let $\underline{v} \in \mathbb{N}^t$ be an annihilator stable point of $L$ and $s$ the saturated dimension of $L$.
  Without loss of generality, we may assume that $L$ is finitely generated as $R$-module in degrees
  $\le \underline{v}$. We prove the theorem by induction on $s$. If $s = 0$, then the theorem follows from
  Theorem~\ref{theorem: bounds of regularity for saturated dimension 0} by taking $\underline{u} := \underline{v}$.
  Therefore we may as well assume that $s > 0$ and the theorem holds true for all such finitely generated
  $\mathbb{N}^t$-graded $R$-modules of saturated dimension $\le s - 1$.
  
  Let $\mathfrak{n} := \mathfrak{m} \oplus A_{+}$ be the maximal homogeneous ideal of $A$. We claim that
  \[
    \mathfrak{n} \notin \Min\left(A/\ann_A(L_{(\underline{v},\star)})\right).
  \]
  Since the collection of all minimal prime ideals of $A$ containing $\ann_A(L_{(\underline{v},\star)})$ are
  associated prime ideals of $A/\ann_A(L_{(\underline{v},\star)})$, they are homogeneous, and hence they must be
  contained in $\mathfrak{n}$. Thus if the above claim is not true, then we have
  \[
    \Min\left(A/\ann_A(L_{(\underline{v},\star)})\right) = \{\mathfrak{n}\},
  \]
  and hence $s = \dim_A(L_{(\underline{v},\star)}) = 0$, which is a contradiction. Therefore the above claim is true,
  and hence by Prime Avoidance Lemma\index{Prime Avoidance Lemma} and using the fact that $\mathfrak{n}$ is the only
  homogeneous prime ideal of $A$ containing $A_{+}$ (as $(A_0,\mathfrak{m})$ is Artinian local), we have
  \[
    A_{+} \nsubseteq \bigcup\left\{P:P \in \Min\left(A/\ann_A(L_{(\underline{v},\star)})\right)\right\}.
  \]
  Then, by the Graded Version of Prime Avoidance Lemma\index{graded version of prime avoidance}, we may choose a homogeneous
  element $x$ in $A$ of positive degree such that
  \[
    x \notin \bigcup \left\{ P : P \in \Min\left(A/\ann_A(L_{(\underline{v},\star)})\right) \right\}.
  \]
  Since $\underline{v}$ is an annihilator stable point of $L$ and $s$ is the saturated dimension of $L$
  (Definition~\ref{definition: saturated dimension}), we have that
  $\ann_A(L_{(\underline{n},\star)}) = \ann_A(L_{(\underline{v},\star)})$ for all $\underline{n} \ge \underline{v}$, and $\dim_A(L_{(\underline{v},\star)}) = s$. Therefore, for all $\underline{n} \ge \underline{v}$, we obtain that
  \begin{align*}
   &\dim_A\left(L_{(\underline{n},\star)}/xL_{(\underline{n},\star)}\right),~
    \dim_A\big(0 :_{L_{(\underline{n},\star)}} x\big) \le \dim_A(L_{(\underline{n},\star)}) - 1 = s - 1 \\
   &\mbox{as }\quad \ann_A\left(L_{(\underline{n},\star)}/xL_{(\underline{n},\star)}\right),~
    \ann_A\big(0 :_{L_{(\underline{n},\star)}} x\big)\supseteq \langle \ann_A(L_{(\underline{n},\star)}), x \rangle.
  \end{align*}
  
  Now observe that $L/xL$ and $(0 :_L x)$ are finitely generated $\mathbb{N}^t$-graded $R$-modules with saturated
  dimensions $\le s - 1$. Therefore, by induction hypothesis, there exist $\underline{w}$ and $\underline{w}'$ in
  $\mathbb{N}^t$ and two integers $k_1, k_2$ such that 
  \begin{align*}
   \reg\left(L_{(\underline{n},\star)}/xL_{(\underline{n},\star)}\right)
   &< |\underline{n}| k_1 + k_1\quad\mbox{for all }\underline{n} \ge \underline{w} \\
   \mbox{and}\quad\reg\big(0 :_{L_{(\underline{n},\star)}} x\big)
   &< |\underline{n}| k_2 + k_2\quad\mbox{for all }\underline{n} \ge \underline{w}'.
  \end{align*}
  Set $k := \max\{k_1, k_2\}$ and $\underline{u} := \max\{\underline{w}, \underline{w}'\}$
  (i.e., $u_i := \max\{w_i,w'_i\}$ for all $1 \le i \le t$, and $\underline{u} := (u_1,\ldots,u_t)$). Then, by
  Lemma~\ref{lemma: dimension reduction relation of regularity}, we have
  \begin{align*}
   \reg(L_{(\underline{n},\star)}) &\le
   \max\left\{\reg\left(L_{(\underline{n},\star)}/xL_{(\underline{n},\star)}\right), \reg\big(0 :_{L_{(\underline{n},\star)}} x\big)\right\}\\
   & < |\underline{n}| k + k \quad \mbox{for all }\underline{n} \ge \underline{u}.
  \end{align*}
  This completes the proof of the first part of the theorem.
  
  To prove the second part, assume $t = 1$.  Then, from the first part, there exist $u \in \mathbb{N}$ and an integer $k$ such that
  \[ \reg(L_{(n,\star)}) < n k + k \quad\mbox{for all }~n \ge u.\]
  Set $k' := \max\left\{k, \reg(L_{(n,\star)}) : 0 \le n \le u-1\right\}$. Then clearly, we have
  \[ \reg(L_{(n,\star)}) \le n k + k' \quad\mbox{for all }~n \in \mathbb{N},\]
  which completes the proof of the theorem.
 \end{proof}
 
 Above theorem gives the result that $\reg(L_{(\underline{n},\star)})$ has linear bound for all
 $\underline{n} \ge \underline{u}$, for some $\underline{u} \in \mathbb{N}^t$. Now we prove the result
 for all $\underline{n} \in \mathbb{N}^t$.
 
 \begin{theorem}\label{theorem: bounds of regularity for multigraded module}\index{linear bounds of regularity}
  With the {\rm Hypothesis~\ref{hypothesis}}, there exist two integers $k$ and $k'$ such that
  \[ 
    \reg\left(L_{(\underline{n},\star)}\right) \le (n_1 + \cdots + n_t) k + k' \quad\mbox{for all }~\underline{n} \in \mathbb{N}^t.
  \]
 \end{theorem}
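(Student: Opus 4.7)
The plan is to proceed by induction on $t$, the number of ideal-type coordinates. The base case $t = 1$ is precisely the second part of Theorem~\ref{theorem: bounds of regularity for saturated dimension positive}, which already gives integers $k, k'$ with $\reg(L_{(n,\star)}) \le n k + k'$ for every $n \in \mathbb{N}$. So assume the theorem has been established for all multigradings with fewer than $t$ ideal-type coordinates and work under Hypothesis~\ref{hypothesis}.

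For the ``large'' part of the parameter space, I would invoke the first assertion of Theorem~\ref{theorem: bounds of regularity for saturated dimension positive} to obtain $\underline{u} \in \mathbb{N}^t$ and an integer $k_0$ with $\reg(L_{(\underline{n},\star)}) < |\underline{n}| k_0 + k_0$ for every $\underline{n} \ge \underline{u}$. The complementary region $\{\underline{n} \in \mathbb{N}^t : \underline{n} \not\ge \underline{u}\}$ is covered by the finitely many slices $\{\underline{n} : n_i = \lambda\}$ with $1 \le i \le t$ and $0 \le \lambda \le u_i - 1$, so it suffices to produce a linear bound on each such slice.

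To treat a single slice, I would apply Lemma~\ref{lemma: fixing one component of gradings} to the standard $\mathbb{N}^t$-graded ring $R = \bigoplus_{\underline{n}} R_{(\underline{n},\star)}$ and the finitely generated $\mathbb{N}^t$-graded $R$-module $L$, yielding a Noetherian standard $\mathbb{N}^{t-1}$-graded ring $S_i := \bigoplus_{n_i = 0} R_{(\underline{n},\star)}$ and a finitely generated $\mathbb{N}^{t-1}$-graded $S_i$-module $M_{i\lambda} := \bigoplus_{n_i = \lambda} L_{(\underline{n},\star)}$. The crucial observation is that this restricted data again satisfies Hypothesis~\ref{hypothesis}, with $t-1$ in place of $t$: the $(\underline{0},\star)$-piece of $S_i$ remains $A = R_{(\underline{0},\star)}$, which is still standard $\mathbb{N}$-graded over the same Artinian local ring $A_0$, and the internal $\mathbb{N}$-grading of each component is preserved. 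The induction hypothesis then supplies integers $k_{i\lambda}, k'_{i\lambda}$ satisfying $\reg(L_{(\underline{n},\star)}) \le (|\underline{n}| - \lambda) k_{i\lambda} + k'_{i\lambda} \le |\underline{n}| k_{i\lambda} + k'_{i\lambda}$ for every $\underline{n}$ in the slice.

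Setting $k := \max\{k_0, k_{i\lambda}\}$ and $k' := \max\{k_0, k'_{i\lambda}\}$ over the finite index set $\{(i,\lambda) : 1 \le i \le t,\, 0 \le \lambda < u_i\}$ then produces the desired uniform bound $\reg(L_{(\underline{n},\star)}) \le |\underline{n}| k + k'$ on all of $\mathbb{N}^t$. The main delicate point is verifying that the coordinate-fixing operation really preserves the full package of standardness (both of the $\mathbb{N}^{t-1}$-grading over $A$ and of the internal $\mathbb{N}$-grading of $A$ over the Artinian local $A_0$) required to reapply the theorem inductively; once that bookkeeping is in hand, the remainder is just combining finitely many linear bounds by taking maxima.
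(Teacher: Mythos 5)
Your argument coincides with the paper's proof essentially step for step: induction on $t$, base case $t=1$ from the second part of Theorem~\ref{theorem: bounds of regularity for saturated dimension positive}, the region $\underline{n} \ge \underline{u}$ handled by the first part of that theorem, and the complement covered by the finitely many slices $\{n_i = \lambda\}$ with $0 \le \lambda < u_i$ treated via Lemma~\ref{lemma: fixing one component of gradings} and the inductive hypothesis, followed by taking maxima. The only cosmetic difference is that your chain $(|\underline{n}| - \lambda)k_{i\lambda} + k'_{i\lambda} \le |\underline{n}| k_{i\lambda} + k'_{i\lambda}$ tacitly assumes $k_{i\lambda} \ge 0$; the paper instead absorbs the shift by setting $k''_{i\lambda} := k'_{i\lambda} - \lambda k_{i\lambda}$, which avoids any sign hypothesis, though your version is also fine since one may always replace a negative slope constant by $0$ without weakening the bound.
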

 
 \begin{proof}
  We prove the theorem by induction on $t$. If $t = 1$, then the theorem follows from the second part of the
  Theorem~\ref{theorem: bounds of regularity for saturated dimension positive}. Therefore we may as well assume that
  $t \ge 2$ and the theorem holds true for $t - 1$.
  
  By Theorem~\ref{theorem: bounds of regularity for saturated dimension positive}, there exist
  $\underline{u} \in \mathbb{N}^t$ and an integer $k_1$ such that
  \begin{equation}\label{theorem: bounds of regularity for multigraded module: equation 1}
   \reg\left(L_{(\underline{n},\star)}\right) < |\underline{n}| k_1 + k_1 \quad\mbox{for all }~\underline{n} \ge \underline{u}.
  \end{equation}
  Now for each $1 \le i \le t$ and $0 \le \lambda < u_i$, we set
  \[
   S_i := \bigoplus_{\{\underline{n} \in \mathbb{N}^t: n_i = 0\}} R_{(\underline{n},\star)}\quad\quad
   \mbox{and}\quad M_{i\lambda} := \bigoplus_{\{\underline{n} \in \mathbb{N}^t: n_i
                                 = \lambda\}} L_{(\underline{n},\star)}.
  \]
  Then, by Lemma~\ref{lemma: fixing one component of gradings}, $S_i$ is a  standard
  $\mathbb{N}^{t-1}$-graded ring and $M_{i\lambda}$ is a finitely generated $\mathbb{N}^{t-1}$-graded $S_i$-module.
  Therefore, by induction hypothesis, for each $1 \le i \le t$ and $0 \le \lambda < u_i$, there exist two integers
  $k_{i\lambda}$ and $k'_{i\lambda}$ such that
  \begin{align}\label{theorem: bounds of regularity for multigraded module: equation 2}
   \reg\left(L_{(n_1,\ldots,n_{i-1},\lambda,n_{i+1},\ldots,n_t,\star)}\right)
   &\le (n_1+\cdots+n_{i-1}+n_{i+1}+\cdots+n_t) k_{i\lambda} + k'_{i\lambda}                      \nonumber \\
   &= (n_1+\cdots+n_{i-1}+\lambda+n_{i+1}+\cdots+n_t) k_{i\lambda} + k''_{i\lambda}                          \\
   &\quad\quad\quad\quad\quad~~\mbox{ for all }n_1,\ldots,n_{i-1},n_{i+1},\ldots,n_t \in \mathbb{N},\nonumber
  \end{align}
  where $k''_{i\lambda} = k'_{i\lambda} - \lambda k_{i\lambda}$. Now set
  \begin{align*}
   k  &:= \max\left\{k_1, k_{i\lambda} : 1 \le i \le t, 0 \le \lambda < u_i\right\}\quad\mbox{and}\\
   k' &:= \max\left\{k_1, k''_{i\lambda} : 1 \le i \le t, 0 \le \lambda < u_i\right\}.
  \end{align*}
  We claim that
  \begin{equation}\label{theorem: bounds of regularity for multigraded module: equation 3}
   \reg\left(L_{(\underline{n},\star)}\right) \le |\underline{n}| k + k' \quad\mbox{for all }~\underline{n} \in \mathbb{N}^t.
  \end{equation}
  To prove \eqref{theorem: bounds of regularity for multigraded module: equation 3}, consider an arbitrary
  $\underline{n} \in \mathbb{N}^t$. If $\underline{n} \ge \underline{u}$, then
  \eqref{theorem: bounds of regularity for multigraded module: equation 3} follows from
  \eqref{theorem: bounds of regularity for multigraded module: equation 1}. Otherwise if
  $\underline{n} \ngeqslant \underline{u}$, then we have $n_i < u_i$ for at least one $i \in \{1,\ldots,t\}$,
  and hence in this case, \eqref{theorem: bounds of regularity for multigraded module: equation 3}
  holds true by \eqref{theorem: bounds of regularity for multigraded module: equation 2}.
 \end{proof}
 
 Now we have arrived at the main goal of this chapter.
 
 \begin{corollary}\label{corollary: bounds of regularity of ideals power times module}\index{linear bounds of regularity}
  Let $A$ be a  standard $\mathbb{N}$-graded algebra over an Artinian local ring $A_0$. Let $I_1,\ldots,I_t$ be homogeneous ideals of $A$, and let $M$ be a finitely generated $\mathbb{N}$-graded $A$-module.
  Then there exist two integers $k$ and $k'$ such that
  \[
    \reg(I_1^{n_1}\cdots I_t^{n_t} M) \le (n_1 + \cdots + n_t) k + k' \quad\mbox{for all }~ n_1,\ldots,n_t \in \mathbb{N}.
  \]
 \end{corollary}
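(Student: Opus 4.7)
The plan is to deduce this corollary directly from Theorem~\ref{theorem: bounds of regularity for multigraded module} by exhibiting the relevant ring and module as an instance of Hypothesis~\ref{hypothesis}. This instance is precisely the construction spelled out in Example~\ref{example 1 satisfying the hypothesis}, so the bulk of the work has already been done.

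First I would set $R := A[I_1 T_1, \ldots, I_t T_t]$, the multi-Rees algebra, and $L := M[I_1 T_1, \ldots, I_t T_t]$, the multi-Rees module of $M$ with respect to $I_1, \ldots, I_t$. I would endow both with the $\mathbb{N}^{t+1}$-grading in which the $(\underline{n}, i)$th component is the degree $i$ part of the $\mathbb{N}$-graded $A$-module $I_1^{n_1} \cdots I_t^{n_t} A$ and $I_1^{n_1} \cdots I_t^{n_t} M$, respectively. Then I would check (as recorded in Example~\ref{example 1 satisfying the hypothesis}) that $R$ is $\mathbb{N}^{t+1}$-graded, $L$ is finitely generated $\mathbb{N}^{t+1}$-graded over $R$, the induced $\mathbb{N}^t$-graded ring $\bigoplus_{\underline{n}} R_{(\underline{n},\star)}$ is standard (because $R = A[I_1 T_1, \ldots, I_t T_t]$ is generated in $\mathbb{N}^t$-degrees $\underline{e}^1, \ldots, \underline{e}^t$), $A = R_{(\underline{0},\star)}$ is standard $\mathbb{N}$-graded by hypothesis, and $R_{(\underline{0},0)} = A_0$ is Artinian local. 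Thus $R$ and $L$ verify Hypothesis~\ref{hypothesis}.

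The crucial identification is
\[
L_{(\underline{n},\star)} \;=\; \bigoplus_{i \in \mathbb{N}} \bigl( I_1^{n_1} \cdots I_t^{n_t} M \bigr)_i \;=\; I_1^{n_1} \cdots I_t^{n_t} M
\]
as $\mathbb{N}$-graded $A$-modules. Applying Theorem~\ref{theorem: bounds of regularity for multigraded module} then yields integers $k$ and $k'$ with
\[
\reg\bigl( I_1^{n_1} \cdots I_t^{n_t} M \bigr) \;=\; \reg\bigl( L_{(\underline{n},\star)} \bigr) \;\le\; (n_1 + \cdots + n_t) k + k'
\]
for all $\underline{n} \in \mathbb{N}^t$, which is exactly the claim.

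There is essentially no obstacle here beyond bookkeeping: all the substantive work (the base-case Theorem~\ref{theorem: bounds of regularity for saturated dimension 0}, the inductive step on saturated dimension Theorem~\ref{theorem: bounds of regularity for saturated dimension positive}, and the induction on $t$ in Theorem~\ref{theorem: bounds of regularity for multigraded module}) is already packaged. The only mild point to verify carefully is that the multi-Rees algebra is standard as an $\mathbb{N}^t$-graded ring even though it is typically \emph{not} standard as an $\mathbb{N}^{t+1}$-graded ring, but this is exactly what Hypothesis~\ref{hypothesis} permits and what Example~\ref{example 1 satisfying the hypothesis} confirms.
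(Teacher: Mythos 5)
Your proposal is correct and follows the paper's own proof essentially verbatim: set up the multi-Rees algebra $R$ and multi-Rees module $L$ with the $\mathbb{N}^{t+1}$-grading, invoke Example~\ref{example 1 satisfying the hypothesis} to verify Hypothesis~\ref{hypothesis}, identify $L_{(\underline{n},\star)} = I_1^{n_1}\cdots I_t^{n_t} M$, and apply Theorem~\ref{theorem: bounds of regularity for multigraded module}. There is nothing to add.
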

 
 \begin{proof}
  Let $R = A[I_1 T_1,\ldots, I_t T_t]$ be the Rees algebra of $I_1,\ldots,I_t$ over the graded ring $A$ and let
  $L = M[I_1 T_1,\ldots, I_t T_t]$ be the Rees module of $M$ with respect to the ideals $I_1,\ldots,I_t$. We give
  $\mathbb{N}^{t+1}$-grading structures on $R$ and $L$ by setting $(\underline{n},i)$th graded
  components of $R$ and $L$ as the $i$th graded components of the $\mathbb{N}$-graded $A$-modules
  $I_1^{n_1}\cdots I_t^{n_t} A$ and $I_1^{n_1}\cdots I_t^{n_t} M$ respectively.
  From Example~\ref{example 1 satisfying the hypothesis}, note that $R$ and $L$ are satisfying the
  Hypothesis~\ref{hypothesis}, and in this case
  \[ L_{(\underline{n},\star)} = I_1^{n_1}\cdots I_t^{n_t} M \quad\mbox{ for all }~\underline{n} \in \mathbb{N}^t.\]
  Therefore the corollary follows from Theorem~\ref{theorem: bounds of regularity for multigraded module}.
 \end{proof}
 
 Here is another corollary of the Theorem~\ref{theorem: bounds of regularity for multigraded module}.
 
 \begin{corollary}\label{corollary: bounds of regularity, integral closure of ideals power}\index{linear bounds of regularity}
  Let $A$ be a  standard $\mathbb{N}$-graded algebra over a field $A_0$. Let  $I_1,\ldots,I_t$ be homogeneous ideals of $A$, and let $M$ be a finitely generated $\mathbb{N}$-graded $A$-module. Then there exist two integers $k$ and $k'$ such that
  \[
    \reg\left(\overline{I_1^{n_1}}\cdots \overline{I_t^{n_t}} M\right) \le (n_1 + \cdots + n_t) k + k'
    \quad\mbox{for all }~ n_1,\ldots,n_t \in \mathbb{N}.
  \]
 \end{corollary}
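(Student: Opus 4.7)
The plan is to deduce this corollary directly from Theorem~\ref{theorem: bounds of regularity for multigraded module} by exhibiting a concrete $R$ and $L$ satisfying Hypothesis~\ref{hypothesis} whose $(\underline{n},\star)$-components are precisely $\overline{I_1^{n_1}} \cdots \overline{I_t^{n_t}} M$. The construction is already indicated in Example~\ref{example 2 satisfying the hypothesis}.

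First I would set $R := A[I_1 T_1, \ldots, I_t T_t]$, the multi-Rees algebra of the ideals $I_1,\ldots,I_t$, and define
\[
L := \bigoplus_{\underline{n} \in \mathbb{N}^t} \left( \overline{I_1^{n_1}} \cdots \overline{I_t^{n_t}} M \right) T_1^{n_1} \cdots T_t^{n_t}.
\]
I would give both $R$ and $L$ the $\mathbb{N}^{t+1}$-grading in which the $(\underline{n},i)$-component is the $i$-th graded piece (with respect to the $\mathbb{N}$-grading on $A$) of $I_1^{n_1}\cdots I_t^{n_t} A$ and $\overline{I_1^{n_1}}\cdots \overline{I_t^{n_t}} M$ respectively. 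That $L$ is indeed an $R$-module follows from the containment $I^m\,\overline{I^n} \subseteq \overline{I^{m+n}}$ for any ideal $I$.

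The key input, which is exactly Example~\ref{example 2 satisfying the hypothesis}, is that when $A_0$ is a field, $L$ is a \emph{finitely generated} $\mathbb{N}^{t+1}$-graded $R$-module. This relies on the classical fact that for a homogeneous ideal $I$ in a standard graded algebra over a field, there exists $n_0$ such that $\overline{I^n} = I^{n-n_0}\,\overline{I^{n_0}}$ for all $n \ge n_0$; this forces the multigraded module of integral closures to be finitely generated over the multi-Rees algebra. With this verified, $R$ and $L$ satisfy all the conditions of Hypothesis~\ref{hypothesis}: $R = \bigoplus_{\underline{n}} R_{(\underline{n},\star)}$ is a standard $\mathbb{N}^t$-graded ring, $A = R_{(\underline{0},\star)}$ is a standard $\mathbb{N}$-graded algebra over the Artinian local ring $A_0$ (in fact a field), and $L$ is finitely generated.

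Finally, by construction $L_{(\underline{n},\star)} = \overline{I_1^{n_1}} \cdots \overline{I_t^{n_t}} M$ for each $\underline{n} \in \mathbb{N}^t$, so applying Theorem~\ref{theorem: bounds of regularity for multigraded module} yields integers $k$ and $k'$ with
\[
\reg\left(\overline{I_1^{n_1}}\cdots \overline{I_t^{n_t}} M\right) = \reg\!\left(L_{(\underline{n},\star)}\right) \le (n_1 + \cdots + n_t)k + k'
\]
for all $\underline{n} \in \mathbb{N}^t$, as desired. There is essentially no obstacle here once Example~\ref{example 2 satisfying the hypothesis} is granted; the only nontrivial point is the finite generation of $L$, which is where the hypothesis that $A_0$ is a field is used and which prevents a verbatim extension of the corollary to the Artinian-local base case handled in Corollary~\ref{corollary: bounds of regularity of ideals power times module}.
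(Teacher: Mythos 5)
Your proposal is correct and follows exactly the same route as the paper: instantiate $R$ and $L$ as in Example~\ref{example 2 satisfying the hypothesis}, check finite generation of $L$ via $\overline{I^n} = I^{n-n_0}\,\overline{I^{n_0}}$ for $n\ge n_0$, and apply Theorem~\ref{theorem: bounds of regularity for multigraded module}. You merely spell out the example in a bit more detail than the paper does.
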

 
 \begin{proof}
  We set $R$ and $L$ as in Example~\ref{example 2 satisfying the hypothesis}. From
  Example~\ref{example 2 satisfying the hypothesis}, note that $R$ and $L$ are satisfying the Hypothesis~\ref{hypothesis},
  and in this case
  \[
   L_{(\underline{n},\star)} = \overline{I_1^{n_1}}\cdots \overline{I_t^{n_t}} M \quad\mbox{for all }~\underline{n} \in \mathbb{N}^t.
  \]
  Hence the corollary follows from Theorem~\ref{theorem: bounds of regularity for multigraded module}.
 \end{proof}
 
\section{About Linearity of Regularity}\label{About linearity of regularity}

  In \cite[page 252]{CHT99}, S. D. Cutkosky, J. Herzog and N. V. Trung remarked that over a polynomial ring $S = k[X_1,\ldots,X_d]$
  over a field $k$, the asymptotic linearity of regularity holds for a collection of ideals, i.e.,\index{Cutkosky, Herzog and Trung}
  \[
    \reg(I_1^{n_1}\cdots I_t^{n_t}) = a_1 n_1 + \cdots + a_t n_t + b \quad \mbox{for all }~n_1,\ldots,n_t\gg 0
  \]\index{linearity of regularity}
  and for some constants $a_1, \ldots, a_t, b$. {\it However, their proof has a gap which we now describe.}
  
  \begin{para}\label{explanation about maximum of linear functions}
   In the proof of Theorem~3.4 in \cite{CHT99}, it is used repeatedly that ``the maximum of finitely many linear functions
   is asymptotically linear". But this is not true in general for functions of more than one variable (see
   Example~\ref{example: maximum of linear functions need not be linear}). Therefore we cannot conclude that for every $i \ge 0$,
   $\reg_i(I_1^{n_1}\cdots I_t^{n_t})$ is linear in $(n_1,\ldots,n_t)$ for all sufficiently large $n_1,\ldots,n_t$, where for a finitely generated
   $\mathbb{N}$-graded $S$-module $N$, $\reg_i(N)$ is defined to be
   \[
	    \reg_i(N) := \sup\{ n : \Tor_i^S(N,k)_n \neq 0 \} - i.
   \]
   Even if $\reg_i(I_1^{n_1}\cdots I_t^{n_t})$ is asymptotically linear for all $i \ge 0$, then
   also it is not clear whether
   \[ 
	    \reg(I_1^{n_1}\cdots I_t^{n_t}) = \max\{\reg_i(I_1^{n_1}\cdots I_t^{n_t}) : i \ge 0\}
   \]
   is linear in $(n_1,\ldots,n_t)$ for all sufficiently large $n_1,\ldots,n_t$.
  \end{para}
  
  Here is an example which shows that the maximum of finitely many linear functions need not be asymptotically linear.
  
  \begin{example}\label{example: maximum of linear functions need not be linear}\index{example on linear functions}
   Set $h(m,n) := \max\{2m + n, m + 3n \}$ for all $m ,n \in \mathbb{N}$. We claim that $h(m,n)$ is not asymptotically a
   linear function in $(m, n)$.
   
   If possible, assume that $h(m,n) = am + bn + c$ for all $m, n \gg 0$, say for all $(m,n)\ge (u,v)$, where $a,b$ and $c$ are
   some constants.
   
   Note that for every fixed $n \in \mathbb{N}$, $h(m,n) = 2m + n$ for all $m \gg 0$. Therefore for every fixed $n \ge v$, we
   have $am + bn + c = 2m + n$ for all $m \gg 0$, which implies that $a = 2$, and hence $bn + c = n$. Thus for all
   $n \ge v$, we have $bn + c = n$, which gives $b = 1$ and $c = 0$. In this way, we have $a = 2$, $b = 1$ and $c = 0$.
   
   Again for every fixed $m \in \mathbb{N}$, $h(m,n) = m + 3n$ for all $n \gg 0$. Therefore for every fixed $m \ge u$, we
   have $am + bn + c = m + 3n$ for all $n \gg 0$, which implies that $b = 3$, and hence $am + c = m$. Thus for all
   $m \ge u$, we have $am + c = m$, which gives $a = 1$ and $c = 0$. In this way, we have $a = 1$, $b = 3$ and $c = 0$,
   which gives a contradiction. Therefore $h(m,n)$ is not asymptotically a linear function in $(m, n)$.
  \end{example}
  
\newpage
\thispagestyle{empty}
\cleardoublepage

\chapter{Characterizations of Regular Local Rings via Syzygy Modules}
                                                         \label{Chapter: Characterizations of Regular Local Rings via Syzygy Modules}
 
 This chapter shows some instances where properties of a local ring are closely connected with the homological properties of a single
 module.
 
 Let $A$ be a  local ring with residue field $k$. The aim of this chapter is to prove that if a finite direct
 sum of syzygy modules of $k$ maps onto `a semidualizing module' or `a non-zero maximal Cohen-Macaulay module of finite injective
 dimension', then $A$ is regular (see Corollaries~\ref{corollary: RLR and surjection onto semidualizing}
 and \ref{corollary: RLR and surjection onto finite injdim}). We also obtain one new characterization of regular local
 rings, namely, the local ring $A$ is regular if and only if some syzygy module of $k$ has a non-zero direct summand of finite
 injective dimension; see Theorem~\ref{theorem: characterization of RLR, injdim}. Moreover, this result has a dual companion.
 
 We use the following notations throughout this chapter.

\begin{customnotations}{\ref{Chapter: Characterizations of Regular Local Rings via Syzygy Modules}.1}\label{Notation: Syzygy}
 Throughout the present chapter, {\it $A$ always denotes a  local ring with maximal ideal $\mathfrak{m}$ and residue field
 $k$}. Let $M$ be a finitely generated $A$-module. For a non-negative integer $n$, we denote $\Omega_n^A(M)$ (resp.
 $\Omega_{-n}^A(M)$) the $n$th syzygy (resp. cosyzygy) module of $M$. We denote a finite collection of non-negative integers by
 $\Lambda$.
\end{customnotations}

 The structure of this chapter is as follows. In Section~\ref{Section: Preliminaries on Syzygy Modules}, we provide some preliminaries
 on syzygy modules which we use in the next section. Finally, we prove our main results of this chapter in
 Section~\ref{Section: Characterizations of Regular Local Rings}.
 
\section{Preliminaries on Syzygy Modules}\label{Section: Preliminaries on Syzygy Modules}
 
 In the present section, we give some preliminaries which we use in order to prove our main results of this chapter.
 We start with the following lemma which gives a relation between the socle of the ring and the annihilator of the syzygy modules.
 
 \begin{lemma}\label{lemma: socle, syzygy}\index{socle and syzygy relation}
  With the Notation~\ref{Notation: Syzygy}, for every positive integer $n$, we have
  \[
   \Soc(A) \subseteq \ann_A\left( \Omega_n^A(M) \right).
  \]
  In particular, if $A \neq k$ {\rm (}i.e., if $\mathfrak{m} \neq 0${\rm )}, then
  \[
   \Soc(A) \subseteq \ann_A\left( \Omega_n^A(k) \right) \quad \mbox{for all } n \ge 0.
  \]
 \end{lemma}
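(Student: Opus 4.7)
The plan is to use a minimal free resolution of $M$ and exploit the fact that, by minimality, each syzygy sits inside $\mathfrak{m}$ times a free module.

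More precisely, fix a minimal free resolution
\[
  \cdots \longrightarrow F_2 \stackrel{d_2}{\longrightarrow} F_1 \stackrel{d_1}{\longrightarrow} F_0 \longrightarrow M \longrightarrow 0
\]
of $M$, and for $n \ge 1$ identify $\Omega_n^A(M)$ with $\Ker(d_{n-1}) = \Image(d_n)$. Minimality of the resolution means that the matrix entries of each $d_j$ lie in $\mathfrak{m}$, so $\Image(d_n) \subseteq \mathfrak{m} F_{n-1}$. Hence
\[
  \Omega_n^A(M) \subseteq \mathfrak{m} F_{n-1} \quad \mbox{for every } n \ge 1.
\]
Now if $s \in \Soc(A) = (0 :_A \mathfrak{m})$ and $x \in \Omega_n^A(M)$, then $x \in \mathfrak{m} F_{n-1}$, so $sx \in (s\mathfrak{m}) F_{n-1} = 0$. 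Therefore $s \in \ann_A(\Omega_n^A(M))$, which proves the first assertion.

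For the ``in particular'' part with $M = k$, the case $n \ge 1$ is covered by the first part, and the only thing left is $n = 0$. Here $\Omega_0^A(k) = k$, so $\ann_A(\Omega_0^A(k)) = \mathfrak{m}$. The hypothesis $A \neq k$ (equivalently $\mathfrak{m} \neq 0$) forces $\Soc(A) \subseteq \mathfrak{m}$: any element of $\Soc(A)$ annihilates a non-zero element of $\mathfrak{m}$ and therefore cannot be a unit. Thus $\Soc(A) \subseteq \mathfrak{m} = \ann_A(\Omega_0^A(k))$, finishing the proof.

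There is no real obstacle here; the only subtlety is remembering to treat $n = 0$ separately in the ``in particular'' statement and to invoke the hypothesis $\mathfrak{m} \neq 0$ to ensure $\Soc(A) \subseteq \mathfrak{m}$ (otherwise for $A = k$ one would have $\Soc(A) = A$ but $\ann_A(k) = 0$, and the inclusion would fail at $n = 0$).
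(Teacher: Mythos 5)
Your proof is correct and follows essentially the same approach as the paper: both exploit minimality of the free resolution to place $\Omega_n^A(M)$ inside $\mathfrak{m} F_{n-1}$, which is killed by $\Soc(A)$, and both handle the $n=0$ case of the ``in particular'' statement by the observation $\Soc(A) \subseteq \mathfrak{m} = \ann_A(k)$ when $\mathfrak{m} \neq 0$. The paper phrases the main step as an element chase through the surjection $F_n \twoheadrightarrow \Omega_n^A(M)$ and the inclusion $\Omega_n^A(M) \hookrightarrow F_{n-1}$, while you state the inclusion $\Omega_n^A(M) \subseteq \mathfrak{m} F_{n-1}$ directly; this is a matter of presentation, not substance.
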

 
 \begin{proof}
  Fix $n \ge 1$. If $\Omega_n^A(M) = 0$, then we are done. So we may assume $\Omega_n^A(M) \neq 0$.
  Consider the following commutative diagram in a minimal free resolution of $M$:
  \[
   \xymatrixrowsep{12mm} \xymatrixcolsep{8mm}
   \xymatrix{ \cdots \ar[r] & A^{b_n} \ar@{->>}[rd]_{f} \ar[rr]^{\delta} 	& 			& A^{b_{n-1}}\ar[r] & \cdots.\\
   			    &                                     		& \Omega_n^A(M)\ar@{^{(}->}[ru]_g	}
  \]
  Let $a \in \Soc(A)$, i.e., $a\mathfrak{m} = 0$. Suppose $x \in \Omega_n^A(M)$. Since $f$ is surjective, there exists
  $y \in A^{b_n}$ such that $f(y) = x$. Note that $\delta(ay) = a\delta(y) = 0$ as
  $\delta\left(A^{b_n}\right) \subseteq \mathfrak{m} A^{b_{n-1}}$ and $a\mathfrak{m} = 0$.  Therefore
  $g(ax) = g(f(ay)) = \delta(ay) = 0$, which gives $ax = 0$ as $g$ is injective.
  Thus $\Soc(A) \subseteq \ann_A\left(\Omega_n^A(M)\right)$.
  
  For the last part, note that $\Soc(A) \subseteq \mathfrak{m} = \ann_A\left(\Omega_0^A(k)\right)$ if $\mathfrak{m} \neq 0$.
 \end{proof}
 
 Let us recall the following well-known result initially obtained by Nagata.
 
 \begin{proposition}{\rm \cite[Corollary~5.3]{Tak06}}\label{proposition: Nagata}\index{Nagata's result on syzygy modules}
  Let $x \in \mathfrak{m} \smallsetminus \mathfrak{m}^2$ be an $A$-regular element. Set $\overline{(-)} := (-) \otimes_A A/(x)$.
  Then we have
  \[ 
   \overline{\Omega_n^A(k)} \cong \Omega_n^{\overline{A}}(k)\oplus \Omega_{n-1}^{\overline{A}}(k)\quad\mbox{for every integer $n\ge 1$}.
  \]
 \end{proposition}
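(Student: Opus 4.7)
My plan is to prove the formula by induction on $n$, handling the base case $n=1$ by a direct decomposition of $\mathfrak{m}/x\mathfrak{m}$ and executing the inductive step via the long exact sequence of $\Tor$ applied to the syzygy short exact sequences in a minimal $A$-free resolution of $k$. For the base case, I would identify $\overline{\Omega_1^A(k)} = \mathfrak{m} \otimes_A \overline{A} = \mathfrak{m}/x\mathfrak{m}$ and analyze the natural short exact sequence
\begin{equation*}
 0 \longrightarrow (x)/x\mathfrak{m} \longrightarrow \mathfrak{m}/x\mathfrak{m} \longrightarrow \mathfrak{m}/(x) \longrightarrow 0.
\end{equation*}
The leftmost term is isomorphic to $A/\mathfrak{m} = k = \Omega_0^{\overline{A}}(k)$ via $\overline{ax} \mapsto \overline{a}$ (using that $x$ is $A$-regular, so $ax \in x\mathfrak{m}$ iff $a \in \mathfrak{m}$), while the rightmost term is $\overline{\mathfrak{m}} = \Omega_1^{\overline{A}}(k)$. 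To split the sequence I would use the hypothesis $x \notin \mathfrak{m}^2$: extend $x = x_1$ to a minimal generating set $x_1,\ldots,x_d$ of $\mathfrak{m}$ and define an $A$-linear retraction $\mathfrak{m} \to k$ sending $x_1 \mapsto 1$ and $x_i \mapsto 0$ for $i \ge 2$ (well-defined since $\overline{x_1},\ldots,\overline{x_d}$ form a $k$-basis of $\mathfrak{m}/\mathfrak{m}^2$); this map kills $x\mathfrak{m}$ and restricts to the displayed isomorphism on $(x)/x\mathfrak{m}$.

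For the inductive step, let $(F_\bullet,\partial)$ be a minimal $A$-free resolution of $k$, so that there are short exact sequences
\begin{equation*}
 0 \longrightarrow \Omega_{n+1}^A(k) \longrightarrow F_n \longrightarrow \Omega_n^A(k) \longrightarrow 0.
\end{equation*}
For $n \ge 1$ the module $\Omega_n^A(k)$ sits inside the free module $F_{n-1}$, so $x$ is $\Omega_n^A(k)$-regular and $\Tor_1^A(\Omega_n^A(k),\overline{A}) = 0$. Reducing modulo $x$ therefore gives a short exact sequence
\begin{equation*}
 0 \longrightarrow \overline{\Omega_{n+1}^A(k)} \longrightarrow \overline{F_n} \longrightarrow \overline{\Omega_n^A(k)} \longrightarrow 0.
\end{equation*}
I would then verify that $\overline{F_n} \twoheadrightarrow \overline{\Omega_n^A(k)}$ is a \emph{minimal} $\overline{A}$-free cover by comparing $k$-dimensions after tensoring with $k$: both sides have dimension $\beta_n^A(k)$, the left by definition and the right because $F_n \twoheadrightarrow \Omega_n^A(k)$ is already a minimal $A$-cover. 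Hence $\overline{\Omega_{n+1}^A(k)} \cong \Omega_1^{\overline{A}}\bigl(\overline{\Omega_n^A(k)}\bigr)$. Invoking the induction hypothesis $\overline{\Omega_n^A(k)} \cong \Omega_n^{\overline{A}}(k) \oplus \Omega_{n-1}^{\overline{A}}(k)$, together with the observation that the termwise direct sum of two minimal $\overline{A}$-free resolutions is again minimal (so $\Omega_1^{\overline{A}}$ distributes over direct sums), yields
\begin{equation*}
 \overline{\Omega_{n+1}^A(k)} \cong \Omega_{n+1}^{\overline{A}}(k) \oplus \Omega_n^{\overline{A}}(k).
\end{equation*}

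The main obstacle is the base case: splitting the sequence $0 \to (x)/x\mathfrak{m} \to \mathfrak{m}/x\mathfrak{m} \to \mathfrak{m}/(x) \to 0$ is precisely where the hypothesis $x \notin \mathfrak{m}^2$ is essential—without it, the class of $x$ in $\mathfrak{m}/\mathfrak{m}^2$ would vanish and one could not separate it from the other minimal generators of $\mathfrak{m}$ to construct the retraction. Once the base case is in hand, the inductive step is routine, relying only on $A$-regularity of $x$ on positive syzygies of $k$ and on the uniqueness (up to isomorphism) of minimal covers over a local ring.
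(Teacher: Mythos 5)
The paper records this result only by citation---to Takahashi \cite[Corollary~5.3]{Tak06}, who traces it to Nagata---and does not reproduce a proof, so there is no in-paper argument to compare yours against. Your inductive proof is correct. The base case is handled properly: the identification $(x)/x\mathfrak{m} \cong k$ via $\overline{ax}\mapsto\overline{a}$ uses $A$-regularity of $x$ exactly as you state, and your retraction $\mathfrak{m}/x\mathfrak{m}\to k$ is legitimate because any $A$-linear map $\mathfrak{m}\to k$ automatically kills $\mathfrak{m}^2$ and hence factors through $\mathfrak{m}/\mathfrak{m}^2$, where $x\notin\mathfrak{m}^2$ supplies the basis vector you project onto; this is indeed the one place the hypothesis $x\notin\mathfrak{m}^2$ is used. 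The inductive step rests on the two correct facts that, for $n\ge 1$, $\Omega_n^A(k)$ embeds in a free module so $x$ is $\Omega_n^A(k)$-regular and $\Tor_1^A(\Omega_n^A(k),\overline{A})=0$, and that the reduction of a minimal free cover modulo a regular element remains a minimal cover (your Betti-number comparison establishes this), so passing to kernels computes $\Omega_1^{\overline{A}}$ of the reduced module. Together with additivity of $\Omega_1^{\overline{A}}$ over direct sums (termwise sum of minimal resolutions is minimal) and $\Omega_1^{\overline{A}}\bigl(\Omega_m^{\overline{A}}(k)\bigr)=\Omega_{m+1}^{\overline{A}}(k)$, this closes the induction. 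The argument is complete and stands on its own.
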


 We notice two properties satisfied by semidualizing modules (see Definition~\ref{definition: semidualizing module})
 and maximal Cohen-Macaulay modules of finite injective dimension.
 
 \begin{definition}\label{definition: star property}
  Let $\mathcal{P}$ be a property of modules over  local rings. We say that $\mathcal{P}$ is a $(*)$-property if $\mathcal{P}$ satisfies
  the following:\index{$(*)$-property of modules}
  \begin{enumerate}[(i)]
   \item An $A$-module $M$ satisfies $\mathcal{P}$ implies that the $A/(x)$-module $M/xM$ satisfies $\mathcal{P}$, where $x \in A$ is an
         $A$-regular element.
   \item An $A$-module $M$ satisfies $\mathcal{P}$ and $\depth(A) = 0$ together imply that $\ann_A(M) = 0$.
  \end{enumerate}
 \end{definition}
 
 Now we give a few examples of $(*)$-properties.
  
 \begin{example}\label{example: star property: semidualizing}\index{examples of $(*)$-properties}
  The property $\mathcal{P}_1$ $:=$ `semidualizing modules over local rings' is a $(*)$-property.
 \end{example}
 
 \begin{proof}
  Let $C$ be a semidualizing $A$-module. It is shown in \cite[page~68]{Gol84} that $C/xC$ is a semidualizing $A/(x)$-module, where
  $x \in \mathfrak{m}$ is an $A$-regular element. Since $\Hom_A(C,C) \cong A$, we have $\ann_A(C) = 0$
  (without any restriction on $\depth(A)$).
 \end{proof}
 
 Here is another example of $(*)$-property.
 
 \begin{example}\label{example: star property: MCM, finite injdim}\index{examples of $(*)$-properties}
  The property $\mathcal{P}_2$ $:=$ `non-zero maximal Cohen-Macaulay modules of finite injective dimension over Cohen-Macaulay local
  rings' is a $(*)$-property.
 \end{example}
 
 \begin{proof}
  Let $A$ be a Cohen-Macaulay local ring, and let $L$ be a non-zero maximal Cohen-Macaulay $A$-module of finite injective dimension.
  Suppose $x \in A$ is an $A$-regular element. Since $L$ is a maximal Cohen-Macaulay $A$-module, $x$ is $L$-regular as well. Therefore
  $L/xL$ is a non-zero maximal Cohen-Macaulay module of finite injective dimension over the Cohen-Macaulay local ring $A/(x)$
  (see \cite[Corollary~3.1.15]{BH98}).
  
  Now further assume that $\depth(A) = 0$. Then $A$ is an Artinian local ring, and $\injdim_A(L) = \depth(A) = 0$. Therefore,
  by \cite[Theorem~3.2.8]{BH98}, we have that $L \cong E^r$, where $E$ is the injective hull of $k$, and
  $r = \rank_k\left(\Hom_A(k,L)\right)$. It is well-known that $\Hom_A(E, E) \cong A$ as $A$ is an Artinian local ring.
  Hence $\ann_A(L) = \ann_A(E) = 0$.
 \end{proof}
 
\section{Characterizations of Regular Local Rings}\label{Section: Characterizations of Regular Local Rings}
 
 Now we can achieve the aim of this chapter. First of all, we prove that if a finite direct sum of syzygy modules of the
 residue field maps onto a non-zero module satisfying a $(*)$-property, then the ring is regular.
 
\begin{theorem}\label{theorem: RLR and surjection onto star module}\index{regular ring via syzygy modules}
 Assume that $\mathcal{P}$ is a $(*)$-property {\rm (}see {\rm Definition~\ref{definition: star property})}. Let
 \[
   f : \bigoplus_{n \in \Lambda} {\left( \Omega_n^A(k) \right)}^{j_n} \longrightarrow L
   \quad\quad\mbox{$(j_n \ge 1$ for each $n \in \Lambda)$}
 \]\index{characterization of regular rings}
 be a surjective $A$-module homomorphism, where $L$ {\rm(}$\neq 0${\rm)} satisfies $\mathcal{P}$. Then $A$ is regular.
\end{theorem}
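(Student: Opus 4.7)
The plan is to induct on $\depth(A)$, with the base case $\depth(A)=0$ forcing $A=k$ via the socle/annihilator obstruction, and the inductive step going through by reducing modulo a well-chosen regular element and invoking Nagata's syzygy splitting formula (Proposition~\ref{proposition: Nagata}).

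For the base case, assume $\depth(A)=0$. If $A\ne k$, then $\mathfrak{m}\ne 0$, so Lemma~\ref{lemma: socle, syzygy} gives
\[
 \Soc(A)\subseteq \ann_A\bigl(\Omega_n^A(k)\bigr)\quad\text{for every }n\ge 0.
\]
Taking the intersection over $n\in\Lambda$ and using that $f$ is surjective yields $\Soc(A)\subseteq \ann_A(L)$. Since $\depth(A)=0$, we have $\Soc(A)\ne 0$, hence $\ann_A(L)\ne 0$. This contradicts property (ii) of the $(*)$-property. Therefore $A=k$, which is regular.

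For the inductive step, suppose $\depth(A)\ge 1$ and the result is known for all local rings of smaller depth. The first sub-step is to produce an $A$-regular element $x\in\mathfrak{m}\smallsetminus\mathfrak{m}^2$. By prime avoidance applied to the covering $\mathfrak{m}\subseteq \mathfrak{m}^2\cup\bigcup_{\mathfrak{p}\in\Ass(A)}\mathfrak{p}$ (permitted since at most two of the covering ideals, namely $\mathfrak{m}^2$, need not be prime), either $\mathfrak{m}\subseteq\mathfrak{m}^2$ (impossible by Nakayama since $A\ne 0$) or $\mathfrak{m}\subseteq\mathfrak{p}$ for some associated prime (impossible since $\depth(A)\ge 1$). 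So such an $x$ exists. Set $\overline{A}:=A/(x)$ and $\overline{(-)}:=(-)\otimes_A\overline{A}$; note $\depth(\overline{A})=\depth(A)-1$.

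Now tensor the surjection $f$ with $\overline{A}$ to obtain a surjection
\[
 \overline{f}:\bigoplus_{n\in\Lambda}\bigl(\overline{\Omega_n^A(k)}\bigr)^{j_n}\longrightarrow L/xL.
\]
By Proposition~\ref{proposition: Nagata}, $\overline{\Omega_n^A(k)}\cong \Omega_n^{\overline{A}}(k)\oplus\Omega_{n-1}^{\overline{A}}(k)$ for $n\ge 1$, while $\overline{\Omega_0^A(k)}=k=\Omega_0^{\overline{A}}(k)$. So the source of $\overline{f}$ is again a finite direct sum of syzygy modules of $k$ over $\overline{A}$ (with positive multiplicities indexed by some finite collection $\Lambda'$). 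By property (i) of the $(*)$-property, $L/xL$ satisfies $\mathcal{P}$ over $\overline{A}$, and Nakayama's lemma (applied to $L\ne 0$ with $x\in\mathfrak{m}$) gives $L/xL\ne 0$. The inductive hypothesis applied to $\overline{A}$ and $\overline{f}$ then shows that $\overline{A}$ is regular. Since $x\in\mathfrak{m}\smallsetminus\mathfrak{m}^2$ is $A$-regular and $A/(x)$ is regular, $A$ itself is regular.

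The only point demanding care is the handling of the term $n=0$ in Nagata's formula (since the statement of Proposition~\ref{proposition: Nagata} is written for $n\ge 1$) and the verification that an $A$-regular element can be found in $\mathfrak{m}\smallsetminus\mathfrak{m}^2$; both are routine. The conceptual heart of the argument is the base case, where the socle-containment of Lemma~\ref{lemma: socle, syzygy} is exactly what clashes with condition (ii) of the $(*)$-property, forcing $\mathfrak{m}=0$.
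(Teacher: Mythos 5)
Your proposal is correct and follows essentially the same route as the paper: induction on $\depth(A)$, with the base case driven by Lemma~\ref{lemma: socle, syzygy} clashing against condition (ii) of the $(*)$-property, and the inductive step reducing modulo an $A$-regular element in $\mathfrak{m}\smallsetminus\mathfrak{m}^2$ and invoking Proposition~\ref{proposition: Nagata}. The extra detail you supply (prime avoidance for the existence of $x$, Nakayama for $L/xL\neq 0$, the $n=0$ edge case in Nagata's formula) is routine and implicit in the paper's proof.
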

 
 \begin{proof}
  We prove the theorem by using induction on $t := \depth(A)$. Let us first assume that $t = 0$. If possible, let $A \neq k$, i.e.,
  $\mathfrak{m} \neq 0$. Since $\depth(A) = 0$, we have $\Soc(A) \neq 0$. But, by virtue of Lemma~\ref{lemma: socle, syzygy}, we obtain
  \begin{align*}
   \Soc(A) & \subseteq \bigcap_{n \in \Lambda} \ann_A\left(\Omega_n^A(k)\right) \\
           & = \ann_A\left( \bigoplus_{n \in \Lambda} {\left( \Omega_n^A(k) \right)}^{j_n} \right) \\
           & \subseteq \ann_A(L) \quad
                    \mbox{[as $f : \bigoplus_{n\in\Lambda} {\left( \Omega_n^A(k) \right)}^{j_n} \longrightarrow L$ is surjective]}\\
           & = 0 \qquad\quad\quad \mbox{[as $L$ satisfies $\mathcal{P}$ which is a $(*)$-property]}.
  \end{align*}
  This gives a contradiction. Therefore $A$ ($= k$) is a regular local ring.
  
  Now we assume that $t \ge 1$. Suppose the theorem holds true for all such rings of depth smaller than $t$. Since
  $\depth(A) \ge 1$, there exists an element $x \in \mathfrak{m} \smallsetminus \mathfrak{m}^2$ which is $A$-regular. We set
  $\overline{(-)} := (-) \otimes_A A/(x)$. Clearly,
  \[
    \overline{f} : \bigoplus_{n\in\Lambda} {\left( \overline{\Omega_n^A(k)} \right)}^{j_n} \longrightarrow  \overline{L}
  \]
  is a surjective $\overline{A}$-module homomorphism, where the $\overline{A}$-module $\overline{L}$ {\rm(}$\neq 0${\rm)}
  satisfies $\mathcal{P}$ as $\mathcal{P}$ is a $(*)$-property. Since $x \in \mathfrak{m} \smallsetminus \mathfrak{m}^2$ is an
  $A$-regular element, by Proposition~\ref{proposition: Nagata}, we have
  \[ 
     \bigoplus_{n \in \Lambda} \left( \overline{\Omega_n^A(k)} \right)^{j_n} \cong
     \bigoplus_{n\in\Lambda} \left( \Omega_n^{\overline{A}}(k) \oplus \Omega_{n-1}^{\overline{A}}(k) \right)^{j_n}
     \quad \mbox{[by setting $\Omega_{-1}^{\overline{A}}(k) := 0$]}.
  \]
  Since $\depth(\overline{A}) = \depth(A) - 1$, by the induction hypothesis, $\overline{A}$ is a regular local ring, and hence $A$
  is a regular local ring as $x \in \mathfrak{m} \smallsetminus \mathfrak{m}^2$ is an $A$-regular element.
 \end{proof}
 
 As a few applications of the above theorem, we obtain the following necessary and sufficient conditions for a  local ring
 to be regular.
 
 \begin{corollary}\label{corollary: RLR and surjection onto semidualizing}\index{regular ring via syzygy modules}
  Let\index{characterization of regular rings}
  \[
    f : \bigoplus_{n\in\Lambda} \left( \Omega_n^A(k) \right)^{j_n} \longrightarrow L
  \]
  be a surjective $A$-module homomorphism, where $L$ is a semidualizing $A$-module. Then $A$ is regular.
 \end{corollary}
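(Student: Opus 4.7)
The plan is essentially to recognize this corollary as an immediate consequence of Theorem~\ref{theorem: RLR and surjection onto star module}, with the property $\mathcal{P}$ instantiated as ``being a semidualizing module.'' So the proof reduces to verifying two things: that semidualizing-ness is a $(*)$-property in the sense of Definition~\ref{definition: star property}, and that the hypothesis $L \neq 0$ is automatic here so that Theorem~\ref{theorem: RLR and surjection onto star module} actually applies.

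First I would recall that $L$ being semidualizing forces $\Hom_A(L,L) \cong A$, so in particular $L \neq 0$, which is the nonvanishing hypothesis demanded by Theorem~\ref{theorem: RLR and surjection onto star module}. Next I would invoke Example~\ref{example: star property: semidualizing}, which records that the property $\mathcal{P}_1 = $ ``semidualizing module over a local ring'' satisfies the two clauses of Definition~\ref{definition: star property}: clause (i) is the standard fact (essentially \cite{Gol84}) that $C/xC$ is semidualizing over $A/(x)$ whenever $C$ is semidualizing over $A$ and $x \in \mathfrak{m}$ is $A$-regular; clause (ii) is immediate from $\Hom_A(L,L) \cong A$, which gives $\ann_A(L) = 0$ without any hypothesis on $\depth(A)$.

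With both ingredients in place, the corollary follows by applying Theorem~\ref{theorem: RLR and surjection onto star module} directly to the given surjection $f$. There is essentially no obstacle: the nontrivial content is entirely absorbed into the general theorem, and the corollary is just a one-line specialization. If anything, the only thing to be careful about is making sure the reader sees that $L \neq 0$ is verified before quoting the theorem, since Theorem~\ref{theorem: RLR and surjection onto star module} explicitly requires $L \neq 0$ in its statement.
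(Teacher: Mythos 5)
Your proposal is correct and matches the paper's proof exactly: the paper also derives the corollary directly from Theorem~\ref{theorem: RLR and surjection onto star module} together with Example~\ref{example: star property: semidualizing}. Your additional observation that $\Hom_A(L,L)\cong A$ forces $L\neq 0$ is a sensible bookkeeping step and entirely consistent with the paper's argument.
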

 
 \begin{proof}
  The corollary follows from Theorem~\ref{theorem: RLR and surjection onto star module}
  and Example~\ref{example: star property: semidualizing}.
 \end{proof}

 \begin{remark}\label{remark: Dutta and Takahashi's result}
	We can recover Theorem~\ref{theorem: Takahashi; regular; semidualizing} (in particular, Theorem~\ref{theorem: Dutta} because $A$ itself is a semidualizing $A$-module) as a consequence of Corollary~\ref{corollary: RLR and surjection onto semidualizing}. In fact the above result is even stronger than Theorem~\ref{theorem: Takahashi; regular; semidualizing}.
 \end{remark}
 
 Now we give a partial answer to Question~\ref{question: surjection onto finite injdim}.
 
 \begin{corollary}\label{corollary: RLR and surjection onto finite injdim}\index{regular ring via syzygy modules}
  Let\index{characterization of regular rings}
  \[ 
    f : \bigoplus_{n\in\Lambda} \left( \Omega_n^A(k) \right)^{j_n} \longrightarrow L
  \]
  be a surjective $A$-module homomorphism, where $L$ {\rm (}$\neq 0${\rm )} is a maximal Cohen-Macaulay $A$-module of finite injective dimension. Then $A$ is regular.
 \end{corollary}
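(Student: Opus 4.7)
The plan is to deduce this corollary as a direct application of Theorem~\ref{theorem: RLR and surjection onto star module}, with the ambient $(*)$-property being $\mathcal{P}_2$ of Example~\ref{example: star property: MCM, finite injdim}. The only real preliminary is to make sure we land in the setting of that example, i.e.\ that $A$ is Cohen--Macaulay.

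First I would observe that $L$ is a non-zero finitely generated $A$-module of finite injective dimension. By the Bass conjecture (proved by Peskine--Szpiro in the equicharacteristic case and by P.~Roberts in full generality), the mere existence of such a module forces $A$ to be Cohen--Macaulay. Thus $A$ is Cohen--Macaulay, and consequently $L$ is a non-zero maximal Cohen--Macaulay module of finite injective dimension over the Cohen--Macaulay local ring $A$, i.e.\ $L$ satisfies the property $\mathcal{P}_2$ of Example~\ref{example: star property: MCM, finite injdim}.

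Next I would invoke Example~\ref{example: star property: MCM, finite injdim}, which established that $\mathcal{P}_2$ is a $(*)$-property in the sense of Definition~\ref{definition: star property}: passing from $A$ to $A/(x)$ for an $A$-regular $x$ preserves Cohen--Macaulayness, MCM-ness of $L/xL$ (since $x$ is $L$-regular), and finiteness of injective dimension; and in the $\depth(A)=0$ case, $A$ is Artinian, $\injdim_A L=0$, so $L\cong E^{r}$ for the injective hull $E$ of $k$, giving $\ann_A(L)=0$.

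Finally, the hypothesis of the corollary provides a surjective $A$-homomorphism
\[
  f:\bigoplus_{n\in\Lambda}\bigl(\Omega_n^A(k)\bigr)^{j_n}\longrightarrow L,
\]
with $L\neq 0$ satisfying the $(*)$-property $\mathcal{P}_2$. Theorem~\ref{theorem: RLR and surjection onto star module} applied with $\mathcal{P}=\mathcal{P}_2$ then yields directly that $A$ is regular, completing the proof. The only non-routine step is the appeal to Bass's conjecture to secure Cohen--Macaulayness; once that is in hand, everything else is the assembly of results already developed in the chapter.
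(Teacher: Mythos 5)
Your proposal is correct and mirrors the paper's own argument exactly: invoke Bass's conjecture (Roberts) to get that $A$ is Cohen--Macaulay, note $L$ then satisfies the $(*)$-property $\mathcal{P}_2$ of Example~\ref{example: star property: MCM, finite injdim}, and apply Theorem~\ref{theorem: RLR and surjection onto star module}. No gaps, and no substantive difference from the paper's proof.
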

 
 \begin{proof}
 	Existence of a non-zero finitely generated $A$-module of finite injective dimension ensures that\index{Bass's conjecture} $A$ is Cohen-Macaulay
 	(see \cite[Corollary~9.6.2 and Remark~9.6.4(a)(ii)]{BH98} and \cite{Rob87}). Therefore we may assume that $A$ is a Cohen-Macaulay local ring. Then the corollary follows from Theorem~\ref{theorem: RLR and surjection onto star module} and Example~\ref{example: star property: MCM, finite injdim}.
 \end{proof}
 
 \begin{remark}\label{remark: partial answer for Artinian rings}
  It is clear from the above corollary that Question~\ref{question: surjection onto finite injdim} has an affirmative answer for
  Artinian local rings.
 \end{remark}

 Let $A$ be a Cohen-Macaulay local ring. Recall that a maximal Cohen-Macaulay $A$-module $\omega$ of type $1$ and of finite injective
 dimension is called the {\it canonical module}\index{canonical module} of $A$. It is well-known that the canonical module $\omega$ of
 $A$ is a semidualizing $A$-module; see, e.g., \cite[Theorem~3.3.10]{BH98}. So, both
 Corollary~\ref{corollary: RLR and surjection onto semidualizing} and Corollary~\ref{corollary: RLR and surjection onto finite injdim}
 yield the following result (independently) which strengthens Corollary~\ref{corollary: Takahashi; regular; canonical module}.
 
 \begin{corollary}\label{corollary: RLR and surjection onto omega}\index{regular ring via syzygy modules}
  Let $(A,\mathfrak{m},k)$ be a Cohen-Macaulay local ring with canonical module $\omega$, and let
  \[
   f : \bigoplus_{n \in \Lambda} \left( \Omega_n^A(k) \right)^{j_n} \longrightarrow \omega
  \]\index{characterization of regular rings}
  be a surjective $A$-module homomorphism. Then $A$ is regular.
 \end{corollary}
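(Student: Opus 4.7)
The plan is essentially to observe that the canonical module $\omega$ simultaneously satisfies the hypotheses of both Corollary~\ref{corollary: RLR and surjection onto semidualizing} and Corollary~\ref{corollary: RLR and surjection onto finite injdim}, and then invoke either one with $L = \omega$. So the work is almost entirely in recording why $\omega$ fits each hypothesis; no new argument is required.

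First I would recall the two defining features of the canonical module $\omega$ of a Cohen-Macaulay local ring $A$: (i) $\omega$ is a maximal Cohen-Macaulay $A$-module of type $1$ with $\injdim_A(\omega) < \infty$ (this is its definition), and (ii) $\omega$ is a semidualizing $A$-module (this is standard, e.g. \cite[Theorem~3.3.10]{BH98}). In particular $\omega \neq 0$.

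Given the surjective $A$-module homomorphism
\[
 f : \bigoplus_{n \in \Lambda} \left( \Omega_n^A(k) \right)^{j_n} \longrightarrow \omega,
\]
applying Corollary~\ref{corollary: RLR and surjection onto semidualizing} with $L = \omega$ (which is legitimate by (ii)) immediately yields that $A$ is regular. Alternatively, applying Corollary~\ref{corollary: RLR and surjection onto finite injdim} with $L = \omega$ (which is legitimate by (i), since $\omega$ is a non-zero maximal Cohen-Macaulay module of finite injective dimension) gives the same conclusion independently. Either route completes the proof.

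There is no real obstacle here; the statement is a direct specialization, and the only thing one needs to be careful about is to cite the correct structural property of $\omega$ matching each of the two previous corollaries. This is also why the corollary is presented as strengthening Corollary~\ref{corollary: Takahashi; regular; canonical module}: the present statement requires only that $\omega$ be a surjective image of a finite direct sum of syzygies of $k$, whereas Takahashi's result required $\omega$ to appear as a direct summand.
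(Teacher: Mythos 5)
Your proposal matches the paper's own reasoning exactly: the paper explicitly remarks, just before stating this corollary, that $\omega$ is a non-zero maximal Cohen-Macaulay module of finite injective dimension (its definition) and also a semidualizing module, so that both Corollary~\ref{corollary: RLR and surjection onto semidualizing} and Corollary~\ref{corollary: RLR and surjection onto finite injdim} yield the result independently. Nothing to add.
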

 
 Here we obtain one new characterization of regular local rings. The following characterization is based on the existence of a
 non-zero direct summand with finite injective dimension of some syzygy module of the residue field.
 
\begin{theorem}\label{theorem: characterization of RLR, injdim}\index{regular ring via syzygy modules}
 The following statements are equivalent:\index{characterization of regular rings}
 \begin{enumerate}[{\rm (1)}]
  \item $A$ is a regular local ring.
  \item $\Omega_n^A(k)$ has a non-zero direct summand of finite injective dimension for some non-negative integer $n$.
 \end{enumerate}
\end{theorem}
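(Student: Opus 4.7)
The direction (1) $\Rightarrow$ (2) is immediate: if $A$ is regular then $\injdim_A(k) < \infty$ (noted in the introduction), so $L = k = \Omega_0^A(k)$ serves as its own non-zero direct summand of finite injective dimension. For the non-trivial direction (2) $\Rightarrow$ (1), I plan to prove by induction on $t := \depth(A)$ the following strengthening: \emph{if $L$ is a non-zero finitely generated $A$-module with $\injdim_A(L) < \infty$ that is a direct summand of some $M := \bigoplus_{n \in \Lambda} \left( \Omega_n^A(k) \right)^{j_n}$ with $\Lambda \subset \mathbb{N}$ finite and $j_n \geq 1$, then $A$ is regular.} The strengthening is forced because Proposition~\ref{proposition: Nagata} replaces a single syzygy by the sum $\Omega_n^{\overline A}(k) \oplus \Omega_{n-1}^{\overline A}(k)$ after reducing modulo a regular element; note also that $t$ is finite because the Bass conjecture applied to $L$ forces $A$ to be Cohen-Macaulay.

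The base case $t = 0$ follows the pattern of Theorem~\ref{theorem: RLR and surjection onto star module}: one may assume $A \neq k$, so $A$ is Artinian and the Bass formula gives $\injdim_A(L) = 0$, whence $L \cong E(k)^r$ with $r \geq 1$. Then $E(k)$ is a direct summand of $M$, and Lemma~\ref{lemma: socle, syzygy} combined with $\ann_A(E(k)) = 0$ (Matlis duality) yields $\Soc(A) \subseteq \ann_A(M) \subseteq \ann_A(E(k)) = 0$, contradicting $\depth(A) = 0$. For the inductive step $t \geq 1$, I split into two cases according to whether $H_\mathfrak{m}^0(L) = 0$.

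In \emph{Case} (i), $H_\mathfrak{m}^0(L) \neq 0$. Since $t \geq 1$, every $\Omega_n^A(k)$ with $n \geq 1$ embeds into a free module and hence is $\mathfrak{m}$-torsion-free; thus $H_\mathfrak{m}^0(M)$ is exactly the summand $k^{j_0}$ (so necessarily $0 \in \Lambda$) and $H_\mathfrak{m}^0(L) \subseteq k^{j_0}$. Pick $0 \neq \ell \in H_\mathfrak{m}^0(L)$ and a $k$-linear functional $\phi_0 \colon k^{j_0} \to k$ with $\phi_0(\ell) = 1$; extending by zero on the complementary summand $\bigoplus_{n \geq 1} \left( \Omega_n^A(k) \right)^{j_n}$ yields an $A$-linear map $\phi \colon M \to k$, and the composition $L \hookrightarrow M \xrightarrow{\phi} k$ is split by the $A$-linear section $c \mapsto c\ell$ (well-defined since $\mathfrak{m} \ell = 0$). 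Hence $k$ is a direct summand of $L$, giving $\injdim_A(k) \leq \injdim_A(L) < \infty$ and so $A$ regular. In \emph{Case} (ii), $\depth_A(L) \geq 1$, and prime avoidance applied to $\mathfrak{m}^2$ together with the associated primes of $A$ and $L$ delivers $x \in \mathfrak{m} \setminus \mathfrak{m}^2$ that is both $A$- and $L$-regular. Then $\overline L := L/xL$ is non-zero with $\injdim_{\overline A}(\overline L) = \injdim_A(L) - 1 < \infty$, and it is a direct summand of $\overline M = \bigoplus_{n \in \Lambda} \overline{\Omega_n^A(k)}^{j_n}$, which by Proposition~\ref{proposition: Nagata} (together with the trivial case $\overline{\Omega_0^A(k)} = k$) is a finite direct sum of syzygies of $k$ over $\overline A$. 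The inductive hypothesis yields $\overline A$ regular, and since $x \in \mathfrak{m} \setminus \mathfrak{m}^2$ is $A$-regular, $A$ is regular.

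The principal obstacle is Case (i): once the inductive reduction introduces an $\Omega_0^{\overline A}(k) = k$ summand into the ambient module, the direct summand $L$ may acquire $\mathfrak{m}$-torsion, and one cannot treat $L$ purely abstractly (reducing modulo $x$ would destroy finite injective dimension). The remedy is to use the \emph{explicit} direct-sum decomposition of $M$ to peel off a copy of $k$ from $L$ via a $k$-linear functional, which is the technical heart of the argument.
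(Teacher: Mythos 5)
Your proof is correct, and it takes a genuinely different route from the paper's at the one non-trivial point. Both arguments share the same skeleton: reduce to $A$ Cohen--Macaulay via the Bass/Roberts theorem, handle the Artinian case via the socle-annihilator argument of Lemma~\ref{lemma: socle, syzygy}, then induct by passing to $\overline A = A/(x)$ for $x \in \mathfrak m \setminus \mathfrak m^2$ and invoking Proposition~\ref{proposition: Nagata}. The divergence is in how each handles the fact that Nagata turns a \emph{single} syzygy into a \emph{sum} $\Omega_n^{\overline A}(k) \oplus \Omega_{n-1}^{\overline A}(k)$. The paper keeps the inductive class ``direct summand of a single $\Omega_n(k)$'': it passes to an indecomposable direct summand $L'$ of $\overline L$ and uses the Krull--Schmidt theorem (hence the completeness assumption at the outset) to place $L'$ inside either $\Omega_n^{\overline A}(k)$ or $\Omega_{n-1}^{\overline A}(k)$, with the subcase $n=0$ dispatched separately by noting that a non-zero direct summand of $k$ must be $k$ itself. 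You instead enlarge the inductive class to ``direct summand of a finite direct sum $\bigoplus_{n\in\Lambda}(\Omega_n^A(k))^{j_n}$,'' which absorbs Nagata's two-term splitting with no further work; the price is your Case~(i), where the summand $L$ can acquire $\mathfrak m$-torsion once $\Omega_0 = k$ enters the ambient sum, and you must use $H_\mathfrak m^0$ and the explicit decomposition to extract a copy of $k$ as a direct summand of $L$. Your approach is arguably the more elementary one --- it avoids Krull--Schmidt entirely and (unlike the paper's argument) appears not to require the reduction to a complete ring --- while the paper's is shorter exactly because it offloads the combinatorics onto Krull--Schmidt. Both correctly settle Case~(ii) by prime avoidance against $\mathfrak m^2$ together with $\Ass(A)$ and $\Ass(L)$, and both correctly track $\injdim_{\overline A}(\overline L) = \injdim_A(L) - 1$ after reducing by an $A$- and $L$-regular element.
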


\begin{proof}
 (1) $\Rightarrow$ (2). If $A$ is regular, then $\Omega_0^A(k)$ $(= k)$ itself is a non-zero $A$-module of finite injective
 dimension. Hence the implication follows.
 
 (2) $\Rightarrow$ (1). Without loss of generality, we may assume that $A$ is complete. Existence of a non-zero finitely generated $A$-module of finite injective dimension ensures that\index{Bass's conjecture} $A$ is Cohen-Macaulay. Therefore we may as well assume that $A$ is a Cohen-Macaulay complete local ring.
 
 Suppose that $L$ is a non-zero direct summand of $\Omega_n^A(k)$ with $\injdim_A(L)$ finite for some integer $n \ge 0$. We prove the
 implication by induction on $d := \dim(A)$. If $d = 0$, then the implication follows from
 Corollary~\ref{corollary: RLR and surjection onto finite injdim}.
 
 Now we assume that $d \ge 1$. Suppose the implication holds true for all such rings of dimension smaller than $d$. Since $A$ is
 Cohen-Macaulay and $\dim(A) \ge 1$, there exists $x \in \mathfrak{m} \smallsetminus \mathfrak{m}^2$ which is $A$-regular.
 We set
 \[
  \overline{(-)} := (-) \otimes_A A/(x).
 \]
 If $n = 0$, then the direct summand $L$ of $\Omega_0^A(k)$ $(= k)$ must be equal to
 $k$, and hence $\injdim_A(k)$ is finite, which gives $A$ is regular. Therefore we may assume that $n \ge 1$. Hence $x$ is
 $\Omega_n^A(k)$-regular. Since $L$ is a direct summand of $\Omega_n^A(k)$, $x$ is $L$-regular as well. This gives
 $\injdim_{\overline{A}}(\overline{L})$ is finite. Now we fix an indecomposable direct summand $L'$ of $\overline{L}$. Then
 $\injdim_{\overline{A}}(L')$ is also finite. Note that the $\overline{A}$-module $\overline{L}$ is a direct summand
 of $\overline{\Omega_n^A(k)}$. Hence $L'$ is an indecomposable direct summand of $\overline{\Omega_n^A(k)}$.
 Since $x \in \mathfrak{m} \smallsetminus \mathfrak{m}^2$ is an $A$-regular element, by Proposition~\ref{proposition: Nagata}, we have
 \begin{equation*}
  \overline{\Omega_n^A(k)} \cong \Omega_n^{\overline{A}}(k)\oplus \Omega_{n-1}^{\overline{A}}(k).
 \end{equation*}
 The uniqueness of Krull-Schmidt decomposition\index{Krull-Schmidt decomposition} (\cite[Theorem~(21.35)]{Lam01})
 yields that $L'$ is isomorphic to a direct summand of $\Omega_n^{\overline{A}}(k)$ or $\Omega_{n-1}^{\overline{A}}(k)$. Since
 $\dim(\overline{A}) = \dim(A) - 1$, by the induction hypothesis, $\overline{A}$ is a regular local ring, and hence $A$ is a regular
 local ring as $x \in \mathfrak{m} \smallsetminus \mathfrak{m}^2$ is an $A$-regular element.
\end{proof}
 
 Let $M$ be a finitely generated $A$-module. Consider the augmented minimal injective resolution of $M$:
 \[
  0 \longrightarrow M \stackrel{d^{-1}}{\longrightarrow} I^0 \stackrel{d^0}{\longrightarrow} I^1 \stackrel{d^1}{\longrightarrow}
  I^2 \longrightarrow \cdots \longrightarrow I^{n-1} \stackrel{d^{n-1}}{\longrightarrow} I^n \longrightarrow \cdots.
 \]
 Recall that the $n$th {\it cosyzygy module}\index{cosyzygy module} of $M$ is defined by
 \[
  \Omega_{-n}^A(M) := \Image(d^{n-1}) \quad\mbox{for all } n \ge 0.
 \]
 
 The following result is dual to Theorem~\ref{theorem: characterization of RLR, injdim} which gives another characterization of
 regular local rings via cosyzygy modules of the residue field.
 
 \begin{corollary}\label{corollary: dual result, cosyzygy}\index{regular ring via cosyzygy modules}
  The following statements are equivalent:\index{characterization of regular rings}
  \begin{enumerate}[{\rm (1)}]
   \item $A$ is a regular local ring.
   \item Cosyzygy module $\Omega_{-n}^A(k)$ has a non-zero finitely generated direct summand of finite projective
         dimension for some integer $n \ge 0$.
  \end{enumerate}
 \end{corollary}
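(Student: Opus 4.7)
The plan is to handle the easy direction directly and to reduce the hard direction to Theorem~\ref{theorem: characterization of RLR, injdim} via Matlis duality. For (1) $\Rightarrow$ (2), if $A$ is regular then $\projdim_A(k)<\infty$, and $\Omega_0^A(k)=k$ itself serves as the required non-zero finitely generated direct summand of finite projective dimension.

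For the reverse direction, suppose $\Omega_{-n}^A(k) \cong L \oplus M$ with $L \neq 0$ finitely generated and $\projdim_A(L) < \infty$. I would first reduce to the case when $A$ is complete. Since $\Supp_A(k) = \{\mathfrak{m}\}$, the only nonzero Bass numbers of $k$ are $\mu^i(\mathfrak{m},k) = \beta_i^A(k)$, so the minimal injective resolution of $k$ takes the form $0 \to k \to E^{\beta_0} \to E^{\beta_1} \to \cdots$, where $E := E_A(k)$ is Artinian; hence every cosyzygy $\Omega_{-n}^A(k)$ is Artinian. Then $L$ is both Artinian and Noetherian, hence of finite length, so $L \otimes_A \widehat{A} \cong L$ and $\projdim_{\widehat{A}}(L) = \projdim_A(L) < \infty$. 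Since $E_A(k) = E_{\widehat{A}}(k)$ and the Bass/Betti numbers of $k$ are preserved under $\mathfrak{m}$-adic completion, $\Omega_{-n}^A(k)$ coincides with $\Omega_{-n}^{\widehat{A}}(k)$, and the direct summand decomposition carries over. Since regularity of $\widehat{A}$ is equivalent to regularity of $A$, I may therefore assume $A$ is complete.

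In the complete case I apply the Matlis dual $(-)^\vee := \Hom_A(-,E)$. Dualizing the minimal injective resolution of $k$ yields $\cdots \to A^{\beta_1} \to A^{\beta_0} \to k \to 0$ (using $E^\vee \cong A$ and $k^\vee \cong k$), and this is a minimal free resolution of $k$ because Matlis duality preserves the condition that the differentials have image in $\mathfrak{m}\cdot(-)$. Hence $(\Omega_{-n}^A(k))^\vee \cong \Omega_n^A(k)$, and dualizing $L \oplus M \cong \Omega_{-n}^A(k)$ gives $\Omega_n^A(k) \cong L^\vee \oplus M^\vee$. Here $L^\vee$ is non-zero and of finite length, in particular finitely generated, and a finite free resolution of $L$ dualizes under the exact functor $(-)^\vee$ to a finite injective resolution of $L^\vee$, so $\injdim_A(L^\vee) < \infty$. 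Theorem~\ref{theorem: characterization of RLR, injdim} then forces $A$ to be regular. The delicate step I anticipate is the careful verification that under Matlis duality the minimal injective resolution of $k$ corresponds precisely to the minimal free resolution of $k$ with the $n$th cosyzygy of $k$ matching the $n$th syzygy of $k$; once this is in place, the remaining correspondences between finite projective and finite injective dimension and between finitely generated and Artinian modules follow from standard Matlis duality.
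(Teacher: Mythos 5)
Your argument is correct and follows essentially the same route as the paper: both dualize the truncated minimal injective resolution of $k$ with respect to $E = E_A(k)$ using $\Hom_A(k,E)\cong k$ and $\Hom_A(E,E)\cong A$ to identify $\Hom_A(\Omega_{-n}^A(k),E)$ with $\Omega_n^A(k)$, and then invoke Theorem~\ref{theorem: characterization of RLR, injdim}. Your write-up is somewhat more explicit about why one may pass to $\widehat{A}$ (via the observation that the cosyzygies of $k$ are Artinian, so a finitely generated direct summand has finite length), whereas the paper simply states the reduction without justification; otherwise the two proofs are the same.
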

 
 \begin{proof}
  (1) $\Rightarrow$ (2). If $A$ is regular, then $\Omega_0^A(k)$ $(= k)$ has finite projective dimension. Hence the implication follows.
  
  (2) $\Rightarrow$ (1). Without loss of generality, we may assume that $A$ is complete. Suppose
  \[
   \Omega_{-n}^A(k) \cong P \oplus Q \quad\mbox{for some integer $n \ge 0$},
  \]
  where $P$ is a non-zero finitely generated $A$-module of finite projective dimension. Consider the
  following part of the minimal injective resolution of $k$:
  \begin{equation}\label{theorem: dual result, cosyzygy: equation 1}
   0 \to k \to E \to E^{\mu_1} \to \cdots \longrightarrow E^{\mu_{n-1}} \longrightarrow \Omega_{-n}^A(k) \cong
   P \oplus Q \longrightarrow 0,
  \end{equation}
  where $E$ is the injective hull of $k$. Dualizing \eqref{theorem: dual result, cosyzygy: equation 1} with respect to $E$, and
  using
  \[
    \Hom_A(k,E) \cong k \quad \mbox{and} \quad \Hom_A(E,E) \cong A
  \]
  (cf. \cite[Proposition~3.2.12(a) and Theorem~3.2.13(a)]{BH98}), we get the following part of a minimal free resolution of $k$:
  \[ 
    0 \longrightarrow \Hom_A(P,E) \oplus \Hom_A(Q,E) \cong \Omega_n^A(k) \longrightarrow A^{\mu_{n-1}} \longrightarrow \cdots \to
    A^{\mu_1} \to A \to k \to 0.
  \]
  Clearly, $\Hom_A(P,E)$ is non-zero and of finite injective dimension as $P$ is non-zero and of finite projective dimension.
  Therefore the implication follows from Theorem~\ref{theorem: characterization of RLR, injdim}.
 \end{proof}
 
 Now we give an example to ensure that the existence of an injective homomorphism from a `special module' to a finite direct sum of
 syzygy modules of the residue field does not necessarily imply that the ring is regular.
 
 \begin{example}\label{example: counter, injective map}\index{example on syzygy}
  Let $(A,\mathfrak{m},k)$ be a Gorenstein local domain of dimension $d$. Then $\Omega_d^A(k)$ is a maximal Cohen-Macaulay $A$-module;
  see, e.g., \cite[Exercise~1.3.7]{BH98}. Since $A$ is Gorenstein, $\Omega_d^A(k)$ is a reflexive $A$-module
  (cf. \cite[Theorem~3.3.10]{BH98}), and hence it is torsion-free. Then, by mapping $1$ to a non-zero element of $\Omega_d^A(k)$,
  we get an injective $A$-module homomorphism
  \[
   f : A \longrightarrow \Omega_d^A(k).
  \]
  Note that $\injdim_A(A)$ is finite. But a Gorenstein local domain need not be a regular local ring.
 \end{example}

\newpage
\thispagestyle{empty}
\cleardoublepage

\chapter{Summary and Conclusions}\label{Summary and Conclusions}

 In this chapter, we give a brief summary of the work discussed in this dissertation. We also present a few open questions for possible 
 further future work.

\section{Asymptotic Prime Divisors Related to Derived Functors}
 
 Let $A$ be a  ring, and let $I$ be an ideal of $A$. Let $M$ and $N$ be finitely generated $A$-modules. The main problem
 we study in Chapter~\ref{Chapter: Asymptotic Prime Divisors over Complete Intersection Rings} is the following:
 
 \begin{customquestion}{\ref{Summary and Conclusions}.I}\label{Question 1 on associate primes}
  Is the set $~\displaystyle \bigcup_{i \ge 0}\bigcup_{n \ge 1} \Ass_A\left( \Ext_A^i(M, N/I^n N) \right)~$ finite?
 \end{customquestion}
 
 We prove that Question~\ref{Question 1 on associate primes} has an affirmative answer for local complete intersection rings.
 Moreover, we show that if $A$ is a local complete intersection ring, then there are non-negative integers $n_0$ and $i_0$ such that
 for all $n \ge n_0$ and $i \ge i_0$, the set
 \[
  \Ass_A\left( \Ext_A^i(M, N/I^n N) \right) ~\mbox{ depends only on whether $i$ is even or odd}.
 \]
 
 Next rings to consider after complete intersection rings are Gorenstein rings. So one may ask
 Question~\ref{Question 1 on associate primes} for Gorenstein local rings.
 
 In this dissertation, we prove the finiteness of the set of associate primes of a certain family of Ext-modules. It seems natural to
 ask the following counterpart for Tor-modules.
 
 \begin{customquestion}{\ref{Summary and Conclusions}.II}\label{Question 2 on associate primes}
  Is the set $~\displaystyle \bigcup_{i \ge 0} \bigcup_{n \ge 1} \Ass_A\left( \Tor_i^A(M, N/I^n N) \right)~$ finite?
 \end{customquestion}
 
 More particularly, one can ask the following:
 
 \begin{customquestion}{\ref{Summary and Conclusions}.III}\label{Question 3 on associate primes}
  Is the set $~\displaystyle \bigcup_{i \ge 0} \Ass_A\left( \Tor_i^A(M, N) \right)~$ finite?
 \end{customquestion}
 
 If $A$ is a regular local ring, then
 \[
  \Tor_i^A(M,N/I^n N) = 0 ~\mbox{ for all } i > \dim(A) \mbox{ and for all } n \ge 1,
 \]
 since $\projdim_A(M) \le \dim(A)$. Therefore it follows from the result of D. Katz and E. West (\cite[Corollary~3.5]{KW04}) that
 Questions~\ref{Question 2 on associate primes} and \ref{Question 3 on associate primes}
 have positive answers for regular local rings. But these two questions are open for local complete intersection rings.

\section{Castelnuovo-Mumford Regularity of Powers of Several Ideals}
 
 In Chapter~\ref{Chapter: Asymptotic linear bounds of Castelnuovo-Mumford regularity}, we study the Castelnuovo-Mumford regularity of
 powers of several ideals. Let $A = A_0[x_1,\ldots,x_d]$ be a  standard $\mathbb{N}$-graded algebra over an Artinian local ring $(A_0,\mathfrak{m})$. Let $I_1,\ldots,I_t$ be homogeneous ideals of $A$, and let $M$ be a finitely generated $\mathbb{N}$-graded $A$-module. We show that there exist two integers $k_1$ and $k_1'$ such that
 \[
   \reg(I_1^{n_1} \cdots I_t^{n_t} M) \le (n_1 + \cdots + n_t) k_1 + k'_1 \quad \mbox{for all }~ n_1, \ldots, n_t \in \mathbb{N}.
 \]  
 Moreover, we prove that if $A_0$ is a field, then there exist two integers $k_2$ and $k'_2$ such that
 \[
   \reg\left( \overline{I_1^{n_1}}\cdots \overline{I_t^{n_t}} M \right) \le (n_1 + \cdots + n_t) k_2 + k'_2
   \quad \mbox{for all }~ n_1, \ldots, n_t \in \mathbb{N},
 \]
 where $\overline{I}$ denotes the integral closure of an ideal $I$ of $A$.

 Keeping the single ideal case (i.e., the result \cite[Theorem~3.2]{TW05} of N. V. Trung and H.-J. Wang) in mind,
 we are now interested in the following question: 
 
\begin{customquestion}{\ref{Summary and Conclusions}.IV}
 Are there some integers $a_1,\ldots,a_t$ and $b$ such that
 \[
  \reg(I_1^{n_1} \cdots I_t^{n_t} M) = a_1 n_1 + \cdots + a_t n_t + b \quad \mbox{for all } n_1,\ldots,n_t \gg 0?
 \]
\end{customquestion}

 We observe that the method of Trung and Wang does not generalize to the several ideals case. So we need to build some new techniques to solve the above question.

 In \cite[Corollary~3.4]{TW05}, Trung and Wang showed that if $A$ is a standard $\mathbb{N}$-graded algebra over a field $A_0$, and if $I$ is a homogeneous ideal of $A$, then $\reg(\overline{I^n})$ is asymptotically a linear function of $n$. So one may ask now the following natural question for several ideals.

\begin{customquestion}{\ref{Summary and Conclusions}.V}
 Are there some integers $a_1,\ldots,a_t$ and $b$ such that
 \[
	  \reg \left( \overline{I_1^{n_1}} \cdots \overline{I_t^{n_t}} M \right) = a_1 n_1 + \cdots + a_t n_t + b \quad \mbox{for all } n_1,\ldots,n_t \gg 0?
 \]
\end{customquestion}

 Another question which seems to be interesting is the following:
 
\begin{customquestion}{\ref{Summary and Conclusions}.VI}
 What is the behaviour of $\reg\left( \overline{I_1^{n_1}\cdots I_t^{n_t}} M \right)$ as a function of $(n_1,\ldots,n_t)$?
\end{customquestion}

\section{Characterizations of Regular Rings via Syzygy Modules}

 In Chapter~\ref{Chapter: Characterizations of Regular Local Rings via Syzygy Modules}, we obtain a few necessary and sufficient
 conditions for a  local ring to be regular in terms of syzygy modules of the residue field. Let $A$ be a  local
 ring with residue field $k$.
 We show that if a finite direct sum of syzygy modules of $k$ maps onto a semidualizing $A$-module, then $A$ is regular.
 
 In \cite[Proposition~7]{Mar96}, A. Martsinkovsky proved that if a finite direct sum of syzygy modules of $k$ maps onto a non-zero
 $A$-module of finite projective dimension, then $A$ is regular. So it seems natural to ask the following counterpart for injective
 dimension:
 
\begin{customquestion}{\ref{Summary and Conclusions}.VII}
 If a finite direct sum of syzygy modules of $k$ maps onto a non-zero $A$-module of finite injective dimension,
 then is the ring $A$ regular?
\end{customquestion}
 
 This is not known in general. However, we give a partial answer to this question by proving that if a finite direct sum of syzygy
 modules of $k$ maps onto a non-zero maximal Cohen-Macaulay $A$-module of finite injective dimension, then $A$ is regular. We
 also obtain one new characterization of regular local rings, namely, the local ring $A$ is regular if and only if some syzygy module
 of $k$ has a non-zero direct summand of finite injective dimension.
 
\newpage
\thispagestyle{empty}
\cleardoublepage

\bibliographystyle{plain}

\newpage
\thispagestyle{empty}
\cleardoublepage

\chapter*{List of Publications}
\addcontentsline{toc}{chapter}{List of Publications}
\markboth{List of Publications}{List of Publications}

\subsection*{Published Papers in Journals} 
\begin{enumerate}
 \item[(1)] Dipankar Ghosh, {\it Asymptotic linear bounds of Castelnuovo--Mumford regularity in multigraded modules}, J. Algebra {\bf 445} (2016), 103--114.
 \item[(2)] Dipankar Ghosh and Tony J. Puthenpurakal, {\it Asymptotic prime divisors over complete intersection rings}, Math. Proc. Cambridge Philos. Soc. {\bf 160} (2016), 423--436.
\end{enumerate}

\subsection*{Accepted Paper in Journal} 
\begin{enumerate}
 \item[(3)] Dipankar Ghosh, Anjan Gupta and Tony J. Puthenpurakal, {\it Characterizations of regular local rings via syzygy modules of the residue field}, To appear in Journal of Commutative Algebra.
\end{enumerate}

\newpage
\thispagestyle{empty}
\cleardoublepage

\chapter*{Acknowledgements}
\addcontentsline{toc}{chapter}{Acknowledgements}
\markboth{Acknowledgements}{Acknowledgements}
 {\slshape
 It is a great pleasure to express my sincere gratitude to all those who contributed in many ways to the successful completion of
 this dissertation. 
 
 First and foremost,  I would like to express my heartfelt gratitude to my supervisor Prof. Tony J. Puthenpurakal for his constant support, motivation and encouragement which helped me to grow as a researcher. Under Prof. Puthenpurakal's guidance, the journey towards completion of this dissertation has become a fulfilling, enjoyable and unforgettable experience. 
 
 Besides my supervisor, it is a great pleasure for me to thank the rest of my research progress committee members, Prof. Jugal K. Verma and
 Prof. Ananthnarayan Hariharan, for their insightful comments, suggestions and encouragement during the entire course of my Ph.D journey.
 Their advice on both research as well as on my career have been priceless. My Ph.D. career has been blessed with beneficial courses taught
 by Prof. Ameer Athavale, Prof. Shripad M. Garge, Prof. Sudhir R. Ghorpade, Prof. Ananthnarayan Hariharan, Prof. Manoj K. Keshari, Prof. Ravi Raghunathan, Prof. Gopala K. Srinivasan, Prof. Tony J. Puthenpurakal and Prof. Jugal K. Verma.
 
 I would like to express my gratefulness to the anonymous reviewers for their careful reading of this dissertation, and for giving many insightful comments and suggestions which helped me to improve the overall quality of the dissertation.

 I express my sincere thanks and gratitude to all my teachers during my master's degree program at IIT Guwahati, especially, Prof. Anupam Saikia, Prof. Sriparna Bandopadhyay, Prof. Guru P. Prasad and Prof. Anjan K. Chakrabarty who motivated me for better prospects of my career. It was my
 great pleasure to have Prof. Saikia not only as my master's project guide but also as a good friend. My deep sense of gratitude is
 extended to my teacher, Prof. Deepankar Das, who taught me mathematics during my bachelor's degree program and encouraged me to pursue my 
 career at IIT. All my teachers' contributions in my academic career are much appreciated.

 I would like to express my gratefulness to Vivek Sadhu and Husney Parvez Sarwar for initiating the weekly students' algebra
 seminar in our department which we have continued later on. I have learned a lot from these seminars. I express my sincere
 thanks to all the members of this seminar series, especially, Rakesh Reddy, Prasant, Rajiv, Parangama, Jai Laxmi and Sudeshna for
 making it successful.
 
 I owe my deep sense of gratitude and thanks to some special individuals, specifically, to Debu da for always being there for me
 in my difficult time; to Vivek da for his motivation and generous guidance throughout my doctoral study; to Anjan da for his encouragement on mathematical discussions
 and for his practical suggestions all the time;
 to Priyo da for his support and unconditional help all the time in many purposes; to Manna da and Dipjyoti da for having given us
 their valuable time and place where we used to have our weekly get together and enjoyed delicious food. My heartfelt thanks to all
 who were there in those weekly get together, and made it fun-filled and memorable.
 
 I would like to acknowledge my sincere thanks to all of my seniors, especially, Soham, Amiya, Shubhankar, Ramesh M., Swapnil, Abhay,
 Parvez, Rakesh R., Sayan, Ishapathik, Ali Zinna, Satya Prakash, Sajith, Harsh, Ashok, Rakesh K., Ramesh K., Nidhin,
 Mahendra, Triveni, Aditya, Zafar and Asha for their encouragement, continuous support and help in many ways during the last five
 years. 
 
 I express my heartfelt thanks to all my batchmates: Arunava, Bhimsen, Ganesh, Gouranga, Harsh, Mrinmoy, Ojas, Parangama,
 Prasant, Radhika, Rajiv, Ram Murti, Saranya, Shuchita, Sunil, Tushar for their companionship, support and many good times which we
 shared during my stay at IIT Bombay. I am also thankful to my other fellow research scholars: Jai Laxmi, Provanjan, Arindam,
 Samriddho, Avijit, Gobinda, Praphulla, Sudeshna, Sudeep, Kuldeep, Rakhi, Souvik, Kriti, Venkitesh, Dibyendu, Hiranya, Mukesh, Akansha,
 Pramod and Vivek for their love and moral support.
 
 Many thanks to all of my hostel mates Oayes Midda, Bapi Bera, Nirmalya Chatterjee, Siddhartha Sarkar, Ashok Shaw, Saurav Mitra, Tanmoy
 Chakrabarty, Pritam Biswas, Arif Iqbal Mallick, Anjan Roy, Deb Datta Mandal, Tanmoy Sarkar, Rakhahari Saha, Mahitosh
 Biswas, Ananta Sarkar, Amal Sarkar, Sandip Mandal, Sudip Das, Arup Chakraborty and many more for the great time we had in our hostel,
 especially, over lunch and dinner shall always be cherished.
 
 Many people have walked in and out of my life. But very few have left their footprints in my heart. My heartiest thanks to Sunil for
 being one of them, and for being there with me always, especially, when times were hard. Many thanks to Gouranga for being a part of
 this journey which has started from IIT Guwahati. My heartfelt thanks to all of my post graduate friends, more specially, Sahadeb,
 Abhishek and Debarchana, for their constant support, love and encouragement. My academic journey with Sahadeb which has started from
 Midnapore College was cheerful and memorable. I thank my childhood friend Pintu for making my village life fun-filled and enjoyable.
 
 A lot of thanks to IIT Bombay for providing an excellent atmosphere and all the facilities which are suitable not only for good
 research but also to live an enjoyable life. I express my gratitude to all the staff members of Mathematics Department for their
 timely help and support.
 
 Many thanks to National Board for Higher Mathematics, Dept. of Atomic Energy, Govt. of India for providing financial support for this
 study. Without this support, it would have been difficult for me to continue my research.

 And finally, needless to say, I am deeply and forever indebted to my parents, grandparents for their unconditional love, patience,
 continuous support and encouragement throughout my entire life. This dissertation is dedicated to my parents and grandparents. I am
 very much fortunate to have two lovable sisters, Papiya and Priyanka, who have always been my source of inspiration. I cannot express how
 much my parents and sisters mean to me. I thank them for just being who they are. It is my pleasure to thank my cousins Kinkar,
 Shubhankar, Debashri, Bidhan, Raz, my sister-in-law Mina and my nephew Shyam for their love and affection. The appreciations as well
 as high expectations of my relatives about my career always motivate me to do something better.
 My heartfelt thanks to all my well wishers.
 }
 
\vspace*{0.3cm}
\rule{0cm}{0.4pt} \hfill {\bf Dipankar Ghosh} \hspace*{0.65cm}\\
\rule{0cm}{0.4pt} \hfill {\bf IIT Bombay} \hspace*{1cm}\\
\rule{0cm}{0.4pt}\hfill {\bf 2016} \hspace*{1.75cm}

\newpage
\thispagestyle{empty}
\cleardoublepage

\addcontentsline{toc}{chapter}{Index}
\markboth{Index}{Index}

\printindex

\end{document}